\numberwithin{equation}{subsection}
\theoremstyle{definition}
\newtheorem{theorem}{Theorem}[subsection]
\newtheorem{lemma}[theorem]{Lemma}
\newtheorem{proposition}[theorem]{Proposition}
\newtheorem{prop}[theorem]{Proposition}
\newtheorem{corollary}[theorem]{Corollary}
\theoremstyle{definition}
\newtheorem{definition}[theorem]{Definition}
\newtheorem{variant}[theorem]{Variant}
\newtheorem{defn}[theorem]{Definition}
\newtheorem{warning}[theorem]{Warning}
\newtheorem{notation}[theorem]{Notation}
\newtheorem{example}[theorem]{Example}
\newtheorem{remark}[theorem]{Remark}
\newtheorem{construction}[theorem]{Construction}
\newcommand\nc{\newcommand}
\nc{\on}{\operatorname}
\nc{\DMO}{\DeclareMathOperator}
\DMO{\id}{id}
\DMO{\op}{op}
\DMO{\ob}{ob}
\DMO{\flow}{Flow}
\DMO{\crit}{Crit}
\DMO{\indx}{index}
\DMO{\colim}{colim}
\newcommand{\Adjoint}[4]{\xymatrix@1{#2 \ar@<.4ex>[r]^-{#1} & #3 \ar@<.4ex>[l]^-{#4}}}
\DeclareMathOperator{\TopStack}{TopStk}
\DeclareMathOperator{\Conv}{Conv}
\DeclareMathOperator{\calG}{\mathcal{G}}
\DeclareMathOperator{\Ob}{Ob}
\DeclareMathOperator{\Tw}{Tw}
\DeclareMathOperator{\Mor}{Mor}
\DeclareMathOperator{\calB}{\mathcal{B}}
\DeclareMathOperator{\strict}{str}
\DeclareMathOperator{\cont}{cont}
\DeclareMathOperator{\smooth}{sm}
\DeclareMathOperator{\BG}{BG}
\DeclareMathOperator{\im}{im}
\DeclareMathOperator{\LinPreOrd}{\mathsf{PLin}}
\DeclareMathOperator{\LinOrd}{ \mathsf{Lin} }
\DeclareMathOperator{\SSet}{ \mathcal{S} }
\DeclareMathOperator{\Shv}{Shv}
\DeclareMathOperator{\sheafF}{ \mathscr{F}}
\DeclareMathOperator{\sheafG}{ \mathscr{G}}
\DeclareMathOperator{\sheafH}{ \mathscr{H}}
\DeclareMathOperator{\bHom}{Map}
\DeclareMathOperator{\lax}{lax}
\DeclareMathOperator{\Pt}{Pt}
\DeclareMathOperator{\Rep}{Rep}
\DeclareMathOperator{\Hom}{Hom}
\DeclareMathOperator{\BR}{B\mathbf{R}}
\DeclareMathOperator{\Fun}{Fun}
\DeclareMathOperator{\init}{init}
\DeclareMathOperator{\term}{term}
\DeclareMathOperator{\Z}{\mathbf{Z}}
\DeclareMathOperator{\Alg}{Alg}
\DeclareMathOperator{\lex}{lex}
\DeclareMathOperator{\nounit}{nu}
\DeclareMathOperator{\Ind}{Ind}
\DeclareMathOperator{\Amal}{Amalg}
\DeclareMathOperator{\fct}{fact}
\DeclareMathOperator{\calJ}{\mathcal{J}}
\DeclareMathOperator{\wfact}{wfact}
\DeclareMathOperator{\calU}{\mathcal{U}}
\DeclareMathOperator{\PShv}{PShv}
\nc{\Top}{\mathsf{Top}}
\nc{\cbl}{\mathsf{cbl}}
\nc{\cat}{\mathsf{Cat}}
\nc{\Cat}{\mathsf{Cat}}
\nc{\Gpd}{\mathsf{Gpd}}
\nc{\shv}{\mathsf{Shv}}
\nc{\fun}{\mathsf{Fun}}
\nc{\bij}{\mathsf{bij}}
\nc{\flag}{\mathsf{Flag}}
\nc{\mfld}{\mathsf{Mfld}}
\nc{\sing}{\mathsf{Sing}}
\nc{\exit}{\mathsf{Exit}}
\nc{\surj}{\mathsf{surj}}
\nc{\sets}{\mathsf{Sets}}
\DeclareMathOperator{\Set}{Set}
\nc{\vtrs}{\mathsf{Vtrs}}
\nc{\Span}{\mathsf{Span}}
\nc{\wvtrs}{\widetilde{\mathsf{Vtrs}}}
\nc{\groupoid}{\mathsf{Groupoid}}
\nc{\vietoris}{\mathsf{Vietoris}}
\nc{\groupoids}{\mathsf{Groupoids}}
\nc{\cospan}{\mathsf{Cospan}}
\nc{\broken}{\mathsf{Broken}}
\nc{\brTop}{\mathsf{BrTop}}
\nc{\BrokeTop}{\mathsf{BrTop}}
\nc{\strips}{\mathsf{Strips}}
\nc{\spaces}{\mathsf{Spaces}}
\nc{\stacks}{\mathsf{Stacks}}
\nc{\metric}{\mathsf{Metric}}
\nc{\arrows}{\mathsf{Arrows}}
\nc{\preord}{\mathsf{Preord}}
\nc{\ttraj}{\mathbb{T}\mathsf{raj}}
\nc{\cTopc}{{}_C \Top _C}
\nc{\trees}{\mathsf{Trees}}
\nc{\disks}{\mathsf{Disks}}
\nc{\traj}{\mathsf{Traj}}
\nc{\wtraj}{\widetilde{\mathsf{Traj}}}
\nc{\holdisks}{\mathsf{HolDisks}}
\nc{\data}{\mathsf{Data}}
\nc{\Fin}{{\mathscr{F\mkern-3.5mu}}{\operatorname{in}}}
\nc{\Ass}{{\mathscr{A\mkern-3mu}}{\operatorname{ss}}}
\nc{\w}{\widetilde}
\nc{\Rbar}{\overline{\RR}}
\nc{\RRbar}{\overline{\RR}}
\nc{\delbar}{\overline{\partial}}
\nc{\tensor}{\otimes}
\nc{\up}{\uparrow}
\nc{\into}{\hookrightarrow}
\nc{\dd}{\downarrow\downarrow}
\nc{\hiro}{\textcolor{blue}}
\nc{\jacob}{\textcolor{green!75!blue}}
\nc{\eqn}{\begin{equation}}
\nc{\eqnd}{\end{equation}}
\nc{\eqnn}{\begin{equation}\nonumber}
\nc{\enum}{\begin{enumerate}}
\nc{\enumd}{\end{enumerate}}
\nc{\xra}{\xrightarrow}
\DeclareMathOperator{\Tot}{Tot}
\DeclareMathOperator{\calD}{\mathcal{D}}
\def\cC{\mathcal C}\def\cD{\mathcal D}
\def\cE{\mathcal E}\def\cF{\mathcal F}
\def\cJ{\mathcal J}\def\cK{\mathcal K}
\def\RR{\mathbf R}
\begin{document}

\title{Associative algebras and broken lines}
\author{Jacob Lurie and Hiro Lee Tanaka}

\begin{abstract}
Inspired by Morse theory, we introduce a topological stack $\broken$, which we refer to as {\it the moduli stack of broken lines}. We show that $\broken$ can be presented as a Lie groupoid with corners
and provide a combinatorial description of sheaves on $\broken$ with values in any compactly generated $\infty$-category $\mathcal{C}$. Moreover, we show that
{\em factorizable} $\mathcal{C}$-valued sheaves (with respect to a natural semigroup structure on the stack $\broken$) can be identified with nonunital $A_{\infty}$-algebras in $\mathcal{C}$.
This is a first step in a program whose goal is to present an ``equation-free'' construction of the Morse complex associated to a compact Riemannian manifold.
\end{abstract}

\maketitle

\tableofcontents

\section{Introduction}\label{section.overview}

Our starting point in this paper is the following:

\begin{defn}\label{defn.broken-line}
Let $L$ be a topological space equipped with a continuous action $$\mu: \RR \times L \rightarrow L$$ of the group $\RR$ of real numbers.
We will say that the pair $(L, \mu)$ is a {\it broken line} if the following pair of conditions is satisfied:
\enum
\item\label{item.directed} There exists a homeomorphism $\gamma: L \simeq [0,1]$ with the property that, for every point $x \in L$ and every nonnegative real number
$t$, we have $\gamma( \mu(t,x) ) \geq \gamma(x)$.

\item\label{item.finite-fixed} The fixed point set $L^{\RR} = \{ x \in L \,:\, (\forall t \in \RR)[\mu(t,x)=x]\} \subseteq L$ is finite.
\enumd
\end{defn}

\begin{example}\label{example.standard-broken-line}
Let $[ - \infty, \infty ] = \RR \cup \{ \pm \infty \}$ denote the extended real line, equipped with the $\RR$-action given by translation.
Then $[-\infty, \infty]$ is a broken line. 
\end{example}

\begin{figure}[h]
		\[
			\xy
			\xyimport(8,8)(0,0){\includegraphics[width=5in]{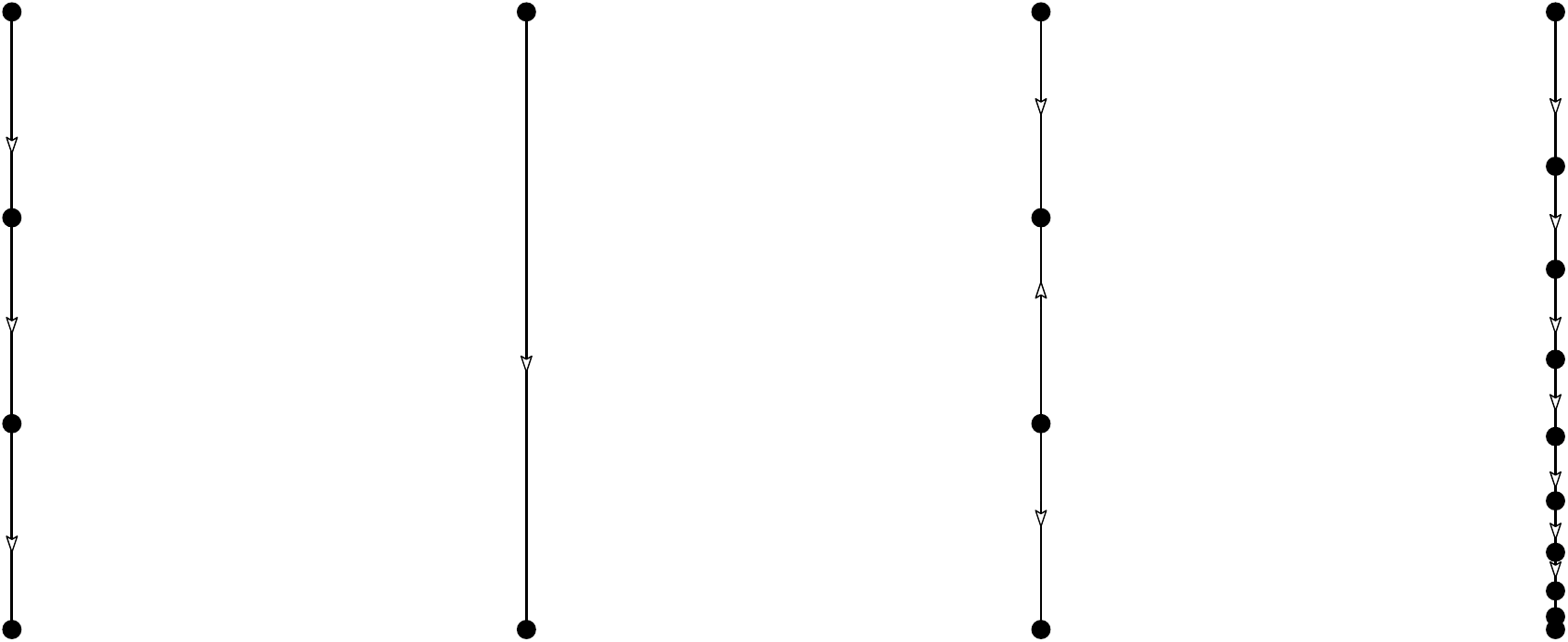}}
			,(0.05,-1)*+{(a)}
			,(2.7,-1)*+{(b)}
			,(5.35,-1)*+{(c)}
			,(7.95,-1)*+{(d)}
			\endxy
		\]
\caption{
Examples and non-examples of broken lines. The white arrowheads indicate the direction of the $\RR$-action, while the black dots indicate the $\RR$-fixed points. Diagrams (a) and (b) depict broken lines, but (c) and (d) do not: (c) violates axiom (\ref{item.directed}) of \ref{defn.broken-line} (the $\RR$-action is not directed), while (d) violates axiom (\ref{item.finite-fixed}) (the collection of fixed points is infinite).
}\label{figure.broken-lines}
\end{figure}

Broken lines are easy to classify. For any broken line $L$, the fixed point set $L^{\RR}$ has at least
two elements: the points $\gamma^{-1}(0)$ and $\gamma^{-1}(1)$, where $\gamma: L \simeq [0,1]$ is as above. We will refer
to $\gamma^{-1}(0)$ as the {\it initial point} of $L$ and $\gamma^{-1}(1)$ as the {\it terminal point} of $L$.
Note that if $L$ and $L'$ are broken lines with terminal point $p \in L$ and initial point $q \in L'$, then we can form
a new broken line from the disjoint union $L \amalg L'$ by identifying the points $p$ and $q$. We will refer
to this broken line as the {\it concatenation of $L$ and $L'$} and denote it by $L \star L'$. It is not difficult
to see that every broken line $L$ can be obtained by concatenating $n$ copies of $[ - \infty, \infty ]$,
for some $n \geq 1$ (see Corollary \ref{corollary.classification}). Our terminology is meant to suggest that the $\RR$-action
``breaks'' $L$ into $n$ pieces: more precisely, the non-fixed locus $L \setminus L^{\RR}$ has $n$ connected components.

In this paper, we study continuous families of broken lines. If $S$ is a topological space, we define
an {\it $S$-family of broken lines} to be a topological space $L_S$ equipped with a projection map
$\pi: L_S \rightarrow S$ and a continuous action of $\RR$ on $L_S$, satisfying certain axioms 
(see Definition \ref{defn.S-family}). These axioms guarantee that, for each point $s \in S$, the fiber
$L_s = \pi^{-1}\{s\}$ is a broken line, so that $L_s \setminus L_s^{\RR}$ has $n_s$ components for
some positive integer $n_s$. However, we do not require the function $s \mapsto n_s$ to be locally constant: 
our definition allows families which ``degenerate'' when specialized to closed subsets of $S$.

\begin{example}\label{easybreak}
Let $S = \RR_{\geq 0}$ be the set of nonnegative real numbers. Set $L_{S}^{\circ} = \RR_{\geq 0} \times \RR_{\geq 0}$,
and define $\pi^{\circ}: L_{S}^{\circ} \rightarrow S$ by the formula $\pi^{\circ}(x,y) = xy$. There is a continuous action
of $\RR$ on $L_{S}^{\circ}$ which preserves each fiber of $\pi^{\circ}$, given by the map
$$ \mu: \RR \times L_{S}^{\circ} \rightarrow L_{S}^{\circ} \quad \quad \mu(t,x,y) = ( e^{t} x, e^{-t} y).$$
The space $L_{S}^{\circ}$ admits a (fiberwise) compactification $L_{S} = L_{S}^{\circ} \cup (S \times \{ \pm \infty \} )$
which is a family of broken lines over $S$ (see Example \ref{easybreak2} for a description). The fiber $L_{s}$ over any point $s > 0$ can
be identified with the standard ``unbroken'' line $[ - \infty, \infty ]$ of Example \ref{example.standard-broken-line},
while the fiber $L_{0}$ can be identified with the concatenation $[ - \infty, \infty ] \star [ -\infty, \infty ]$.
\end{example}

\begin{figure}[h]

\begin{tabular}{lll}
		\raisebox{-0.1\height}
		{\includegraphics[height=1.5in]{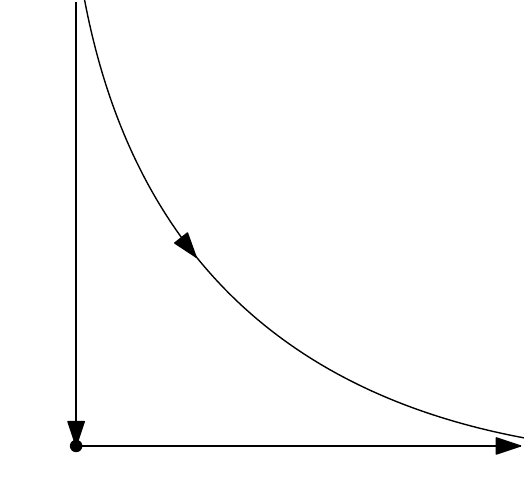}}
		& $\qquad$&
		\raisebox{0\height}{\includegraphics[height=1in]{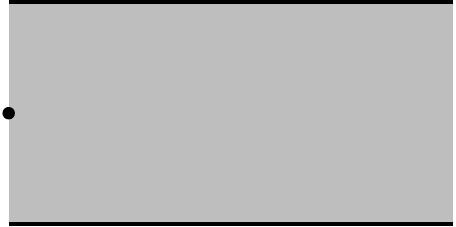}}
\end{tabular}
\caption{
Two depictions of the family of broken lines from Example~\ref{easybreak}. On the left are drawn the domain of $\pi^{\circ}$, along with the fibers $(\pi^{\circ})^{-1}\{0\}$ and $(\pi^{\circ})^{-1}\{t\}$ for $t>0$, whose arrowheads indicate the $\RR$-action. On the right is a drawing of $L_S$, where $S$ (not pictured) is a horizontal interval with a closed endpoint toward the left. Black dots indicate fixed points of the $\RR$-action, and on the right, the solid horizontal lines also indicated $\RR$-fixed points. See also Figure~\ref{figure.non-families}.
}\label{figure.family-xy}
\end{figure}

For every topological space $S$, the collection of $S$-families of broken lines can be organized
into a groupoid which we will denote by $\broken(S)$. The construction $S \mapsto \broken(S)$
is an example of a {\em topological stack}: that is, it is a rule which associates a groupoid
to every topological space, having good descent properties (see \S \ref{section.stacks} for a review of the theory of topological stacks).
We will denote this stack by $\broken$ and refer to it as {\it the moduli stack of broken lines}. Our first main result is that
$\broken$ is a reasonable geometric object:

\begin{theorem}\label{theorem.mainA}
The moduli stack $\broken$ is (representable by) a Lie groupoid with corners (see Definition \ref{definition.liegroupoid}).
\end{theorem}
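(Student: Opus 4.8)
The strategy is to exhibit $\broken$ as the quotient stack of an explicit Lie groupoid with corners: I will construct a manifold with corners $U$ together with a ``universal-enough'' family of broken lines over it, i.e.\ a morphism $p\colon U\to\broken$, and then check that $p$ is representable and submersive (in the sense appropriate to manifolds with corners), that it is surjective on points, and that the fiber product $R:=U\times_{\broken}U$ is again a manifold with corners; the groupoid $(R\rightrightarrows U)$ then presents $\broken$. Equivalently, one produces an explicit Lie groupoid $\calG$ with corners and a functor $\calG\to\broken$ and shows it induces an equivalence of stacks; the object space of $\calG$ will be $U$.

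To build $U$, I use the classification of broken lines: by Corollary~\ref{corollary.classification} every broken line is a concatenation of $n$ copies of $[-\infty,\infty]$ for a unique $n\geq1$, and an elementary analysis shows such a line has automorphism group $\RR^{n}$, one translation parameter per non-fixed component (with the diagonal $\RR\subseteq\RR^{n}$ being the restriction of the given $\RR$-action). For each $n$ I construct a standard family $\mathcal{L}_{n}\to\RR_{\geq0}^{n-1}$ generalizing Examples~\ref{easybreak} and~\ref{easybreak2}: modeling the $i$-th interior fixed point of the $n$-fold broken line on $\{(a,b)\in\RR_{\geq0}^{2}:ab=c\}$ with $\RR$ acting by $(a,b)\mapsto(e^{t}a,e^{-t}b)$, the $i$-th coordinate $c_{i}$ resolves the $i$-th node when $c_{i}>0$ and leaves it broken when $c_{i}=0$; over the open stratum $(\RR_{>0})^{n-1}$ the fiber is the unbroken line, over the deepest corner it is the $n$-fold broken line. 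Putting $U=\coprod_{n\geq1}\RR_{\geq0}^{n-1}$ and assembling the $\mathcal{L}_{n}$ gives $p\colon U\to\broken$, which is surjective on points by the classification.

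The technical heart is a local normal form for families: for any $S$-family $L_{S}$ and any point $s\in S$ whose fiber is a concatenation of $n$ copies of $[-\infty,\infty]$, there are a neighborhood $S_{0}\ni s$, a map $\psi\colon S_{0}\to\RR_{\geq0}^{n-1}$ (smooth and nondegenerate along boundary strata, when $S$ is itself a manifold with corners), and an isomorphism $L_{S}|_{S_{0}}\cong\psi^{*}\mathcal{L}_{n}$, with $\psi$ unique up to the action of $\operatorname{Aut}(L_{s})=\RR^{n}$ (which acts on $\RR_{\geq0}^{n-1}$ through $\RR^{n}\to\RR^{n-1}$ by coordinatewise scaling). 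Granting this: $p$ is submersive because the classifying maps admit smooth local sections; and for any test family $L_{T}$ the fiber product $T\times_{\broken}U$ is covered by pieces $T_{0}\times(\RR_{>0})^{k}\times\RR^{n}$ --- recording the admissible classifying maps $T_{0}\to\RR_{\geq0}^{n-1}$ together with the isomorphism --- which glue to a manifold with corners over $T$, so $p$ is representable. I would then identify $R=U\times_{\broken}U$ directly: it splits into pieces indexed by $n,m\geq1$ and by the combinatorics of the ways a concatenation of $m$ lines can be isomorphic to a partial resolution of a concatenation of $n$ lines --- equivalently by surjective order-preserving maps between the associated finite linear orders, which is where $\LinOrd$ and $\LinPreOrd$ enter --- each piece an explicit product of factors $\RR_{\geq0}$ (matching smoothing parameters), $\RR_{>0}$ (rescalings) and $\RR$ (translations), hence a manifold with corners; near the $n$-fold broken line this is just the action groupoid $(\RR_{\geq0}^{n-1}\times\RR^{n})\rightrightarrows\RR_{\geq0}^{n-1}$. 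The source, target and composition are manifestly smooth, so $(R\rightrightarrows U)$ is a Lie groupoid with corners presenting $\broken$.

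The main obstacle is the local normal form for families. One must show that the only degenerations of a family of broken lines are the ``expected'' breakings of non-fixed components, and that near any point this degeneration is governed by exactly the finitely many $\RR_{\geq0}$-parameters of $\mathcal{L}_{n}$ --- no hidden moduli and the right corner structure. This is a transversality/normal-form argument resting on the axioms of Definition~\ref{defn.S-family} (which are precisely designed to force it): one uses the $\RR$-action near a fixed point of $L_{s}$ to produce canonical local coordinates exhibiting the node model $ab=c$, and then patches over the non-fixed components using equivariance and the fact that those components are $\RR$-torsors. The uniqueness of $\psi$ up to $\operatorname{Aut}(L_{s})$ --- what upgrades the cover to a groupoid --- then follows by tracking how the node parameters and the per-component translation ambiguities are pinned down by the germ of the family.
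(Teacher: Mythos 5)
Your overall strategy matches the paper's: construct an atlas $U$ carrying universal families, show the classifying map to $\broken$ is representable, submersive, and surjective, and identify $R = U \times_{\broken} U$ as a manifold with corners. After reindexing, your $U = \coprod_{n\geq 1} \RR_{\geq 0}^{n-1}$ is the paper's $\Ob(\calB) = \coprod_{m\geq 0} \broken^{[m]}$ (Construction~\ref{construction.liegroup}), your families $\mathcal{L}_n$ are the universal families $\widetilde{\Rep}([n-1],\BR^+) \to \Rep([n-1],\BR^+)$ of Construction~\ref{makelines}, and the pieces of $R$ you describe correspond to the amalgam decomposition of Theorem~\ref{theorem.fiber-product-description-vague}. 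You also correctly identify the automorphism group $\RR^n$ of the $n$-fold broken line and the resulting local action-groupoid picture.

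The genuine gap is exactly the step you flag as the ``technical heart'' --- the local normal form --- which you sketch as a ``transversality/normal-form argument'' without carrying it out. The paper's proof of this step is not a transversality argument. The key device is the notion of an \emph{$I$-section} (Definition~\ref{definition.I-section}): for a finite linearly preordered set $I$, an $I$-section of $L_S \to S$ is a collection of sections $\{\sigma_i\colon S \to L_S^\circ\}_{i\in I}$ of $\pi$ hitting every non-fixed component of every fiber. Local existence of $I$-sections (Proposition~\ref{proposition.section-existence}) follows from the lifting axiom $(A5)$ of the Recognition Principle (Theorem~\ref{theorem.line-recognition}); once an $I$-section is fixed, the \emph{translation distance} function $d_{L_S}(\sigma_i(s),\sigma_j(s))$ (Notation~\ref{notation.translation-distance}, continuity by Proposition~\ref{proposition.translation-distance}) gives an honest classifying map $\alpha\colon S\to\Rep(I,\BR^+)$. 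What makes this work is that $I$-sectioned families have \emph{no} nontrivial automorphisms (Lemma~\ref{harmless}), so the moduli problem $\broken^I$ is discrete and is represented by the topological space $\Rep(I,\BR^+)$ (Theorem~\ref{theorem-representab}); this rigidification is what simultaneously establishes the existence and essential uniqueness of your $\psi$. The $\RR^n$-ambiguity you invoke is precisely the freedom in choosing the $I$-section, and it reappears in the identification of $\broken^{[m]}\times_{\broken}\broken^{[n]}$ as a union of spaces $\broken^K$ indexed by amalgams (Theorem~\ref{theorem.fiber-product-description-vague}) --- which is why the paper insists on linearly \emph{pre}ordered sets throughout: amalgams of linear orders are essentially never linear orders.

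In short: the proposal has the right architecture, and its final answer is the paper's $\calB$, but it does not supply the mechanism for the normal form it hinges on. That mechanism is the $I$-section/translation-distance formalism, which replaces ``transversality'' with an explicit rigidification and an explicit coordinate function; without it (or an equivalent), the argument has a real gap at its center.
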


The primary goal in this paper is to understand the theory of sheaves on the moduli stack $\broken$. Let $\cC$ be any category.
We define a {\it $\cC$-valued sheaf on $\broken$} to be a functor which associates an object $\sheafF( L_S ) \in \cC$ to every $S$-family of broken lines $L_S$,
satisfying a suitable descent condition for open coverings (see \S \ref{section.sheaves-on-stacks} for more details). Heuristically, one can think of
$\sheafF$ as a rule which associates an object $\sheafF(L) \in \cC$ to each broken line $L$, depending ``continuously'' on $L$. Since the
classification of broken lines is relatively simple, this heuristic suggests that sheaves on $\broken$ should admit a purely combinatorial description.
Our second main result asserts that this is indeed the case, at least if the category $\cC$ is well-behaved:

\begin{theorem}\label{theorem.mainB}
Let $\cC$ be a compactly generated category. Then there is a canonical equivalence of categories
$\Shv_{\cC}(\broken) \simeq \Fun( \LinOrd, \cC )$,
where $\LinOrd$ denotes the category whose objects are nonempty finite linearly ordered sets, and whose morphisms are monotone surjections
(see Theorems \ref{theorem.broken-sheaves-vague} and \ref{theorem.broken-sheaves} for more precise statements).
\end{theorem}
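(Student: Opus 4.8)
The plan is to combine the presentation of $\broken$ as a Lie groupoid with corners (Theorem~\ref{theorem.mainA}) with the stratification of $\broken$ by the number of pieces, and to compute $\Shv_{\cC}(\broken)$ by induction along that stratification using recollement; the category $\LinOrd$ will appear as the bookkeeping of the resulting gluing functors. First I would set up the stratification: for $n\ge 1$ let $\broken_{\le n}\subseteq\broken$ be the substack of families all of whose fibers have at most $n$ pieces. Since breaking a line into more pieces is a ``closed'' degeneration --- nearby fibers of a family cannot acquire extra pieces, as is already visible in Example~\ref{easybreak} and its higher analogues --- each $\broken_{\le n}$ is an \emph{open} substack, $\broken=\bigcup_n\broken_{\le n}$, and therefore $\Shv_{\cC}(\broken)\simeq\lim_n\Shv_{\cC}(\broken_{\le n})$ along the restriction functors. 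The locally closed stratum $\broken_n:=\broken_{\le n}\setminus\broken_{\le n-1}$ classifies families whose fibers have exactly $n$ pieces; by Corollary~\ref{corollary.classification} all such broken lines are isomorphic, with automorphism group $\RR^n$ (translate each piece independently), so $\broken_n\simeq[\ast/\RR^n]$, and since $\RR^n$ is contractible $\Shv_{\cC}(\broken_n)\simeq\cC$. This last identification, and the base case $\broken_{\le 1}=\broken_1=[\ast/\RR]$ of the induction below, use that $\cC$ is compactly generated (in particular presentable).

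For the inductive step, the open--closed decomposition given by $j\colon\broken_{\le n-1}\into\broken_{\le n}$ and the complementary closed immersion $i\colon\broken_n\into\broken_{\le n}$ exhibits $\Shv_{\cC}(\broken_{\le n})$ as a recollement of $\Shv_{\cC}(\broken_{\le n-1})$ and $\Shv_{\cC}(\broken_n)\simeq\cC$, with gluing functor $\Psi=i^{*}j_{*}\colon\Shv_{\cC}(\broken_{\le n-1})\to\cC$. The heart of the matter is a local computation of $\Psi$. The corner charts of Theorem~\ref{theorem.mainA} identify a neighborhood of $\broken_n$, up to a contractible gauge factor that does not affect sheaves, with a neighborhood of the origin in the corner $\RR_{\ge 0}^{n-1}$ --- the $n-1$ coordinates along the boundary faces recording how far each of the $n-1$ breakpoints of the $n$-piece line has been smoothed --- compatibly with stratifications. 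Hence a punctured neighborhood of $\broken_n$ is modelled by $\RR_{\ge 0}^{n-1}\setminus\{0\}$, which retracts onto the link $\Delta^{n-2}$ with its face stratification; and for a sheaf $\sheafF'$ on $\broken_{\le n-1}$ --- which by the inductive hypothesis is automatically constructible along the strata $\broken_m$ --- the sections of $\sheafF'$ over such a punctured neighborhood are computed by the finite limit of the stalks of $\sheafF'$ over the $\broken_m$, $m<n$, indexed by the poset of nonempty subsets $T\subseteq\{1,\dots,n-1\}$, the subset $T$ corresponding to the locus where exactly the breakpoints in $T$ are smoothed, i.e.\ to the $(n-|T|)$-piece stratum. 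To run this argument for general $\cC$-valued sheaves rather than only space-valued ones, I would apply $\Map_{\cC}(c,-)$ for $c$ in a set of compact generators --- which commutes with $i^{*}$ and $j_{*}$ --- and invoke the corresponding statement about constructible sheaves on a manifold with corners.

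Granting the inductive identification $\Shv_{\cC}(\broken_{\le n-1})\simeq\Fun(\LinOrd_{<n},\cC)$, where $\LinOrd_{<n}\subseteq\LinOrd$ is the full subcategory spanned by the linearly ordered sets of cardinality $<n$, one then recognizes the poset of nonempty subsets of $\{1,\dots,n-1\}$ as (equivalent to) the comma category of the $n$-element linearly ordered set over $\LinOrd_{<n}$: a nonempty set of breakpoints to smooth is exactly a nontrivial monotone surjection out of the $n$-element linearly ordered set. Thus $\Psi(\sheafF')$ is precisely the value, at the $n$-element linearly ordered set, of the right Kan extension of $\sheafF'$ along $\LinOrd_{<n}\into\LinOrd_{\le n}$. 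Since $\Fun(\LinOrd_{\le n},\cC)$ is visibly the recollement of $\Fun(\LinOrd_{<n},\cC)$ and $\cC$ with this same gluing functor, we obtain $\Shv_{\cC}(\broken_{\le n})\simeq\Fun(\LinOrd_{\le n},\cC)$, which in particular shows every sheaf on $\broken_{\le n}$ is constructible and closes the induction. Taking the limit over $n$ and using $\LinOrd=\colim_n\LinOrd_{\le n}$ yields the desired equivalence $\Shv_{\cC}(\broken)\simeq\Fun(\LinOrd,\cC)$. Equivalently, one may summarize the computation by saying that $\broken$ is a conically stratified stack whose exit-path $\infty$-category is $\LinOrd$ --- an $n$-piece line is the linearly ordered set of its pieces, an exit path out of it records which breakpoints to smooth and so is a monotone surjection, and the contractibility of the automorphism groups $\RR^n$ and of the links produces no higher morphisms --- so that the standard equivalence between constructible $\cC$-valued sheaves and functors out of the exit-path category applies.

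I expect the main obstacle to be the local analysis underpinning the computation of the gluing functor $\Psi$: extracting from Theorem~\ref{theorem.mainA} that a neighborhood of the deep stratum $\broken_n$ genuinely is the quotient of a corner $\RR_{\ge 0}^{n-1}$ by a \emph{contractible} group, stratification-compatibly, and --- most delicately --- verifying that the residual automorphisms (the group $\RR^n$ at an $n$-piece line versus the groups $\RR^m$ at nearby $m$-piece lines) contribute no additional exit paths, so that exactly the monotone surjections of $\LinOrd$ appear and nothing more. Everything else is either formal recollement manipulation or a reduction, via compact generators, to standard facts about constructible sheaves on manifolds with corners.
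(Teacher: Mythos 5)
Your route is genuinely different from the paper's, and it is plausible, but it leans on more unverified machinery than you acknowledge. The paper never runs an induction on the number of pieces and never invokes recollement. Instead it presents $\broken$ as a colimit $\varinjlim_{I}\broken^{I}$ over \emph{all} finite linear preorders (Theorem~\ref{leftor}), where each $\broken^{I}\simeq\Rep(I,\BR^{+})$ is an honest topological space with corners; it then applies the exit-path characterization (HA~A.9.3) to each $\broken^{I}$ separately (Proposition~\ref{proposition.constructible-characterization}), obtaining $\Shv^{c}_{\cC}(\broken^{I})\simeq\Fun(\Conv(I),\cC)$; it shows restriction of a sheaf on $\broken$ to any $\broken^{I}$ is automatically constructible (Example~\ref{example.automatic-constructibility}); and finally it organizes the limit $\varprojlim_{I}\Fun(\Conv(I),\cC)$ via a Grothendieck fibration $\overline{\LinPreOrd}\to\LinPreOrd$ and a left-adjoint-to-right-Kan argument to land on $\Fun(\LinOrd,\cC)$. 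So the paper works entirely at the level of atlases of $\broken$, where the relevant sheaf categories live on genuine spaces, and never has to analyze $\Shv_{\cC}$ of a locally closed substack of a stack directly. Your proposal instead computes the exit-path $\infty$-category of $\broken$ itself by an open-closed induction and identifies it with $\LinOrd$ head-on; this is cleaner conceptually (and is indeed the heuristic content of Theorem~\ref{theorem.broken-sheaves-vague}), but it pushes the real work into places the paper deliberately avoids.

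Three specific gaps, beyond the local analysis you already flag: (1) You need a recollement theorem for $\cC$-valued sheaves on \emph{topological stacks} with respect to an open-closed decomposition, with $\cC$ compactly generated but not assumed stable; the paper has no such theorem and never needs one, since its gluing happens at the level of atlases via Proposition~\ref{proposition.formal-statement} and Lemma~\ref{oloic}. Asserting it is "formal" is optimistic --- one must check, for instance, that the unit $\id\to i^{*}i_{*}$ is an equivalence and that $j_{*}$ is fully faithful when $i,j$ are stack morphisms, in the paper's specific sense of $\Shv_{\cC}$ (Definition~\ref{definition.sheaf-on-stack}), which imposes the Cartesian condition on transition maps. (2) The claim $\Shv_{\cC}(\BR^{n})\simeq\cC$ is not merely "$\RR^{n}$ is contractible"; it requires the descent computation along $\ast\to\BR^{n}$ together with homotopy invariance of sheaves (Proposition~\ref{proposition.automatic-homotopy-invariance}); the paper effectively does this in the proof of Theorem~\ref{leftor2}, and you are implicitly using a version of it. (3) Your computation of $i^{*}j_{*}$ as the finite limit over the link requires $\sheafF'$ to already be constructible; you invoke the inductive hypothesis for this, which is fine as long as the base case is also framed so as to deliver constructibility --- make this explicit, since otherwise one could worry the recollement produces non-constructible sheaves on $\broken_{\le n}$ that the next stage cannot handle (it cannot, because $\broken_{n}$ is a single stratum, but say so). Your identification of the exit-path poset of the link $\Delta^{n-2}$ with the nonempty subsets of $\{1,\dots,n-1\}$, and of that poset with the comma category $\bigl([n]\downarrow\LinOrd_{<n}\bigr)$, is correct; the combinatorial bookkeeping genuinely matches what the paper encodes through $\Conv(I)$ and $\overline{\LinPreOrd}$.
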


The concatenation construction $(L, L') \mapsto L \star L'$ can be extended to families of broken lines, and is classified by a map
$$ m: \broken \times \broken \rightarrow \broken.$$
The multiplication $m$ is associative (up to canonical isomorphism) and exhibits $\broken$ as a semigroup in the setting of topological stacks.
Suppose now that $\cC$ is a (sufficiently nice) monoidal category, with tensor product $\otimes: \cC \times \cC \rightarrow \cC$. In this case, we define
a {\it factorizable $\cC$-valued sheaf on $\broken$} to be a $\cC$-valued sheaf $\sheafF$ on $\broken$, equipped with an isomorphism
$\alpha: m^{\ast} \sheafF \simeq \sheafF \boxtimes \sheafF$ which satisfies the following coherence condition: the diagram
$$ \xymatrix{ (m \times \id)^{\ast} (m^{\ast} \sheafF) \ar[rr]^{\sim} \ar[d]^{ \alpha }& & (\id \times m)^{\ast} (m^{\ast} \sheafF) \ar[d]^{\alpha} \\
(m \times \id)^{\ast}( \sheafF \boxtimes \sheafF) \ar[dr]^{\alpha \boxtimes \id} & & (\id \times m)^{\ast} (\sheafF \boxtimes \sheafF) \ar[dl]_{\id \boxtimes \alpha} \\
& \sheafF \boxtimes \sheafF \boxtimes \sheafF & }$$
commutes, in the category of $\cC$-valued sheaves on $\broken \times \broken \times \broken$ (see Definition \ref{definition.factorizable-sheaf} for a more precise definition). 

Heuristically, if we view a $\cC$-valued sheaf $\sheafF$ as a rule which associates an object $\sheafF(L) \in \cC$ to each broken line $L$, then
a factorizable sheaf consists of such a rule together with a collection of isomorphisms $\sheafF(L \star L') \simeq \sheafF(L) \otimes \sheafF(L')$,
satisfying a suitable associativity condition. Since every broken line $L$ can be obtained by concatenating finitely many copies of the interval
$[ - \infty, \infty ]$, this heuristic suggests that a factorizable sheaf $\sheafF$ should be determined by the single object $\sheafF( [ - \infty, \infty ] ) \in \cC$.
Our third main result asserts that this heuristic is essentially correct, modulo the caveat that we must remember some additional structure
on the object $\sheafF( [ - \infty, \infty] )$:

\begin{theorem}\label{theorem.mainC}
Let $\cC$ be a compactly generated category with a closed monoidal structure and let
$\Shv^{\fct}_{\cC}(\broken)$ denote the category of factorizable $\cC$-valued sheaves on $\broken$.
Then the construction $\sheafF \mapsto \sheafF( [ -\infty, \infty] )$ can be promoted to an equivalence of categories
	\eqnn
	\Shv^{\fct}_{\cC}( \broken) \simeq \Alg^{\nounit}(\cC),
	\eqnd
where $\Alg^{\nounit}(\cC)$ denotes the category of nonunital
associative algebra objects of $\cC$.
\end{theorem}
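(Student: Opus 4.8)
My plan is to reduce Theorem \ref{theorem.mainC} to the combinatorial model furnished by Theorem \ref{theorem.mainB}, by tracking how the semigroup structure on $\broken$ is transported under the equivalence $\Shv_{\cC}(\broken) \simeq \Fun(\LinOrd, \cC)$. First I would check that the argument proving Theorem \ref{theorem.mainB} also yields compatible equivalences $\Shv_{\cC}(\broken \times \broken) \simeq \Fun(\LinOrd \times \LinOrd, \cC)$ and $\Shv_{\cC}(\broken \times \broken \times \broken) \simeq \Fun(\LinOrd \times \LinOrd \times \LinOrd, \cC)$, under which the external tensor product $\boxtimes$ of sheaves corresponds to the operation $(G,H) \mapsto [(P,Q) \mapsto G(P) \otimes H(Q)]$ on functor categories. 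This is the point at which the hypothesis that $\cC$ is closed monoidal is used: it guarantees that $\otimes \colon \cC \times \cC \to \cC$ preserves colimits separately in each variable, hence is compatible with the descent (sheafification/left Kan extension) used to build the equivalence of Theorem \ref{theorem.mainB}, so that $\boxtimes$ can be computed ``pointwise'' on the combinatorial side.

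Next I would identify the concatenation map $m \colon \broken \times \broken \to \broken$. Writing $\mathbf 1$ for the one-element linear order, the classification of broken lines (Corollary \ref{corollary.classification}) shows that the broken line $L_P$ classified by a nonempty finite linear order $P$ has the elements of $P$ as its non-fixed segments, that $L_{\mathbf 1} \cong [-\infty,\infty]$, and that $L_P \star L_Q \cong L_{P \oplus Q}$, where $\oplus$ denotes ordinal sum; moreover this identification is natural in $P$ and $Q$ with respect to monotone surjections and extends to families. Hence, under the equivalences of the previous paragraph, $m^{\ast}$ is carried to precomposition with the ordinal-sum functor $\oplus \colon \LinOrd \times \LinOrd \to \LinOrd$, and the associativity isomorphism of $m$ is carried to the strict associativity of $\oplus$.

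Combining these, a factorizable $\cC$-valued sheaf on $\broken$ becomes the same thing as a functor $G \colon \LinOrd \to \cC$ equipped with a natural isomorphism $G(P \oplus Q) \simeq G(P) \otimes G(Q)$, for which the coherence diagram of Definition \ref{definition.factorizable-sheaf} (and its higher-categorical refinements in an $\infty$-category) is precisely the statement that these isomorphisms make $G$ a strong, nonunital monoidal functor $(\LinOrd, \oplus) \to (\cC, \otimes)$; nonunitality appears because $(\LinOrd, \oplus)$ has no unit object, the empty linear order being excluded. The proof then concludes by invoking the universal property of $(\LinOrd, \oplus)$: it is the free nonunital monoidal category on a semigroup object. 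Concretely, every object of $\LinOrd$ is an iterated ordinal sum $\mathbf 1^{\oplus n}$; every monotone surjection $\sigma \colon P \twoheadrightarrow Q$ is, via its block decomposition $P \cong \coprod_{q \in Q} \sigma^{-1}(q)$, an ordinal sum of the collapse maps $c_k \colon \mathbf 1^{\oplus k} \twoheadrightarrow \mathbf 1$; each $c_k$ is an iterated composite of $c_2$; and the only relations are generated by the equality of the two factorizations of $c_3$ through $\mathbf 1^{\oplus 2}$. Therefore a nonunital monoidal functor $(\LinOrd,\oplus) \to (\cC,\otimes)$ amounts to an object $A := G(\mathbf 1)$ together with a map $\mu := G(c_2) \colon A \otimes A \to A$ subject to associativity --- that is, an object of $\Alg^{\nounit}(\cC)$ --- and by construction $A$ is identified with $\sheafF([-\infty,\infty])$. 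This gives the desired equivalence $\Shv^{\fct}_{\cC}(\broken) \simeq \Alg^{\nounit}(\cC)$.

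The main obstacle is the matching in the second and third paragraphs: one must verify, at the level of the $\infty$-categorical equivalence of Theorem \ref{theorem.mainB} and not merely on objects, that $m$ corresponds to $\oplus$ and that all coherence data (associators, and the higher coherences packaged into the notion of a factorizable sheaf) are carried across correctly. The generators-and-relations analysis of $(\LinOrd, \oplus)$ in the final step is elementary for ordinary categories; in the $\infty$-categorical setting it is cleanest to phrase it as an identification of $(\LinOrd,\oplus)$ with (a presentation of) the nonunital associative $\infty$-operad, after which the conclusion is the standard description of $\Alg^{\nounit}(\cC)$.
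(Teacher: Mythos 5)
The conclusion you reach is correct, and the final step (identifying strong nonunital monoidal functors out of $(\LinOrd,\star)$ with nonunital algebra objects) is exactly Remark~\ref{remark.universal-property-linord}, which the paper also uses. But there is a genuine gap in the middle of your argument, and it is precisely the one you flag in your last paragraph.

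The issue is that the equivalence $\Shv_{\cC}(\broken)\simeq\Fun(\LinOrd,\cC)$ of Theorem~\ref{theorem.broken-sheaves} is implemented by restriction along a functor $\widetilde{\Rep}\colon\LinOrd^{\op}\to\Pt(\broken)$ that is only \emph{lax} monoidal, not strong monoidal: the comparison map $\widetilde{\Rep}(I,\BR^{+})\star\widetilde{\Rep}(J,\BR^{+})\to\widetilde{\Rep}(I\star J,\BR^{+})$ is a proper closed embedding, since the left side lives over $\Rep(I,\BR^{+})\times\Rep(J,\BR^{+})$ while the right side lives over the larger space $\Rep(I\star J,\BR^{+})$ (see the pullback square following Construction~\ref{construction.compare}). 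Equivalently, viewed as a functor $\LinOrd\to\Pt(\broken)^{\op}$, $\widetilde{\Rep}$ is \emph{colax} monoidal. A weakly factorizable sheaf is a \emph{lax} monoidal functor $\sheafF\colon\Pt(\broken)^{\op}\to\cC$, and composing a lax monoidal functor with a colax one does not yield a lax (or strong) monoidal functor. So the assertion that ``$m^{\ast}$ is carried to precomposition with $\oplus$'' is only true up to a system of natural equivalences (supplied, objectwise, by Remark~\ref{remark.constructible-evaluation} / Proposition~\ref{proposition.eval}), and your argument does not provide the homotopy-coherent lift of that system that is needed to transfer the operadic structure across the equivalence. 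This is not a detail one can wave away in the $\infty$-categorical setting.

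The paper resolves exactly this obstruction by enlarging $\LinOrd$ to its twisted arrow category $\Tw(\LinOrd)$ and replacing $\widetilde{\Rep}$ by the functor $\widetilde{\Phi}\colon\Tw(\LinOrd)\to\Pt(\broken)^{\op}$ of Construction~\ref{construction.phi}, which is a genuine nonunital \emph{monoidal} functor (Proposition~\ref{ols}) because the closed subspaces $\Phi(I,\simeq_I)$ are chosen so that $\Phi(I,\simeq_I)\times\Phi(J,\simeq_J)\cong\Phi(I\star J,\simeq_{I\star J})$ on the nose. Composition with a monoidal functor does preserve lax monoidal structure, so $\sheafF\circ\widetilde{\Phi}$ is a well-defined lax monoidal functor out of $\Tw(\LinOrd)$. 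Theorem~\ref{theorem.even-more-precise} then upgrades Corollary~\ref{corollary.plain-version} to an equivalence of planar $\infty$-operads using Day convolution (Theorems~\ref{theorem.daycon} and~\ref{theorem.daycon-topological}) and Proposition~\ref{proposition.homotopy-invariant-is-enough}; passing from $\Tw(\LinOrd)$ back to $\LinOrd$ is handled by Proposition~\ref{proposition.character-functor}, which uses that the colax monoidal embedding $\sharp\colon\LinOrd\hookrightarrow\Tw(\LinOrd)$ exhibits $\LinOrd$ as a localization of $\Tw(\LinOrd)$. In short: your high-level picture is right, but the coherence transfer you defer to the last paragraph is the actual content of the proof, and it requires a concrete device (here, $\Tw(\LinOrd)$ and Day convolution) that your proposal does not supply.
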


\begin{remark}
Let $\sheafF$ be a factorizable $\cC$-valued sheaf on $\broken$ and set $A = \sheafF( [ -\infty, \infty] ) \in \cC$.
Set $S = \RR_{\geq 0}$, and let $L_S$ be the $S$-family of broken lines described in Example \ref{easybreak}.
Then $L_S$ is classified by a map $S \rightarrow \broken$, we can regard the restriction $\sheafF|_{S}$ 
as a $\cC$-valued sheaf on $S$. On the open set $\RR_{> 0} \subseteq S$, this sheaf is constant
with the value $A$. At the point $0 \in S$, the stalk $(\sheafF|_{S})_{0}$ is given by
$\sheafF( [ -\infty, \infty] \star [ - \infty, \infty] ) = A \otimes A$. The structure of the sheaf $\sheafF|_{S}$
is then encoded by a cospecialization map $A \otimes A \rightarrow A$. The content of Theorem \ref{theorem.mainC}
is that this map endows $A$ with the structure of a (nonunital) associative algebra, from which
we can recover the entire factorizable sheaf $\sheafF$. 
\end{remark}

We will actually prove stronger versions of Theorems \ref{theorem.mainB} and \ref{theorem.mainC} in this paper, where we allow
$\cC$ to be an $\infty$-category rather than an ordinary category. In this case, the associativity in the definition
of factorizable sheaf and on the resulting algebra object of $\cC$ should be understood in the homotopy coherent sense:
for example, if $\cC$ is the $\infty$-category of chain complexes over some field $k$, then Theorem \ref{theorem.mainC} provides
a geometric model for the theory of (nonunital) $A_{\infty}$-algebras over $k$. 

\begin{remark}[Motivation]
One impetus for the current work is to present a ``coordinate-free'' construction of Morse theory---a formal framework in which the input of a Morse function, a sufficiently generic Riemannian metric, and sufficient tangential structures output a filtered stable homotopy type equivalent to the original manifold (filtered by the Morse function). In later works, we plan to show that the example of Section~\ref{section.morse-example} (combined with the results of this paper) outputs an $A_\infty$-algebra in spectra encoding a deformation problem, and that a choice of fundamental cycle for the moduli of Morse trajectories yields a Maurer-Cartan element for the algebra. The associated solution recovers the filtered stable homotopy type of the original manifold, whose associated graded pieces are the Morse attaching spheres.
\end{remark}

\begin{notation}
In the literature, the term $\infty$-categories is sometimes used model-independently. In this work, by an $\infty$-category we mean a quasi-category as introduced by Boardman-Vogt~\cite{boardman-vogt} and later developed by Joyal~\cite{joyal}: a simplicial set satisfying the weak Kan condition. In our notation, we will not distinguish between a category and its nerve; for example, if $\cC$ is a category and $\cD$ is an $\infty$-category, we will write $\fun(\cC,\cD)$ rather than $\fun(N(\cC),\cD)$ to mean the $\infty$-category of functors from the nerve of $\cC$ to $\cD$.
\end{notation}

\subsection*{Acknowledgments}

During the period of time in which this work was carried out, the first author was supported by the National Science Foundation under grant number 1510417,
and the second author was also supported by the National Science Foundation under award number DMS-1400761.

\clearpage
\section{Families of Broken Lines}

\subsection{Definitions}

Our first goal in is to introduce the notion of a {\em family of broken lines} parametrized by a topological space $S$ (Definition \ref{defn.S-family}),
which specializes to Definition \ref{defn.broken-line} in the case where $S$ is a point. We begin with some general remarks.

\begin{defn}\label{definition.directed}
Let $L$ be a topological space equipped with a continuous action $\mu : \RR \times L \to L$, and let $[a,b]$ be a closed interval for real numbers $a< b$.
We will say that a homeomorphism $\gamma : L \cong [a, b]$ is {\em directed} if, for every point $x \in L$, the induced map
\begin{equation*}
\RR \cong \RR\times\{x\} \to\RR\times L \xra{\mu} L \xra{\gamma} [a,b]
\end{equation*}
is nondecreasing.
\end{defn}

\begin{notation}[The Ordering of a Broken Line]\label{notation.ordering}
Let $L$ be a broken line (in the sense of Definition \ref{defn.broken-line}), and let $\gamma: L \simeq [0,1]$
be a directed homeomorphism. For every pair of points $x,y \in L$, we write $x \leq_{L} y$ if $\gamma(x) \leq \gamma(y)$.
Note that this condition is independent of the choice of $\gamma$. The relation $\leq_{L}$ is a linear ordering of $L$.
Moreover, the ordered set $( L, \leq_{L} )$ has a least element $\gamma^{-1}(0)$ (the initial point of $L$)
and a greatest element $\gamma^{-1}(1)$ (the terminal point of $L$). Note that the initial and terminal points are automatically fixed by the action of $\RR$ on $L$.
\end{notation}

\begin{defn}\label{defn.S-family}
Let $S$ be a topological space. An {\it $S$-family of broken lines} is a triple $( L_S, \pi, \mu)$, where
$L_S$ is a topological space, $\pi: L_S \rightarrow S$ is a continuous map, and $\mu: \RR \times L_S \rightarrow L_S$
is a continuous action of $\RR$ on $L_S$ which preserves each fiber of $\pi$ and satisfies the following additional conditions:

\begin{itemize}
\item[$(a)$] For every point $s \in S$, there exists an open set $U \subseteq S$ containing $s$
and a continuous map $f: L_S \times_{S} U \rightarrow [0,1]$ with the following properties:
\begin{itemize}
\item The induced map $L_S \times_{S} U \cong [0,1] \times U$ is a homeomorphism.
\item For each $s' \in U$, the restriction $f|_{L_{s'} }: L_{s'} \rightarrow [0,1]$ is a directed
homeomorphism (in the sense of Definition \ref{definition.directed}). Here $L_{s'} = L_{S} \times_{S} \{s'\}$ denotes
the fiber $\pi^{-1} \{s' \}$.
\end{itemize}

\item[$(b)$] Let $L_S^{\RR}$ denote the set of fixed points for the action of
$\RR$ on $L_S$. Then the restriction $\pi|_{ L_S^{\RR} }: L_{S}^{\RR} \rightarrow S$ is unramified.
More precisely, for every point $s \in S$ there exists an open set $U \subseteq S$ containing $s$ and a decomposition
$$\pi^{-1}(U) \cap L_{S}^{\RR} \cong K_1 \amalg K_2 \amalg \cdots \amalg K_n,$$
where each of the induced maps $\pi|_{K_i}: K_i \rightarrow U$ is a closed embedding of topological spaces. 
\end{itemize}
\end{defn}

\begin{remark}
In the situation of Definition \ref{defn.S-family}, we will often abuse terminology by referring to the
topological space $L_S$ or the projection map $\pi: L_{S} \rightarrow S$ as an $S$-family of broken lines (in this case,
we implicitly assume that an action of $\RR$ on $L_{S}$ has also been specified). 
\end{remark}

\begin{figure}
		\[
			\xy
			\xyimport(8,8)(0,0){\includegraphics[width=5in]{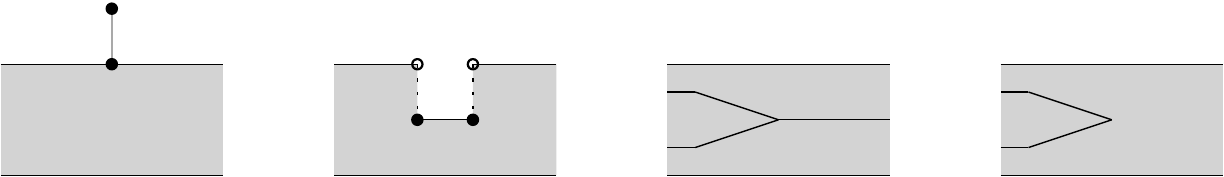}}
			,(0.7,-1.7)*+{(a)}
			,(2.9,-1.7)*+{(b)}
			,(5.1,-1.7)*+{(c)}
			,(7.3,-1.7)*+{(d)}
			\endxy
		\]
\caption{
Non-examples of families of broken lines over $S = [0,1]$. The projection map to $S$, in each figure, is the projection to a horizontal line. Also in each figure, a black line segment or a black dot indicates points fixed by the $\RR$ action; a circle or a dotted line indicates loci which are not included in $\tilde S$. The $\RR$-actions in each picture are by flowing downward. Example $(a)$ fails to satisfy axiom $(A5)$,
Example $(b)$ fails to satisfy $(A3)$ and $(A4)$, and examples $(c)$ and $(d)$ fail to satisfy $(A2)$.
}\label{figure.non-families}
\end{figure}

\begin{remark}[Pullbacks]\label{remark.build-pullback}
Let $S$ be a topological space and let $\pi: L_{S} \rightarrow S$ be an $S$-family of broken lines. For any
continuous map of topological spaces $S' \rightarrow S$, we can endow the pullback $L_{S'} = L_{S} \times_{S} S'$
with the structure of an $S'$-family of broken lines (via projection onto the second factor, with $\RR$-action inherited from the $\RR$-action on $L_{S}$).
In the special case where $S' = \{s\}$ is a single point, we will denote the pullback $L_{\{s\}}$ simply by $L_{s}$. 
\end{remark}

\begin{example}
Let $S$ be a point. Then the data of an $S$-family of broken lines $( L_{S}, \pi, \mu)$ (in the sense of
Definition \ref{defn.S-family}) is equivalent to the data of a broken line $(L_S, \mu)$ (in the sense of Definition \ref{defn.broken-line}). Condition $(b)$ of Definition \ref{defn.S-family} is equivalent to the requirement that the fixed point locus $L_{S}^{\RR}$ is finite. 

More generally, if $S$ is an arbitrary topological space and $L_{S}$ is an $S$-family of broken lines (in the sense of Definition \ref{defn.S-family}), then
for each point $s \in S$, the fiber $L_{s}$ is a broken line (in the sense of Definition~\ref{defn.broken-line}).
\end{example}

\begin{remark}[Semicontinuity]\label{remark.semicontinuity}
Condition $(b)$ of Definition \ref{defn.S-family} implies that the function
$(s \in S) \mapsto (| L_{s}^{\RR} | \in \Z)$ is upper semicontinuous. In particular, the collection of points $s \in S$ for which there exists
an isomorphism of broken lines $L_{s} \simeq [ - \infty, \infty ]$ is an open subset of $S$.
\end{remark}

For further examples of broken lines (and a complete classification of their local behavior), we refer the reader to \S \ref{section.stack}.

\begin{remark}[Locality]\label{remark.locality}
Let $\pi: L_S \rightarrow S$ be a map of topological spaces and let $\mu: \RR \times L_{S} \rightarrow \tilde S$ be a continuous action of $\RR$ on $L_{S}$ which preserves each fiber of $\pi$. Then the condition that $(L_{S}, \pi, \mu)$ is an $S$-family of broken lines can
be tested locally on $S$. More precisely, if there exists an open covering $\{ U_{\alpha} \}$ of $S$ for which
each $( L_{U_{\alpha}}, \pi|_{ L_{U_{\alpha}} }, \mu|_{ \RR \times L_{U_{\alpha}}} )$ is a
$U_{\alpha}$-family of broken lines, then $(L_{S}, \pi, \mu)$ is an $S$-family of broken lines.
\end{remark}

\begin{remark}
Let $S$ be a topological space and let $\pi: L_{S} \rightarrow S$ be an $S$-family of broken lines.
If $S$ is a paracompact Hausdorff space, then $L_{S}$ satisfies the following {\it a priori} stronger version of
condition $(a)$ of Definition \ref{defn.S-family}:
\begin{itemize}
	\item[$(a')$] There exists a continuous map $f: L_{S} \rightarrow [0,1]$ for which the product map
$(f \times \pi): L_{S} \rightarrow [0,1] \times S$ is a homeomorphism and, for each
$s \in S$, the induced homeomorphism $f|_{ L_{s} }: L_{s} \rightarrow [0,1]$ is directed.
\end{itemize}
To prove this, choose a cover of $S$ by open subsets $\{ U_{\alpha} \}$ and continuous maps
$f_{\alpha}: L_{U_{\alpha}} \rightarrow [0,1]$ satisfying condition $(a')$ for each $L_{ U_{\alpha} }$. 
The assumption that $S$ is a paracompact Hausdorff space guarantees that there exists a partition of unity
$\{ \psi_{\alpha} \}$ subordinate to the open cover $\{ U_{\alpha} \}$. It is then easy to check that the formula
$$ f(x) = \sum_{\alpha} \psi_{\alpha}( \pi(x) ) f_{\alpha}(x)$$
determines a function $f: L_S \rightarrow [0,1]$ satisfying $(a')$ (where we adopt the convention that
the product $ \psi_{\alpha}( \pi(x) ) f_{\alpha}(x)$ is equal to zero when $\pi(x) \notin U_{\alpha}$). 
\end{remark}

\subsection{Example: Broken Gradient Trajectories}\label{section.morse-example}

Let $M$ be a compact, smooth Riemannian manifold, and let $h: M \to \RR$ be a smooth function.
Then the manifold $M$ can be equipped with a (smooth) action
$$\mu: \RR \times M \rightarrow M,$$
which is the unique solution to the differential equation $$\frac{ \partial \mu(t,x) }{ \partial t } = (\nabla h)( \mu(t,x) ).$$
We will refer to the action $\mu$ as the {\it gradient flow} associated to $h$. It enjoys the following properties:
\begin{itemize}
\item The fixed point set $M^{\RR}$ can be identified with the set $\crit(M) \subseteq M$ of
{\it critical points} for the function $h$: that is, the set of points $x$ for which $(\nabla h)(x) = 0$.

\item For every point $x \in X$, the function $t \mapsto h( \mu(t,x) )$ is either constant (if $x$ is a critical point of $h$)
or strictly increasing (if $x$ is not a critical point of $h$).
\end{itemize}

\begin{definition}\label{definition.traj-version1}
Let $x$ and $y$ be critical points of $M$ satisfying $h(x) < h(y)$. A {\it broken gradient trajectory} from $x$ to $y$
is a continuous path $p: [ h(x), h(y) ] \to M$ satisfying the following conditions:
\begin{itemize}
\item[$(a)$] The path $p$ satisfies $p( h(x) ) = x$ and $p(h(y)) = y$.
\item[$(b)$] For $t \in [ h(x), h(y) ]$, we have $h( p(t) ) = t$.
\item[$(c)$] The image $\im(p) = \{ p(t): h(x) \leq t \leq h(y) \} \subseteq M$ is invariant under the gradient flow associated to $h$.
\end{itemize}
We let $\traj_{x,y}$ denote the set of all broken gradient trajectories from $x$ to $y$. We will regard
$\traj_{x,y}$ as a topological space by endowing it with the compact-open topology.
\end{definition}

Under mild hypotheses, the topological space $\traj_{x,y}$ supports a family of broken lines, in the sense of Definition \ref{defn.S-family}.

\begin{notation}
Let $x$ and $y$ be critical points of $M$ satisfying $h(x) < h(y)$. We let
$\wtraj_{x,y}$ denote the subset of $\traj_{x,y} \times M$ consisting of those pairs $(p, z)$ where $z$ belongs to the image of $p$.
It follows from condition $(c)$ of Definition \ref{definition.traj-version1} that the gradient flow $\mu: \RR \times M \rightarrow M$
restricts to an action of $\RR$ on $\wtraj_{x,y}$, which preserves each fiber of the projection map
$\wtraj_{x,y} \rightarrow \traj_{x,y}$.
\end{notation}

\begin{proposition}
Let $M$ be a compact smooth Riemannian manifold equipped with a smooth function
$h: M \to \RR$ for which the critical set $\crit(M)$ is finite (this condition is satisfied, for example, if $h$ is a Morse function).
Then the projection map $\pi: \wtraj_{x,y} \rightarrow \traj_{x,y}$ exhibits $\wtraj_{x,y}$ as a $\traj_{x,y}$-family of broken lines, in the sense of Definition \ref{defn.S-family}.
\end{proposition}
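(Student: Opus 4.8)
The plan is to show that the family $\wtraj_{x,y}\to\traj_{x,y}$ is in fact \emph{globally trivial}, using the function $h$ itself as the fiber coordinate. As already observed, the gradient flow restricts to a continuous action of $\RR$ on $\wtraj_{x,y}$ preserving each fiber of $\pi$, so it remains to verify conditions $(a)$ and $(b)$ of Definition~\ref{defn.S-family}. The elementary point underlying everything is that, for each $p\in\traj_{x,y}$, condition $(b)$ of Definition~\ref{definition.traj-version1} forces $h\circ p=\id_{[h(x),h(y)]}$; since $[h(x),h(y)]$ is compact and $M$ is Hausdorff, $p$ is then a homeomorphism onto $\im(p)$, with inverse the restriction $h|_{\im(p)}\colon\im(p)\xrightarrow{\sim}[h(x),h(y)]$.

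To verify condition $(a)$ I would take $U=S=\traj_{x,y}$ and let $f\colon\wtraj_{x,y}\to[0,1]$ be $(p,z)\mapsto\bigl(h(z)-h(x)\bigr)/\bigl(h(y)-h(x)\bigr)$, which is continuous (it factors through $h$ composed with an affine rescaling, and lands in $[0,1]$ since $h(\im(p))=[h(x),h(y)]$). Then $(f,\pi)\colon\wtraj_{x,y}\to[0,1]\times\traj_{x,y}$ is a continuous bijection: injectivity and surjectivity both follow from $h\circ p=\id$ (for $(t,p)$ the unique preimage is $(p,p(t'))$, where $t'=h(x)+t(h(y)-h(x))$), and the inverse $(t,p)\mapsto(p,p(t'))$ is continuous because its nontrivial component is the evaluation map $\traj_{x,y}\times[h(x),h(y)]\to M$, which is continuous as the source interval is compact Hausdorff, hence locally compact Hausdorff, and $\traj_{x,y}$ carries the compact-open topology. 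On the fiber over $p$, $f$ is the rescaling of $h|_{\im(p)}$, hence a homeomorphism onto $[0,1]$, and it is directed because for each $z\in\im(p)$ the function $s\mapsto h(\mu(s,z))$ is nondecreasing (constant if $z$ is critical, strictly increasing otherwise). Thus $\wtraj_{x,y}$ satisfies even the strengthened condition $(a')$.

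For condition $(b)$, note that $(p,z)$ is $\RR$-fixed exactly when $z\in\crit(M)$ and $z\in\im(p)$, so $\wtraj_{x,y}^{\RR}=\coprod_{c\in\crit(M)}K_c$ with $K_c=(\traj_{x,y}\times\{c\})\cap\wtraj_{x,y}$; this is a finite disjoint union by hypothesis, each $K_c$ is closed in $\wtraj_{x,y}$ (as $\{c\}$ is closed in $M$), and the decomposition is a decomposition into clopen pieces since $\crit(M)$ is a finite, hence discrete, subset of $M$. If $h(c)\notin[h(x),h(y)]$ then $K_c=\emptyset$; otherwise any trajectory through $c$ satisfies $p(h(c))=c$, so $\pi|_{K_c}$ is injective with continuous inverse $p\mapsto(p,c)$ and image equal to the preimage of $\{c\}$ under evaluation at $h(c)$, a closed subset of $\traj_{x,y}$. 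Hence each $\pi|_{K_c}$ is a closed embedding and $(b)$ holds with $U=S$.

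The only genuinely non-formal ingredients are the identity $h\circ p=\id$ and the continuity of the evaluation map $\traj_{x,y}\times[h(x),h(y)]\to M$ (which is precisely where compactness of the parametrizing interval, and hence the compact-open topology on $\traj_{x,y}$, is used); once these are in place, everything else is bookkeeping. I therefore expect the main point to guard against is simply stating the evaluation-continuity cleanly and confirming that $\traj_{x,y}$, as a subspace of a path space, inherits exactly the properties the argument needs.
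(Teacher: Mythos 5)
Your proposal is correct and follows essentially the same approach as the paper: both arguments rest on the identity $h\circ p=\id_{[h(x),h(y)]}$, on compactness of $[h(x),h(y)]$ to identify $\wtraj_{x,y}$ with the product $\traj_{x,y}\times[h(x),h(y)]$, and on the finiteness of $\crit(M)$ to handle condition $(b)$. The only cosmetic difference is that you build the fiber coordinate $f$ explicitly from $h$ and verify continuity of the inverse via the evaluation map, whereas the paper argues the other direction by observing that the product of the projection and evaluation maps is a closed injection onto $\wtraj_{x,y}$.
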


\begin{proof}
Let  $e: \traj_{x,y} \times [ h(x), h(y) ] \rightarrow M$ be the evaluation map given by $e(p,t) = p(t)$, and let
$\pi: \traj_{x,y} \times [h(x), h(y) ] \rightarrow \traj_{x,y}$ be the projection onto the first factor. Unwinding the definitions, we see that
$\wtraj_{x,y}$ is the image of the product map $$(\pi \times e): \traj_{x,y} \times [h(x), h(y) ] \rightarrow \traj_{x,y} \times M.$$
This map is closed (by the compactness of $[ h(x), h(y) ]$) and injective, hence determines a homeomorphism
$\traj_{x,y} \times [ h(x), h(y) ] \simeq \wtraj_{x,y}$ which is directed on each fiber. Consequently, to show that $\wtraj_{x,y}$ is a $\traj_{x,y}$-family of broken lines,
it will suffice to show that the projection map $\wtraj_{x,y}^{\RR} \rightarrow \traj_{x,y}$ is unramified. To prove this, we note that
$\wtraj_{x,y}^{\RR}$ can be identified with a closed subset of the product $\traj_{x,y} \times M^{\RR}$, where
$M^{\RR} = \crit(M)$ is the set of critical points of $h$ (and is therefore a finite set equipped with the discrete topology).
\end{proof}

\subsection{The Moduli Stack of Broken Lines}\label{section.stacks}

In this section, we introduce the main object of interest in this paper: the moduli stack $\broken$ of broken lines. We begin by reviewing some terminology.

\begin{definition}\label{definition.fibration}
Let $\rho: \cE \rightarrow \cC$ be a functor between categories. We say that $\rho$ is a {\it fibration in groupoids} if the following conditions are satisfied:
\begin{itemize}
\item[$(a)$] For every object $E \in \cE$ and every morphism $f: E' \rightarrow E''$ in $\cE$, the diagram of sets
$$ \xymatrix{ \Hom_{\cE}( E, E' ) \ar[r]^-{f \circ} \ar[d] & \Hom_{\cE}( E, E'') \ar[d] \\
\Hom_{\cC}( \rho(E), \rho(E') ) \ar[r]^-{ \rho(f) \circ} & \Hom_{\cC}( \rho(E), \rho(E'') ) }$$
is a pullback square.

\item[$(b)$] For every object $E \in \cE$ and every morphism $f: C \rightarrow \rho(E)$ in the category $\cC$, there
exists a morphism $f: \overline{C} \rightarrow E$ in $\cE$ with $\rho(f) = f_0$.
\end{itemize}
\end{definition}

\begin{remark}\label{remark.equivast}
Let $\rho: \cE \rightarrow \cC$ be a fibration in groupoids. Then, for each object $C \in \cC$, the fiber $\cE_{C} = \rho^{-1} \{C \}$ is a groupoid---that is, every
morphism in $\cE_{C}$ is invertible. Moreover, if $f: C' \rightarrow C$ is a morphism in the category $\cC$, then for every object $E \in \cE_{C}$
we can apply condition $(b)$ of Definition \ref{definition.fibration} to choose a morphism $\overline{f}: f^{\ast} E \rightarrow E$ in the category
$\cE$ with $\rho( \overline{f} ) = f$. It follows from condition $(a)$ that the object $f^{\ast} E$ is well-defined up to (unique) isomorphism and that the construction
$E \mapsto f^{\ast} E$ determines a functor $f^{\ast}: \cE_{C} \rightarrow \cE_{C'}$. If we regard the category $\cC$ as fixed, this construction establishes a dictionary
$$ \xymatrix{ \{ \text{Fibrations in Groupoids $\rho: \cE \to \cC$} \} \ar[d] \\ \{ \text{Functors of $2$-categories $\cC^{\op} \to \{ \text{Groupoids} \}$} \}. }$$
\end{remark}

\begin{definition}\label{definition.topological-stack}
Let $\Top$ denote the category of topological spaces. A {\it topological prestack $X$} is a category $\Pt(X)$ equipped with a fibration in groupoids
$\rho: \Pt(X) \rightarrow \Top$. Note that if $X$ is a topological stack and $S$ is a topological space, then the construction
$$ (U \subseteq S) \mapsto \rho^{-1}(U)$$
determines a presheaf on $S$ with values in the $2$-category of groupoids. We will say that $\Pt(X)$ is a {\it topological stack}
if this presheaf is a sheaf for every topological space $S$ (in other words, if the construction $S \mapsto \rho^{-1}(S)$ satisfies
descent with respect to the Grothendieck topology on $\Top$ generated by the open covers).
\end{definition}

\begin{remark}
The terminology of Definition \ref{definition.topological-stack} is potentially confusing, since a topological stack $X$ ``is'' the category $\Pt(X)$ (and the functor $\rho: \Pt(X) \rightarrow \Top$).
However, our notation is intended to emphasize the idea that a topological stack $X$ should be viewed as a geometric object of some kind, from which one can extract a category
$\Pt(X)$ (whose objects can be viewed as topological spaces $S$ equipped with a map $S \rightarrow X$). We refer to $\Pt(X)$ as the {\it category of points} of $X$.
\end{remark}

\begin{remark}[The $2$-Category of Topological Stacks]\label{remark.topstack}
Let $X$ and $Y$ be topological stacks. A {\it morphism of topological stacks} from $X$ to $Y$ is a functor
$f: \Pt(X) \rightarrow \Pt(Y)$ for which the diagram
$$ \xymatrix{ \Pt(X) \ar[rr]^{f} \ar[dr]^{\rho_X} & & \Pt(Y) \ar[dl]_{\rho_Y} \\
& \Top & }$$
commutes. The collection of all morphisms from $X$ to $Y$ can be organized into a category, where
an isomorphism from $f: \Pt(X) \rightarrow \Pt(Y)$ to $g: \Pt(X) \rightarrow \Pt(Y)$ is a natural transformation
$\alpha: f \rightarrow g$ for which the induced natural transformation 
$$ \rho_{X} = \rho_{Y} \circ f \xrightarrow{\alpha} \rho_{Y} \circ g = \rho_X$$
is the identity (that is, for every object $\widetilde{S} \in \Pt(X)$, the map $\alpha( \widetilde{S}): f( \widetilde{S}) \rightarrow g( \widetilde{S})$ 
induces the identity on the underlying topological space $S$). Note that such a natural transformation $\alpha$ is
automatically invertible (since $\rho_Y$ is a fibration in groupoids).

We let $\TopStack$ denote the (strict) $2$-category whose objects are topological stacks, and whose morphisms (and $2$-morphisms)
are defined as above. Note that every $2$-morphism in $\TopStack$ is invertible.
\end{remark}

\begin{example}[Topological Spaces as Topological Stacks]\label{example.space-as-stack}
Let $X$ be a topological space, and let $\Pt(X)$ denote the category $\Top_{ / X }$, whose objects are topological spaces $S$ equipped with a map $S \rightarrow X$.
Then the forgetful functor $\rho: \Pt(X) \rightarrow \Top$ is a topological stack, which (by a slight abuse of terminology) we will identify with the original topological space $X$.
The construction $X \mapsto \Pt(X)$ determines a fully faithful embedding from the category of topological spaces to the $2$-category of topological stacks. 

Further, let $\rho: \Pt(Y) \to \Top$ be a topological stack. Then by the Yoneda Lemma, there is a canonical equivalence between the category of morphisms $X \to Y$ and the category $\rho^{-1}(X)$. 
\end{example}

\begin{example}[The Classifying Stack of a Group]\label{example.class-stack}
Let $G$ be a topological group. We let $\Pt(\BG)$ denote the category whose objects are maps $P \to S$, where $S$ is a topological space and $P$ is a principal $G$-bundle over
$S$; a morphism from $(P \to S)$ to $(P' \to S')$ in the category $\Pt(\BG)$ is given by a commutative diagram of topological spaces
$$ \xymatrix{ P \ar[r] \ar[d] & P' \ar[d] \\
S \ar[r] & S' }$$
where the upper horizontal map is $G$-equivariant. Then the construction $(P \to S) \mapsto S$ determines a topological stack $\Pt(\BG) \rightarrow \Top$. We will denote this
topological stack by $\BG$ and refer to it as the {\it classifying stack of the group $G$}.
\end{example}

\begin{notation}
Let $X$ be a topological stack. We will typically denote objects of the category $\Pt(X)$ by $\widetilde{S}$, and write $S$ for the image of $\widetilde{S}$ under the forgetful functor
$\rho: \Pt(X) \rightarrow \Top$. If $f: U \rightarrow S$ is a continuous map of topological spaces, we let $\widetilde{S}|_{U}$ denote the domain of a morphism
$\widetilde{f}: \widetilde{S}|_{U} \rightarrow \widetilde{S}$ in the category $\Pt(X)$ (our assumption that $\rho$ is fibered in groupoids guarantees that
the object $\widetilde{S}|_{U}$ is determined uniquely up to (unique) isomorphism). Heuristically, we view the object $\widetilde{S} \in \Pt(X)$ as encoding the
datum of a map from $S$ into the topological stack $X$, in which case $\widetilde{S}|_{U}$ encodes the composite map $U \xrightarrow{f} S \rightarrow X$.
\end{notation}

We now introduce the main object of interest in this paper.

\begin{construction}\label{construction.category-of-lines}
We define a category $\Pt(\broken)$ as follows:
\begin{itemize}
\item An object of $\Pt(\broken)$ is a topological space $S$ together with an $S$-family of broken lines $(\pi: L_S \to S, \mu)$ (see Definition \ref{defn.S-family}).

\item A morphism from $(\pi': L_{S'} \to S', \mu')$ to $(\pi: L_S \to S, \mu)$ in
the category $\Pt(\broken)$ is a pair of continuous maps $f: S' \to S$, $\widetilde{f}: L_{S'} \to L_{S}$,
where $\widetilde{f}$ is $\RR$-equivariant and the diagram
$$ \xymatrix{ L_{S'} \ar[r]^{ \widetilde{f} } \ar[d]^{\pi'} & L_{S} \ar[d]^{ \pi } \\
S' \ar[r]^{f} & S }$$
is a pullback square in the category of topological spaces.
\end{itemize}
\end{construction}

\begin{remark}
In what follows, we will generally abuse notation by identifying an object $(\pi: L_{S} \rightarrow S, \mu)$
of the category $\Pt(\broken)$ with the topological space $L_{S}$, and a morphism
$$( \widetilde{f}, f): (\pi': L_{S'} \rightarrow S', \mu') \rightarrow (\pi: L_{S} \rightarrow S, \mu)$$ with
the underlying map of topological spaces $\widetilde{f}: L_{S'} \rightarrow L_{S}$ (note that the map $\widetilde{f}$ determines
$f$). 
\end{remark}

Note that the construction $(\pi: L_{S} \rightarrow S, \mu) \mapsto S$ determines a functor $\rho$ from the category
$\Pt(\broken)$ to the category $\Top$ of topological spaces. We will refer to $\rho$ as {\it the forgetful functor}.

\begin{proposition}\label{proposition.cartesian-fibration}
The forgetful functor $\rho: \Pt(\broken) \rightarrow \Top$ is a topological stack, in the sense of Definition \ref{definition.topological-stack}.
\end{proposition}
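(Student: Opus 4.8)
The plan is to verify the two conditions in Definition~\ref{definition.topological-stack}: first that $\rho \colon \Pt(\broken) \to \Top$ is a fibration in groupoids (Definition~\ref{definition.fibration}), and second that for every topological space $S$ the induced presheaf of groupoids $U \mapsto \rho^{-1}(U)$ on $S$ satisfies descent for open covers. For the fibration-in-groupoids condition, I would handle part $(b)$ of Definition~\ref{definition.fibration} using Remark~\ref{remark.build-pullback}: given an $S$-family of broken lines $L_S$ and a continuous map $f \colon S' \to S$, the pullback $L_{S'} = L_S \times_S S'$ carries a canonical $S'$-family structure, and the projection $L_{S'} \to L_S$ together with $f$ furnishes the required lift. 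For part $(a)$, I would observe that a morphism in $\Pt(\broken)$ covering a fixed $f \colon S' \to S$ is, by Construction~\ref{construction.category-of-lines}, precisely an $\RR$-equivariant map $L_{S'} \to L_S$ whose underlying square is a pullback; since any such map factors uniquely through the fibered product $L_S \times_S S'$ by an $\RR$-equivariant homeomorphism over $S'$, the relevant hom-sets are computed as the claimed fiber products, and in particular each fiber $\rho^{-1}(S)$ is a groupoid (every morphism covering $\id_S$ is an $\RR$-equivariant homeomorphism).

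Next I would turn to descent. Fix a topological space $S$ and an open cover $\{U_\alpha\}$ of $S$. I need to show that the groupoid of $S$-families of broken lines is equivalent to the groupoid of compatible families $\{L_{U_\alpha}\}$ equipped with gluing isomorphisms over the overlaps $U_\alpha \cap U_\beta$ satisfying the cocycle condition on triple overlaps. The essential-surjectivity part is the heart of the matter: given descent data $(L_{U_\alpha}, \varphi_{\alpha\beta})$, I would form the topological space $L_S$ by gluing the $L_{U_\alpha}$ along the $\varphi_{\alpha\beta}$ (a colimit in $\Top$), equip it with the evident projection $\pi$ to $S$ and the $\RR$-action assembled from the local actions (these glue because the $\varphi_{\alpha\beta}$ are $\RR$-equivariant), and then check that $(L_S, \pi, \mu)$ satisfies conditions $(a)$ and $(b)$ of Definition~\ref{defn.S-family}. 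But this last check is exactly the content of Remark~\ref{remark.locality}: the property of being an $S$-family of broken lines can be tested locally on $S$, and by construction $L_S$ restricts to $L_{U_\alpha}$ over each $U_\alpha$. Fully faithfulness of the comparison functor is more routine: a morphism of $S$-families is a continuous $\RR$-equivariant map over $S$ whose square is cartesian, and such maps can be specified locally on $S$ and glued, with uniqueness following since continuous maps form a sheaf.

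The main obstacle I anticipate is the bookkeeping in the descent argument, specifically verifying that the glued space $L_S$ together with its projection and $\RR$-action genuinely assembles into an object of $\Pt(\broken)$ and that the construction is functorial and inverse (up to coherent isomorphism) to restriction. Most of the topology here is formal — $\Top$ has all colimits and the forgetful functor to sets along $\pi$ behaves well — but one must be slightly careful that the cartesian-square condition in Construction~\ref{construction.category-of-lines} is preserved under gluing, and that the cocycle condition on the $\varphi_{\alpha\beta}$ is precisely what is needed for the local $\RR$-actions to be mutually compatible. Invoking Remark~\ref{remark.locality} at the right moment sidesteps the need to re-examine axioms $(a)$ and $(b)$ of Definition~\ref{defn.S-family} directly, which is what keeps the argument short. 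I would conclude by noting that, since the comparison functor is an equivalence for every $S$ and every open cover, $\rho$ is a topological stack in the sense of Definition~\ref{definition.topological-stack}.
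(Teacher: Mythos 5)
Your proposal is correct and follows the same route as the paper's proof: condition~$(a)$ of the fibration-in-groupoids definition is checked directly from the cartesian-square condition in Construction~\ref{construction.category-of-lines}, condition~$(b)$ is supplied by Remark~\ref{remark.build-pullback}, and descent is reduced to Remark~\ref{remark.locality}. The paper states this in three sentences; you spell out the gluing step (assembling $L_S$ from descent data $(L_{U_\alpha},\varphi_{\alpha\beta})$ and then invoking locality) that the paper leaves implicit, which is a fine and complete elaboration rather than a different argument.
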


\begin{proof}
Condition $(a)$ of Definition \ref{definition.fibration} is immediate from the definitions, and condition $(b)$ follows from Remark \ref{remark.build-pullback}.
It follows that $\rho$ is a fibration in groupoids, so that we can regard the construction $S \mapsto \Pt(\broken) \times_{ \Top} \{ S \}$ as a
groupoid-valued functor on the category $\Top^{\op}$. This functor satisfies descent by virtue of Remark \ref{remark.locality}.
\end{proof}

\begin{notation}\label{not-broke}
We will denote the topological stack of Proposition \ref{proposition.cartesian-fibration} by $\broken$, and refer
to it as the {\it moduli stack of broken lines}. For every topological space $S$, we let $\broken(S)$ denote the fiber product $\Pt(\broken) \times_{ \Top } \{S \}$.
This is the groupoid whose objects are $S$-families of broken lines, and whose morphisms are $\RR$-equivariant homeomorphisms which are compatible
with the projection to $S$. It follows from Remark \ref{remark.equivast} that we can regard the construction $S \mapsto \broken(S)$ as a contravariant functor
from the category $\Top^{\op}$ to the $2$-category of groupoids.
\end{notation}

\begin{example}[Points of $\broken$]\label{example.points-of-broken}
The groupoid $\broken(\ast)$ can be described as follows:
\begin{itemize}
\item An object of $\broken(\ast)$ is a broken line $L$ (Definition \ref{defn.broken-line}).
\item A morphism from $L$ to $L'$ in $\broken(\ast)$ is an $\RR$-equivariant homeomorphism $L \simeq L'$.
\end{itemize}
The structure of this category is easy to describe. Up to isomorphism, it has one object $L_{n}$ for every positive integer $n$
(given by a concatenation of $n$ copies of the broken line $[ - \infty, \infty ]$ of Example \ref{example.standard-broken-line}),
whose automorphism group is isomorphic to $\RR^{n}$ (which acts by translation separately on each component of $L_{n}^{\circ} = L_{n} \setminus L_{n}^{\RR}$). 
\end{example}

\begin{remark}[The Moduli Stack of Unbroken Lines]\label{remark.unbroken}
Let $\Pt(\broken^{\circ})$ denote the full subcategory of $\Pt(\broken)$ spanned by those families of broken lines $L_{S} \to S$ where each fiber $L_{s}$ is isomorphic to
the broken line $[ - \infty, \infty ]$ of Example \ref{example.standard-broken-line}. Then the forgetful functor $\Pt(\broken^{\circ}) \rightarrow \Top$ determines a topological
stack which we denote by $\broken^{\circ}$. Note that if $L_{S}$ is an $S$-family of broken lines satisfying this condition,
then the open subset $L_{S} \setminus L_{S}^{\RR}$ is a principal $\RR$-bundle over $S$. It is not difficult to see that the construction $L_S \mapsto L_S \setminus L_{S}^{\RR}$
induces an equivalence of topological stacks $\broken^{\circ} \simeq \BR$, where $\BR$ denotes the classifying stack of the additive group of real numbers (Example \ref{example.class-stack}).
\end{remark}

\begin{remark}
The moduli stack $\broken^{\circ}$ of Remark \ref{remark.unbroken} can be regarded as an open substack of the moduli stack of broken lines. More generally,
for any integer $n > 0$, there is a locally closed substack $\broken^{=n} \subseteq \broken$, equivalent to the classifying stack $\BR^{n}$, which parametrized $S$-families of broken lines $L_{S} \to S$
having the property that each of the broken lines $L_{s}$ is isomorphic to a concatenation of $n$ copies of $[- \infty, \infty ]$ (that is, the non-fixed locus $L_{s} \setminus L_{s}^{\RR}$
has exactly $n$ connected components). The substacks $\broken^{=n} \subseteq \broken$ determine a stratification of the topological stack $\broken$, given by the construction
which assigns to each $L_{S} \in \broken(S)$ the upper semicontinuous function $(s \in S) \mapsto | \pi_0( L_{s} \setminus L_{s}^{\RR}) |$ (see Remark \ref{remark.semicontinuity}).
\end{remark}

\subsection{A Recognition Principle for Broken Lines}

Condition $(a)$ of Definition \ref{defn.S-family} can be somewhat inconvenient to work with, because
a broken line $L$ does not come equipped with any {\em canonical} choice of directed homeomorphism $L \cong [0,1]$. 
It will therefore be useful to have a more intrinsic characterization of $S$-families of broken lines.

\begin{theorem}\label{theorem.line-recognition}
Let $\pi: L_{S} \rightarrow S$ be a continuous map of topological spaces and let $\mu: \RR \times L_S \rightarrow L_S$ 
be a continuous $\RR$-action which stabilizes each fiber of $\pi$. Then $( \pi: L_S \to S, \mu)$ is an $S$-family of broken
lines (in the sense of Definition \ref{defn.S-family}) if and only if the following axioms are satisfied:
\begin{itemize}
	\item[$(A1)$] For each point $s \in S$, there exists a directed homeomorphism
	$L_{s} \cong [0,1]$.
	
	\item[$(A2)$] The restriction $\pi|_{ L_{S}^{\RR} }: L_{S}^{\RR} \rightarrow S$ is unramified
	(as in Definition \ref{defn.S-family}). 
	
	\item[$(A3)$] The map $\pi$ is closed. 
	
	\item[$(A4)$] For every pair of points $x,y \in L_{S}$ having the same image $s = \pi(x) = \pi(y)$ in $S$, let
	us write $x \leq_{L_{S} } y$ if $x \leq_{ L_{s} } y$ (see Notation \ref{notation.ordering}).
	Then $\{ (x,y) \in L_{S} \times_{S} L_{S}: x \leq_{ L_S } y \}$ is a closed subset of the fiber product
	$L_{S} \times_{S} L_{S}$.
	
	\item[$(A5)$] The restriction of $\pi$ to the non-fixed locus $L_{S}^{\circ} = L_{S} / L_{S}^{\RR}$ has the local lifting property. More precisely,
	$x$ is any point in $L_{S}^{\circ}$, then there exists a neighborhood $U \subseteq S$ of $s = \pi(x)$ and a continuous map $\sigma: U \to L_{S}^{\circ}$ satisfying
	$\sigma(s) = x$ and $\pi \circ \sigma = \id_{U}$.
\end{itemize}
\end{theorem}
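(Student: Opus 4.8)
The plan is to prove the two implications separately; the ``only if'' direction is routine and essentially all of the substance lies in the ``if'' direction.

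\textbf{The ``only if'' direction.} Suppose $(\pi\colon L_S\to S,\mu)$ is an $S$-family of broken lines and fix $s\in S$. Restricting the map $f$ supplied by condition $(a)$ of Definition \ref{defn.S-family} to the fiber over $s$ yields $(A1)$, and $(A2)$ is literally condition $(b)$. For the remaining axioms, choose $U\ni s$ and an identification $L_S\times_S U\cong[0,1]\times U$ as in $(a)$; under it $\pi$ becomes the projection $[0,1]\times U\to U$, which is closed because $[0,1]$ is compact; the relation $\{(x,y):x\leq_{L_S}y\}$ becomes $\{(a,b,u):a\leq b\}$, which is closed; and for $x\in L_S^\circ$ over $s$, written $x=(t_0,s)$, the map $u\mapsto(t_0,u)$ (defined for $u$ near $s$, since the fixed locus is closed and avoids $(t_0,s)$) is a local section of $L_S^\circ\to S$ through $x$. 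Since being a closed subset is local on the ambient space and being a closed map is local on the target, axioms $(A3)$, $(A4)$, $(A5)$ follow by patching these local/fiberwise observations over an open cover of $S$.

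\textbf{The ``if'' direction: reductions.} Assume $(A1)$--$(A5)$. By Remark \ref{remark.locality} the claim is local on $S$, and condition $(b)$ of Definition \ref{defn.S-family} is $(A2)$, so it suffices to produce, near each $s_0\in S$, an open $U\ni s_0$ and a continuous $f\colon\pi^{-1}(U)\to[0,1]$ whose restriction to each fiber $L_{s'}$ is a directed homeomorphism onto $[0,1]$---equivalently, $f$ is fiberwise surjective, strictly increasing along every orbit in $L_S^\circ$, and takes the value $0$ (resp.\ $1$) along the sheet of fixed points through the initial (resp.\ terminal) point of $L_{s_0}$. The first point is purely set-theoretic: given such an $f$, the map $(f,\pi)\colon\pi^{-1}(U)\to[0,1]\times U$ is a continuous fiberwise bijection, and because $\pi$ is closed (by $(A3)$) with compact fibers (by $(A1)$), a routine argument shows $(f,\pi)$ is a closed map, hence a homeomorphism---so $f$ automatically realizes the strong form of condition $(a)$. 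Thus only the construction of $f$ remains.

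\textbf{The ``if'' direction: construction of $f$.} Write $L_{s_0}^\RR=\{p_0<_{L_{s_0}}\cdots<_{L_{s_0}}p_n\}$ and use $(A2)$ to choose $U$ and a decomposition $\pi^{-1}(U)\cap L_S^\RR=\coprod_i K_i$ into branches. Each image $\pi(K_i)$ is closed in $U$, so after shrinking $U$ away from the branches missing $L_{s_0}$ and reindexing, the branches are exactly $K_0,\dots,K_n$ with $p_i\in K_i$. A short argument using $(A4)$ (closedness of $\{x\leq_{L_S}y\}$) together with the existence of initial and terminal points in every fiber (by $(A1)$) shows that, after a further shrinking, $K_0$ and $K_n$ are genuine sections of $\pi$, sending $s'$ to the initial and terminal points of $L_{s'}$. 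Next, for $1\le i\le n$ pick $x_i$ in the $i$-th component of $L_{s_0}^\circ$ (so $p_{i-1}<_{L_{s_0}}x_i<_{L_{s_0}}p_i$) and, using $(A5)$, a local section $\sigma_i$ of $L_S^\circ\to S$ with $\sigma_i(s_0)=x_i$. A last application of $(A4)$, exploiting that the $x_i$ and $p_j$ are strictly ordered in $L_{s_0}$, lets one shrink $U$ so that the sections $K_0,\sigma_1,\dots,\sigma_n,K_n$ stay strictly $\leq_{L_{s'}}$-ordered in every fiber and so that the slab $A_i$ they carve out meets $L_{s'}^\RR$ only in $K_i$ ($0\le i\le n$, with $A_0$ the bottom and $A_n$ the top slab). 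Since the $\RR$-action on $L_S^\circ$ is free---a nontrivial stabilizer element would, by directedness, force the orbit map $\RR\to L_{s'}\cong[0,1]$ to be monotone and non-injective, hence constant, contradicting non-fixedness---each orbit-arc inside a slab is canonically coordinatized by flow time. Fixing values $0=w_0<w_1<\cdots<w_{n+1}=1$ on the $n+2$ separating sections, one defines $f$ slab by slab by a monotone interpolation in flow time along each orbit-arc (passing, in a slab containing an interior branch $K_i$, through a fixed intermediate value on $K_i$), and checks that these agree on the walls and assemble into a single continuous function.

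\textbf{Main obstacle.} I expect the real difficulty to be continuity of $f$ across the degeneration loci $\pi(K_i)\subsetneq U$ with $0<i<n$: there a fiber $L_{s_0}$ with an interior fixed point $p_i$ is approached by nearby fibers $L_{s'}$ in which the two slabs adjacent to $K_i$ have merged into a single orbit-arc, and one must arrange that the flow-time interpolation on the merged arc degenerates correctly---in particular that the level set on which $f$ equals the value assigned to $K_i$ tracks the branch $K_i$ as $s'\to s_0$. The essential tool here is again properness of $\pi$ (axiom $(A3)$), which prevents the flow from ``escaping to infinity'' as fibers degenerate and is precisely what converts fiberwise statements into statements about the total space. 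A convenient way to organize the bookkeeping is to first dispatch the case $n=1$, where by Remark \ref{remark.semicontinuity} all fibers near $s_0$ are unbroken, $L_S^\circ|_U\to U$ is a principal $\RR$-bundle (by $(A5)$ and freeness), and hence $\pi^{-1}(U)\cong[-\infty,\infty]\times U\cong[0,1]\times U$ directly, and then to bootstrap to general $n$ by peeling off one slab at a time.
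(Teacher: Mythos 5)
Your ``only if'' direction matches the paper's: restrict the local product coordinate to a fiber for $(A1)$, observe that $(A2)$ is literally the hypothesis, and check $(A3)$–$(A5)$ in local coordinates, patching over a cover. Your reduction of the ``if'' direction is also correct and matches the paper: it suffices to build, near each $s_0$, a continuous $f\colon\pi^{-1}(U)\to[0,1]$ that is a directed homeomorphism on each fiber, and properness (which follows from $(A3)$ plus compact fibers, as in Remark \ref{remark.properness}) then upgrades the fiberwise bijection $(f,\pi)$ to a homeomorphism. The choice of local sections $\sigma_1,\dots,\sigma_n$ using $(A2)$, $(A4)$, $(A5)$ is also essentially identical to the paper's Proposition \ref{proposition.section-existence}.

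The gap is precisely where you flag it. Your slab-by-slab ``monotone interpolation in flow time, passing through a fixed intermediate value on $K_i$'' is not a construction, and the proposed remedy (handle $n=1$, then ``peel off one slab at a time'') does not obviously close it. Peeling off a slab presupposes that the wall you peel along is a globally defined continuous section of $\pi$ and that the complement is itself a family satisfying $(A1)$–$(A5)$ with one fewer break, which is most of what you are trying to prove. More fundamentally, any slab-wise definition must glue continuously as fibers degenerate, and this is exactly where the flow time from $\sigma_i$ to a point jumps from finite to $+\infty$ as a branch $K_i$ appears between them; your interpolation scheme does not explain how the level set $\{f=w_i\}$ is forced to track $K_i$.

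The paper avoids this by never working slab-by-slab. It first shows (Proposition \ref{proposition.translation-distance}) that $(A4)$ is equivalent to continuity of the translation-distance function $d_{L_S}\colon L_S^{\circ}\times_S L_S\to[-\infty,\infty]$, and then defines the coordinate globally by a single closed formula
\begin{equation*}
\gamma(x)=\sum_{i\in I}\rho\bigl(d_{L_S}(\sigma_i(\pi(x)),x)\bigr),
\end{equation*}
where $\rho\colon[-\infty,\infty]\xrightarrow{\sim}[0,\tfrac1n]$ is any orientation-preserving homeomorphism. Each summand is continuous on all of $L_S$ (not merely on a slab) because $d_{L_S}$ is; on each fiber it is nondecreasing, endpoint-preserving, and strictly increasing on the orbit through $\sigma_i(s)$; and the sum therefore gives a directed homeomorphism $L_s\cong[0,1]$. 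Continuity across degeneration loci is automatic because $\rho$ is continuous at $\pm\infty$: as $s'\to s_0$ and a translation distance diverges, the corresponding summand converges to $0$ or $1/n$, with no case analysis. In short, your flow-time intuition is the right one, but it must be packaged as ``translation distance to each section, compactified and summed,'' rather than as a per-slab interpolation that you then try to sew together.
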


\begin{proof}[Proof of Necessity]
Let $L_S$ be an $S$-family of broken lines; we will show that conditions $(A1)$ through $(A5)$ are satisfied (the proof of the converse will require some preliminaries, and
will be given in \S \ref{section.line-recognition}). Working locally on $S$, we can assume that there exists a continuous map $\gamma: L_{S} \rightarrow [0,1]$ 
which induces a homeomorphism $L_{S} \simeq [0,1] \times S$ which restricts to a directed homeomorphism $L_{s} \simeq [0,1]$ for each $s \in S$. 
Assertions $(A1)$ and $(A2)$ are immediate, assertion $(A3)$ follows from the compactness of the interval $[0,1]$, and assertion $(A4)$ follows from the identification
$$ \{ (x,y) \in L_{S} \times_{S} L_{S}: x \leq_{L_S} y \} = \{ (x,y) \in L_{S} \times_{S} L_{S}: \gamma(y) - \gamma(x) \leq 0 \}.$$
To prove $(A5)$, we note that for any point $x \in L_{S}$ having image $s = \pi(x)$ in $S$, there is a unique section $\sigma: S \rightarrow L_{S}$ of the projection map $\pi$
such that $\sigma(s) = x$ and $\gamma \circ \sigma: S \rightarrow [0,1]$ is constant. If $x$ does not belong to the fixed locus $L_{S}^{\RR}$, then
there is an open neighborhood $U \subseteq S$ containing $s$ such that $\sigma|_{U}$ factors through $L_{S}^{\circ}$.
\end{proof}

\begin{remark}\label{remark.properness}
Let $S$ be a topological space and let $(L_{S}, \pi, \mu)$ satisfy axioms $(A1)$ through $(A5)$ of
Theorem \ref{theorem.line-recognition}. It follows from $(A4)$ that the relative diagonal
$L_S \rightarrow L_{S} \times_{S} L_{S}$ is a closed embedding, and from $(A1)$ that each fiber of $\pi$ is compact. Consequently, axiom
$(A3)$ is equivalent to the requirement that the map $\pi: L_{S} \rightarrow S$ is proper.
\end{remark}

We now give a convenient reformulation of condition $(A4)$ of Theorem \ref{theorem.line-recognition}.

\begin{notation}[Translation Distance]\label{notation.translation-distance}
Let $L$ be a broken line equipped with $\RR$-action $\mu: \RR \times L \rightarrow L$,
and let $\leq_{L}$ be the ordering of Notation \ref{notation.ordering}. 
For points $x \in L \setminus L^{\RR}$ and $y \in L$, we define
$$ d_L(x,y) = \begin{cases} \infty & \text{ if } \mu(t,x) \leq_{L} y \text{ for all $t \in \RR$ } \\
t & \text{ if } \mu(t,x) = y \text{ for some $t \in \RR$ } \\
-\infty & \text{ if } y \leq_{L} \mu(t,x) \text{ for all $t \in \RR$}. \end{cases}$$
We will refer to $d_L(x,y) \in [ - \infty, \infty ]$ as the {\it translation distance from $x$ to $y$}.

More generally, suppose that we are given a map of topological spaces
$\pi: L_{S} \rightarrow S$ and an action of $\RR$ on $L_{S}$ which exhibits each
fiber of $\pi$ as a broken line. For points $x \in L_{S}^{\circ} = L_{S} \setminus L_{S}^{\RR}$ and $y \in L_{S}$ having the same image $s \in S$, we define $d_{L_{S} }(x,y) = d_{L_{s} }( x,y) \in [ - \infty, \infty ]$.
\end{notation}

\begin{proposition}\label{proposition.translation-distance}
Let $\pi: L_{S} \rightarrow S$ be a map of topological spaces and suppose that $L_{S}$ is equipped with
a continuous action of $\RR$ which exhibits each fiber of $\pi$ as a broken line. 
Then $\pi: L_{S} \rightarrow S$ satisfies axiom $(A4)$ of Theorem \ref{theorem.line-recognition} if and only if the translation distance function
$$ d_{ L_{S}}: L_{S}^{\circ} \times_{S} L_{S} \rightarrow [ -\infty, \infty ]$$
is continuous.
\end{proposition}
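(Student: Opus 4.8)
The plan is to prove the two implications separately; I would first record, for use in both, a reformulation of translation distance: for every real number $t$ and every $(x,y)\in L_S^\circ\times_S L_S$ one has $d_{L_S}(x,y)\le t$ if and only if $y\le_{L_S}\mu(t,x)$, and $d_{L_S}(x,y)\ge t$ if and only if $\mu(t,x)\le_{L_S}y$. Each of these follows by going through the three cases in Notation~\ref{notation.translation-distance}, using that for a non-fixed point $x$ the orbit of $x$ is the interior of one of the copies of $[-\infty,\infty]$ making up $L_{\pi(x)}$, so that $t\mapsto\mu(t,x)$ is \emph{strictly} increasing for $\le_{L_{\pi(x)}}$.

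For the direction $(A4)\Rightarrow$ continuity: assuming $(A4)$, the set $P:=\{(u,v)\in L_S\times_S L_S:u\le_{L_S}v\}$ is closed. For each $t\in\RR$ the maps $\phi_t,\psi_t\colon L_S^\circ\times_S L_S\to L_S\times_S L_S$, $\phi_t(x,y)=(y,\mu(t,x))$ and $\psi_t(x,y)=(\mu(t,x),y)$, are continuous, and the reformulation identifies $d_{L_S}^{-1}([-\infty,t])$ with $\phi_t^{-1}(P)$ and $d_{L_S}^{-1}([t,\infty])$ with $\psi_t^{-1}(P)$, both closed. Since a map into $[-\infty,\infty]$ is continuous once the preimages of the sets $[-\infty,t]$ and $[t,\infty]$ ($t\in\RR$) are closed, $d_{L_S}$ is continuous.

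For the converse, assume $d_{L_S}$ continuous; the goal is to show the complement $W:=\{(x,y)\in L_S\times_S L_S:y<_{L_S}x\}$ of $P$ is open. On the locus where the first coordinate is non-fixed, $W\cap(L_S^\circ\times_S L_S)=d_{L_S}^{-1}([-\infty,0))$; on the locus where the second coordinate is non-fixed, $W\cap(L_S\times_S L_S^\circ)$ is the image of the open set $d_{L_S}^{-1}((0,\infty])$ under the coordinate-swap homeomorphism; so $W$ is open away from the ``both fixed'' locus. For $(x_0,y_0)\in W$ with $s_0=\pi(x_0)=\pi(y_0)$ and both $x_0,y_0$ fixed, I would use that the non-fixed locus of $L_{s_0}$ is dense (its complement is finite) to pick $z_0\in L_{s_0}\setminus L_{s_0}^{\RR}$ with $y_0<_{L_{s_0}}z_0<_{L_{s_0}}x_0$; then $d_{L_S}(z_0,y_0)<0$ and $d_{L_S}(z_0,x_0)>0$. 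Continuity of $d_{L_S}$ then yields open sets $O_1\ni z_0$, $O_y\ni y_0$, $O_x\ni x_0$ in $L_S$ with $y<_{L_S}z$ whenever $(z,y)\in(O_1\times O_y)\cap(L_S^\circ\times_S L_S)$ and $z<_{L_S}x$ whenever $(z,x)\in(O_1\times O_x)\cap(L_S^\circ\times_S L_S)$; hence $(x,y)\in W$ for every $(x,y)\in(O_x\times O_y)\cap(L_S\times_S L_S)$ whose common fiber meets $O_1$ in a non-fixed point. (When only one of $x_0,y_0$ is fixed one runs the same argument, taking $z_0$ to be the non-fixed one.)

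The step I expect to be the genuine obstacle is that last clause: to know the constructed neighborhood lies in $W$ one needs the fibers near $(x_0,y_0)$ to meet $O_1$ in a non-fixed point, which is a local lifting property for $\pi|_{L_S^\circ}$ not among our hypotheses. I would handle this by observing that such lifts are only needed over the fibers that actually approach $(x_0,y_0)$: first one checks that $L_S^{\RR}$ is closed in $L_S$, so that $L_S^\circ\times_S L_S$ is open and the two ``easy'' descriptions of $W$ above are open in $L_S\times_S L_S$ rather than merely in a subspace; then one shrinks $O_x$ and $O_y$ using continuity of $d_{L_S}$ a second time, noting that for large $M$ the open sets $d_{L_S}^{-1}((M,\infty])$ and $d_{L_S}^{-1}([-\infty,-M))$ pick out exactly the fibers over which a non-fixed companion of $z_0$ persists. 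Carrying out this shrinking so as to land inside $W$ is the heart of the argument.
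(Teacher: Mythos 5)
Your ``necessity'' direction (axiom $(A4)$ implies continuity of $d_{L_S}$) is correct and is essentially the paper's own argument: both reduce to the subbasis $\{(t,\infty]\}\cup\{[-\infty,t)\}$ of $[-\infty,\infty]$ and use the $\RR$-action to transport the question to $t=0$, where the preimage is cut out by the order relation of $(A4)$. The paper phrases this by acting on the second factor and reducing to a single preimage, while you act on the first and show the two closed preimages $\phi_t^{-1}(P)$ and $\psi_t^{-1}(P)$ directly, but this is a cosmetic difference. This is also the only direction of the proposition that the paper ever invokes (in Construction~\ref{suls}, in the proof of Proposition~\ref{proposition.section-existence}, and in the proof of Theorem~\ref{theorem.line-recognition}).

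Where your proposal genuinely diverges from the paper is in the ``sufficiency'' direction, and you are right to worry. The paper's proof, after the reduction to $t=0$, simply asserts that openness of $\{(x,y)\in L_S^\circ\times_S L_S : y\nleq_{L_S} x\}$ \emph{in the subspace} $L_S^\circ\times_S L_S$ ``is equivalent to axiom $(A4)$''; passing from openness in the subspace to closedness of $P$ in $L_S\times_S L_S$ is precisely the step you identify as nontrivial, and the paper gives no argument for it. Your proposed repair has two issues. First, the claim that $L_S^{\RR}$ is closed is true but is not something one ``checks'' for free: it must be extracted from continuity of $d_{L_S}$ (for instance, take a neighborhood $V$ of $x_0\in L_S^\circ$ on which $|d_{L_S}|<1$ along $(V\times V)\cap(L_S^\circ\times_S L_S)$; if $V$ contained a fixed point $p$, then $V\cap L_{\pi(p)}$, being a nonempty open subset of a copy of $[0,1]$, would contain a non-fixed $z$, and $d_{L_S}(z,p)=\pm\infty$ contradicts the bound). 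Second, and more seriously, your handling of the case where both $x_0$ and $y_0$ are fixed does not close: the shrinking you describe needs, over each nearby fiber, a non-fixed point landing in $O_1$, which is the local lifting property $(A5)$ and is not among the hypotheses; and using $d_{L_S}^{-1}((M,\infty])$ or $d_{L_S}^{-1}([-\infty,-M))$ for large $M$ does not manufacture such lifts, since $d_{L_S}$ is already identically $\pm\infty$ on $L_S^\circ\times_S L_S^{\RR}$ for \emph{all} $M$, so varying $M$ does not shrink the relevant neighborhoods at all. So your last paragraph does not resolve the difficulty you diagnosed; the ``both fixed'' case remains open in your write-up, just as it is glossed over in the paper's.
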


\begin{proof}
Let $U$ be an open subset of $[ - \infty, \infty ]$; we wish to show that
$$ \{ (x,y) \in L_{S}^{\circ} \times_{S} L_{S}: d_{L_S}(x,y) \in U \}$$
is an open subset of the fiber product $L_{S}^{\circ} \times_{S} L_{S}$.
Without loss of generality, we may assume that $U$ is a half-open interval of the form
$(t, \infty]$ or $[ - \infty, t)$ for some real number $t$ (such intervals form a subbasis for the topology of
$[ - \infty, \infty ]$). By symmetry, we may assume that $U = (t, \infty ]$. Using the continuity of the $\RR$-action
on the second factor, we may reduce to the case $t = 0$. In this case, we have
$$d_{L_{S}}(x,y) \in U \Leftrightarrow y \nleq_{L_S} x,$$ so the desired conclusion is equivalent to axiom $(A4)$.
\end{proof}

\subsection{The Proof of Theorem \ref{theorem.line-recognition}}\label{section.line-recognition}

Our goal in this section is to supply a proof of Theorem \ref{theorem.line-recognition}. We begin by introducing a technical device
which is useful for producing ``local coordinates'' on the moduli stack of broken lines (see \S~\ref{section.stack}).

\begin{notation}\label{linpreorder}
Let $I$ be a set. A {\it linear preordering of $I$} is a binary relation $\leq_{I}$ on $I$ satisfying the following conditions:
\begin{itemize}
\item For every pair of elements $i, j \in I$, either $i \leq_{I} j$ or $j \leq_{I} i$.
\item If $i, j, k \in I$ satisfy $i \leq_{I} j$ and $j \leq_{I} k$, then $i \leq_{I} k$.
\end{itemize}
Note that the first condition guarantees that $\leq_{I}$ is reflexive: that is, we have $i \leq_{I} i$ for each $i \in I$.

If $\leq_{I}$ is a linear preordering of a set $I$ containing elements $i,j$, we write $i =_{I} j$ if
$i \leq_{I} j$ and $j \leq_{I} i$. Then $=_{I}$ is an equivalence relation on $I$. We will say that
$\leq_{I}$ is a {\it linear ordering} if $i =_{I} j$ implies that $i = j$ (that is, if the relation $\leq_{I}$ is antisymmetric).
\end{notation}

\begin{definition}\label{definition.I-section}
Let $\pi: L_{S} \rightarrow S$ be a map of topological spaces and suppose that
$L_{S}$ is equipped with an $\RR$-action which exhibits each fiber of $\pi$ as a broken line.

Let $I$ be a finite linearly preordered set. An {\it $I$-section of $L_{S}$}
is a collection of continuous maps $\{ \sigma_i: S \rightarrow L_{S} \setminus L_{S}^{\RR} \}_{i \in I}$ with the following properties:
\begin{itemize}
\item[$(a)$] Each $\sigma_{i}$ is a section of $\pi$: that is, we have $\pi \circ \sigma_i = \id_{S}$.

\item[$(b)$] For each $i \leq j$ in $I$ and each point $s \in S$, we have $d_{L_{S}}( \sigma_i(s), \sigma_j(s) ) > - \infty$;
here $d_{ L_{S} }$ is the translation distance function of Notation \ref{notation.translation-distance}.

\item[$(c)$] For each point $s \in S$ and each connected component $A$ of $L_s^{\circ}$,
there exists $i \in I$ such that $\sigma_i(s) \in A$. 
\end{itemize}
\end{definition}

\begin{proposition}[Existence of $I$-Sections]\label{proposition.section-existence}
Let $\pi: L_{S} \rightarrow S$ be a map of topological spaces and let $\mu: \RR \times L_{S} \rightarrow L_{S}$
be a continuous action of $\RR$ which preserves each fiber of $\pi$ and satisfies axioms
$(A1)$, $(A2)$, $(A4)$, and $(A5)$ of Theorem \ref{theorem.line-recognition}.
Then, for every point $s \in S$, there exists an open set $U \subseteq S$ containing $s$, a finite linearly ordered set $I$,
and an $I$-section of the projection map $L_{S} \times_{S} U \rightarrow U$.
\end{proposition}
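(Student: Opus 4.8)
The plan is to construct the desired $I$-section locally by first producing a single section through the given point, then enlarging the index set to meet every component of the fiber $L_s^\circ$, and finally shrinking the base so that the sections remain well-behaved. First I would fix $s \in S$. By axiom $(A1)$, the fiber $L_s$ is a broken line, so $L_s^\circ = L_s \setminus L_s^{\RR}$ has finitely many connected components $A_1, \dots, A_n$ (where $n = n_s$ is the integer appearing in the discussion after Definition \ref{defn.broken-line}); order them so that $A_1 <_{L_s} A_2 <_{L_s} \cdots <_{L_s} A_n$. Pick a point $x_k \in A_k$ for each $k$. By axiom $(A5)$ (the local lifting property on the non-fixed locus), for each $k$ there is an open neighborhood $U_k \subseteq S$ of $s$ and a continuous section $\sigma_k : U_k \to L_S^\circ$ of $\pi$ with $\sigma_k(s) = x_k$. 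Set $U_0 = \bigcap_{k=1}^n U_k$; this is an open neighborhood of $s$, and restricting each $\sigma_k$ to $U_0$ gives continuous sections $\sigma_1, \dots, \sigma_n : U_0 \to L_{U_0}^\circ$ with $\sigma_k(s) \in A_k$.

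Next I would take $I = \{1 < 2 < \cdots < n\}$ with the standard linear ordering, and check the three conditions of Definition \ref{definition.I-section} for the family $L_S \times_S U \to U$ for a suitable open $U \subseteq U_0$. Condition $(a)$ holds by construction. For condition $(c)$: the issue is that for $s' \ne s$ the fiber $L_{s'}$ may have fewer than $n$ components (by the upper semicontinuity of Remark \ref{remark.semicontinuity}), but since $L_{s'}^\circ \to \pi_0(L_{s'}^\circ)$ is onto and each $\sigma_k(s')$ lies in some component, I need to verify that the $n$ sections collectively hit every component of $L_{s'}^\circ$. The key point is that components can only merge, not split, as $s'$ degenerates from the generic locus, and one can arrange (by choosing $U$ inside the open locus where the number of fixed points is locally bounded, cf. $(A2)$) that the components of $L_{s'}^\circ$ are exactly the "intervals between consecutive fixed points," into which the sections $\sigma_k$ continuously deform; a point-set argument using continuity of the $\sigma_k$ and of the translation-distance function $d_{L_S}$ (continuous by Proposition \ref{proposition.translation-distance}, which applies since $(A4)$ holds) then shows each component is met. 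For condition $(b)$: I need $d_{L_S}(\sigma_i(s'), \sigma_j(s')) > -\infty$ for $i \le j$ and all $s' \in U$. At $s' = s$ this holds because $x_i = \sigma_i(s) \in A_i$ lies in a component weakly below that of $x_j$, so $\mu(t, x_i)$ cannot eventually drop below $x_j$; hence $d_{L_s}(x_i, x_j) \in (-\infty, \infty]$. Since $d_{L_S}$ is continuous and $\{$finite or $+\infty\} = (-\infty,\infty]$ is open in $[-\infty,\infty]$, the set of $s'$ where $d_{L_S}(\sigma_i(s'),\sigma_j(s')) > -\infty$ is open and contains $s$; intersecting over the finitely many pairs $(i,j)$ with $i \le j$ yields the desired open set $U$.

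The main obstacle I expect is condition $(c)$: ensuring the sections meet \emph{every} component of each nearby fiber, which requires controlling how the combinatorial type of $L_{s'}$ can vary and ruling out the appearance of a component not hit by any $\sigma_k$. The cleanest route is to use axioms $(A2)$ and $(A5)$ together to pin down, near $s$, that the fixed-point set is a disjoint union of finitely many sheets $K_1, \dots, K_m$ over $U$ (with $m \le n+1$), that on $U$ these sheets stay in the same cyclic order along each fiber, and that the components of $L_{s'}^\circ$ correspond exactly to the "gaps" between consecutive sheets present in $L_{s'}$; then each $\sigma_k$, starting in $A_k$ at $s$, stays in the gap between the appropriate pair of sheets by continuity, and every occupied gap is hit because the $x_k$ already exhaust all gaps at $s$. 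Once this local structural description is established, conditions $(a)$, $(b)$, $(c)$ all follow by the openness arguments sketched above, and shrinking $U$ to the intersection of the relevant open sets completes the proof.
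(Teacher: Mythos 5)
Your plan follows the paper's proof closely: pick a representative in each connected component of $L_s^\circ$, extend to local sections via $(A5)$, shrink $U$ using $(A2)$ to split the fixed locus into sheets $K_0, \ldots, K_n$, and use $(A4)$ (equivalently, continuity of $d_{L_S}$ via Proposition \ref{proposition.translation-distance}) to verify conditions $(b)$ and $(c)$ on a smaller neighborhood. The step you correctly flag as the main obstacle --- condition $(c)$ --- is handled in the paper by invoking $(A4)$ to shrink $U$ so that each $\sigma_j$ falls in the correct order position relative to every sheet $K_i$ (namely $\sigma_j(\pi(y)) <_{L_S} y$ for $y \in K_i$ when $j \le i$, and $y <_{L_S} \sigma_j(\pi(y))$ when $j > i$, both open conditions), after which any component of $L_{s'}^\circ$ whose closure meets $K_a$ and $K_b$ with $a < b$ must contain $\sigma_b(s')$.
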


\begin{proof}
Regard the fiber $L_s$ as equipped with the linear ordering $\leq_{L_s}$ of Notation \ref{notation.ordering}. 
Then the fixed point locus $L_{s}^{\RR}$ is a finite set $\{ x_0 <_{L} x_1 < \cdots <_{L} x_n \}$. 
Using axiom $(A5)$, we can choose an open set $U \subseteq S$ containing $s$ and a collection of continuous 
maps $\{ \sigma_i: U \rightarrow L_{S}^{\circ} \}_{1 \leq i \leq n}$ satisfying $x_{i-1} <_{L} \sigma_i(s) <_{L} x_i$.

Using axiom $(A2)$, we can further assume (after replacing $U$ by a smaller neighborhood of $s$ if necessary) that the fixed point locus $L_{S}^{\RR}$ decomposes as a disjoint union $K_0 \amalg K_1 \amalg \cdots \amalg K_n$, where $x_{i} \in K_i$ and each restriction $\pi|_{ K_i }: K_i \rightarrow U$ is a closed embedding. Using Axiom $(A4)$, we can further arrange (by shrinking $U$ as necessary) that for
all $y \in K_i$, we have $\sigma_{j}( \pi(y) ) <_{L_S} y$ if $j \leq i$, and $y <_{ L_{S} } \sigma_{j}( \pi(y) )$ for $j > i$. 
Finally, using the continuity of the translation distance function $d_{L_{S}}$ (Proposition \ref{proposition.translation-distance}), we can assume (after shrinking $U$) that $d_{L_S}( \sigma_i(s'), \sigma_j(s') ) > - \infty$ for all $s' \in U$ and $1 \leq i \leq j \leq n$.

We now claim that $\{ \sigma_i \}_{1 \leq i \leq n}$ is an $I$-section of the projection map $L_{U} \to U$, where $I$ is the set $\{ 1, \ldots, n \}$ (equipped with its usual ordering). 
The only nontrivial point is to verify that for each $s' \in U$, the set $\{ \sigma_i(s') \}_{ 1 \leq i \leq n}$ intersects each connected component $A$ of 
$L_{s'} \setminus L_{s'}^{\RR}$. Note that the closure of $A$ in the broken line $L_{s'}$
contains exactly two $\RR$-fixed points $p$ and $q$, which belong to the sets $K_{j}$ and $K_{k}$ for some $j < k$. 
In this case, our preceding assumptions guarantee that $p <_{ L_{S}} \sigma_k(s') <_{L_{S}} q$, so that $\sigma_{k}(s')$ belongs to $A$.
\end{proof}

\begin{proof}[Proof of Theorem \ref{theorem.line-recognition}]
Let $\pi: L_{S} \rightarrow S$ be a map of topological spaces and let $\mu: \RR \times L_{S} \rightarrow L_{S}$
be a continuous action of $\RR$ which satisfies axioms $(A1)$ through $(A5)$ of Theorem \ref{theorem.line-recognition}.
We now complete the proof of Theorem \ref{theorem.line-recognition} by showing that $( L_{S}, \pi, \mu)$ is an $S$-family of broken lines. Since this assertion is local on $S$
(Remark \ref{remark.locality}), we can reduce to the case where there exists a finite linearly ordered set $I$ and an $I$-section $\{ \sigma_{i} \}_{i \in I}$ of $\pi$ (Proposition \ref{proposition.section-existence}).
Let $n$ be the cardinality of $I$. We may assume without loss of generality that $n > 0$ (otherwise, the space $S$ is empty and there is nothing to prove). 

Let $d_{L_{S}}: L_{S}^{\circ} \times_{S} L_{S} \rightarrow [ - \infty, \infty ]$ denote the translation distance function
of Notation \ref{notation.translation-distance}, and choose an orientation-preserving homeomorphism
$$\rho: [ -\infty, \infty ] \simeq [ 0, \frac{1}{n} ].$$ We define a map 
$\gamma: L_{S} \rightarrow [0,1]$ by the formula
$$\gamma(x) = \sum_{i \in I} \rho( d_{ L_S}( \sigma_i( \pi(x) ), x ) ).$$
It follows from Proposition \ref{proposition.translation-distance} that the map $\gamma$ is continuous. To complete the proof
of Theorem \ref{theorem.line-recognition}, it will suffice to establish the following:

\begin{itemize}
\item[$(a)$] For each point $s \in S$, the restriction $\gamma|_{ L_s }: L_s \rightarrow [0,1]$ is a directed homeomorphism.

\item[$(b)$] The product map $(\gamma, \pi): L_{S} \rightarrow [0,1] \times S$ is a homeomorphism.
\end{itemize}

We first prove $(a)$. Fix a point $s \in S$, and regard the broken line $L_s$ as equipped with the ordering $\leq_{ L_s }$ of Notation \ref{notation.ordering}.
For each $i \in I$, the function $(x \in L_s) \mapsto d_{L_S}( \sigma_i(s), x) = d_{L_s}( \sigma_i(s), x)$ determines a nondecreasing, endpoint-preserving map
from $L_s$ to the interval $[ - \infty, \infty ]$. It follows that the function $\gamma|_{ L_s}: L_s \rightarrow [0,1]$ is nondecreasing and endpoint-preserving.
We will complete the proof by showing that $\gamma|_{L_s}$ is strictly increasing. By continuity, it will suffice to show that $\gamma|_{L_{s}}$ is strictly
increasing on each connected component $A$ of $L^{\circ}_{s} = L_{s} \setminus L_{s}^{\RR}$. Our assumption that
$\{ \sigma_i \}$ is an $I$-section of $\pi$ then guarantees that $A$ contains $\sigma_{i}(s)$ for some point $i \in I$. We now observe that the function $(x \in L_s) \mapsto d_{L_S}( \sigma_i(s), x)$ is strictly increasing when restricted to $A$. This completes the proof of $(a)$.

To prove $(b)$, we note that the product map $(\gamma, \pi): L_{S} \rightarrow [0,1] \times S$ is continuous and bijective (by virtue of $(a)$).
It will therefore suffice to show that it is a closed map, which follows from the fact that $L_{S}$ is proper over $S$ (Remark \ref{remark.properness}).
\end{proof}

\clearpage
\section{Presenting the Moduli Stack of Broken Lines}\label{section.stack}

Our goal in this section is to construct an atlas for the topological stack $\broken$ of Notation \ref{not-broke}. For this, it will be convenient to consider broken lines equipped with some additional
structure.

\begin{notation}\label{slabic}
Let $(I, \leq_I )$ be a finite linearly preordered set (Notation \ref{linpreorder}). For every topological space $S$, we let $\broken^{I}(S)$ denote
the groupoid whose objects are triples $(\pi: L_S \to S, \mu, \{ \sigma_i \}_{i \in I} )$, where the pair $(\pi: L_S \to S, \mu)$ is an $S$-family of broken
lines (Definition \ref{defn.S-family}) and $\{ \sigma_i \}_{i \in I}$ is an $I$-section of $L_S$ (Definition \ref{definition.I-section}). 
The construction $S \mapsto \broken^{I}(S)$ determines a stack (in groupoids) on the category of topological spaces, which we will
denote by $\broken^{I}$.
\end{notation}

\begin{remark}
The associated fibration in groupoids (see Definition~\ref{definition.fibration}) is as follows: We let $\Pt(\broken^I)$ denote the category whose objects are triples $(\pi: L_S \to S, \mu, \{ \sigma_i \}_{i \in I} )$. A morphism of $\Pt(\broken^I)$ is a morphism of $\Pt(\broken)$ compatible with the choices of $I$-sections. The forgetful functor $\Pt(\broken^I) \to \Top$ is the fibration in groupoids associated to $\broken^I$ (see Definition~\ref{definition.topological-stack}).
\end{remark}

For each $I$, $\broken^{I}$ is equipped with a map of topological stacks $\broken^{I} \to \broken$. Moreover, Proposition \ref{proposition.section-existence} guarantees
that these maps determine a covering of $\broken$: every family of broken lines $L_S \to S$ can be equipped with an $I$-section in a neighborhood of each
point $s \in S$, for some finite linearly preordered set $I$ (which might depend on $s$). Consequently, we can recover the moduli stack $\broken$ from
the moduli stacks $\broken^{I}$ together with information about fiber products of the form $\broken^{I} \times_{ \broken } \broken^{J}$. The main results of this section can be summarized as follows:

\begin{itemize}
\item For every finite linearly preordered set $I$, the topological stack $\broken^{I}$ is actually (representable by) a topological space (Theorem \ref{theorem-representab}). In other words,
there is a universal example of a family of broken lines equipped with an $I$-section; we will construct this universal family explicitly in \S \ref{section-universal}.

\item For every pair of linearly preordered sets $I$ and $J$, the fiber product $$\broken^{I} \times_{ \broken} \broken^{J}$$ is a manifold (with corners) which
admits an explicit open covering by open subsets of the form $\broken^{K}$, where $K$ ranges over all linearly preordered sets which can be obtained by
{\em amalgamating} $I$ and $J$ (Theorem \ref{theorem.fiber-product-description-vague}). Using this, we show that $\broken$ is (representable by) a Lie groupoid with corners (Proposition \ref{proposition.brokenpresentation}).

\item The topological stack $\broken$ can be realized as the (homotopy) colimit $\varinjlim_{I} \broken^{I}$, where $I$ ranges over all finite linearly preordered sets
(Theorem \ref{leftor}).
\end{itemize}

\subsection{The Moduli Stack \texorpdfstring{$\broken^{I}$}{Broken-I}}\label{cullus}

Our first goal is to show that, for every linearly preordered set $I$, the moduli stack $\broken^{I}$ is representable by a topological space.

\begin{notation}\label{preorderrep}
Let $(I, \leq_{I} )$ be a finite nonempty linearly preordered set. We can then form a category whose objects are the elements of $I$ whose morphisms are given by
$$ \Hom(i, j) = \begin{cases} \{ \ast \} & \text{ if } i \leq_{I} j \\
\emptyset & \text{ otherwise. } \end{cases}$$
In what follows, we will generally not distinguish between a linearly preordered set $I$ and its associated category.

Let $\RR^{+}$ denote the union $\RR \cup \{ \infty \}$, equipped with the topology given by its realization as a half-open interval
$( - \infty, \infty ]$. We regard $\RR^{+}$ as a commutative monoid under addition (with the convention that $t + \infty = \infty + t = \infty$
for all $t \in \RR^{+}$).

Let $\BR^{+}$ denote the category having a unique object $\ast$, with $\Hom_{ \BR^{+} }( \ast, \ast) = \RR^{+}$
(and composition of morphisms given by addition). For every finite linearly preordered set $I$, we let $\Rep(I, \BR^{+} )$ denote the set of all functors from
$I$ to $\BR^{+}$. 

More concretely, $\Rep( I, \BR^{+} )$ can be described as the set of all functions
$$ \alpha: \{ (i,j) \in I \times I: i \leq_{I} j \} \rightarrow \RR^{+}$$
which satisfy the equations 
$$ \alpha(i,i) = 0$$
$$ \alpha(i,j) + \alpha(j,k) = \alpha(i,k) \text{ if } i \leq_{I} j \leq_{I} k.$$
Consequently, we can view $\Rep( I, \BR^{+} )$ as a closed subset of a product of finitely many copies of $\RR^{+}$. 
We will regard $\Rep(I, \BR^{+} )$ as a topological space by equipping it with the subspace topology: that is, the coarsest
topology for which all of the functions $\alpha \mapsto \alpha(i,j) \in \RR^{+}$ are continuous.
\end{notation}

\begin{example}\label{ex1}
Let $I$ be a nonempty finite set. Then we can regard $I$ as a linearly preordered set by equipping it with the {\it indiscrete preordering}:
that is, by declaring that $i \leq_{I} j$ for every pair of elements $i, j \in I$. In this case, the set $\Rep(I, \BR^{+} )$ is homeomorphic to
$\RR^{ |I|-1}$. More precisely, for any choice of element $i \in I$, we have a canonical homeomorphism
$\Rep(I, \BR ) \simeq \RR^{ I - \{i\}}$, given by the construction
$$ (\alpha \in \Rep(I, \BR^{+} )) \mapsto \{ \alpha(i,j) \}_{j \neq i}.$$
\end{example}

\begin{example}\label{ex2}
Let $I = [n] = \{ 0 < 1 < 2 < \ldots < n \}$, which we regard as equipped with its usual ordering. In this case, we have
a canonical homeomorphism $\Rep(I, \BR ) \simeq (\RR^{+})^{n}$, given by
the construction
$$ (\alpha \in \Rep(I, \BR^{+} )) \mapsto ( \alpha(0,1), \alpha(1,2), \ldots, \alpha(n-1, n) ).$$
\end{example}

\begin{remark}\label{remark.opener}
Let $I$ be any nonempty finite linearly preordered set. Then we can (non-uniquely) write $I = \{ i_0, i_1, \ldots, i_n \}$ 
where $i_0 \leq_{I} i_1 \leq_{I} i_2 \leq_{I} \cdots \leq_{I} i_n$. In this case, the construction
$$ (\alpha \in \Rep(I, \BR^{+} )) \mapsto ( \alpha(i_0,i_1), \alpha(i_1, i_2), \ldots, \alpha(i_{n-1}, i_n) )$$
determines an open embedding $\Rep( I, \BR^{+} ) \hookrightarrow (\RR^{+})^{n}$, whose image
consists of those points $(t_1, \ldots, t_n) \in (\RR^{+})^{n}$ satisfying the condition
$t_m < \infty$ whenever $i_{m} \leq_{I} i_{m-1}$---that is, whenever $i_m \cong i_{m-1}$ in the category $I$. In particular, $\Rep(I, \BR^{+} )$ is a manifold (with corners) of dimension
$n$.
\end{remark}

\begin{construction}\label{suls}
Let $(I, \leq_I)$ be a nonempty finite linearly preordered set, let $S$ be a topological space, and let $(\pi: L_S \to S, \mu, \{ \sigma_i \}_{i \in I} )$
be an object of $\broken^{I}(S)$. To this data, we associate a map 
$$\alpha: S \rightarrow \Rep(I, \BR^{+} ) \quad \quad s \mapsto \alpha_s$$
by the formula
$$ \alpha_s(i,j) = d_{ L_S}( \sigma_i(s), \sigma_j(s) ).$$
Note that condition $(b)$ of Definition \ref{definition.I-section} guarantees that $\alpha_s(i,j) \neq - \infty$ for $i \leq_{I} j$,
and Proposition \ref{proposition.translation-distance} guarantees that the map $\alpha: S \to \Rep(I, \BR^{+} )$ is continuous.
\end{construction}

We can now formulate the first result of this section:

\begin{theorem}\label{theorem-representab}
Let $(I, \leq_I)$ be a nonempty finite linearly preordered set. Then, for every topological space $S$, Construction \ref{suls} induces an equivalence of categories
$$ \broken^{I}(S) \rightarrow \Hom_{ \Top}( S, \Rep(I, \BR^{+} ) )$$
where we regard the set $\Hom_{ \Top}( S, \Rep(I, \BR^{+} ) )$ as a category having only identity morphisms. In other words,
the topological stack $\broken^{I}$ is represented by the topological space $\Rep(I, \BR^{+} )$.
\end{theorem}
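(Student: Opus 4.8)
The plan is to construct an inverse to Construction~\ref{suls}, i.e. to produce, from a continuous map $\alpha \colon S \to \Rep(I, \BR^+)$, an $S$-family of broken lines equipped with an $I$-section, and to check that the two constructions are mutually inverse (up to canonical isomorphism). Since $\broken^I(S)$ is a groupoid and the target is a set (a discrete category), I must also show along the way that every object of $\broken^I(S)$ has no nontrivial automorphisms; this will fall out of the construction because an $I$-section rigidifies the translation ambiguity on each component. The whole statement is local on $S$, so by Remark~\ref{remark.locality} I am free to shrink $S$ and to use Proposition~\ref{proposition.section-existence} and the ``local coordinates'' it provides.

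\textbf{Step 1: Build the universal family.} Over the point, a broken line with $n$ non-fixed components and a chosen section in each component is determined by the $n-1$ ``gaps'' between consecutive sections together with the finiteness data of which gaps are finite — this is exactly the data recorded by a point of $\Rep(I,\BR^+)$ (Remark~\ref{remark.opener}). I would therefore write $I = \{i_0 \leq_I \cdots \leq_I i_n\}$ and, over $\Rep(I,\BR^+)$, define the total space as a subspace of $\Rep(I,\BR^+) \times [0,1]$ (or of $\Rep(I,\BR^+) \times [-\infty,\infty]^{\sqcup ?}$, then glued), with fiber over $\alpha$ the broken line obtained by concatenating intervals $[-\infty,\infty]$ whose ``visible lengths'' are governed by the finite coordinates $\alpha(i_{m-1},i_m)$: consecutive sections $\sigma_{i_{m-1}}, \sigma_{i_m}$ lie in the same component exactly when $\alpha(i_{m-1},i_m) < \infty$, and in that case their translation distance is that finite number. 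Concretely I would use the homeomorphism $\rho\colon [-\infty,\infty]\simeq[0,\tfrac1n]$ exactly as in the proof of Theorem~\ref{theorem.line-recognition} to place everything inside $[0,1]$: the universal broken line is $\{(\alpha,t) : t \in [0,1]\}$ with $\RR$-action on the $m$-th block determined by translation, pushed through $\rho$, and the tautological sections $\sigma_{i_m}(\alpha) = \sum_{k\le m}\rho(\alpha(i_{k-1},i_k))$ (interpreting the $\infty$ terms as the left endpoint of the next block). One checks that the projection is proper and that axioms $(A1)$–$(A5)$ of Theorem~\ref{theorem.line-recognition} hold — $(A4)$ is the continuity of the translation-distance function, which here is visibly continuous because it is built from the continuous coordinate functions $\alpha\mapsto\alpha(i,j)$ via $\rho$ — so Theorem~\ref{theorem.line-recognition} applies and this really is a family of broken lines.

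\textbf{Step 2: The two constructions are inverse.} Given $\alpha\colon S \to \Rep(I,\BR^+)$, pull back the universal family; by construction its associated map (Construction~\ref{suls}) computes $d_{L_S}(\sigma_i(s),\sigma_j(s))$, which is $\alpha_s(i,j)$ by design, so the round trip $\alpha \mapsto$ family $\mapsto \alpha$ is the identity. Conversely, given an object $(\pi\colon L_S\to S,\mu,\{\sigma_i\})$ of $\broken^I(S)$, let $\alpha$ be the map of Construction~\ref{suls}; I claim the canonical comparison map from the pulled-back universal family to $L_S$ is an isomorphism. This is where Theorem~\ref{theorem.line-recognition}'s proof is reused almost verbatim: the function $\gamma(x) = \sum_{i\in I}\rho(d_{L_S}(\sigma_i(\pi(x)),x))$ is continuous, restricts to a directed homeomorphism on each fiber (by the $I$-section property, each component contains some $\sigma_i(s)$, and $d_{L_S}(\sigma_i(s),-)$ is strictly increasing there), and the product $(\gamma,\pi)$ is a closed continuous bijection, hence a homeomorphism; moreover $\gamma$ intertwines the $\RR$-actions and sends $\sigma_i$ to the tautological section, so it \emph{is} an isomorphism of objects of $\broken^I(S)$ onto $\alpha^*(\text{universal family})$. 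Finally, the same map $\gamma$ shows the target is a set: any automorphism of $(L_S,\mu,\{\sigma_i\})$ must fix $\gamma$ (since $\gamma$ is defined purely in terms of $\pi$, $\mu$ and the $\sigma_i$, all of which are preserved), hence is the identity; so $\broken^I(S) \to \Hom_{\Top}(S,\Rep(I,\BR^+))$ is fully faithful and essentially surjective, i.e. an equivalence, and functoriality in $S$ is immediate from functoriality of pullback.

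\textbf{Main obstacle.} The routine-looking but genuinely delicate point is the \emph{global} construction of the universal total space with the correct topology near the degenerate strata — i.e. checking that as the finite coordinate $\alpha(i_{m-1},i_m)$ tends to $\infty$ the family ``breaks'' continuously, so that the candidate total space is actually a manifold with corners and the projection is proper (Remark~\ref{remark.properness}). Handling the linearly \emph{preordered} (not just ordered) case — where several $i$'s may be identified and so several sections live in the same component with prescribed finite offsets — adds bookkeeping but no new idea: one just quotients/identifies coordinates accordingly, and Remark~\ref{remark.opener} already tells us the resulting space is the expected manifold with corners. I expect the cleanest route is to defer the explicit model to the promised construction in \S\ref{section-universal} and phrase Step~1 as: ``let $\Rep(I,\BR^+)$ carry the universal family constructed there,'' so that the proof of this theorem is purely the inverse-equivalence argument of Step~2.
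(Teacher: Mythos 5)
Your overall strategy matches the paper's: construct the universal family over $\Rep(I,\BR^{+})$ (you even propose to defer this to the paper's Construction~\ref{makelines} and Proposition~\ref{bez}), produce an explicit comparison homeomorphism from a given object of $\broken^{I}(S)$ to the pullback of that universal family, and observe that the $I$-section condition rigidifies away all automorphisms, so $\broken^{I}(S)$ is discrete.

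The one substantive deviation is the comparison map. You propose to reuse the scalar function $\gamma(x) = \sum_{i\in I}\rho\bigl(d_{L_S}(\sigma_i(\pi(x)),x)\bigr)$ from the proof of Theorem~\ref{theorem.line-recognition}, producing a trivialization $(\gamma,\pi)\colon L_S \xrightarrow{\sim} [0,1]\times S$. But $\gamma$ is \emph{not} $\RR$-equivariant for any fixed $\RR$-action on $[0,1]$ (the action you would need on the target varies with the parameter point $s$), and the target $[0,1]\times S$ is not literally $\alpha^{\ast}\widetilde{\Rep}(I,\BR^{+})$, so you would still owe a further identification. The paper's proof of Theorem~\ref{theorem-representab} avoids this friction by using the vector-valued map $h(x) = \bigl(\alpha_{\pi(x)}, \{d_{L_S}(\sigma_i(\pi(x)),x)\}_{i\in I}\bigr)$, which lands directly in $[\alpha]_S = \widetilde{\Rep}(I,\BR^{+})\times_{\Rep(I,\BR^{+})} S$ and is manifestly $\RR$-equivariant because the $\RR$-action on $\widetilde{\Rep}(I,\BR^{+})$ is coordinate-wise translation and $d_{L_S}(\sigma_i,\mu(t,x)) = d_{L_S}(\sigma_i,x)+t$. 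If you are going to defer the universal-family construction to \S\ref{section-universal} anyway, you should also switch to this map; the $\gamma$-based trivialization belongs to the $[0,1]$-model you initially sketched, not to $\widetilde{\Rep}(I,\BR^{+})$. Your rigidity argument is the right idea but slightly abbreviated: you need the observation (made explicit in Lemma~\ref{harmless}) that the $I$-section places a marked point on every non-fixed orbit of each fiber, and an $\RR$-equivariant self-map of an orbit fixing a point is the identity. Finally, the appeal to Proposition~\ref{proposition.section-existence} is unnecessary here: an object of $\broken^{I}(S)$ already comes equipped with an $I$-section, so there is nothing to choose.
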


\subsection{The Universal Family}\label{section-universal}

Theorem \ref{theorem-representab} asserts that there is a universal family of broken lines which can be equipped with
an $I$-section, which is parametrized by the topological space $\Rep( I, \BR^{+} )$ of Notation \ref{preorderrep}.

\begin{example}\label{easybreak2}
Consider the topological space $[-\infty,\infty] \times [-\infty,\infty]$, equipped with the $\RR$-action given by translation on each factor.
For every real number $\alpha$, the construction $t \mapsto ( t + \alpha, t)$ determines a map $[ -\infty, \infty ] \rightarrow [ - \infty, \infty ] \times [ - \infty, \infty ]$,
whose image is a closed subset $L_{\alpha} \subseteq [ -\infty, \infty ] \times [ -\infty, \infty]$. As $\alpha$ approaches $\infty$, the sets $L_{\alpha}$ converge to a broken line
$L_{\infty} \subseteq [ -\infty, \infty ] \times [ -\infty, \infty ]$, given by the set-theoretic union
$$ ([ - \infty, \infty ] \times \{ - \infty \}) \cup ( \{ \infty \}, [ - \infty, \infty ] ).$$
The collection $\{ L_{\alpha} \}_{\alpha \in \RR \cup \{ \infty \} }$ can be organized into a family of broken lines over the topological space
$\RR^{+} \simeq \Rep( \{ 0 < 1 \}, \BR^{+} )$. Note that each $L_{\alpha}$ can be identified with a compactification of the set
$$ \{ (x,y) \in \RR_{\geq 0}: xy = e^{-\alpha} \}$$
via the construction $(x,y) \mapsto ( -\log(x), \log(y) )$ (compare with Example \ref{easybreak}).
\end{example}

We now consider a generalization of Example \ref{easybreak2}.

\begin{construction}\label{makelines}
Let $(I, \leq_I)$ be a nonempty finite linearly preordered set. We let
$$ \widetilde{\Rep}( I, \BR^{+} ) \subseteq \Rep(I, \BR^{+} ) \times [ - \infty, \infty ]^{I}$$
denote the subset consisting of those pairs $( \alpha: I \to \BR^{+}, \{ \beta(i) \}_{i \in I} )$
which satisfy the following condition:
\begin{itemize}
\item[$(\ast)$] Suppose that $i \leq_{I} j$ in $I$. If either $\beta(j) > - \infty$ or $\alpha(i,j) < \infty$, then $\beta(i) = \alpha(i,j) + \beta(j)$.
\end{itemize}

Note that there is an action of $\RR$ on $\widetilde{\Rep}(I, \BR^{+} )$, given by the map
$$ \mu: \RR \times \widetilde{\Rep}(I, \BR^{+} ) \rightarrow \widetilde{\Rep}(I, \BR^{+} )$$
$$ \mu( t, (\alpha, \{ \beta(i) \}_{i \in I} ) ) = ( \alpha, \{ \beta(i) + t \}_{i \in I}).$$
\end{construction}

\begin{proposition}\label{bez}
Let $(I, \leq_I)$ be a nonempty finite linearly preordered set. Then the projection map
$$ \widetilde{\Rep}(I, \BR^{+}) \subseteq \Rep(I, \BR^{+}) \times [ - \infty, \infty]^{I} \rightarrow \Rep(I, \BR^{+} )$$
exhibits $\widetilde{\Rep}(I, \BR^{+} )$ as a $\Rep(I, \BR^{+})$-family of broken lines (with respect to the $\RR$-action
described in Construction \ref{makelines}.
\end{proposition}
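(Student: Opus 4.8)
The plan is to verify that $\pi\colon\widetilde{\Rep}(I,\BR^{+})\to\Rep(I,\BR^{+})$ satisfies axioms $(A1)$--$(A5)$ of the recognition principle (Theorem~\ref{theorem.line-recognition}). Write $R=\Rep(I,\BR^{+})$ and $\widetilde R=\widetilde{\Rep}(I,\BR^{+})$; the latter is a closed subset of $R\times[-\infty,\infty]^{I}$, and $\mu$ is a continuous $\RR$-action preserving the fibres of $\pi$ (the only nonformal point being that condition $(\ast)$ is invariant under adding a finite constant to all the $\beta(i)$). Two axioms are immediate. Axiom $(A3)$ holds because $\pi$ is the restriction to the closed subset $\widetilde R$ of the projection $R\times[-\infty,\infty]^{I}\to R$, which is a closed map since $[-\infty,\infty]^{I}$ is compact. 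For $(A2)$, a point of $\widetilde R$ is $\RR$-fixed exactly when every coordinate $\beta(i)\in\{-\infty,+\infty\}$; after choosing (Remark~\ref{remark.opener}) an enumeration $I=\{i_{0}\le_{I}\cdots\le_{I}i_{n}\}$ and setting $t_{m}(\alpha)=\alpha(i_{m-1},i_{m})$, one unwinds $(\ast)$ to see that the fixed locus is the disjoint union, over $0\le j\le n+1$, of the pieces consisting of pairs $(\alpha,\epsilon^{(j)})$ with $\epsilon^{(j)}$ equal to $+\infty$ on indices $<j$ and $-\infty$ on indices $\ge j$; such a pair lies in $\widetilde R$ iff $j\in\{0,n+1\}$ or $t_{j}(\alpha)=\infty$. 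Since $\{t_{j}=\infty\}$ is closed in $R$ (as $\{\infty\}$ is closed in $\RR^{+}$) and the pieces are distinguished by the discrete datum $\epsilon^{(j)}$, each nonempty piece is clopen in $\widetilde R^{\RR}$ and maps homeomorphically via $\pi$ onto a closed subset of $R$; this is exactly the unramifiedness required in $(A2)$.

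The heart of the argument is $(A1)$: each fibre $\widetilde R_{\alpha}$ admits a directed homeomorphism to $[0,1]$. In the coordinates above, $\widetilde R_{\alpha}$ is the set of $\beta=(\beta_{0},\dots,\beta_{n})\in[-\infty,\infty]^{n+1}$ such that for each $m$, either $\beta_{m}=-\infty$ and $t_{m}(\alpha)=\infty$, or else $\beta_{m-1}=t_{m}(\alpha)+\beta_{m}$; one first checks that this consecutive-pair condition is equivalent to the full condition $(\ast)$. Let $S=\{m:t_{m}(\alpha)=\infty\}$ and $k=|S|$; the complement of $S$ cuts $\{0,\dots,n\}$ into $k+1$ consecutive blocks, and on each block $\beta$ is rigidly determined by its value at the block's left endpoint (via the finite translations $t_{m}$). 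Projecting onto these $k+1$ left-endpoint coordinates gives a continuous $\RR$-equivariant bijection of $\widetilde R_{\alpha}$ onto
$$
E_{k}=\{(u_{0},\dots,u_{k})\in[-\infty,\infty]^{k+1}: u_{r-1}=+\infty\ \text{or}\ u_{r}=-\infty\ \text{for}\ 1\le r\le k\},
$$
which is a homeomorphism since both sides are compact Hausdorff. Finally, unwinding its definition shows every point of $E_{k}$ has the form $(+\infty,\dots,+\infty,c,-\infty,\dots,-\infty)$ with $c\in[-\infty,\infty]$ in some slot $j$, so $E_{k}$ is literally a chain of $k+1$ copies of $[-\infty,\infty]$ glued end to end, with $\RR$ acting by translation on the $c$-parameter of whichever slot is active; the map $E_{k}\to[0,1]$, $(u_{r})_{r}\mapsto\sum_{r}\rho(u_{r})$ for a fixed order-preserving homeomorphism $\rho\colon[-\infty,\infty]\to[0,\tfrac{1}{k+1}]$, is then a homeomorphism which is nondecreasing along $\RR$-orbits, i.e.\ a directed homeomorphism. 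Hence $\widetilde R_{\alpha}$ is a broken line and $(A1)$ holds.

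For $(A4)$ I would invoke Proposition~\ref{proposition.translation-distance}, reducing to the continuity of $d_{\widetilde R}\colon\widetilde R^{\circ}\times_{R}\widetilde R\to[-\infty,\infty]$. Using the block description one verifies the formula $d_{\widetilde R}\big((\alpha,\beta),(\alpha,\beta')\big)=\beta'_{m}-\beta_{m}$ for any index $m$ with $\beta_{m}\in\RR$ (the right side being independent of such $m$, with the convention $(\pm\infty)-(\text{finite})=\pm\infty$); since a point of $\widetilde R^{\circ}$ has $\beta_{m_{0}}\in\RR$ for some $m_{0}$ and $\beta_{m_{0}}$ stays finite on a neighbourhood, $d_{\widetilde R}$ agrees locally with the continuous map $(\beta,\beta')\mapsto\beta'_{m_{0}}-\beta_{m_{0}}$, hence is continuous. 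For $(A5)$, given $(\alpha_{0},\beta^{0})\in\widetilde R^{\circ}$ choose $m_{0}$ with $v:=\beta^{0}_{m_{0}}\in\RR$ and define $\sigma\colon R\to\widetilde R^{\circ}$ by $\sigma(\alpha)=(\alpha,\beta(\alpha))$, where $\beta(\alpha)_{m_{0}}=v$, $\beta(\alpha)_{m}=v+\big(t_{m+1}(\alpha)+\cdots+t_{m_{0}}(\alpha)\big)$ for $m<m_{0}$, and $\beta(\alpha)_{m}=v-\big(t_{m_{0}+1}(\alpha)+\cdots+t_{m}(\alpha)\big)$ for $m>m_{0}$ (sums in $\RR^{+}$); a routine case-check in $[-\infty,\infty]$ shows $\beta(\alpha)$ satisfies $(\ast)$, is never $\RR$-fixed, depends continuously on $\alpha$, and equals $\beta^{0}$ at $\alpha_{0}$, so $\sigma$ is the desired (even global) section. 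The main obstacle is $(A1)$---specifically pinning down the topology of the fibre, i.e.\ the passage through $E_{k}$ and the verification that $E_{k}$ is a finite concatenation of standard broken lines; the remaining axioms are either formal or bookkeeping with $\infty$-arithmetic in $[-\infty,\infty]$.
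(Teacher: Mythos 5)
Your proof is correct and follows the paper's strategy exactly: verify axioms (A1)--(A5) of Theorem~\ref{theorem.line-recognition}. Your treatment of (A1) is a coordinate-explicit version of the paper's Lemma~\ref{bezlem} (the blocks cut out by $S=\{m:t_m(\alpha)=\infty\}$ are precisely the $\alpha$-equivalence classes of that lemma, and your $E_k$ is its explicit concatenation model), and your (A2), (A3), (A5) match the paper's arguments. The one place you deviate is (A4): the paper verifies the closedness condition directly, observing that the order relation is the preimage under $\widetilde R\times_R\widetilde R\hookrightarrow[-\infty,\infty]^I\times[-\infty,\infty]^I$ of the coordinatewise-order locus $\{\beta\le\beta'\}$, while you reformulate via Proposition~\ref{proposition.translation-distance} and verify continuity of $d_{\widetilde R}$ using the local formula $d=\beta'_{m_0}-\beta_{m_0}$. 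Both are valid one-line reductions; the paper's is marginally more self-contained (it avoids needing (A1) as a prerequisite for the statement of (A4)), whereas yours recycles the translation-distance machinery that the paper develops anyway for Construction~\ref{suls}.
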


We begin by analyzing the fibers of the map $\widetilde{\Rep}(I, \BR^{+}) \to \Rep(I, \BR^{+})$.

\begin{lemma}\label{bezlem}
Let $(I, \leq_I)$ be a nonempty finite linearly preordered set, let $\alpha: I \to \BR^{+}$ be a functor,
and set $L = \widetilde{\Rep}( I, \BR^{+}) \times_{ \Rep( I, \BR^{+} ) } \{ \alpha \}$. Then $L$ is a broken line 
(with respect to the $\RR$-action described in Construction \ref{makelines}).
\end{lemma}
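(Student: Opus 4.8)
The plan is to fix a functor $\alpha: I \to \BR^{+}$, give an explicit description of the fiber $L = \widetilde{\Rep}(I,\BR^{+}) \times_{\Rep(I,\BR^{+})} \{\alpha\}$ as a subset of $[-\infty,\infty]^{I}$ with its $\RR$-action by simultaneous translation, and then verify directly that this topological space with $\RR$-action satisfies the two axioms of Definition \ref{defn.broken-line}. Writing $I = \{i_0 \le_I i_1 \le_I \cdots \le_I i_n\}$ as in Remark \ref{remark.opener}, a point of $L$ is a tuple $\{\beta(i)\}_{i \in I}$ subject to condition $(\ast)$ of Construction \ref{makelines}. The first step is to unwind $(\ast)$: it says that the components $\beta(i)$ are "linked" by the values $\alpha(i,j)$ — if $\alpha(i_m, i_{m+1}) < \infty$ then $\beta(i_m) = \alpha(i_m,i_{m+1}) + \beta(i_{m+1})$, and if $\alpha(i_m, i_{m+1}) = \infty$ then either $\beta(i_{m+1}) = -\infty$ (and $\beta(i_m)$ is unconstrained above it) or $\beta(i_m) = \infty$. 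I would introduce the set of "breakpoints" $B = \{m : \alpha(i_m, i_{m+1}) = \infty\} \subseteq \{0,\dots,n-1\}$; these cut $\{0,\dots,n\}$ into consecutive blocks, and on each block all the $\beta(i_m)$ are determined by a single real (or $\pm\infty$) parameter via the finite translation distances $\alpha$.

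The second step is to build the directed homeomorphism $\gamma: L \simeq [0,1]$ required by axiom (\ref{item.directed}). On each block, the coordinate $\beta$ restricted to the lowest-indexed element of the block is a homeomorphism from the block's contribution onto $[-\infty,\infty]$, compatible with the $\RR$-action. Concatenating these blocks in order — identifying the $+\infty$ end of one block with the $-\infty$ end of the next, which is exactly what $(\ast)$ forces at a breakpoint — realizes $L$ as the concatenation of $|B|+1$ copies of $[-\infty,\infty]$. Rescaling each copy to a subinterval of length $\frac{1}{|B|+1}$ (as in the proof of Theorem \ref{theorem.line-recognition}) gives a homeomorphism $\gamma: L \simeq [0,1]$; it is directed because translation increases each $\beta(i)$, hence increases $\gamma$. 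This also shows $L$ is compact and Hausdorff (it is closed in $[-\infty,\infty]^I$), so it is genuinely homeomorphic to $[0,1]$. For axiom (\ref{item.finite-fixed}), a point is $\RR$-fixed iff adding $t$ to every coordinate leaves the tuple unchanged, i.e. iff every $\beta(i) \in \{-\infty, +\infty\}$; by the linkage relations these fixed points correspond bijectively to the $|B|+2$ "cut points" of the block decomposition (the two global endpoints plus one interior fixed point per breakpoint), which is a finite set.

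The main obstacle I anticipate is the bookkeeping around the degenerate cases of condition $(\ast)$ — in particular the asymmetry in $(\ast)$ (it is only an implication, triggered by $\beta(j) > -\infty$ or $\alpha(i,j) < \infty$), which means one must be careful that at a breakpoint $m \in B$ the fiber coordinate is allowed to "sit" at an interior fixed point and that no spurious extra components or missing points arise. Concretely, I would check that the map sending a tuple to its block-parameters is a well-defined continuous bijection onto $\prod_{\text{blocks}} [-\infty,\infty]$ glued end-to-end, with continuous inverse; continuity of the inverse is where the subspace topology from $[-\infty,\infty]^{I}$ must be matched against the interval topology, and this is routine but is the step most prone to error. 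Once the explicit homeomorphism and fixed-point count are in hand, both axioms of Definition \ref{defn.broken-line} follow immediately, completing the proof.
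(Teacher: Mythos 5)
Your proposal is correct and follows essentially the same route as the paper's proof: both identify the decomposition of $I$ into consecutive ``blocks'' (your breakpoint set $B$ cutting $\{0,\ldots,n\}$ is exactly the set of gaps between the equivalence classes $I_1 \amalg \cdots \amalg I_m$ under the relation ``$\alpha(i,j)<\infty$ or $\alpha(j,i)<\infty$'' used in the paper), and both realize $L$ as the concatenation of $|B|+1 = m$ copies of $[-\infty,\infty]$ by parametrizing each block by a single coordinate shifted by the finite $\alpha$-distances. The only cosmetic difference is that you then unwind this to a directed homeomorphism $L \simeq [0,1]$ and a fixed-point count to check Definition \ref{defn.broken-line} directly, whereas the paper stops at the $\RR$-equivariant identification with the concatenation and quotes Example \ref{example.standard-broken-line}; and you anchor each block at its minimal element while the paper uses the maximal one.
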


\begin{proof}
The map $\alpha$ determines an equivalence relation on $I$, where two elements $i, j \in I$
are equivalent if either $i \leq_{I} j$ and $\alpha(i,j) < \infty$, or $j \leq_I i$ and $\alpha(j,i) < \infty$.
This equivalence relation determines a partition $I = I_1 \amalg I_2 \amalg \cdots \amalg I_m$
into equivalence classes. For each equivalence class $I_a \subseteq I$, choose a maximal
element $i_a \in I_a$ (with respect to the linear preordering $\leq_I$). 
Reindexing if necessary, we may assume that for $1 \leq a < b \leq m$,
we have $i_a \leq_I i_b$ (so that $\alpha(i_a, i_b) = \infty$).

Let us identify $L$ with the subset of $[ - \infty, \infty ]^{I}$ consisting of those tuples $\{ \beta(i) \}_{i \in I}$
satisfying condition $(\ast)$ of Construction \ref{makelines}. For $1 \leq a \leq m$, we define an $\RR$-equivariant map
$\gamma_a: [ -\infty, \infty ] \to L$ by the formula
$$ \gamma_a(t)(i) = \begin{cases} \infty & \text{ if $i \in I_{b}$ for $b < a$} \\
\alpha(i, i_a) + t & \text{ if $i \in I_a$ } \\
- \infty & \text{ if $i \in I_{b}$ for $b > a$}. \end{cases}$$
Unwinding the definitions, we see that the maps $\{ \gamma_a \}_{1 \leq a \leq m}$ induce an
$\RR$-equivariant isomorphism of $L$ with the concatenation of $m$ copies of $[ - \infty, \infty ]$, so that
$L$ is a broken line.
\end{proof}

\begin{proof}[Proof of Proposition \ref{bez}]
We verify that $\widetilde{\Rep}( I, \BR^{+} )$ satisfies the hypotheses $(A1)$ through $(A5)$ of Theorem \ref{theorem.line-recognition}:

\begin{itemize}
\item[$(A1)$] For each point $\alpha \in \Rep( I, \BR^{+} )$, the fiber $\widetilde{\Rep}( I, \BR^{+}) \times_{ \Rep( I, \BR^{+} ) } \{ \alpha \}$
is a broken line; this follows from Lemma \ref{bezlem}.

\item[$(A2)$] The fixed point locus $\widetilde{\Rep}( I, \BR^{+} )^{\RR}$ is a closed subset of the product
$$\Rep(I, \BR^{+}) \times \{ \pm \infty \}^{I},$$ so the projection map $\widetilde{\Rep}( I, \BR^{+} )^{\RR} \rightarrow \Rep(I, \BR^{+} )$ is unramified.

\item[$(A3)$] The map $\widetilde{\Rep}( I, \BR^{+} ) \rightarrow \Rep(I, \BR^{+} )$ is closed, since it is a composition
of the closed embedding $\widetilde{\Rep}(I, \BR^{+} ) \hookrightarrow \Rep(I, \BR^{+}) \times [ - \infty, \infty ]^{I}$
with the projection map $\Rep(I, \BR^{+}) \times [ - \infty, \infty ]^{I} \rightarrow \Rep(I, \BR^{+} )$ (which is closed,
since the space $[ - \infty, \infty ]^{I}$ is compact).

\item[$(A4)$] Let $$C \subseteq \widetilde{\Rep}( I, \BR^{+} ) \times_{ \Rep(I, \BR^{+} ) } \widetilde{\Rep}(I, \BR^{+})$$
be the set of pairs $(x,y)$, where $x,y \in \widetilde{\Rep}(I, \BR^{+} )$ are points having the same
image $\alpha: I \to \BR^{+}$ in the space $\Rep(I, \BR^{+} )$ and $x \leq_{ L } y$, where
$L$ is the broken line of Lemma \ref{bezlem}. We wish to show that $C$ is closed.
This follows from the observation that $C$ is the inverse image of the closed subset
$C_0 \subseteq [ - \infty, \infty ]^{I} \times [ - \infty, \infty ]^{I}$ consisting of those
pairs of sequences $( \{ \beta(i) \}_{i \in I}, \{ \beta'(i) \}_{i \in I} )$ satisfying
$\beta(i) \leq \beta'(i)$ for each $i \in I$.

\item[$(A5)$] Let $p = (\alpha, \{ \beta(i) \}_{i \in I} )$ be a point
of $\widetilde{\Rep}(I, \BR^{+})$ which is {\em not} fixed by the action of $\RR$, so we can choose some element $j \in I$ such that
$\beta(j) \in \RR$. We define a map 
$$f: \Rep(I, \BR^{+} ) \rightarrow \widetilde{ \Rep}(I, \BR^{+} ) \quad \quad f( \alpha' ) = (\alpha', \{ \beta'(i) \}_{i \in I} )$$
where each $\beta'(i)$ is given by the formula
$$ \beta'(i) = \begin{cases} \alpha'(i,j) + \beta(j) & \text{ if $i \leq_I j$ } \\
- \alpha'(j,i) + \beta(j) & \text{ if  $j \leq_I i$}; \end{cases}$$
here we adopt the convention that $- \alpha'(j,i) + \beta(j) = - \infty$ when $\alpha'(j,i) = \infty$.
It is easy to see that $f$ is a section of the projection map $\widetilde{\Rep}( I, \BR^{+} ) \rightarrow \Rep(I, \BR^{+} )$
satisfying $f( \alpha ) = ( \alpha, \{ \beta(i) \}_{i \in I} )$. 
\end{itemize}
\end{proof}

\subsection{The Proof of Theorem \ref{theorem-representab}}

Throughout this section, we fix a topological space $S$ and a nonempty finite linearly preordered set $I$.
Our goal is to prove Theorem \ref{theorem-representab} by showing that Construction \ref{suls} induces an equivalence of
categories
$$ \broken^{I}(S) \rightarrow \Hom_{ \Top}( S, \Rep( I, \BR^{+} ).$$
We begin by noting that the left hand side is (equivalent to) a set:

\begin{lemma}\label{harmless}
Suppose we are given a pair of objects
$$ (\pi: L_S \to S, \mu, \{ \sigma_i \}_{i \in I}), 
( \pi': L'_{S} \to S, \mu', \{ \sigma'_{i} \}_{i \in I} ) \in \broken^{I}(S).$$
Then:
\begin{itemize}
\item[$(1)$] If $h: L_{S} \to L'_{S}$ is an $\RR$-equivariant map
satisfying $\pi' \circ h = \pi$ and $h \circ \sigma_i = \sigma'_{i}$ for
each $i \in I$, then $h$ is a homeomorphism (and can therefore be regarded
as an isomorphism of $L_{S}$ with $L'_{S}$ in the category $\broken^{I}(S)$).

\item[$(2)$] If $h, h': L_S \to L'_{S}$ are two maps satisfying the hypotheses of $(1)$, then
$h = h'$.
\end{itemize}
\end{lemma}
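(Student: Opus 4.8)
The plan is to prove both assertions by pointwise analysis on the fibers over $S$, using the fact (established in Theorem~\ref{theorem.line-recognition} and Proposition~\ref{proposition.section-existence}) that an $I$-section identifies the underlying family with something tightly controlled by translation distances. Since an $\RR$-equivariant map $h$ covering $\id_{S}$ restricts to an $\RR$-equivariant map $h_{s}\colon L_{s}\to L'_{s}$ for each $s\in S$, and since the hypotheses pass to fibers ($h_{s}\circ\sigma_{i}(s)=\sigma'_{i}(s)$), it suffices to prove the two statements when $S=\{s\}$ is a point: a continuous bijection $L_{S}\to L'_{S}$ of spaces proper over $S$ (Remark~\ref{remark.properness}) covering $\id_{S}$ is automatically a homeomorphism, so (1) reduces to showing each $h_{s}$ is bijective, and (2) is a pointwise statement to begin with.

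For the pointwise version, fix a broken line $L$ with $I$-section $\{\sigma_{i}\}$ and another $L'$ with $I$-section $\{\sigma'_{i}\}$, and let $h\colon L\to L'$ be $\RR$-equivariant, endpoint-behaviour aside, with $h(\sigma_{i})=\sigma'_{i}$. First I would record that an $\RR$-equivariant map of broken lines is automatically monotone with respect to the orderings $\leq_{L},\leq_{L'}$ of Notation~\ref{notation.ordering}: it carries fixed points to fixed points and carries each connected component $A$ of $L\setminus L^{\RR}$ either to a single fixed point or homeomorphically (equivariantly) onto a component of $L'\setminus (L')^{\RR}$, because the $\RR$-action on each such component is simply transitive. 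Next, the key computation: for $x$ in a component $A$ containing some $\sigma_{i}$, equivariance gives $d_{L'}(\sigma'_{i},h(x))=d_{L'}(h(\sigma_{i}),h(x))=d_{L}(\sigma_{i},x)$, since $h$ intertwines the flows and $x=\mu(t,\sigma_{i})$ forces $h(x)=\mu(t,\sigma'_{i})$ for the same $t$. Because $\{\sigma_{i}\}$ is an $I$-section, every component of $L\setminus L^{\RR}$ contains some $\sigma_{i}$ (condition~(c) of Definition~\ref{definition.I-section}), so this determines $h$ on the non-fixed locus, hence on all of $L$ by continuity and density of $L\setminus L^{\RR}$; this proves~(2).

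For~(1) it remains to show such an $h$ is bijective (equivalently, surjective, since injectivity follows from strict monotonicity of $d_{L}(\sigma_{i},-)$ on the component of $\sigma_{i}$, combined with the monotonicity across components noted above). Both $L$ and $L'$ have non-fixed locus with exactly one component per element of $I$ after grouping by the equivalence relation ``$d_{L}(\sigma_{i},\sigma_{j})<\infty$'' — and crucially, conditions~(b) and~(c) of the $I$-section force this partition to be the same for $L$ and $L'$, since $d_{L}(\sigma_{i},\sigma_{j})<\infty$ iff $\sigma_{i},\sigma_{j}$ lie in the same component iff (by equivariance plus the distance-matching above) $d_{L'}(\sigma'_{i},\sigma'_{j})<\infty$. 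Thus $L$ and $L'$ have the same number $m$ of components, $h$ maps the $a$-th component of $L$ onto the $a$-th component of $L'$ (monotonically, hence surjectively, as the flow acts transitively and $d_{L}(\sigma_{i},-)$ achieves all finite real values on that component), and $h$ matches up the $m+1$ fixed points in order. Hence $h$ is a continuous bijection, and by the properness/compactness argument above, a homeomorphism.

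The main obstacle I anticipate is the bookkeeping in the last paragraph: one must be careful that an $\RR$-equivariant map is allowed to ``collapse'' a component of $L$ to a fixed point of $L'$ a priori, and the argument that this cannot happen (given the $I$-section hypotheses) is exactly where the combinatorial matching of partitions is used. Everything else — monotonicity, the distance identity, continuous-bijection-to-homeomorphism — is routine given the results already in the excerpt.
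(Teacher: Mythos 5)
Your reduction to fibers and your proof of part $(2)$ match the paper's argument essentially word for word. The divergence is in part $(1)$, where you take a genuinely different and more combinatorial route: you try to show that $h$ matches up components of $L^{\circ}$ and $L'^{\circ}$ in order, by first arguing that the partitions of $I$ induced by the relation ``$\sigma_i,\sigma_j$ lie in the same component'' agree for $L$ and $L'$. The paper sidesteps this bookkeeping entirely. For surjectivity it notes that the image of $h$ is an $\RR$-invariant set containing every $\sigma'_i(s)$, hence (by condition $(c)$ of Definition~\ref{definition.I-section}) contains each component of $L'^{\circ}_{S}$; since $h$ is closed (properness) and $L'^{\circ}_{S}$ is dense, $h$ is onto. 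For injectivity it observes that if $h(x)=h(y)$ with $x<_{L_S}y$ then $h$ is constant on the open interval $(x,y)$, which meets some component $V\ni\sigma_i(s)$, and equivariance then forces $\mu'(t,\sigma'_i(s))=\mu'(t',\sigma'_i(s))$ for $t\neq t'$, a contradiction.

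There is a real gap in your version of $(1)$. The ``iff'' you assert --- $d_{L}(\sigma_i,\sigma_j)<\infty$ if and only if $d_{L'}(\sigma'_i,\sigma'_j)<\infty$ --- is only justified in the forward direction: if $\sigma_j=\mu(t,\sigma_i)$ then equivariance gives $\sigma'_j=\mu'(t,\sigma'_i)$. The reverse direction (which is what you actually need to rule out $L$ having strictly more components than $L'$, i.e., two distinct components of $L^{\circ}$ mapping into the same component of $L'^{\circ}$) does not follow from the distance-matching identity, because that identity only applies to points inside a single component. The correct argument is a continuity argument about fixed points: if $\sigma_i<_{L}p<_{L}\sigma_j$ for some $p\in L^{\RR}$, let $p'$ be the terminal fixed point of the component $A_i\ni\sigma_i$; then $h(p')=\lim_{t\to\infty}\mu'(t,\sigma'_i)$ is the terminal fixed point $q$ of the component $A'\ni\sigma'_i$, and monotonicity of $h$ (which itself must be argued from continuity plus piecewise equivariance) forces $q=h(p')\leq_{L'}h(\sigma_j)=\sigma'_j$, contradicting $\sigma'_j\in A'$. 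You flag this as the ``main obstacle'' and correctly diagnose that it is the crux, but the proof as written asserts the matching rather than proving it; and your description of the worry (``collapse a component to a fixed point'') is not the actual failure mode, since the $\sigma_i\mapsto\sigma'_i$ hypothesis already precludes collapsing. The paper's route avoids needing any of this.
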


\begin{proof}
We first prove $(1)$. Note that since $L_{S}$ and $L'_{S}$ are both proper over $S$ (Remark \ref{remark.properness}),
any continuous map $h: L_S \to L'_{S}$ satisfying $\pi' \circ h = \pi$ is automatically closed.
Consequently, to show that $h$ is a homeomorphism, it will suffice to show that $h$ is bijective.
This condition can be checked fiberwise, so we can assume without loss of generality that
$S = \{s \}$ is a point. Note that the image of $h$ is an $\RR$-invariant subset
of $L'_{S}$ which contains each point $\sigma'_{i}(s)$, and therefore contains
the non-fixed locus $L'^{\circ}_{S} = L'_{S} \setminus L'^{\RR}_{S}$. Since
$h$ is closed and $L'^{\circ}_{S}$ is dense in $L'_{S}$, we conclude that $h$ is surjective.
To prove injectivity, suppose that we are given points $x \neq y$ in $L_{S}$ such
that $h(x) = h(y)$. Without loss of generality, we may assume that $x <_{L_S} y$
(where $\leq_{ L_S }$ is the ordering of Notation \ref{notation.ordering}). 
It then follows that the function $h$ is constant on the nonempty open set
$U = \{ z \in L_S: x <_{L_S} z <_{L_S} y \}$. The set $U$ has nonempty intersection
with some connected component $V$ of $L_{S}^{\circ} = L_{S} \setminus L_{S}^{\RR}$.
Choose $i \in I$ such that $\sigma_i(s)$ belongs to $V$. Then we can choose choose
real numbers $t \neq t' \in \RR$ such that $\mu(t, \sigma_i(s) ), \mu(t', \sigma_i(s)) \in U$.
It follows that
$$ \mu'(t, \sigma'_i(s) ) = h( \mu(t, \sigma_i(s)) ) = h( \mu(t', \sigma_i(s)) ) = \mu'(t', \sigma'_i(s)),$$
contradicting the fact that $\sigma'_i(s)$ belongs to the non-fixed locus $L'^{\circ}_{S}$. This completes the proof of $(1)$.

We now prove $(2)$. Suppose we are given a pair of maps $h,h': L_S \to L'_{S}$ satisfying the requirements of $(1)$;
we wish to show that $h(x) = h'(x)$ for each $x \in L_S$. 
Set $s = \pi(x)$. By continuity, we may assume without loss of generality that $x$ belongs
to the non-fixed locus $L_s^{\circ} = L_s \setminus L_s^{\RR}$. In this case, there
exists $t \in \RR$ and $i \in I$ such that $x = \mu(t, \sigma_i(s) )$, so that 
$h(x) = \mu'(t, \sigma'_i(s) ) = h'(x)$ by virtue of the $\RR$-equivariance of $h$ and $h'$.
\end{proof}

We now explicitly construct an inverse to the map of Construction \ref{suls}.

\begin{construction}\label{construction.inverse}
Let $\alpha: S \rightarrow \Rep(I, \BR^{+} )$ be a continuous map of topological spaces;
we denote the value of $\alpha$ on a point $s \in S$ by $\alpha_s \in \Rep(I, \BR^{+} )$. 
We let $[ \alpha ]_{S}$ denote the fiber product $\widetilde{\Rep}( I, \BR^{+} ) \times_{ \Rep(I, \BR^{+} )} S$.
Let $\pi: [\alpha]_S \to S$ be the projection onto the second factor.
By virtue of Proposition \ref{bez}, we can regard $\pi: [\alpha]_S \to S$ as a family of broken lines over $S$.
For each $j \in I$, we define a map $\widetilde{\alpha}_{j}: S \rightarrow \widetilde{ \Rep}(I, \BR^{+} )$
by the formula
$$ \widetilde{\alpha}_j( s ) = ( \alpha_s, \{ \beta_s(i) \}_{i \in I} )
\quad \quad \beta_s(i) = \begin{cases} \alpha_s(i,j) & \text{ if } i \leq_{I} j \\
- \alpha_s(j,i) & \text{ if } j \leq_{I} i. \end{cases}$$
Then each of the maps $\widetilde{\alpha}_{j}$ induces a section
$\sigma_j: S \to [ \alpha ]_{S}$ of the projection map $[ \alpha ]_{S}$.
It is not difficult to see that $\{ \sigma_j \}_{j \in I}$ is an $I$-section of $[ \alpha ]_{S}$,
in the sense of Definition \ref{definition.I-section}.
\end{construction}

\begin{proof}[Proof of Theorem \ref{theorem-representab}]
Let us abuse notation by identifying $\broken^{I}(S)$ with the set of isomorphism classes of objects
of $\broken^{I}(S)$ (this abuse is harmless, by virtue of Lemma \ref{harmless}). Construction \ref{construction.inverse}
then defines a map
$$ \rho: \Hom_{ \Top}( S, \Rep(I, \BR^{+} ) ) \rightarrow \broken^{I}(S)$$
$$ \rho( \alpha ) = ( [ \alpha ]_{S}, \{ \sigma_j \}_{j \in I} ).$$
If $\alpha: S \to \Rep(I, \BR^{+} )$ is a continuous map, then a simple calculation shows that the translation distance function
$d_{ [\alpha]_{S} }$ satisfies the identity
$d_{ [\alpha]_S }( \sigma_i(s), \sigma_j(s) ) = \alpha_{s}(i,j)$ for $i \leq_{I} j$. It follows that
$\rho$ is right inverse to the map given by Construction \ref{suls}. We claim that it is also a left inverse. To prove this,
suppose we are given an object $( \pi: L_S \to S, \mu, \{ \tau_j \}_{j \in I} ) \in \broken_{S}(i)$.
Define $\alpha: S \to \Rep(I, \BR^{+} )$ by the formula $\alpha_s(i,j) = d_{ L_S}( \tau_i(s), \tau_j(s) )$, as
in Construction \ref{suls}. Let $[\alpha]_{S}$ and $\{ \sigma_j: S \to [\alpha]_{S} \}_{j \in I}$ be as in Construction \ref{construction.inverse}.
We wish to show that $( L_S, \{ \tau_j \}_{j \in I} )$ and $( [ \alpha]_{S}, \{ \sigma_j \}_{i \in I} )$ are isomorphic
as objects of $\broken^{I}(S)$. To prove this, define a map
$h: L_S \to [\alpha]_S$ by the formula
$$ h(x) = ( \alpha_{\pi(s)}, \{ d_{ L_S}( \tau_i( \pi(s) ), x) \}_{i \in I} ).$$
A simple calculation shows that $h$ is an $\RR$-equivariant map satisfying $h \circ \tau_{i} = \sigma_i$ for $i \in I$
and compatible with the projection to $S$, and is therefore an isomorphism in $\broken^{I}(S)$ by virtue of
Lemma \ref{harmless}.
\end{proof}

\begin{corollary}\label{corollary.local-coordinates}
Let $S$ be a topological space and let $\pi: L_S \rightarrow S$ be an $S$-family of broken lines.
For each point $s \in S$, there exists an open set $U \subseteq S$ containing $s$,
a nonempty finite linearly ordered set $I$, a continuous map $\alpha: U \rightarrow \Rep( I, \BR^{+} )$, and
an isomorphism $$U \times_{S} L_S \simeq U \times_{ \Rep(I, \BR^{+} ) } \widetilde{\Rep}( I, \BR^{+} )$$
of $U$-families of broken lines.
\end{corollary}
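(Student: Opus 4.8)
The plan is to combine the existence of local $I$-sections (Proposition \ref{proposition.section-existence}) with the representability of $\broken^{I}$ (Theorem \ref{theorem-representab}) and the explicit description of its universal family (Proposition \ref{bez}). First I would invoke Proposition \ref{proposition.section-existence}: since $\pi\colon L_S \to S$ is an $S$-family of broken lines, in particular axioms $(A1)$, $(A2)$, $(A4)$, $(A5)$ of Theorem \ref{theorem.line-recognition} hold (by the proof of necessity in that theorem), so there is an open set $U \subseteq S$ containing $s$, a nonempty finite linearly ordered set $I$, and an $I$-section $\{\sigma_i\}_{i\in I}$ of the projection $L_S\times_S U \to U$. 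Thus $(\,L_S\times_S U \to U,\ \mu|_U,\ \{\sigma_i\}\,)$ is an object of $\broken^{I}(U)$.

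Next I would feed this object into Theorem \ref{theorem-representab}, which asserts that $\broken^{I}$ is represented by $\Rep(I,\BR^{+})$ via Construction \ref{suls}. Concretely, the object just produced is classified by the continuous map $\alpha\colon U \to \Rep(I,\BR^{+})$ given by $\alpha_{s'}(i,j) = d_{L_S}(\sigma_i(s'),\sigma_j(s'))$ (continuity is Proposition \ref{proposition.translation-distance}, and the hypothesis $\alpha_{s'}(i,j)\neq-\infty$ for $i\leq_I j$ is condition $(b)$ of Definition \ref{definition.I-section}). Since $\widetilde{\Rep}(I,\BR^{+}) \to \Rep(I,\BR^{+})$ is the universal family equipped with an $I$-section (Proposition \ref{bez} together with Construction \ref{construction.inverse}), the statement that $\alpha$ classifies our object is precisely an isomorphism of $U$-families of broken lines
$$ U \times_S L_S \ \simeq\ U \times_{\Rep(I,\BR^{+})} \widetilde{\Rep}(I,\BR^{+}), $$
compatible with the $I$-sections (we may of course forget the $I$-sections to get the asserted isomorphism). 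This isomorphism is exactly the map $h$ written down at the end of the proof of Theorem \ref{theorem-representab}, namely $h(x) = (\alpha_{\pi(x)}, \{d_{L_S}(\sigma_i(\pi(x)),x)\}_{i\in I})$, which is shown there to be an isomorphism in $\broken^{I}(U)$ by Lemma \ref{harmless}.

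There is essentially no hard part remaining: the corollary is a direct packaging of Proposition \ref{proposition.section-existence} and Theorem \ref{theorem-representab}. The only point requiring a word of care is the passage from ``$\pi$ is an $S$-family of broken lines'' to ``the hypotheses $(A1)$, $(A2)$, $(A4)$, $(A5)$ of Proposition \ref{proposition.section-existence} hold for $L_S \times_S U$'', which follows from the Proof of Necessity already given for Theorem \ref{theorem.line-recognition} (the needed axioms are verified there, working locally on $S$) together with the compatibility of all the structures with pullback (Remark \ref{remark.build-pullback}). Once that is noted, the proof is two sentences: apply Proposition \ref{proposition.section-existence} to get $(U, I, \{\sigma_i\})$, then apply Theorem \ref{theorem-representab} to get $\alpha$ and the isomorphism.
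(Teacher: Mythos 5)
Your proposal is correct and is exactly the paper's argument: the paper's proof is the one-line instruction to combine Proposition \ref{proposition.section-existence} (local existence of an $I$-section) with Theorem \ref{theorem-representab} and its proof (the classifying map $\alpha$ and the explicit isomorphism $h$). Your added remark that the hypotheses of Proposition \ref{proposition.section-existence} are supplied by the already-proved necessity direction of Theorem \ref{theorem.line-recognition} is a correct and harmless elaboration.
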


\begin{proof}
Combine Theorem \ref{theorem-representab} (and its proof) with \ref{proposition.section-existence}.
\end{proof}

Corollary \ref{corollary.local-coordinates} can be regarded as ``parametrized version'' of the following elementary observation, which we asserted in Example~\ref{example.points-of-broken}:

\begin{corollary}[Classification of Broken Lines]\label{corollary.classification}
Let $L$ be a broken line. Then $L$ is isomorphic to a concatenation of finitely many copies of $[-\infty,\infty]$.
\end{corollary}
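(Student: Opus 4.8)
The plan is to deduce Corollary \ref{corollary.classification} as the special case $S = \ast$ of Corollary \ref{corollary.local-coordinates}, which itself packages together Theorem \ref{theorem-representab}, Proposition \ref{proposition.section-existence}, and Lemma \ref{bezlem}. First I would observe that a broken line $L$ is the same datum as an $\ast$-family of broken lines in the sense of Definition \ref{defn.S-family} (this was noted explicitly in the excerpt). Applying Corollary \ref{corollary.local-coordinates} with $S = \ast$ (so the only open set containing the point is $\ast$ itself), there exists a nonempty finite linearly ordered set $I$, a point $\alpha \in \Rep(I, \BR^{+})$, and an isomorphism of broken lines
\begin{equation*}
L \;\simeq\; \widetilde{\Rep}(I, \BR^{+}) \times_{\Rep(I,\BR^{+})} \{\alpha\}.
\end{equation*}

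Next I would invoke Lemma \ref{bezlem}, whose proof shows that for any functor $\alpha: I \to \BR^{+}$, the fiber $\widetilde{\Rep}(I, \BR^{+}) \times_{\Rep(I,\BR^{+})} \{\alpha\}$ is $\RR$-equivariantly isomorphic to a concatenation of $m$ copies of $[-\infty,\infty]$, where $m$ is the number of equivalence classes of the equivalence relation on $I$ determined by $\alpha$ (two elements $i,j$ being equivalent when they are connected by a relation on which $\alpha$ is finite). In particular $1 \le m \le |I|$, so $m$ is a positive integer. Combining the two displayed identifications gives an isomorphism of $L$ with a concatenation of $m$ copies of $[-\infty,\infty]$, which is exactly the assertion of Corollary \ref{corollary.classification}.

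There is essentially no obstacle here: the entire content has been front-loaded into the machinery already established, and the corollary is a routine specialization. The only point requiring a moment's care is the bookkeeping that the linearly ordered set $I$ produced by Proposition \ref{proposition.section-existence} is nonempty (equivalently, that $L$ is nonempty), which is guaranteed since a broken line is homeomorphic to $[0,1]$ and so $L^{\RR}$ has at least the two endpoints, forcing $I$ to be nonempty in the construction of the $I$-section; hence $m \geq 1$. Alternatively, and even more directly, one may simply cite that $\Rep(I,\BR^{+})$ is connected by Remark \ref{remark.opener} together with Theorem \ref{theorem-representab}, but the fiberwise description via Lemma \ref{bezlem} already delivers the statement without any further input.

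\begin{proof}
A broken line $L$ may be regarded as an $\ast$-family of broken lines. By Corollary \ref{corollary.local-coordinates} applied with $S = \ast$, there is a nonempty finite linearly ordered set $I$ and a functor $\alpha: I \to \BR^{+}$ such that $L$ is isomorphic, as a broken line, to the fiber $\widetilde{\Rep}(I, \BR^{+}) \times_{\Rep(I,\BR^{+})} \{\alpha\}$. By Lemma \ref{bezlem} (and its proof), this fiber is $\RR$-equivariantly isomorphic to a concatenation of $m$ copies of $[-\infty,\infty]$, where $m \geq 1$ is the number of equivalence classes in the partition of $I$ induced by $\alpha$. Hence $L$ is isomorphic to a concatenation of finitely many copies of $[-\infty,\infty]$.
\end{proof}
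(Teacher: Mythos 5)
Your proof is correct and follows exactly the same route as the paper: apply Corollary \ref{corollary.local-coordinates} with $S = \ast$ to identify $L$ with a fiber of $\widetilde{\Rep}(I,\BR^{+}) \to \Rep(I,\BR^{+})$, then read off the concatenation structure from (the proof of) Lemma \ref{bezlem}. The extra remarks about nonemptiness of $I$ are harmless but unnecessary, since Corollary \ref{corollary.local-coordinates} already asserts $I$ is nonempty.
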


\begin{proof}
By virtue of Corollary \ref{corollary.local-coordinates}, we can assume that there exists a nonempty finite linearly ordered set $I$
and a map $\alpha: I \to \BR^{+}$ such that $L \simeq \{ \alpha \} \times_{ \Rep(I, \BR^{+} ) } \widetilde{\Rep}( I, \BR^{+} )$.
In this case, the desired result follows as in the proof of Lemma \ref{bezlem}.
\end{proof}

\subsection{Fiber Products}\label{section.fiber-products}	

Corollary \ref{corollary.local-coordinates} implies that the topological spaces $$\Rep(I, \BR^{+} ) \simeq \broken^{I}$$ form a covering of the moduli stack $\broken$ of broken lines. Our next goal is to describe the nerve of this covering.

\begin{notation}\label{fiberstack}
Let $(I, \leq_I)$ and $(J, \leq_J)$ be nonempty finite linearly ordered sets. We let
$\broken^{I,J}$ denote the fiber product $\broken^{I} \times_{\broken} \broken^{J}$, formed in the
$2$-category of topological stacks. More concretely, $\broken^{I,J}$ is the topological stack which
assigns to a topological space $S$ the groupoid of triples $(L_S, \{ \sigma_i \}_{i \in I}, \{ \tau_j \}_{j \in J} )$, where
$L_S$ is an $S$-family of broken lines, $\{ \sigma_i \}_{i \in I}$ is an $I$-section of $L_S$,
and $\{ \tau_j \}_{j \in J}$ is a $J$-section of $L_S$.
\end{notation}

\begin{proposition}\label{toprep}
For every pair of nonempty finite linearly ordered sets $(I, \leq_I)$ and $(J, \leq_J)$, the topological stack $\broken^{I,J}$ is (representable by) a topological space.
\end{proposition}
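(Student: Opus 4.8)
The plan is to reduce Proposition \ref{toprep} to Theorem \ref{theorem-representab} by exhibiting $\broken^{I,J}$ as a disjoint union (or more precisely, an open subspace of a disjoint union) of representable stacks of the form $\broken^{K}$, where $K$ ranges over the linearly preordered sets that ``amalgamate'' $I$ and $J$. The key observation is this: if $L_S$ is an $S$-family of broken lines equipped with both an $I$-section $\{\sigma_i\}_{i\in I}$ and a $J$-section $\{\tau_j\}_{j\in J}$, then the disjoint union $I \amalg J$ acquires, at each point $s \in S$, a binary relation determined by the ordering $\leq_{L_s}$ of the fiber: we declare $i \leq j$ precisely when $\sigma_i(s) \leq_{L_s} \sigma_j(s)$ (interpreting $\sigma$ and $\tau$ uniformly as sections into $L_S^\circ$). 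Conditions $(b)$ and $(c)$ in Definition \ref{definition.I-section}, applied to both sections, together with the continuity of the translation distance function (Proposition \ref{proposition.translation-distance}), guarantee that this relation is a linear preordering of $I \amalg J$ that is \emph{locally constant} in $s$ --- the set of $s \in S$ realizing a fixed preordering $\preceq$ on $I\amalg J$ is open, because strict inequalities $d_{L_S}(\sigma_a(s),\sigma_b(s)) \in (0,\infty]$ and the finite-translation-distance conditions are open, and for a fixed finite set these conditions partition $S$ into clopen pieces.

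Next I would make precise the notion of \emph{admissible amalgamation}: a linear preordering $\preceq$ on the set $I \amalg J$ whose restriction to $I$ refines $\leq_I$ in the appropriate sense (i.e. $i \leq_I i' \Rightarrow i \preceq i'$, and the $\preceq$-equivalence classes meeting $I$ are compatible with the $\leq_I$-classes), and similarly for $J$, together with the condition that every $\preceq$-equivalence class contains an element of $I$ or of $J$ (this is what keeps the resulting structure a genuine $(I\amalg J)$-section rather than one with a ``missing component''). Given such a $\preceq$, write $K = (I\amalg J, \preceq)$. Forgetting down to $I$ and to $J$ respectively gives maps of stacks $\broken^{K} \to \broken^{I}$ and $\broken^{K}\to\broken^{J}$, hence a map $\broken^{K} \to \broken^{I,J}$; on the level of $S$-points it sends an $(I\amalg J)$-section to the pair consisting of its restriction to $I$ and its restriction to $J$. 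I claim the induced map
\[
\coprod_{K} \broken^{K} \longrightarrow \broken^{I,J}
\]
(coproduct over admissible amalgamations $K$) is an equivalence of stacks. Surjectivity on $S$-points is exactly the locally-constant-preordering observation above, applied after passing to the open cover of $S$ by the pieces $S_{\preceq}$; fully faithfulness follows because a morphism in $\broken^{I,J}$ is an $\RR$-equivariant homeomorphism over $S$ compatible with \emph{both} section systems, which is the same datum as a morphism in the relevant $\broken^{K}$ (and Lemma \ref{harmless} shows these groupoids are discrete, so there is nothing subtle about the $2$-categorical structure). Since each $\broken^{K}$ is representable by the topological space $\Rep(K,\BR^{+})$ by Theorem \ref{theorem-representab}, and a disjoint union of topological spaces is a topological space, we conclude that $\broken^{I,J}$ is representable --- by $\coprod_{K}\Rep(K,\BR^{+})$.

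The main obstacle I anticipate is verifying carefully that the decomposition $\{S_{\preceq}\}$ really is an \emph{open} cover of $S$ (so that the equivalence of stacks can be checked locally and the coproduct is the right construction), and in particular that the ``every equivalence class meets $I$ or $J$'' condition is automatic rather than an extra constraint --- this needs condition $(c)$ of Definition \ref{definition.I-section} for both the $I$-section and the $J$-section, plus the fact that a fiber $L_s$ has only finitely many components, each with a well-defined pair of bounding fixed points. A secondary bookkeeping point is that two distinct admissible preorderings $\preceq$, $\preceq'$ give $S_\preceq \cap S_{\preceq'} = \emptyset$, so the coproduct has no overlaps to glue --- this should follow since the preordering on $I\amalg J$ at a point $s$ is uniquely determined by the two section systems and the fiber ordering. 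Once these set-theoretic/point-set matters are pinned down, the representability statement is immediate from Theorem \ref{theorem-representab}, and in fact this argument simultaneously yields the explicit open cover of $\broken^{I,J}$ by charts $\broken^{K}$ that Theorem \ref{theorem.fiber-product-description-vague} will need.
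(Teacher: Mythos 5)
The idea of relating $\broken^{I,J}$ to the stacks $\broken^{K}$ for amalgams $K$ of $I$ and $J$ is the right instinct (it is exactly what Theorem~\ref{theorem.fiber-product-description-vague} does), but your proof has a genuine gap: you claim the preordering on $I\amalg J$ induced at each point $s\in S$ is locally constant, so that the loci $S_\preceq = \{s : K_s = \preceq\}$ form an open partition of $S$, and hence that $\coprod_K\broken^K\to\broken^{I,J}$ is an isomorphism. Neither of these is correct. The condition ``$a\leq_s b$'' (which, following the paper, should be phrased as $d_{L_s}(\sigma_a(s),\sigma_b(s))\neq-\infty$ rather than as $\sigma_a(s)\leq_{L_s}\sigma_b(s)$ --- your version is a linear \emph{order}, not a preorder, and is even more badly behaved) is \emph{open} in $s$, by continuity of the translation distance function; but its negation ``$a\not\leq_s b$'' is therefore a \emph{closed} condition. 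Consequently the exact level set $\{s: K_s = \preceq\}$ is only locally closed, and the $S_\preceq$ stratify $S$ rather than partitioning it into clopen pieces. Concretely, take $I=J=\{0<1\}$: then $\Amal(I,J)$ consists of the indiscrete amalgam $K_1$ and the two-class amalgam $K_2$ with $\{i_0,j_0\}<\{i_1,j_1\}$; the set $\{K_s = K_1\}$ is open, so $\{K_s=K_2\}$ is its closed complement and is not open. So $\coprod_K\broken^K$ is not $\broken^{I,J}$ --- one sees this already by noting that $\coprod_K\broken^K$ is disconnected while $\broken^{I,J}$ is connected.

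What is actually true is that the open sets $U_K=\{s : K\leq K_s\}$ (i.e.\ the condition that $K_s$ \emph{refines} $K$, not that they agree) cover $S$, and these sets overlap, with $U_K\cap U_{K'}=U_{K\vee K'}$. One could in principle repair your argument by showing $\broken^{I,J}$ is the colimit of the diagram $\{\broken^K\}_{K\in\Amal(I,J)^{\op}}$ along these open immersions and checking that this colimit (a gluing of manifolds with corners along open subsets) is a topological space; but this requires real work to set up without circularity, since ``$U_K$ is an open subset of $\broken^{I,J}$'' presupposes that $\broken^{I,J}$ is already a space. The paper instead proves Proposition~\ref{toprep} directly and independently: because $\broken^I\simeq\Rep(I,\BR^+)$ is a topological space (Theorem~\ref{theorem-representab}), one pulls back $\broken^J\to\broken$ along the classifying map $\Rep(I,\BR^+)\to\broken$ and identifies the result with the open subset of the $J$-fold fiber power of $\widetilde{\Rep}(I,\BR^+)$ over $\Rep(I,\BR^+)$ cut out by the $J$-section condition, which is manifestly a topological space. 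The amalgam-indexed open cover you are reaching for is then derived as a \emph{consequence} in Theorem~\ref{theorem.fiber-product-description-vague}, not used as the proof of representability.
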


\begin{proof}
Let $X$ denote the $J$-fold fiber product of $\widetilde{\Rep}( I, \BR^{+} )$ with itself
over the space $\Rep(I, \BR^{+} )$. By definition, a point of $X$ is a tuple $\{ \widetilde{\alpha}_j \}_{j \in J}$
of points of $\widetilde{\Rep}(I, \BR^{+} )$, where each $\widetilde{\alpha}_j$ has the same image
$\alpha \in \Rep(I, \BR^{+} )$. Let $Y \subseteq X$ be the subset consisting of those
tuples $\{ \widetilde{\alpha}_j \}_{j \in J}$ which comprise a $J$-section of the broken line
$\{ \alpha \} \times_{ \Rep(I, \BR^{+} ) } \widetilde{\Rep}(I, \BR^{+} )$ (the subset $Y \subseteq X$ is open, though we do not need this). Unwinding the definitions, we have a pullback diagram
$$ \xymatrix{  Y \ar[r] \ar[d] & \broken^{I,J} \ar[d] \\
\Rep(I, \BR^{+} ) \ar[r] & \broken^{I}, }$$
where the bottom horizontal map is inverse to the equivalence of Theorem \ref{theorem-representab}
(that is, the map classifying the family of broken lines $\widetilde{\Rep}(I, \BR^{+} ) \to \Rep(I, \BR^{+} )$).
It follows that the upper horizontal map is also an equivalence: that is, $\broken^{I,J}$ is representable by the topological space $Y$.
\end{proof}

In what follows, we will abuse notation by identifying $\broken^{I,J}$ with the corresponding topological space. Our next goal is
to give an explicit open covering of $\broken^{I,J}$ by topological spaces of the form $\broken^{K}$ (Theorem \ref{theorem.fiber-product-description-vague}). 

\begin{remark}[Functoriality]\label{remark.functor}
Let $(I, \leq_I)$ and $(J, \leq_J)$ be a finite linearly preordered sets, so that $\leq_I$ and $\leq_J$ induce
equivalence relations $=_I$ and $=_J$ on the sets $I$ and $J$, respectively (see Notation \ref{linpreorder}).
We will say that a nondecreasing function $f: I \rightarrow J$ is {\it essentially surjective} if the induced map
$I / =_{I} \rightarrow J / =_J$ is surjective. If this condition is satisfied, then for any $S$-family of broken lines
$L_S$ and any $J$-section $\{ \sigma_j: S \to L_S \}_{j \in J}$ of $L_S$, the collection of maps
$\{ \sigma_{ f(i) }: S \to L_S \}_{i \in I}$ is an $I$-section of $L_S$. Consequently, the construction
$$ (\pi: L_S \to S, \mu, \{ \sigma_{j} \}_{j \in J} ) \mapsto (\pi: L_S \to S, \mu, \{ \sigma_{ f(i) } \}_{i \in I})$$
induces a functor $\Pt( \broken^{J} ) \to \Pt( \broken^{I} )$, which we can identify
with a morphism of topological stacks $\broken^{J} \to \broken^{I}$. (See Construction~\ref{construction.dotwell}.)
Under the identifications $\broken^{I} \simeq \Rep(I, \BR^{+} )$ and $\broken^{J} \simeq \Rep(J, \BR^{+} )$
supplied by Theorem \ref{theorem-representab}, this corresponds to the map
$\Rep(J, \BR^{+}) \to \Rep(I, \BR^{+} )$ given by precomposition with $f$. 
\end{remark}

\begin{notation}\label{notation.amalgam-poset}
Let $(I, \leq_{I})$ and $(J, \leq_{J})$ be nonempty finite linearly preordered sets. An {\it amalgam}
of $\leq_{I}$ and $\leq_{J}$ is a linearly preordered set $(K, \leq_K)$ with the following properties:
\begin{itemize}
\item The underlying set $K$ is the disjoint union $I \amalg J$.
\item The inclusion maps $I \hookrightarrow K \hookleftarrow J$ are nondecreasing and essentially surjective.
\end{itemize}
Let $\Amal(I,J)$ denote the collection of all amalgams of $I$ and $J$. We will regard $\Amal(I,J)$ as a category,
where a morphism from $(K, \leq_K)$ to $(K', \leq_{K'} )$ is a nondecreasing map which is the identity on $I$ and $J$. Note that this category is actually a partially ordered set---in particular, there is at most one morphism between every pair of objects of $\Amal(I,J)$.

It follows from Remark \ref{remark.functor} that each amalgam $(K, \leq_K) \in \Amal(I,J)$ determines a canonical map
$$ \broken^{K} \rightarrow \broken^{I,J} = \broken^{I} \times_{\broken} \broken^{J}.$$
Moreover, we can regard $\broken^{K}$ as a contravariant functor of $K \in \Amal(I,J)$.
\end{notation}

\begin{remark}
Let $(I, \leq_I)$ and $(J, \leq_J)$ be nonempty finite linearly preordered sets. Then the partially ordered set $\Amal(I,J)$ is a join semilattice. That is, every pair of objects $K, K' \in \Amal(I,J)$ has a least upper bound in
$\Amal(I,J)$, which we will denote by $K \vee K'$.
\end{remark}

We can now state the main result of this section:

\begin{theorem}\label{theorem.fiber-product-description-vague}
Let $(I, \leq_I)$ and $(J, \leq_J)$ be finite nonempty linearly preordered sets. Then:
\begin{itemize}
\item[$(a)$] For each $K \in \Amal(I,J)$, the canonical map
$\broken^{K} \rightarrow \broken^{I,J}$ is an open embedding, whose image is an open subset
$U_{K} \subseteq \broken^{I,J}$.
\item[$(b)$] The open sets $U_K$ cover the topological space $\broken^{I,J}$.
\item[$(c)$] For every pair of amalgams $K,K' \in \Amal(I,J)$, we have
$U_{K \vee K'} = U_K \cap U_{K'}$.
\end{itemize}
\end{theorem}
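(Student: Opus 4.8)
The plan is to work entirely on the level of the representing topological spaces supplied by Theorem~\ref{theorem-representab} and Proposition~\ref{toprep}. Recall from the proof of Proposition~\ref{toprep} that $\broken^{I,J}$ is represented by the space $Y \subseteq X$, where $X$ is the $J$-fold fiber power of $\widetilde{\Rep}(I,\BR^{+})$ over $\Rep(I,\BR^{+})$ and $Y$ carves out those tuples $\{\widetilde{\alpha}_j\}_{j\in J}$ which form a $J$-section of the broken line over the common base point $\alpha\in\Rep(I,\BR^{+})$. Concretely, a point of $Y$ is the data of $\alpha\colon I\to\BR^{+}$ together with, for each $j\in J$, a point $\sigma_j$ in the non-fixed locus of the broken line $L_\alpha$, such that the $\sigma_j$ hit every connected component of $L_\alpha^\circ$ and satisfy the finiteness-of-translation-distance condition. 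First I would record the ``master coordinate'' on $Y$: given a point of $Y$, the combined collection $\{\sigma_i\}_{i\in I}\cup\{\sigma_j\}_{j\in J}$ of sections of $L_\alpha$ (where $\{\sigma_i\}_{i\in I}$ is the tautological $I$-section coming from $\widetilde{\Rep}(I,\BR^{+})$) determines a linear preordering $\leq_K$ on $K=I\amalg J$ by comparing the sections inside $L_\alpha$ via $\leq_{L_\alpha}$—two sections being equivalent exactly when their translation distance is finite. Because the $I$-part and $J$-part are each already $I$- resp.\ $J$-sections, the restrictions of $\leq_K$ to $I$ and to $J$ recover $\leq_I$ and $\leq_J$ and the inclusions are essentially surjective; thus $\leq_K$ is an amalgam, and we get a well-defined ``type map'' $\tau\colon Y\to\Amal(I,J)$.

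The core of the argument is then to identify $U_K$ with $\tau^{-1}(\{K'\in\Amal(I,J): K'\leq K\})$, the set of points whose type refines to $K$. For part~$(b)$, every point of $Y$ has \emph{some} type $\tau(\text{point})=K_0\in\Amal(I,J)$, and it lies in $U_{K_0}$, so the $U_K$ cover. For part~$(a)$, I would show directly that the canonical map $\broken^{K}\to\broken^{I,J}$—which under Theorem~\ref{theorem-representab} is the map $\Rep(K,\BR^{+})\to Y$ classifying the universal $K$-family together with the $I$- and $J$-subsections—is a homeomorphism onto the locally closed (in fact open) subset $\{y\in Y:\tau(y)\leq K\}$. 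Injectivity and the description of the image are formal from the definitions and from Lemma~\ref{harmless}; continuity of the inverse amounts to saying that on this piece of $Y$ the $K$-translation-distance data $d_{L_\alpha}(\sigma_k,\sigma_{k'})$ varies continuously, which is exactly Proposition~\ref{proposition.translation-distance} applied to the universal family. Openness of the image is the one genuine point of content: I would prove that the condition ``$\tau(y)\leq K$'' is open by using Remark~\ref{remark.opener}, which exhibits $\Rep(K,\BR^{+})$ as an open subset of $(\RR^{+})^{n}$ cut out by finitely many conditions of the form $t_m<\infty$; an inequality ``$<\infty$'' on a translation distance is an open condition by Proposition~\ref{proposition.translation-distance}, and no \emph{closed} conditions appear because passing to a coarser amalgam only ever \emph{adds} equivalences (finite distances), never imposes equalities. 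Once $U_K=\{\tau\leq K\}$ is established as an open subset, part~$(c)$ is immediate: $\tau(y)\leq K$ and $\tau(y)\leq K'$ together say exactly $\tau(y)\leq K\vee K'$, since $K\vee K'$ is the least upper bound in the join-semilattice $\Amal(I,J)$, giving $U_K\cap U_{K'}=U_{K\vee K'}$.

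The main obstacle I anticipate is the openness claim in part~$(a)$—making precise that ``the type can only jump to a \emph{finer} amalgam under specialization, never to an incomparable or coarser one, and refinement is an open move.'' The subtlety is that in a degenerating family two sections $\sigma_k,\sigma_{k'}$ that lie on distinct components (so $d_{L}(\sigma_k,\sigma_{k'})=\pm\infty$) can collide into a single component in a limit (distance becomes finite), but the reverse cannot happen continuously because finiteness of translation distance is preserved by Proposition~\ref{proposition.translation-distance} on the relevant open locus. I would handle this by choosing, as in Remark~\ref{remark.opener}, an enumeration $K=\{k_0\leq_K k_1\leq_K\cdots\leq_K k_N\}$ refining the given orderings on $I$ and $J$, writing the image of $\Rep(K,\BR^{+})$ in coordinates $(t_1,\dots,t_N)=(d_L(\sigma_{k_{m-1}},\sigma_{k_m}))$, and verifying that the defining conditions ($t_m=0$ when $k_{m-1}=_Kk_m$, else $0\le t_m\le\infty$, plus the finiteness constraints coming from $\leq_I$ and $\leq_J$) describe precisely the intersection of $Y$ with an open box in $(\RR^{+})^{N}$, invoking continuity of $d_L$ throughout. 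Everything else—the functoriality setup, the comparison with Lemma~\ref{harmless}, and the semilattice bookkeeping for~$(c)$—is routine once this local-coordinate picture is in hand.
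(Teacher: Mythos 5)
Your framework — defining a ``type map'' $\tau$ sending a point of $\broken^{I,J}$ to the amalgam on $I\amalg J$ induced by comparing translation distances, identifying each $U_K$ as a preimage under $\tau$, and appealing to Proposition~\ref{proposition.translation-distance} for openness — is exactly the paper's, but you have the direction of the partial order on $\Amal(I,J)$ reversed, and this error is not cosmetic. Recall from Notation~\ref{notation.amalgam-poset} that $K\leq K'$ means ${\leq_K}\subseteq{\leq_{K'}}$; a point $y$ with sections $\{\sigma_i,\tau_j\}$ underlies a $K$-section precisely when $d_{L_s}(a,b)>-\infty$ for every pair $a\leq_K b$, i.e.\ when ${\leq_K}\subseteq{\leq_{\tau(y)}}$, i.e.\ when $K\leq\tau(y)$. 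So the correct identification is $U_K=\{y: K\leq\tau(y)\}=\tau^{-1}(\{K':K\leq K'\})$, the set of points whose type is \emph{coarser} than $K$ — not $\{y:\tau(y)\leq K\}$ as you write. With your formula, $U_K$ would be cut out by conditions ``$d_{L}(a,b)=-\infty$'' for the pairs with $b<_K a$ strictly, and each such condition is \emph{closed}, so your $U_K$ is a closed set, not open; this breaks part~$(a)$. Your argument for part~$(c)$ also fails with your convention: $\tau\leq K$ together with $\tau\leq K'$ is an upper-bound condition $\tau\leq K\wedge K'$ (a meet), whereas the theorem needs the join, and indeed with the corrected direction $K\leq\tau$ and $K'\leq\tau$ hold jointly if and only if $K\vee K'\leq\tau$ by the very definition of least upper bound, giving $U_K\cap U_{K'}=U_{K\vee K'}$ cleanly.

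The ``subtlety'' paragraph is also internally inconsistent in a way that signals the same confusion. You correctly state that ``the type can only jump to a finer amalgam under specialization,'' but then assert that sections on distinct components can ``collide into a single component in a limit'' — this would make the type \emph{coarser} at the special point, contradicting what you just said, and it is ruled out by continuity: if $d_{L_{s_n}}(a,b)=\infty$ for $s_n\to s_0$, then $d_{L_{s_0}}(a,b)=\infty$ by Proposition~\ref{proposition.translation-distance}. It is the reverse — finite distance going to $\pm\infty$ — that happens at specializations (this is exactly what Example~\ref{easybreak} exhibits and what Remark~\ref{remark.semicontinuity} quantifies). Once the direction is corrected, the openness argument is immediate and in fact simpler than you suggest: $U_K=\bigcap_{a\leq_K b}\{y: d_L(a,b)>-\infty\}$ is a finite intersection of open sets, with no need to invoke Remark~\ref{remark.opener} or the local-coordinate picture.
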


\begin{warning}
In the situation of Theorem \ref{theorem.fiber-product-description-vague}, 
each amalgam $K \in \Amal(I,J)$ can be viewed as a linearly preordered set which is {\em never} linearly ordered.
This is our primary motivation for considering the objects $\broken^{I}$ when $I$ is a general preordering, rather than restricting our attention to linear orderings.
\end{warning}

\begin{remark}
It follows formally from Theorem \ref{theorem.fiber-product-description-vague} that
$\broken^{I,J}$ can be realized as the colimit $\varinjlim_{K \in \Amal(I,J)^{\op} } \broken^{K}$, formed
either in the ordinary category of topological spaces or in the $2$-category of topological stacks. See Lemma \ref{lemma.answered} for a stronger statement along similar lines.
\end{remark}

\begin{proof}[Proof of Theorem \ref{theorem.fiber-product-description-vague}]
Suppose we are given a topological space $S$ and an object 
$$( L_S, \{ \sigma_i \}_{i \in I}, \{ \tau_j \}_{j \in J} ) \in \broken^{I,J}(S).$$ For every point $s \in S$, let us regard the elements $\{ \sigma_i(s), \tau_j(s) \}_{i \in I, j \in J}$
as defining a map of sets $f_s: I \amalg J \to L_s$. Let $\leq_s$ denote the linear preordering of
$I \amalg J$ given by 
$$ ( a \leq_s b ) \Leftrightarrow (d_{L_s}( a, b) \neq - \infty)$$
Let $K_s$ denote the linearly preordered set $(I \amalg J, \leq_s )$, which we will identify with an object
of $\Amal(I,J)$. For each object $K \in \Amal(I,J)$, we can identify the fiber product
$S \times_{ \broken^{I,J} } \broken^{K}$ with the open subset $S_K \subseteq S$
consisting of those points $s$ for which $K \leq K_s$ (in the partially ordered set $\Amal(I,J)$; see Notation~\ref{notation.amalgam-poset}).
From this observation, assertions $(a)$, $(b)$, and $(c)$ are immediate.
\end{proof}

\subsection{The Lie Groupoid of Broken Lines}

We now use the analysis of \S \ref{section.fiber-products} to define a smooth structure on the moduli stack $\broken$ of broken lines.

\begin{definition}\label{definition.liegroupoid}
Let $\calG$ be a category. We let $\Ob( \calG )$ denote the set of objects of $\calG$ and $\Mor(\calG)$ the set of morphisms of $\calG$,
so that we have maps $s,t: \Mor(\calG) \rightarrow \Ob(\calG)$ which associate to each morphism $f: C \rightarrow D$ its
source $C = s(f)$ and target $D = t(f)$, respectively. 

We will say that a category $\calG$ is a {\it Lie groupoid with corners} if the sets $\Ob(\calG)$ and $\Mor(\calG)$ are equipped with
the structure of smooth manifolds with corners satisfying the following conditions:
\begin{itemize}
\item[$(a)$] The map $s$ is submersive. That is, for every point $f \in \Mor(\calG)$, there exist open neighborhoods
$U \subseteq \Mor(\calG)$ containing $f$ and $V \subseteq \Ob(\calG)$ containing $s(f)$ for which
the restriction $s|_{U}$ factors as a composition
$$ U \xrightarrow{h} V \times \RR^{k} \rightarrow V \hookrightarrow \Ob(\calG),$$
where $h$ is a diffeomorphism.

\item[$(b)$] The map $t$ is submersive.

\item[$(c)$] The construction $(C \in \Ob(\calG) ) \mapsto (\id_{C} \in \Mor(\calG) )$ is a smooth map of manifolds with corners.

\item[$(d)$] The composition law $\Mor(\calG) \times_{\Ob(\calG)} \Mor(\calG) \rightarrow \Mor(\calG)$ is smooth map of manifolds with corners (note that
the fiber product $\Mor(\calG) \times_{ \Ob(\calG)} \Mor(\calG)$ inherits a smooth structure from assumption $(a)$ or $(b)$).

\item[$(e)$] The category $\calG$ is a groupoid: that is, every morphism $f \in \Mor(\calG)$ has an inverse $f^{-1} \in \Mor(\calG)$. Moreover,
the construction $f \mapsto f^{-1}$ determines a smooth map from $\Mor(\calG)$ to itself.
\end{itemize}
\end{definition}

\begin{remark}
As mentioned in the introduction, Lie groupoids represent reasonable geometric objects among all stacks on the site of smooth manifolds. This is analogous to the way in which Artin stacks are reasonable objects among all stacks on the site of algebraic varieties.
\end{remark}

\begin{remark}[From Lie Groupoids to Topological Stacks]\label{makestack}
Let $\calG$ be a Lie groupoid with corners and let $S$ be a topological space. We
define a category $\Hom^{\strict}_{\cont}( S, \calG )$ as follows:
\begin{itemize}
\item The objects of $\Hom^{\strict}_{\cont}(S, \calG)$ are continuous maps $S \rightarrow \Ob(\calG)$.
\item Given a pair of objects $C,D: S \rightarrow \Ob(\calG)$ of $\Hom^{\strict}_{\cont}(S, \calG)$, a morphism
from $C$ to $D$ is a continuous map $f: S \rightarrow \Mor(\calG)$ satisfying $s \circ f = C$ and $t \circ f = d$
(where $s,t: \Mor(\calG) \rightarrow \Ob(\calG)$ are the source and target maps, respectively).
\item Composition of morphisms in $\Hom^{\strict}_{\cont}(S, \calG)$ is given by composition of morphisms in $\calG$.
\end{itemize}
The construction $S \mapsto \Hom^{\strict}_{\cont}(S, \calG)$ determines a contravariant functor from the category
$\Top$ of topological spaces to the $2$-category of groupoids, which we can view as a topological prestack
(see Definition \ref{definition.topological-stack}). In general, this prestack is not a stack: that is, the construction
$S \mapsto \Hom^{\strict}_{\cont}(S, \calG)$ need not satisfy descent for open coverings. However, this can be rectified by passing
to the sheafification of the functor $S \mapsto \Hom^{\strict}_{\cont}(S, \calG)$, which we will denote by 
$S \mapsto \Hom_{\cont}(S, \calG)$.
\end{remark}

\begin{example}
Let $G$ be a Lie group, and let $\BG$ denote the groupoid having a single object with automorphism group
$G$ (so that $\Ob(\BG) = \ast$ and $\Mor(\BG) = G$). Then $\BG$ can be regarded as a Lie groupoid (without corners). 
For any topological space $S$, the groupoid $\Hom^{\strict}_{\cont}( S, \BG)$ has a single object with automorphism group
$\Hom_{\cont}( S, G)$. This category is equivalent to the category of {\em trivial} principal $G$-bundles on $S$.
The associated topological stack $S \mapsto \Hom_{\cont}(S, \BG)$ associates to each topological space $S$
the category of {\em all} $G$-bundles on $S$: that is, it agrees with the classifying stack of the topological group $G$,
in the sense of Example \ref{example.class-stack}.
\end{example}

We now apply these ideas to the study of broken lines. For every nonempty finite linearly ordered set $I$,
let us abuse notation by identifying $\broken^{I}$ with the topological space that represents it, given
explicitly by $\Rep(I, \BR^{+} )$ (Theorem \ref{theorem-representab}).

\begin{remark}[The Smooth Structure on $\broken^{[n]}$]\label{plusup0}
Let $I = [n] = \{ 0 < 1 < \cdots < n \}$ be the standard linearly ordered set with $n+1$ elements.
Then the construction
$$ ( \alpha \in \Rep(I, \BR^{+} ) ) \mapsto ( \alpha(0,1), \ldots, \alpha(n-1, n) )$$
determines a homeomorphism of topological spaces
$\broken^{I} \simeq \Rep(I, \BR^{+} ) \simeq (\RR^{+})^{n}$ (see Example \ref{ex2}).
Composing with the homeomorphism
$$ - \log: \RR^{+} \rightarrow \RR_{\geq 0},$$
we obtain a homeomorphism of topological spaces $\broken^{I} \simeq \RR^{n}_{\geq 0}$.
We will use this homeomorphism to endow $\broken^{I}$ with the structure of a smooth manifold with corners.
\end{remark}

\begin{remark}[The Smooth Structure on $\broken^{I}$]\label{plusup}
Let $(I, \leq_I)$ be an arbitrary nonempty finite linearly preordered set.
Then we can write $I = \{ i_0, i_1, \ldots, i_n \}$, where $i_0 \leq_{I} i_1 \leq_{I} \cdots \leq_{I} i_n$.
In other words, we can choose a map of linearly preordered sets $[n] \rightarrow I$ which is bijective on objects.
This map induces an open embedding
$$ \broken^{I} \hookrightarrow \broken^{[n]} \simeq \RR^{n}_{\geq 0},$$
whose image is the open subset of $\RR^{n}_{\geq 0}$ consisting of those sequences $(t_1, \ldots, t_n)$
satisfying $t_{m} > 0$ when $i_{m-1} =_{I} i_{m}$. Using this embedding, we endow
$\broken^{I}$ with the structure of a smooth manifold with corners; it is easy to see that the resulting
structure is independent of the chosen enumeration of $I$.
\end{remark}

\begin{remark}[The Smooth Structure on $\broken^{I,J}$]\label{plusup1}
Let $(I, \leq_I)$ and $(J, \leq_J)$ be nonempty finite linearly ordered sets, and let us abuse notation
by identifying the topological stack $\broken^{I,J}$ of Notation \ref{fiberstack} with its underlying
topological space. Then Theorem \ref{theorem.fiber-product-description-vague} supplies an
explicit covering of $\broken^{I,J}$ by open subspaces $\{ \broken^{K} \}_{K \in \Amal(I,J) }$. 
Using Remark \ref{plusup}, we can regard each of these subspaces as a smooth manifold with corners. It it easy to see that these smooth structures are compatible
along the intersections $\broken^{K} \cap \broken^{K'} \simeq \broken^{K \vee K'}$, so that
they endow $\broken^{I,J}$ with the structure of a smooth manifold with corners.
\end{remark}

\begin{construction}[The Lie Groupoid of Broken Lines]\label{construction.liegroup}
We define a category $\calB$ as follows:
\begin{itemize}
\item An object of $\calB$ consists of an integer $n \geq 0$ and a point $\alpha \in \Rep( [n], \BR^{+} )$ (that is, a sequence $( \alpha(0,1), \ldots, \alpha(n-1,n) ) \in \RR^{+}$). 
Identifying $\alpha$ with a continuous map from the one-point space to $\Rep( [n], \BR^{+} )$, we can apply Construction \ref{construction.inverse} to construct
a broken line, which we will denote by $[\alpha]$.

\item Let $([n], \alpha)$ and $([n'], \alpha')$ be objects of $\calB$. A morphism from $( [n], \alpha)$ to $( [n'], \alpha' )$ is an isomorphism of broken lines
$[ \alpha ] \simeq [ \alpha' ]$.
\end{itemize}
Note that the set $\Ob(\calB)$ of objects of $\calB$ can be identified with the disjoint union
$\coprod_{n \geq 0} \broken^{[n]}$, and that the set $\Mor(\calB)$ of morphisms of $\calB$ can be identified with the disjoint union
$\coprod_{m,n \geq 0} \broken^{[m], [n] }$. Using Remarks \ref{plusup0} and \ref{plusup1}, we can regard $\Ob(\calB)$ and $\Mor(\calB)$ as smooth manifolds with corners.
\end{construction}

\begin{proposition}
Let $\calB$ be the category of Construction \ref{construction.liegroup}. Then the smooth structures on $\Ob(\calB)$ and $\Mor(\calB)$ endow
$\calB$ with the structure of a Lie groupoid with corners, in the sense of Definition \ref{definition.liegroupoid}.
\end{proposition}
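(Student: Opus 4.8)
The plan is to verify conditions $(a)$ through $(e)$ of Definition \ref{definition.liegroupoid}, using the descriptions $\Ob(\calB) = \coprod_{n} \broken^{[n]}$ and $\Mor(\calB) = \coprod_{m,n} \broken^{[m],[n]}$ together with the open cover of each $\broken^{[m],[n]}$ by the charts $\broken^{K}$, $K \in \Amal([m],[n])$, supplied by Theorem \ref{theorem.fiber-product-description-vague}. I first record what the structure maps are. By Theorem \ref{theorem-representab}, a point of $\broken^{[m],[n]} = \broken^{[m]} \times_{\broken} \broken^{[n]}$ is a broken line equipped with an $[m]$-section and an $[n]$-section, and each of these sections presents an object of $\Ob(\calB)$; the source map $s$ is the first projection to $\broken^{[m]}$ and the target map $t$ the second. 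The identity section $e\colon \Ob(\calB) \to \Mor(\calB)$ sends a broken line with $[n]$-section to the same line equipped with that section twice; inversion $\iota\colon \Mor(\calB)\to\Mor(\calB)$ interchanges the two sections; and a composable pair of morphisms is, after the canonical identification of Theorem \ref{theorem-representab}, a single broken line carrying an $[m]$-, $[n]$- and $[p]$-section, so that $\Mor(\calB)\times_{\Ob(\calB)}\Mor(\calB) \cong \coprod_{m,n,p}\broken^{[m],[n],[p]}$ with composition the map that discards the middle section.

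The essential point is $(a)$ and $(b)$: that $s$ and $t$ are submersive. On a chart $\broken^{K} \subseteq \broken^{[m],[n]}$, the map $s$ restricts to the morphism $\broken^{K}\to\broken^{[m]}$ induced by the inclusion $[m]\hookrightarrow K$, which under Theorem \ref{theorem-representab} is precomposition $\Rep(K,\BR^{+})\to\Rep([m],\BR^{+})$ (Remark \ref{remark.functor}). Fixing an enumeration of $K$ and using the manifold-with-corners charts of Remark \ref{plusup} (with coordinates converted to range in $\RR_{\geq 0}$ via $-\log$), this becomes a map $(u_{1},\dots,u_{N})\mapsto(v_{1},\dots,v_{m})$ in which each $v_{r}$ is the product of a block $B_{r}$ of consecutive $u_{\ell}$'s, the blocks being cut out by the positions of the image of $[m]$ inside $K$. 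The combinatorial heart of the argument---and the place where essential surjectivity of $[m]\hookrightarrow K$ enters---is that every $u_{\ell}$ lying outside $B_{1}\cup\dots\cup B_{m}$ is an interior coordinate (its range is $\RR_{>0}$), and that each block $B_{r}$ contains at most one boundary coordinate; hence $v_{r} = c_{r} u_{q_{r}}$ with $c_{r}$ a product of interior coordinates (a strictly positive smooth function) and $u_{q_{r}}$ one of the coordinates of $B_{r}$. Replacing each $u_{q_{r}}$ by $v_{r}$ is then, near any point, a diffeomorphism of manifolds with corners taking $\{u_{q_{r}}=0\}$ onto $\{v_{r}=0\}$, and it presents $s|_{\broken^{K}}$ as a projection $V\times\RR^{N-m}\to V$ of the form required by Definition \ref{definition.liegroupoid}$(a)$; the argument for $t$ is identical with $[n]$ in place of $[m]$. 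I expect this bookkeeping---tracking which coordinates of $\broken^{K}$ are boundary coordinates and how they are distributed among the blocks---to be the only real obstacle.

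The remaining conditions are soft. For $(c)$, the identity section $e$ has image in the single open chart $\broken^{K_{\mathrm{diag}}}$, where $K_{\mathrm{diag}}$ is the amalgam of $[n]$ with itself that identifies the two copies coordinatewise, and on that chart $e$ is precomposition with the fold map $K_{\mathrm{diag}}\to[n]$, which is smooth in the coordinates of Remark \ref{plusup}. For $(e)$, every morphism of $\calB$ is by construction an isomorphism of broken lines, so $\calB$ is a groupoid, and $\iota$ restricts on each chart to the tautological relabeling isomorphism $\broken^{K}\cong\broken^{K^{\mathrm{sw}}}$ (swapping the roles of the two factors of the amalgam), which is a diffeomorphism and visibly an involution. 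For $(d)$, I would cover $\broken^{[m],[n],[p]}$ by charts $\broken^{K'}$ obtained by applying Theorem \ref{theorem.fiber-product-description-vague} twice; on each such chart composition is again a precomposition map $\Rep(K',\BR^{+})\to\Rep(K'',\BR^{+})$ of the block-product type analyzed above, hence smooth. The one point to check beyond this is that the smooth structure that $\Mor(\calB)\times_{\Ob(\calB)}\Mor(\calB)$ inherits from $s$ (or $t$) being a submersion agrees with the one from the charts $\broken^{K'}$; this is immediate once $(a)$ and $(b)$ are known, since pullbacks along submersions of manifolds with corners are computed chartwise.
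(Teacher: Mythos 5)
Your proof is correct, and for the crucial submersivity conditions $(a)$--$(b)$ it follows the same overall strategy as the paper: cover $\broken^{[m],[n]}$ by the charts $\broken^{K}$, $K \in \Amal([m],[n])$, and use essential surjectivity of $[m] \hookrightarrow K$ to see that the fiber directions of $s|_{\broken^{K}}$ are all interior. The implementations of the final trivialization differ, though. The paper first shrinks the target to the open subset $\broken^{I} \subseteq \broken^{[m]}$ (where $I = [m]$ carries the preorder induced from $K$), then chooses for each $j \in [n]$ an element $\phi(j) \in [m]$ with $j =_{K} \phi(j)$ and observes that $\alpha \mapsto \bigl( s(\alpha), \{ \alpha(j,\phi(j)) \}_{j} \bigr)$ is a single \emph{global} diffeomorphism $\broken^{K} \simeq \broken^{I} \times \RR^{[n]}$ — the complementary coordinates are the (finite) translation distances to $=_K$-equivalent elements of $I$. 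You instead perform a local block-by-block coordinate substitution in the $\RR_{\geq 0}$-charts of Remark \ref{plusup}; your two combinatorial claims (coordinates outside the blocks are interior; each block has at most one boundary coordinate) are exactly the places where essential surjectivity enters, and both are correct. The paper's route is slicker and avoids the corner bookkeeping; yours is more hands-on but also yields the smoothness of the precomposition maps $\Rep(K',\BR^{+}) \to \Rep(K'',\BR^{+})$ needed for axioms $(c)$, $(d)$, $(e)$, which the paper explicitly leaves to the reader and which you verify in full (including the correct observation that the identity section lands in the single chart indexed by the diagonal amalgam, and that the smooth structure on $\Mor(\calB) \times_{\Ob(\calB)} \Mor(\calB)$ is unambiguous once $(a)$ is known).
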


\begin{proof}
We will show that the source and target maps $s,t: \Mor(\calB) \rightarrow \Ob(\calB)$ are submersive, and leave the remaining axioms of Definition \ref{definition.liegroupoid} to the reader.
Fix nonnegative integers $m$ and $n$; we wish to show that the projection maps
$$ \broken^{[m]} \leftarrow \broken^{[m],[n]} \rightarrow \broken^{[n]}$$
are submersive. Covering $\broken^{[m],[n]}$ by open subsets of the form $\broken^{K}$ for $K \in \Amal([m], [n] )$, we are reduced to showing that
the projection maps $\broken^{[m]} \leftarrow \broken^{K} \rightarrow \broken^{[n]}$ are submersive (Theorem \ref{theorem.fiber-product-description-vague}).
Let $I$ and $J$ denote the sets $[m]$ and $[n]$, respectively, endowed with the linear preorderings given by the restriction of $\leq_{K}$.
Then we can identify $\broken^{I}$ and $\broken^{J}$ with open subsets of $\broken^{[m]}$ and $\broken^{[n]}$, respectively (Remark \ref{plusup}).
We are therefore reduced to showing that the projection maps $\broken^{I} \leftarrow \broken^{K} \rightarrow \broken^{J}$ are submersive.
By symmetry, it will suffice to treat the case of the projection map $s: \broken^{K} \rightarrow \broken^{I}$. For each
$j \in J$, choose an element $\phi(j) \in I$ such that $j =_{K} \phi(j)$. We conclude by observing that
the construction
$\alpha \mapsto ( s(\alpha), \{ \alpha( j, \phi(j) ) \}_{j \in J} )$ induces a diffeomorphism $\broken^{K} \simeq \broken^{I} \times \RR^{J}$.
\end{proof}

For any topological space $S$, we can identify $\Hom^{\strict}_{\cont}( S, \calB )$ with the groupoid of $S$-families of broken lines $L_S$
with the following property: there exists a decomposition of $S$ into closed and open subsets $\{ S_n \}_{n \geq 0}$ such that
each $L_{S_n}$ admits an $[n]$-section. According to Proposition \ref{proposition.section-existence}, every $S$-family of broken lines satisfies this condition
{\em locally} on $S$. We therefore obtain the following more precise formulation of Theorem \ref{theorem.mainA}:

\begin{proposition}\label{proposition.brokenpresentation}
For every topological space $S$, there is a canonical equivalence of categories $\broken(S) \simeq \Hom_{ \cont}( S, \calB)$, where
$\calB$ is the Lie groupoid with corners of Construction \ref{construction.liegroup}. In other words, the topological stack
$\broken$ is presented by the Lie groupoid $\calB$ via the construction of Remark \ref{makestack}.
\end{proposition}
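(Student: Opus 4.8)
The plan is to compare the strict prestack $S \mapsto \Hom^{\strict}_{\cont}(S,\calB)$ directly with $\broken$ and then invoke the universal property of stackification, recalling that by Remark \ref{makestack} the functor $\Hom_{\cont}(-,\calB)$ is \emph{defined} to be the stackification of $\Hom^{\strict}_{\cont}(-,\calB)$. First I would construct a morphism of prestacks $\Phi\colon \Hom^{\strict}_{\cont}(-,\calB) \to \broken$. An object of $\Hom^{\strict}_{\cont}(S,\calB)$ is a continuous map $c\colon S \to \Ob(\calB) = \coprod_{n\ge 0}\broken^{[n]}$, equivalently a decomposition of $S$ into clopen pieces $S_n = c^{-1}(\broken^{[n]})$ together with continuous maps $c_n\colon S_n \to \broken^{[n]} \simeq \Rep([n],\BR^{+})$. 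By Theorem \ref{theorem-representab}, applied through Construction \ref{construction.inverse}, each $c_n$ classifies an $S_n$-family of broken lines $[c_n]_{S_n}$ carrying a distinguished $[n]$-section; forgetting the sections and gluing along the clopen cover $\{S_n\}$ — legitimate since being a family of broken lines is local on the base (Remark \ref{remark.locality}) — yields an $S$-family $\Phi_S(c) := \coprod_n [c_n]_{S_n}$. A morphism in $\Hom^{\strict}_{\cont}(S,\calB)$ is a map $f\colon S \to \Mor(\calB) = \coprod_{m,n}\broken^{[m],[n]}$ compatible with source and target, and I would send it to the $\RR$-equivariant isomorphism of families it induces. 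Because Construction \ref{construction.inverse} commutes with pullback, $\Phi$ is compatible with restriction along maps $S' \to S$ and is genuinely a morphism of prestacks.

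The second step is to check that $\Phi_S$ is fully faithful, for every topological space $S$. Let $c$ and $d$ be objects of $\Hom^{\strict}_{\cont}(S,\calB)$, corresponding to clopen partitions $S = \coprod_m S_m = \coprod_n S'_n$ and classifying maps $c_m, d_n$. A morphism $c \to d$ is a continuous map $f\colon S \to \Mor(\calB)$ with $s\circ f = c$ and $t\circ f = d$; unwinding the source and target maps $\broken^{[m],[n]} \to \broken^{[m]}$ and $\broken^{[m],[n]} \to \broken^{[n]}$ forces the clopen partition of $S$ underlying $f$ to be exactly the common refinement $\{S_m \cap S'_n\}$, and over $S_m \cap S'_n$ the datum of $f$ is precisely a family of broken lines equipped with an $[m]$-section classified by $c_m$ and an $[n]$-section classified by $d_n$. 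By the rigidity statement of Lemma \ref{harmless} together with Theorem \ref{theorem-representab}, such a datum is the same thing as an $\RR$-equivariant isomorphism $[c_m]|_{S_m \cap S'_n} \simeq [d_n]|_{S_m \cap S'_n}$ over $S_m \cap S'_n$. On the other hand, since $\{S_m \cap S'_n\}$ is a clopen partition of $S$, an isomorphism $\Phi_S(c) \simeq \Phi_S(d)$ in $\broken(S)$ is exactly a compatible system of isomorphisms over these pieces. Comparing the two descriptions shows $\Phi_S$ is bijective on morphism sets, and functoriality in $f$ is clear.

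The third step is local essential surjectivity of $\Phi$: every object of $\broken(S)$ is, locally on $S$, in the essential image of $\Phi$. Given an $S$-family $L_S$ and a point $s \in S$, Proposition \ref{proposition.section-existence} furnishes an open $U \ni s$, a finite nonempty linearly ordered set $I$, and an $I$-section of $L_U$; since $I$ is uniquely isomorphic to $[n]$ with $|I| = n+1$, the pair $(L_U,\text{section})$ is an object of $\broken^{[n]}(U)$, hence by Theorem \ref{theorem-representab} is classified by a continuous map $U \to \broken^{[n]} \subseteq \Ob(\calB)$, i.e.\ an object of $\Hom^{\strict}_{\cont}(U,\calB)$ sent by $\Phi_U$ to $L_U$. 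Then I would conclude with the general fact that a morphism of prestacks into a stack which is fully faithful and locally essentially surjective exhibits the target as a stackification of the source: applied to $\Phi$ — fully faithful by Step 2, locally essentially surjective by Step 3, with target the stack $\broken$ — and combined with the definition of $\Hom_{\cont}(-,\calB)$ as the stackification of $\Hom^{\strict}_{\cont}(-,\calB)$, this yields the asserted equivalence $\broken(S) \simeq \Hom_{\cont}(S,\calB)$, naturally in $S$.

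I expect the main obstacle to be the bookkeeping in Step 2 — carefully verifying that imposing $s\circ f = c$ and $t\circ f = d$ pins down the clopen decomposition underlying a morphism of $\calB$-valued maps and identifies the remaining data with a single isomorphism of families over each piece — together with a clean invocation of the stackification universal property at the end. Everything else is a direct assembly of Theorem \ref{theorem-representab}, Lemma \ref{harmless}, Proposition \ref{proposition.section-existence}, and the locality (Remark \ref{remark.locality}) of the notion of a family of broken lines.
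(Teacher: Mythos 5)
Your proof is correct and follows the same route the paper takes: the paper's (brief, pre-proposition) discussion identifies $\Hom^{\strict}_{\cont}(S,\calB)$ with the full subgroupoid of $\broken(S)$ consisting of families admitting a clopen decomposition with global $[n]$-sections (your Steps 1--2), invokes Proposition \ref{proposition.section-existence} for local essential surjectivity (your Step 3), and concludes by passing to the stackification defining $\Hom_{\cont}(-,\calB)$. You have simply spelled out the full-faithfulness check, via Lemma \ref{harmless} and the representability of $\broken^{[m],[n]}$, which the paper leaves implicit.
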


We can regard Proposition \ref{proposition.brokenpresentation} as supplying a ``smooth structure'' on the topological stack $\broken$, which allows us to make sense of
{\em smooth} families of broken lines.

\begin{variant}[From Lie Groupoids to Smooth Stacks]
Let $\calG$ be a Lie groupoid with corners and let $S$ be a smooth manifold. We
define a category $\Hom^{\strict}_{\smooth}( S, \calG )$ as follows:
\begin{itemize}
\item The objects of $\Hom^{\strict}_{\smooth}(S, \calG)$ are smooth maps $S \rightarrow \Ob(\calG)$.
\item Given a pair of objects $C,D: S \rightarrow \Ob(\calG)$ of $\Hom^{\strict}_{\smooth}(S, \calG)$, a morphism
from $C$ to $D$ is a smooth map $f: S \rightarrow \Mor(\calG)$ satisfying $s \circ f = C$ and $t \circ f = d$
(where $s,t: \Mor(\calG) \rightarrow \Ob(\calG)$ are the source and target maps, respectively).
\item Composition of morphisms in $\Hom^{\strict}_{\smooth}(S, \calG)$ is given by composition of morphisms in $\calG$.
\end{itemize}
The construction $S \mapsto \Hom^{\strict}_{\smooth}(S, \calG)$ determines a contravariant functor from the category
$\Top$ of topological spaces to the $2$-category of groupoids. We denote the sheafification of this functor
by $S \mapsto \Hom_{\smooth}( S, \calG )$.
\end{variant}

\begin{example}[Smooth Families of Broken Lines]
Let $S$ be a smooth manifold and let $\calB$ be the Lie groupoid with corners of Construction \ref{construction.liegroup}.
Unwinding the definitions, we can identify the objects of $\Hom_{\smooth}( S, \calB )$ with $S$-families of broken lines $L_{S}$
equipped with a smooth structure on the non-fixed locus $L_{S}^{\circ}$ satisfying the following conditions:
\begin{itemize}
\item The projection map $L_{S}^{\circ} \rightarrow S$ is a smooth submersion.
\item Set $U = \{ (x,y) \in L_{S}^{\circ} \times_{S} L_{S}^{\circ}: d_{ L_S}( x, y) < \infty \}$. Then
the function
$$ ( (x,y) \in U ) \mapsto ( \log( d_{L_S}(x,y) ) \in \RR_{\geq 0} )$$
is smooth.
\end{itemize}
\end{example}

\begin{warning}
More generally, one can attempt to use Proposition \ref{proposition.brokenpresentation} to define the notion of a smooth $S$-families of broken lines where
$S$ is a manifold with corners. There are multiple candidates for this definition, depending on how one defines the notion of smooth map
$f: S \rightarrow \RR_{\geq 0}^{n}$ when $S$ is a manifold with corners (that is, what sort of differentiability does one require at the boundary of $S$?).
We caution the reader that the most conservative option (where we require $f$ to extend to a smooth function on a larger manifold without boundary)
does not seem appropriate for describing families of broken lines which arise naturally in geometric situations (such as Morse theory; see \S \ref{section.morse-example}).
\end{warning}

\subsection{Another Presentation of \texorpdfstring{$\broken$}{Broken}}

We now apply the analysis of \S \ref{section.fiber-products} to obtain another presentation of the moduli stack $\broken$, which will be particularly
convenient for our analysis of sheaves in \S \ref{section.sheaves-on-broken}. We begin by elaborating on Remark \ref{remark.functor}.

\begin{construction}[The Category of Linear Preorderings]\label{construction.dotwell}
We define a category $\LinPreOrd$ as follows:
\begin{itemize}
\item The objects of $\LinPreOrd$ are finite nonempty linearly preordered sets $(I, \leq_{I})$. 
\item A morphism from $(I, \leq_{I} )$ to $(J, \leq_{J} )$ in the category $\LinPreOrd$
is a nondecreasing map $\gamma: I \rightarrow J$ which is essentially surjective (that is, for every
element $j \in J$, there exists $i \in I$ such that $\gamma(i) =_{J} j$).
\end{itemize}
By virtue of Remark \ref{remark.functor}, the construction
$(I, \leq_I) \mapsto \broken^{I}$ determines a functor from the category $\LinPreOrd^{\op}$ to the $2$-category of topological stacks.
\end{construction}

Our goal in this section is to show that the moduli stack $\broken$ can be identified with the (homotopy) colimit
$$ \varinjlim_{I \in \LinPreOrd^{\op} } \broken^{I},$$
in the $2$-category of topological stacks. For later use, it will be convenient to formulate and prove a stronger version of this result.

\begin{notation}\label{notation.kan-complexes}
Let $\SSet$ denote the $\infty$-category of Kan complexes and let $\Top$ denote the category of topological spaces.
We say that a functor $\sheafF: \Top^{\op} \rightarrow \SSet$ is a {\it sheaf} if, for every topological space
$S$, the construction
$$ (U \subseteq S) \mapsto \sheafF(U)$$
determines an $\SSet$-valued sheaf on the topological space $S$ (see Definition \ref{definition.sheaf}). 
We let $\Shv_{\SSet}( \Top )$ denote the full subcategory of $\Fun( \Top^{\op}, \SSet)$ spanned by the sheaves on $\Top$.
\end{notation}

\begin{example}
The $2$-category of topological stacks (Definition \ref{definition.topological-stack}) can be identified with the full subcategory of
$\Shv_{\SSet}( \Top )$ spanned by those sheaves $\sheafF: \Top^{\op} \rightarrow \SSet$ with the property that, for each
topological space $S$, the Kan complex $\sheafF(S)$ is $1$-truncated: that is, the homotopy groups $\pi_{n}( \sheafF(S), x)$ vanish
for $n \geq 2$ (and any choice of base point $x \in \sheafF(S)$). 

In particular, we can regard the moduli stack $\broken$ as an object of $\Shv_{\SSet}( \Top )$, and the construction
$(I, \leq_{I} ) \mapsto \broken^{I}$ as a functor from $\LinPreOrd^{\op}$ to the $\infty$-category $\Shv_{\SSet}(\Top)$.
\end{example}

\begin{theorem}\label{leftor}
The canonical map
$$ \varinjlim_{I \in \LinPreOrd^{\op} } \broken^{I} \rightarrow \broken$$
exhibits $\broken$ as a colimit of the diagram $\{ \broken^{I} \}_{I \in \LinPreOrd^{\op} }$ in the $\infty$-category $\Shv_{\SSet}( \Top )$.
\end{theorem}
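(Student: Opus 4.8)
The plan is to reduce the statement to a purely local assertion about the moduli stack $\broken$, namely that every $S$-family of broken lines is, locally on $S$, pulled back from some $\broken^{I}$, and that the identifications of two such local descriptions are themselves controlled by the category $\LinPreOrd$ via the fiber products $\broken^{I,J}$ of \S\ref{section.fiber-products}. Concretely, I would argue as follows. Since $\Shv_{\SSet}(\Top)$ is an $\infty$-topos and colimits of sheaves are computed by sheafifying the pointwise colimit in presheaves, it suffices to exhibit, for each topological space $S$, a natural equivalence $\broken(S) \simeq \bigl( \varinjlim_{I \in \LinPreOrd^{\op}} \broken^{I} \bigr)(S)$ after restricting to a basis of opens and using descent. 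Equivalently, writing $F$ for the presheaf colimit $S \mapsto \varinjlim_{I \in \LinPreOrd^{\op}} \broken^{I}(S)$, I would show that the canonical map $F \to \broken$ is a \emph{local equivalence} of presheaves of $\infty$-groupoids: it induces an equivalence after sheafification. By general nonsense, this amounts to checking (i) the map is \emph{locally essentially surjective}: every object of $\broken(S)$ lifts, on an open cover of $S$, into the image of some $\broken^{I}(S')$; and (ii) the map is \emph{locally fully faithful}: for two objects of $F(S)$ with the same image in $\broken(S)$, they become equivalent in $F$ after passing to an open cover.

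Claim (i) is exactly Proposition \ref{proposition.section-existence} together with the representability Theorem \ref{theorem-representab}: every $S$-family of broken lines admits, near each point of $S$, an $I$-section for some finite linearly \emph{ordered} (hence preordered) set $I$, which is precisely the data of a lift to $\broken^{I}(U)$ for a suitable open $U$. For claim (ii), the key computation is that for $I, J \in \LinPreOrd$ the fiber product $\broken^{I} \times_{\broken} \broken^{J} = \broken^{I,J}$ is covered (Theorem \ref{theorem.fiber-product-description-vague}) by the open subspaces $\broken^{K}$ for $K \in \Amal(I,J)$, and each $\broken^{K}$ maps to $\broken^{I,J}$ through the two structure maps $\broken^{K}\to\broken^{I}$, $\broken^{K}\to\broken^{J}$ induced by the inclusions $I \hookrightarrow K \hookleftarrow J$ in $\LinPreOrd$ (Remark \ref{remark.functor}, Notation \ref{notation.amalgam-poset}). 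Thus any ``coincidence'' between an $I$-lift and a $J$-lift of the same family is, locally on $S$, witnessed by a single $K$-lift in $\broken^{K}$ — which is exactly a zig-zag of morphisms in $\LinPreOrd^{\op}$ realizing the required identification in the colimit $F$. Combined with the fact that each $\broken^{I}(S)$ is (a set, hence) $0$-truncated, while the indexing category $\LinPreOrd^{\op}$ supplies the homotopical glue, this shows the comparison map is a local equivalence, and one concludes by sheafification.

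The step I expect to be the main obstacle is the higher-coherence bookkeeping hidden in claim (ii): one must verify not merely that two local lifts are \emph{connected} in the colimit, but that the space of such identifications computed by $F$ matches the (discrete) space of isomorphisms in $\broken(S)$, and this requires understanding iterated fiber products $\broken^{I} \times_{\broken} \broken^{J} \times_{\broken} \broken^{L}$, i.e.\ the full \v{C}ech nerve of the cover $\{\broken^{I} \to \broken\}$. The clean way to handle this is to upgrade Theorem \ref{theorem.fiber-product-description-vague} to the statement (foreshadowed in the remark after it, and presumably Lemma \ref{lemma.answered}) that the entire \v{C}ech nerve is, level-wise, the colimit over the appropriate poset of amalgams of the spaces $\broken^{K}$, so that the \v{C}ech nerve of $\{\broken^{I}\}$ and the simplicial object computing $\varinjlim_{I} \broken^{I}$ have the same geometric realization as sheaves. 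Once that comparison of simplicial resolutions is in place, the theorem follows formally: $\broken$ is the colimit of its own \v{C}ech nerve (as the cover is jointly surjective, by Proposition \ref{proposition.section-existence}), and that colimit is identified with $\varinjlim_{I \in \LinPreOrd^{\op}} \broken^{I}$. I would therefore organize the proof around first establishing this \v{C}ech-nerve refinement of Theorem \ref{theorem.fiber-product-description-vague}, and then invoking the descent property of $\Shv_{\SSet}(\Top)$.
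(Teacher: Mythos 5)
Your proposal correctly identifies the two pillars of the argument — local essential surjectivity from Proposition \ref{proposition.section-existence}, and control of overlaps via Theorem \ref{theorem.fiber-product-description-vague} / Lemma \ref{lemma.answered} — but it diverges from the paper in the final assembly, and that divergence is where a genuine gap appears.

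The paper does not analyze the full \v{C}ech nerve. Instead it observes that, since $\theta\colon \varinjlim_{I}\broken^{I}\to\broken$ is already known to be locally surjective (hence an effective epimorphism after sheafification), it suffices to prove $\theta$ is a \emph{monomorphism}, i.e.\ that the relative diagonal $\delta\colon \varinjlim_{I}\broken^{I}\to\sheafF:=(\varinjlim_{I}\broken^{I})\times_{\broken}(\varinjlim_{J}\broken^{J})$ is an equivalence. This recasting is important on two counts. First, it needs only the two-fold self-pullback, not the iterated fiber products you propose to track level by level; Lemma \ref{lemma.answered} already computes $\broken^{I}\times_{\broken}\broken^{J}=\varinjlim_{K\in\Amal(I,J)^{\op}}\broken^{K}$, and universality of colimits gives $\sheafF\simeq\varinjlim_{(I,J,K)\in\calJ^{\op}}\broken^{K}$. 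Second, ``monomorphism'' is the right invariant in $\Shv_{\SSet}(\Top)$: your phrase ``locally fully faithful'' silently presumes the colimit presheaf is $1$-truncated, which you cannot assume in advance (the target $\broken$ is a stack, but $\varinjlim_{I}\broken^{I}$ could a priori carry higher homotopy); checking the diagonal is an equivalence handles all levels at once.

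The genuine gap is the step you label ``once that comparison of simplicial resolutions is in place, the theorem follows formally.'' That comparison is precisely where the combinatorial content lives, and the paper supplies it as a cofinality statement: the forgetful functor $\calJ\to\LinPreOrd$, $(I,J,K)\mapsto K$, admits a right adjoint $K\mapsto K\amalg K$ and is therefore right cofinal, so $\sheafF\simeq\varinjlim_{K\in\LinPreOrd^{\op}}\broken^{K}$; that equivalence is a left homotopy inverse to $\delta$, forcing $\delta$ (hence $\theta$) to be an equivalence. Without an argument of this kind your proposal does not actually identify the geometric realization of the \v{C}ech nerve with the $\LinPreOrd^{\op}$-indexed colimit — those are differently-shaped colimits, and no formal principle identifies them for free. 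I would suggest replacing the \v{C}ech-nerve comparison with the monomorphism reduction and then proving the cofinality claim explicitly; as written, your proof stops short of the key reindexing step.
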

 
We begin with some general remarks. For every topological space $S$, let us abuse notation by identifying $S$ with
the functor $\Hom_{ \Top}( \bullet, S )$, which we regard as an object of $\Shv_{\SSet}( \Top )$.

\begin{remark}
Let $X$ be a topological space. Note that $\Hom_{\Top}(X,S)$ is a discrete Kan complex---for example, $\pi_0\Hom_{\Top}(X,S)$ is naturally in bijection with the set of all continuous maps from $X$ to $S$. 
\end{remark}

\begin{lemma}\label{oloic}
Let $S$ be a topological space, let $A$ be a partially ordered set, and let
$\{ U_{\alpha} \}_{\alpha \in A}$ be a collection of open subsets of $S$ indexed by $A$.
Assume that:
\begin{itemize}
\item[$(i)$] The open sets $U_{\alpha}$ cover the topological space $S$.
\item[$(ii)$] For every pair of elements $\alpha, \beta \in A$, there exists a least
upper bound $\alpha \vee \beta$ in $A$, and we have
$$ U_{\alpha \vee \beta} = U_{\alpha} \cap U_{\beta}.$$
\end{itemize}
It follows from $(ii)$ that the construction $\alpha \mapsto U_{\alpha}$ determines an
order-reversing map from $A$ to the partially ordered set $\calU(S)$ of open subsets of $S$,
and therefore defines a functor
$$ A^{\op} \rightarrow \calU(S) \rightarrow \Top \hookrightarrow \Shv_{\SSet}( \Top ).$$
Then the canonical map $\varinjlim_{\alpha \in A^{\op}} U_{\alpha} \rightarrow S$
exhibits $S$ as a colimit of the diagram $\{ U_{\alpha} \}_{\alpha \in A}$ in
the $\infty$-category $\Shv_{\SSet}( \Top )$.
\end{lemma}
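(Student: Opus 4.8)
Write $\mathscr{F} = \varinjlim_{\alpha \in A^{\op}} U_{\alpha}$ for the colimit formed in $\Shv_{\SSet}(\Top)$, and let $\phi : \mathscr{F} \to S$ be the canonical map. The plan is to show that $\phi$ is simultaneously an effective epimorphism and a monomorphism; since $\Shv_{\SSet}(\Top)$ is an $\infty$-topos, a morphism with both properties is an equivalence, which is exactly the assertion of the lemma. (We may assume $A \neq \emptyset$: otherwise hypothesis $(i)$ forces $S = \emptyset$, and $\phi$ is then a map between initial objects.) Throughout I will use two standard facts about the $\infty$-topos $\Shv_{\SSet}(\Top)$: colimits are universal (stable under pullback), and effective epimorphisms admit right cancellation (if $g \circ h$ is an effective epimorphism, so is $g$).

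For the effective-epimorphism assertion, I would factor $\phi$ as $\coprod_{\alpha \in A} U_{\alpha} \to \mathscr{F} \xrightarrow{\phi} S$, where the composite is the map classified by the open inclusions $U_{\alpha} \hookrightarrow S$. By hypothesis $(i)$ these cover $S$, so the composite is an effective epimorphism --- a covering family of open sets generates a covering sieve, which is essentially the definition of the topology on $\Top$. Right cancellation then shows that $\phi$ is an effective epimorphism.

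For the monomorphism assertion, I must check that the relative diagonal $\mathscr{F} \to \mathscr{F} \times_{S} \mathscr{F}$ is an equivalence. Applying universality of colimits in each variable, and using the identification $U_{\alpha} \times_{S} U_{\beta} \simeq U_{\alpha} \cap U_{\beta} \simeq U_{\alpha \vee \beta}$ supplied by hypothesis $(ii)$ (together with the fact that the embedding $\Top \hookrightarrow \Shv_{\SSet}(\Top)$ preserves such pullbacks), one computes
$$ \mathscr{F} \times_{S} \mathscr{F} \;\simeq\; \varinjlim_{(\alpha, \beta) \in A^{\op} \times A^{\op}} U_{\alpha \vee \beta} \;=\; \varinjlim_{A^{\op} \times A^{\op}} ( U_{\bullet} \circ \vee ), $$
where $\vee : A^{\op} \times A^{\op} \to A^{\op}$ is the binary-join operation (it is order-preserving, hence a functor, and satisfies $\vee \circ \Delta = \id$ for the diagonal $\Delta$, since $\alpha \vee \alpha = \alpha$). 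Under these identifications the relative diagonal becomes the comparison map associated to $\Delta : A^{\op} \to A^{\op} \times A^{\op}$; so it suffices to prove that $\vee$ is a cofinal functor, for then the comparison map along $\vee$ is an equivalence and, because $\vee \circ \Delta = \id$, the comparison map along $\Delta$ is an equivalence as well. Cofinality of $\vee$ I would check using the $\infty$-categorical form of Quillen's Theorem A: for each $\gamma \in A$ the relevant comma category $(A^{\op} \times A^{\op}) \times_{A^{\op}} (A^{\op})_{\gamma/}$ is, by inspection, the product of two copies of the poset $\{ \delta \in A : \delta \leq_{A} \gamma \}$ equipped with the reverse order, and this poset has $\gamma$ as an initial object, hence is weakly contractible. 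This gives that $\phi$ is a monomorphism, completing the proof.

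The one point requiring real care is the bookkeeping in the monomorphism step: after the universality-of-colimits computation one must correctly match the relative diagonal of $\phi$ with the comparison map associated to $\Delta : A^{\op} \to A^{\op} \times A^{\op}$, which is precisely what makes cofinality of $\vee$ the relevant input. The remaining ingredients --- the effective-epimorphism assertion, the Theorem A computation, and the principle ``effective epimorphism plus monomorphism implies equivalence'' --- are formal consequences of the standard theory of $\infty$-topoi.
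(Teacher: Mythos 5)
Your proof is correct, but it proceeds quite differently from the paper's argument. The paper's proof is a two-line application of the Yoneda lemma: for an arbitrary $\sheafF \in \Shv_{\SSet}(\Top)$, the universal property of the colimit reduces the claim to the assertion that $\sheafF(S) \to \varprojlim_{\alpha \in A} \sheafF(U_{\alpha})$ is an equivalence, which (using hypothesis $(ii)$ to see that the sub-diagram $\{U_\alpha\}$ is cofinal in the sieve generated by the cover) is just the sheaf condition on $\sheafF$ for the covering $\{U_\alpha\}$ of $S$. You instead construct the comparison map $\mathscr{F} \to S$ and verify it is an equivalence ``internally'' to the $\infty$-topos, by showing it is simultaneously an effective epimorphism (from $(i)$, via the cover $\coprod U_\alpha \to S$ and right cancellation) and a monomorphism (from $(ii)$, via universality of colimits, the identification $U_\alpha \times_S U_\beta \simeq U_{\alpha \vee \beta}$, and cofinality of $\vee : A^{\op} \times A^{\op} \to A^{\op}$ established by Theorem A). Both arguments are sound; the paper's is shorter and closer to the definitions in \S\ref{section.sheaves-on-spaces}, while yours cleanly isolates how each of the two hypotheses enters and is more transparently topos-theoretic, so it would transfer to covers that are not purely sieve-theoretic. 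The cofinality step in your mono argument---identifying the comma category over $\gamma$ as the square of $\{\delta \leq_A \gamma\}^{\op}$, which has initial object $\gamma$---is exactly right; this is the place where hypothesis $(ii)$ is doing its real work, and it plays the same role as the implicit cofinality observation the paper needs to match its indexing set $A$ with the full covering sieve.
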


\begin{proof}
Let $\sheafF: \Top^{\op} \rightarrow \SSet$ be a sheaf. We wish to show
that the canonical map
$$ \theta: \bHom_{ \Shv_{\SSet}( \Top) }( S, \sheafF ) \rightarrow
\varprojlim_{\alpha \in A} \bHom_{ \Shv_{\SSet}(\Top) }( U_{\alpha}, \sheafF )$$
is a homotopy equivalence. Using Yoneda's lemma, we can identify
$\theta$ with the canonical map
$$ \sheafF(S) \rightarrow \varprojlim_{\alpha \in A} \sheafF( U_{\alpha} ),$$
which is a homotopy equivalence by virtue of our assumption that $\sheafF$ is a sheaf. 
\end{proof}

Combining Lemma \ref{oloic} with Theorem \ref{theorem.fiber-product-description-vague}, we obtain the following:

\begin{lemma}\label{lemma.answered}
Let $(I, \leq_I)$ and $(J, \leq_J)$ be nonempty finite linearly preordered sets. Then the canonical map
$$ \varinjlim_{K \in \Amal(I,J)^{\op} } \broken^{K} \rightarrow \broken^{I} \times_{ \broken } \broken^{J}$$
exhibits the fiber product $\broken^{I} \times_{ \broken } \broken^{J}$ as a colimit of the diagram
$$\{ \broken^{K} \}_{K \in \Amal(I,J)^{\op} }$$ in the $\infty$-category $\Shv_{\SSet}( \broken )$.
\end{lemma}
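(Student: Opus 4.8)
The plan is to deduce the lemma by a direct application of Lemma \ref{oloic} to the topological space underlying the fiber product $\broken^{I} \times_{\broken} \broken^{J}$. By Proposition \ref{toprep} this fiber product is representable, and we continue to denote the representing space by $\broken^{I,J}$ (Notation \ref{fiberstack}). For each amalgam $K \in \Amal(I,J)$, Notation \ref{notation.amalgam-poset} supplies a map $\broken^{K} \to \broken^{I,J}$, natural in $K \in \Amal(I,J)^{\op}$, and composing with the projection $\broken^{I,J} \to \broken$ exhibits the whole diagram $\{\broken^{K}\}_{K \in \Amal(I,J)^{\op}}$, together with its putative colimit, as living in the over-category $\Shv_{\SSet}(\Top)_{/\broken}$ that computes $\Shv_{\SSet}(\broken)$. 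Since the forgetful functor $\Shv_{\SSet}(\Top)_{/\broken} \to \Shv_{\SSet}(\Top)$ creates colimits, it suffices to prove that the canonical map $\varinjlim_{K \in \Amal(I,J)^{\op}} \broken^{K} \to \broken^{I,J}$ is an equivalence in $\Shv_{\SSet}(\Top)$.

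For this I would invoke Lemma \ref{oloic} with $S = \broken^{I,J}$ and $A = \Amal(I,J)$; note that $A$ is a partially ordered set, and in fact has all binary joins, by Notation \ref{notation.amalgam-poset} and the observation (recorded just after it) that $\Amal(I,J)$ is a join semilattice. Theorem \ref{theorem.fiber-product-description-vague}(a) identifies $\broken^{K}$, via the map above, with an open subset $U_{K} \subseteq \broken^{I,J}$; part (b) says that the $U_{K}$ cover $\broken^{I,J}$, which is hypothesis $(i)$ of Lemma \ref{oloic}; and part (c) says $U_{K \vee K'} = U_{K} \cap U_{K'}$, which is hypothesis $(ii)$. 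Because the open embeddings $\broken^{K} \cong U_{K}$ are compatible with the transition maps on either side, the diagram $K \mapsto \broken^{K}$ coincides with the diagram $K \mapsto U_{K}$ to which Lemma \ref{oloic} applies, and the two ``canonical maps'' into $\broken^{I,J}$ agree. Lemma \ref{oloic} then shows that $\broken^{I,J}$ is the colimit of $\{U_{K}\}_{K \in \Amal(I,J)^{\op}} \simeq \{\broken^{K}\}_{K \in \Amal(I,J)^{\op}}$ in $\Shv_{\SSet}(\Top)$, which is what we wanted.

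I do not anticipate a genuine obstacle: the substantive geometric input is entirely contained in Theorem \ref{theorem.fiber-product-description-vague}, while Lemma \ref{oloic} packages the sheaf-theoretic descent. The only points that require care are bookkeeping. First, one must check that the maps $\broken^{K} \to \broken^{I,J}$ of Notation \ref{notation.amalgam-poset} are literally the open embeddings onto the sets $U_{K}$ appearing in Theorem \ref{theorem.fiber-product-description-vague}, and that they assemble into a functor on $\Amal(I,J)^{\op}$ compatible with the order-reversing map $K \mapsto U_{K}$ used in Lemma \ref{oloic}; this is an unwinding of the functoriality in Remark \ref{remark.functor}. Second, one must justify the transition between $\Shv_{\SSet}(\broken)$ and $\Shv_{\SSet}(\Top)$, for which it is enough to recall that a forgetful functor out of a slice $\infty$-category creates colimits, so that the colimit computed in $\Shv_{\SSet}(\Top)$ inherits the evident structure map to $\broken$ and represents the colimit in $\Shv_{\SSet}(\broken)$.
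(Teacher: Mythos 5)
Your argument is correct and is exactly the paper's: the paper states the lemma as an immediate corollary of Lemma \ref{oloic} combined with Theorem \ref{theorem.fiber-product-description-vague}, and your write-up simply spells out that combination, verifying hypotheses $(i)$ and $(ii)$ of Lemma \ref{oloic} from parts $(b)$ and $(c)$ of Theorem \ref{theorem.fiber-product-description-vague} and using part $(a)$ together with Notation \ref{notation.amalgam-poset} to match the two diagrams. Your added care about the slice category (reading the displayed ``$\Shv_{\SSet}(\broken)$'' as $\Shv_{\SSet}(\Top)_{/\broken}$, whose forgetful functor to $\Shv_{\SSet}(\Top)$ creates colimits) is a sensible and correct reading of what is most likely a typo for $\Shv_{\SSet}(\Top)$ in the lemma statement.
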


\begin{proof}[Proof of Theorem \ref{leftor}]
By virtue of Proposition \ref{proposition.section-existence}, the canonical map
$$\theta: \varinjlim_{I \in \LinPreOrd^{\op} } \broken^{I} \rightarrow \broken$$
is locally surjective: that is, for every $S$-family of broken lines $L_{S}$,
we can find an open covering $\{ U_{\alpha} \}$ of $S$ for which each $L_{ U_{\alpha} }$
belongs to the essential image of $( \varinjlim_{I \in \LinPreOrd^{\op} } \broken^{I})( U_{\alpha} )$.
It will therefore suffice to show that $\theta$ is a monomorphism in $\Shv_{\SSet}( \Top)$.

Form a pullback diagram
$$ \xymatrix{ \sheafF \ar[r] \ar[d] & \varinjlim_{ I \in \LinPreOrd^{\op} } \broken^{I} \ar[d]^{\theta} \\
\varinjlim_{ J \in \LinPreOrd^{\op} } \broken^{J} \ar[r]^{\theta} & \broken. }$$
so that $\sheafF$ can be written as a colimit $\varinjlim_{I,J \in \LinPreOrd^{\op} } \broken^{I,J}$
in the $\infty$-category $\Shv_{\SSet}( \Top )$. Using Lemma \ref{lemma.answered}, we can write
$\sheafF$ as an iterated colimit
$$ \varinjlim_{I, J \in \LinPreOrd^{\op} } \varinjlim_{K \in \Amal(I,J)^{\op} } \broken^{K}.$$
Equivalently, $\sheafF$ can be realized as the colimit
$$ \varinjlim_{ (I,J,K) \in \calJ^{\op} } \broken^{K},$$
where $\calJ$ denotes the category whose objects are triples $(I, J, K )$
where $I, J \in \LinPreOrd$ and $K \in \Amal(I,J)$ is an amalgam of
$I$ and $J$; a morphism from $(I, J, K )$ to $(I', J', K' )$ is a
pair of morphisms $I \rightarrow I'$ and $J \rightarrow J'$ in $\LinPreOrd$
for which the induced map of sets $$K = I \amalg J \rightarrow I' \amalg J' = K'$$ 
is nondecreasing (and therefore also a morphism in $\LinPreOrd$).

The construction $(I,J,K) \mapsto K$ determines a functor
$\calJ \rightarrow \LinPreOrd$. This functor admits a right adjoint
(which carries a linearly preordered set $K$ to the disjoint union $K \amalg K$)
and is therefore right cofinal. It follows that the canonical map
$$\sheafF \simeq \varinjlim_{ (I, J, K) \in \calJ^{\op} } \broken^{K}
\rightarrow \varinjlim_{K \in \LinPreOrd^{\op}} \broken^{K}$$
is an equivalence in $\Shv_{\SSet}( \Top )$. We observe that this map is a left
homotopy inverse to the relative diagonal
$$ \delta: \varinjlim_{K \in \LinPreOrd^{\op} } \Rep(K, \BR^{+} )
\rightarrow \varinjlim_{I \in \LinPreOrd^{\op} } \Rep(I, \BR^{+} ) \times_{ \broken} 
\varinjlim_{J \in \LinPreOrd^{\op} } \Rep(J, \BR^{+} ) = \sheafF.$$
It follows that $\delta$ is also an equivalence in $\Shv_{\SSet}( \broken)$, so that $\theta$ is a monomorphism as desired.
\end{proof}

\subsection{The Moduli Stack \texorpdfstring{$\broken_{I}$}{Broken-I}}

Let $I$ be a nonempty finite linearly ordered set and let $\Rep(I, \BR^{+})$ denote the set of all functors from $I$ to
the category $\BR^{+}$ (Notation \ref{preorderrep}). According to Theorem \ref{theorem-representab}, we can regard $\Rep(I, \BR^{+} )$
as a topological space which represents the functor $\broken^{I}$ of Notation \ref{slabic}. However, it is perhaps more natural to view
$\Rep(I, \BR^{+})$ as the collection of objects of the {\em category} $\Fun(I, \BR^{+} )$ of all functors from $I$ to $\BR^{+}$. We let $\Fun(I, \BR^{+} )^{\simeq}$ denote the underlying groupoid of this category: that is,
the groupoid whose objects are the elements of $\Rep(I, \BR^{+})$, where a morphism from $\alpha: I \rightarrow \BR^{+}$
to $\alpha': I \rightarrow \BR^{+}$ is given by a map $\gamma: I \rightarrow \RR$ satisfying
$$ \alpha(i,j) + \gamma(j) = \gamma(i) + \alpha'(i,j)$$
for $i \leq_{I} j$. We will regard $\Fun(I, \BR^{+} )^{\simeq}$ as an example of a Lie groupoid with corners (Definition \ref{definition.liegroupoid}),
where the set of objects $\Rep(I, \BR^{+} )$ is regarded as a manifold with corners as in Remark \ref{plusup}, and the set of morphisms
is regarded as a manifold with corners by identifying it with $\Rep(I, \BR^{+}) \times \RR^{I}$.
Our goal in this section is to give an explicit description of the topological stack represented by the Lie groupoid $\Fun(I, \BR^{+})^{\simeq}$.

We begin with some elementary observations. 

\begin{lemma}\label{lemma-braceward}
Let $S$ be a topological space and let $\pi: L_{S} \rightarrow S$ be an $S$-family of broken lines.
Then:
\begin{itemize}
\item[$(i)$] The action of $\RR$ on $L_{S}^{\circ} = L_{S} - L_{S}^{\RR}$ is topologically
free: that is, $L_{S}^{\circ}$ is an $\RR$-torsor over the quotient $L_{S}^{\circ} / \RR$.

\item[$(ii)$] The induced map $L_{S}^{\circ} / \RR \rightarrow S$ is a local homeomorphism.
\end{itemize}
\end{lemma}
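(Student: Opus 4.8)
The plan is to prove both parts at once by constructing, around every point of $L_S^\circ$, an explicit $\RR$-invariant open set that is equivariantly homeomorphic to a product $\RR \times U$ with $U \subseteq S$ open. Part $(i)$ then becomes the statement that $L_S^\circ \to L_S^\circ/\RR$ is a (locally trivial) principal $\RR$-bundle, and part $(ii)$ drops out because such a trivializing chart identifies the corresponding open subset of $L_S^\circ/\RR$ with $U$, compatibly with the projections to $S$.

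First I would check that the $\RR$-action on $L_S^\circ$ is free. Fix $x \in L_S^\circ$ with $s = \pi(x)$ and suppose $\mu(t,x) = x$; since $\mu$ preserves fibres we may work inside the broken line $L_s$. Choosing a directed homeomorphism $\gamma \colon L_s \simeq [0,1]$, the orbit map $t \mapsto \gamma(\mu(t,x))$ is continuous and nondecreasing; if $t \neq 0$ it is also periodic, and a nondecreasing periodic function on $\RR$ is constant, which (as $\gamma$ is injective) forces $x \in L_s^{\RR}$, a contradiction. (Alternatively one may invoke Corollary \ref{corollary.classification}, by which each connected component of $L_s^\circ$ is a copy of $\RR$ carrying the translation action.)

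Next, for the torsor structure, fix $x \in L_S^\circ$ and $s = \pi(x)$. Since $L_S$ is an $S$-family of broken lines, axiom $(A5)$ of Theorem \ref{theorem.line-recognition} provides an open $U \ni s$ and a section $\sigma \colon U \to L_S^\circ$ with $\sigma(s) = x$. Put $\phi \colon \RR \times U \to L_S^\circ$, $\phi(t,u) = \mu(t,\sigma(u))$, and let $\psi \colon \pi^{-1}(U) \cap L_S^\circ \to [-\infty,\infty] \times U$ be the map $\psi(y) = (d_{L_S}(\sigma(\pi(y)),y),\pi(y))$, which is continuous because the translation distance function $d_{L_S}$ is continuous by Proposition \ref{proposition.translation-distance}. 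I would then verify the two formal identities: $d_{L_S}(\sigma(u),\mu(t,\sigma(u))) = t$ (immediate from Notation \ref{notation.translation-distance}, since $\sigma(u) \in L_S^\circ$), and, conversely, that finiteness of $d_{L_S}(\sigma(\pi(y)),y)$ forces $y$ to lie in the orbit of $\sigma(\pi(y))$. These show that $V := \psi^{-1}(\RR \times U)$ is an open, $\RR$-invariant subset of $L_S^\circ$ equal to the image of $\phi$, and that $\phi$ and $\psi|_V$ are mutually inverse; hence $\phi$ is an $\RR$-equivariant homeomorphism $\RR \times U \simeq V$. Consequently $V \to V/\RR$ is a trivial $\RR$-bundle with $V/\RR \cong U$ open in $L_S^\circ/\RR$. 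Letting $x$ vary, these sets $V$ cover $L_S^\circ$, which gives $(i)$; and since the composite $U \cong V/\RR \to S$ is simply the inclusion $U \hookrightarrow S$, the map $L_S^\circ/\RR \to S$ is a local homeomorphism, which gives $(ii)$.

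The only step that takes genuine (if routine) work is the identification $V = \mathrm{im}(\phi)$ together with checking $\phi$ and $\psi|_V$ are inverse; everything else — freeness, openness of $V$ as a $\psi$-preimage, triviality of the bundle over $U$, and the computation of the map $V/\RR \to S$ — is formal once the translation distance function and its continuity are in hand. I do not anticipate any serious obstacle.
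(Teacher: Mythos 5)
Your proof is correct, but it takes a somewhat different route from the paper's. The paper reduces to the universal model via Corollary~\ref{corollary.local-coordinates}: it assumes $S = \Rep(I,\BR^+)$ and $L_S = \widetilde{\Rep}(I,\BR^+)$, covers $L_S^\circ$ by the $\RR$-invariant open sets $L_S(i) = \{(\alpha,\{\beta(j)\}) : \beta(i) \in \RR\}$, and observes that $(\alpha,\{\beta(j)\}) \mapsto (\alpha,\beta(i))$ is an $\RR$-equivariant homeomorphism $L_S(i) \simeq S \times \RR$. You instead argue intrinsically: axiom $(A5)$ of Theorem~\ref{theorem.line-recognition} supplies a local section $\sigma$ through any given non-fixed point, and the translation-distance coordinate $y \mapsto d_{L_S}(\sigma(\pi(y)),y)$, continuous by Proposition~\ref{proposition.translation-distance}, furnishes the trivialization. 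These are really the same chart in two guises — in the explicit model the section through $(\alpha,\{\beta(j)\})$ is ``$\beta(i)$ constant'' and the translation distance is just $\beta(i)$ — but your version avoids the reduction to the explicit presentation altogether, depending only on the recognition principle and the continuity of $d_{L_S}$ rather than on Theorem~\ref{theorem-representab} and Corollary~\ref{corollary.local-coordinates}. This makes it slightly more self-contained, at the modest cost of having to spell out the $\phi/\psi$ bookkeeping that the coordinate model hands you for free. Your preliminary freeness check (a nondecreasing periodic function is constant) is also a clean direct argument; the paper's proof gets freeness for free from the explicit isomorphism $L_S(i) \simeq S \times \RR$.
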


\begin{warning}
In the situation of Lemma \ref{lemma-braceward}, the quotient $L_{S}^{\circ} / \RR$ is usually not Hausdorff
(even if the space $S$ is assumed to be Hausdorff).
\end{warning}

\begin{proof}[Proof of Lemma \ref{lemma-braceward}]
Using Corollary \ref{corollary.local-coordinates}, we can assume that $S = \Rep(I, \BR^{+} )$
and $L_{S} = \widetilde{\Rep}(I, \BR^{+} )$ for some nonempty finite linearly ordered set $I$ (see Construction \ref{makelines}).
For each $i \in I$, let $L_{S}(i)$ denote the subset of $L_S$ consisting of those
pairs $( \alpha, \{ \beta(j) \}_{j \in I} )$ where $\beta(i) \in \RR$. Then the sets $\{ L_S(i) \}_{i \in I}$
comprise an open covering of $L_{S}^{\circ}$ by $\RR$-invariant open subsets. It will therefore suffice to prove the following:
\begin{itemize}
\item[$(i')$] The action of $\RR$ on each $L_{S}(i)$ is topologically free.
\item[$(ii')$] Each of the maps $L_{S}(i) / \RR \rightarrow S$ is a local homeomorphism.
\end{itemize}
In fact, something stronger is true: there are $\RR$-equivariant isomorphisms
$L_{S}(i) \simeq S \times \RR$, given by the construction $(\alpha, \{ \beta(j) \}_{j \in I} ) \mapsto (\alpha, \beta(i))$.
\end{proof}

\begin{definition}\label{definition.weak-I-section}
Let $S$ be a topological space, let $\pi: L_S \rightarrow S$ be an $S$-family of broken lines, and let $I$ be a finite nonempty linearly preordered set.
A {\it weak $I$-section} of $L_{S}$ is a collection of continuous maps $\{ \overline{\sigma}_i: S \rightarrow L_{S}^{\circ} / \RR \}_{i \in I}$ with the following properties:
\begin{itemize}
\item Each $\overline{\sigma}_i$ is a section of the local homeomorphism $L_{S}^{\circ} / \RR \rightarrow S$: that is,
the composite maps $S \xrightarrow{ \overline{\sigma}_i } L_{S}^{\circ} / \RR \rightarrow S$ are the identity on $S$.

\item For each $s \in S$, the induced map
$$ I \rightarrow L_{s}^{\circ} / \RR \quad \quad i \mapsto \overline{\sigma}_{i}(s)$$
is surjective and nondecreasing (where $L^{\circ}_{s} / \RR$ is equipped with the linear ordering inherited from the ordering on the broken line
$L_{s}$).
\end{itemize}
\end{definition}

\begin{example}\label{example.weak-sections-versus-sections}
Let $S$ be a topological space, let $\pi: L_S \rightarrow S$ be an $S$-family of broken lines, and let $I$ be a finite nonempty linearly preordered set. 
Suppose we are given a collection of maps $\{ \sigma_i: S \rightarrow L_S^{\circ} \}_{i \in I}$, and for each $i \in I$ let
$\overline{\sigma}_i$ denote the composite map $S \xrightarrow{\sigma_i} L_S^{\circ} \rightarrow L_S^{\circ} / \RR$. Then
$\{ \sigma_i \}_{i \in I}$ is an $I$-section of $L_S$ (in the sense of Definition \ref{definition.I-section}) 
if and only if $\{ \overline{\sigma}_i \}_{i \in I}$ is a weak $I$-section of $L_S$ (in the sense of Definition \ref{definition.weak-I-section}). In particular, every $I$-section of $L_S$ gives rise to a weak $I$-section of
$L_S$. Moreover, every weak $I$-section of $L_{S}$ arises in this way, at least locally on $S$ (or even globally on $S$, if every $\RR$-torsor over $S$ is trivial: this condition is satisfied whenever $S$ is a paracompact Hausdorff space).
\end{example}

\begin{remark}\label{suffices}
Let $\pi: L_S \rightarrow S$ and $I$ be as in Definition \ref{definition.weak-I-section}, and let $\{ \overline{\sigma}_{i}: S \rightarrow L_{S}^{\circ} / \RR \}_{i \in I}$ be a weak $I$-section.
Then $\overline{\sigma}_{i} = \overline{\sigma}_{j}$ whenever $i =_{I} j$. It follows that we can identify weak $I$-sections of $L_{S}$ with weak
$(I / =_{I} )$-sections of $L_{S}$. Consequently, nothing is lost by replacing $I$ by the quotient $I / =_{I}$, and thereby restricting to the case where
$I$ is a linearly ordered set.
\end{remark}

\begin{notation}\label{notation.broken-I}
Let $I$ be a finite nonempty linearly ordered set. We define a category $\Pt(\broken_I)$ as follows:
\begin{itemize}
\item The objects of $\Pt(\broken_I)$ are triples $(\pi: L_S \rightarrow S, \mu, \{ \overline{\sigma}_i \}_{i \in I})$, where $(\pi, \mu)$ is an object
of the category $\Pt(\broken)$ (see Construction \ref{construction.category-of-lines}) and $\{ \overline{\sigma}_i: S \rightarrow L_S^{\circ} / \RR \}_{i \in I}$ is
a weak $I$-section of $L_S$ (see Definition \ref{definition.weak-I-section}).

\item A morphism from $(\pi': L_{S'} \rightarrow S', \mu', \{ \overline{\sigma}'_i \}_{i \in I} )$ to $(\pi: L_{S} \rightarrow S, \mu,  \{ \overline{\sigma}_i \}_{i \in I} )$ in
the category $\Pt(\broken_I)$ is a pair of continuous maps
$g: S' \rightarrow S$, $\widetilde{g}: L_{S'} \rightarrow L_{S}$ which define a morphism from $(\pi', \mu')$ to $(\pi, \mu)$ in the category $\Pt(\broken)$ and for which each of the diagrams
$$ \xymatrix{ S' \ar[d]^{ \overline{\sigma}'_i} \ar[r]^{ g } & S \ar[d]^{ \overline{\sigma}_i } \\
L_{S'}^{\circ} / \RR \ar[r] & L_S^{\circ} / \RR }$$
commutes. Here, the bottom horizontal arrow is the map induced by $\widetilde{g}$. 
\end{itemize}
\end{notation}

\begin{lemma}\label{lemma.compare-broken-I}
Let $I$ be a finite nonempty linearly ordered set. Then the forgetful functor $\Pt(\broken_I) \rightarrow \Pt(\broken)$ is a representable
local homeomorphism of topological stacks (see Remark \ref{remark.extension-by-zero-2}). In other words, for any
family of broken lines $L_S \rightarrow S$, the fiber product $S \times_{\broken} \broken_I$ is (representable by)
a topological space $S'$ for which the projection map $S' \rightarrow S$ is a local homeomorphism.
\end{lemma}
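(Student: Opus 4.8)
The plan is to exhibit $S'$ explicitly as an open subspace of an iterated fibre power of the \'etale space $L_S^{\circ}/\RR$, and then to identify it with the fibre product. So fix an $S$-family of broken lines $\pi\colon L_S\to S$. By Lemma \ref{lemma-braceward}, $L_S^{\circ}$ is an $\RR$-torsor over $E:=L_S^{\circ}/\RR$ and $p\colon E\to S$ is a local homeomorphism. Let $E^{I}$ denote the $|I|$-fold fibre power of $E$ over $S$, with one factor for each $i\in I$; since local homeomorphisms are closed under base change and composition, $E^{I}\to S$ is again a local homeomorphism, and a point of $E^{I}$ over $s$ is a tuple $(e_i)_{i\in I}$ with $e_i\in L_s^{\circ}/\RR$. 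Let $S'\subseteq E^{I}$ be the set of those tuples for which $i\mapsto e_i$ is nondecreasing and surjective onto $L_s^{\circ}/\RR$ (with respect to the ordering that $L_s^{\circ}/\RR$ inherits from the broken line $L_s$). I will show (a) that $S'$ is open in $E^{I}$ — so that $S'\to S$ is a local homeomorphism — and (b) that $S'$ represents $S\times_{\broken}\broken_I$ over $S$; together these give the lemma.

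For (b), let $T$ be a topological space. Forming the non-fixed locus and the $\RR$-quotient both commute with base change (the latter because the pullback of an $\RR$-torsor is an $\RR$-torsor), so for any $f\colon T\to S$ one has $f^{*}E=L_T^{\circ}/\RR$, where $L_T=T\times_S L_S$; hence a map $T\to E^{I}$ is the same datum as a continuous map $f\colon T\to S$ together with an $|I|$-tuple of sections of $L_T^{\circ}/\RR\to T$. Such a map factors through $S'$ exactly when, fibrewise over $T$, the resulting tuple is nondecreasing and surjective — that is, exactly when the tuple of sections is a weak $I$-section of $L_T$ in the sense of Definition \ref{definition.weak-I-section}. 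Thus $\Hom_{\Top}(T,S')$ is naturally in bijection with the set of objects of the groupoid $(S\times_{\broken}\broken_I)(T)$. An automorphism of an object of that groupoid is an automorphism of the underlying family $L_T$ inducing the identity on $L_T$, hence is trivial, so the groupoid is discrete and the bijection is an equivalence. Therefore $S'$ represents $S\times_{\broken}\broken_I$, with structure map the local homeomorphism $S'\to S$.

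For (a), I treat the two conditions defining $S'$ separately. First, the relation $R:=\{(e,e')\in E\times_S E:\ e\leq_{L_s}e'\}$ is open in $E\times_S E$: by Notation \ref{notation.translation-distance} one has $[x]\leq_{L_s}[y]$ iff $d_{L_s}(x,y)>-\infty$ for one (equivalently every) choice of lifts; the set $\{d_{L_S}>-\infty\}$ is open in $L_S^{\circ}\times_S L_S$ by Proposition \ref{proposition.translation-distance} (axiom $(A4)$), its intersection with $L_S^{\circ}\times_S L_S^{\circ}$ is $\RR\times\RR$-invariant, and $R$ is the image of this invariant open set under the quotient map $L_S^{\circ}\times_S L_S^{\circ}\to E\times_S E$, hence open. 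Consequently the nondecreasing locus in $E^{I}$ — the intersection, over comparable pairs $i\leq_I j$, of the preimages of $R$ under the coordinate projections $E^{I}\to E\times_S E$ — is open. Second, and this is the crux, I claim the surjective locus in $E^{I}$ is open; since this can be checked locally on $S$, Corollary \ref{corollary.local-coordinates} reduces us to $S=\Rep(J,\BR^{+})$ and $L_S=\widetilde{\Rep}(J,\BR^{+})$ for a finite nonempty linearly ordered set $J$. As in the proof of Lemma \ref{lemma-braceward}, $E$ is the union of the open subsets $S_j:=L_S(j)/\RR$ ($j\in J$), each mapped homeomorphically onto $S$ by $p$; the fibre $E_{\alpha}$ is $J/{\sim_{\alpha}}$, where $j\sim_{\alpha}j'$ means that $\alpha$ places $j$ and $j'$ in the same component of $L_{\alpha}$ (i.e.\ $\alpha(j,j')<\infty$ for $j\leq_J j'$); and since ``$\alpha(j,j')<\infty$'' is an open condition on $\Rep(J,\BR^{+})$, the relation $\sim_{\alpha'}$ contains $\sim_{\alpha}$ for all $\alpha'$ in a suitable neighbourhood of $\alpha$.

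Now let $\xi=(e_i)_{i\in I}$ be a surjective tuple lying over $\alpha$, and choose $\phi(i)\in J$ with $e_i\in S_{\phi(i)}$, so that $\{[\phi(i)]_{\sim_{\alpha}}:i\in I\}=J/{\sim_{\alpha}}$. Let $V\subseteq\Rep(J,\BR^{+})$ be the open set of those $\alpha'$ with $\sim_{\alpha'}\supseteq\sim_{\alpha}$ (a finite intersection of conditions ``$\alpha'(j,j')<\infty$''), and put $N:=\bigcap_{i\in I}\mathrm{pr}_i^{-1}(S_{\phi(i)})\cap\mathrm{pr}^{-1}(V)$, an open neighbourhood of $\xi$ in $E^{I}$. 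If $(f_i)_{i\in I}\in N$ lies over $\alpha'\in V$ then $f_i$ is the point $[\phi(i)]_{\sim_{\alpha'}}$ of $E_{\alpha'}=J/{\sim_{\alpha'}}$, and for every $k\in J$ there is some $i$ with $k\sim_{\alpha}\phi(i)$, hence $k\sim_{\alpha'}\phi(i)$; so $(f_i)$ is surjective and $N\subseteq S'$. This shows the surjective locus is open, completing (a). The one genuine obstacle is this last step: the nondecreasing condition is soft, coming directly from axiom $(A4)$, whereas openness of the surjective locus relies on the explicit local structure of the \'etale space $L_S^{\circ}/\RR$ supplied by Corollary \ref{corollary.local-coordinates} — equivalently, on the fact that, locally on $S$, the number of components of the fibres can only degenerate, never refine.
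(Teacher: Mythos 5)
Your proof is correct and takes essentially the same route as the paper's: both identify $S\times_{\broken}\broken_I$ with a subset of the fibre power $\prod_{i\in I}L_S^{\circ}/\RR$ formed over $S$ and then invoke Lemma \ref{lemma-braceward}. The only difference is that the paper asserts the openness of this subset as part of ``unwinding the definitions,'' whereas you verify it in full (openness of the nondecreasing locus via axiom $(A4)$ and Proposition \ref{proposition.translation-distance}, openness of the surjective locus via the local coordinates of Corollary \ref{corollary.local-coordinates}), and that verification is sound.
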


\begin{proof}
Unwinding the definitions, we see that $S \times_{\broken} \broken_I$ can be identified with
an open subset of the product $\prod_{i \in I} L_{S}^{\circ} / \RR$, formed in the category $\Top_{/S}$. The desired result now
follows from the observation that the map $L_{S}^{\circ} / \RR \rightarrow S$ is a local homeomorphism (Lemma \ref{lemma-braceward}).
\end{proof}

\begin{proposition}\label{lemma.structure-of-broken-I}
Let $I$ be a finite nonempty linearly ordered set. Then, for every topological space $S$, we have a
canonical equivalence
$$ \broken_{I}(S) \simeq \Hom_{\cont}(S, \Fun(I, \BR^{+})^{\simeq} ),$$
where the right hand side is obtained by applying Remark \ref{makestack} to the Lie groupoid with corners $\Fun(I, \BR^{+} )^{\simeq}$.
\end{proposition}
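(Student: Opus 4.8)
The plan is to produce a comparison at the level of the \emph{strict} prestacks underlying Remark \ref{makestack}. I will construct a natural functor
\[
\Psi_{S}\colon \Hom^{\strict}_{\cont}\bigl(S,\Fun(I,\BR^{+})^{\simeq}\bigr)\longrightarrow \broken_{I}(S),
\]
show that it is fully faithful and locally essentially surjective, and conclude by sheafifying. Since $\broken_{I}$ is already a stack (Lemma \ref{lemma.compare-broken-I}) and the stackification of $S\mapsto\Hom^{\strict}_{\cont}(S,\Fun(I,\BR^{+})^{\simeq})$ is by definition $S\mapsto\Hom_{\cont}(S,\Fun(I,\BR^{+})^{\simeq})$, such a local equivalence of prestacks induces the asserted equivalence $\broken_{I}(S)\simeq\Hom_{\cont}(S,\Fun(I,\BR^{+})^{\simeq})$.

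To define $\Psi_{S}$ on objects, send a continuous map $\alpha\colon S\to\Rep(I,\BR^{+})=\Ob(\Fun(I,\BR^{+})^{\simeq})$ to the family $[\alpha]_{S}$ of Construction \ref{construction.inverse}, equipped with the weak $I$-section obtained by composing the canonical $I$-section $\{\sigma_{j}\}_{j\in I}$ with the quotient map $[\alpha]_{S}^{\circ}\to[\alpha]_{S}^{\circ}/\RR$ (cf.\ Example \ref{example.weak-sections-versus-sections}). A morphism from $\alpha$ to $\alpha'$ in $\Hom^{\strict}_{\cont}(S,\Fun(I,\BR^{+})^{\simeq})$ is a continuous map $\gamma\colon S\to\RR^{I}$ with $\alpha_{s}(i,j)+\gamma_{s}(j)=\gamma_{s}(i)+\alpha'_{s}(i,j)$ for $i\leq_{I}j$; I send it to the map $h_{\gamma}\colon[\alpha]_{S}\to[\alpha']_{S}$ which over $s\in S$ carries $(\beta(i))_{i\in I}$ to $(\beta(i)-\gamma_{s}(i))_{i\in I}$. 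Using that $\gamma$ is real-valued (so that $\alpha_{s}(i,j)=\infty$ precisely when $\alpha'_{s}(i,j)=\infty$), the defining relation shows that $h_{\gamma}$ preserves condition $(\ast)$ of Construction \ref{makelines}, is $\RR$-equivariant, is continuous over $S$ with inverse $h_{-\gamma}$, and carries $\sigma_{j}$ to the $\RR$-translate $\mu(-\gamma_{\bullet}(j),\sigma'_{j})$ of the canonical $I$-section of $[\alpha']_{S}$; this last point shows $h_{\gamma}$ respects weak $I$-sections, so it is a morphism in $\broken_{I}(S)$. Functoriality follows from $h_{\gamma'}\circ h_{\gamma}=h_{\gamma+\gamma'}$, and naturality in $S$ is clear.

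For full faithfulness, faithfulness is immediate: the fiber of $[\alpha]_{S}$ over $s$ contains the point $\widetilde{\alpha}_{i}(s)$ of Construction \ref{construction.inverse}, whose $i$-th coordinate $\alpha_{s}(i,i)=0$ is finite, so $h_{\gamma}$ determines each $\gamma_{s}(i)$. For fullness, let $\Phi\colon\Psi_{S}(\alpha)\to\Psi_{S}(\alpha')$ be an isomorphism in $\broken_{I}(S)$: an $\RR$-equivariant homeomorphism $\Phi\colon[\alpha]_{S}\to[\alpha']_{S}$ over $\id_{S}$ whose induced map on $(-)^{\circ}/\RR$ matches the two weak $I$-sections. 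Since the $\RR$-action on $[\alpha']_{S}^{\circ}$ is free and $[\alpha']_{S}^{\circ}\to[\alpha']_{S}^{\circ}/\RR$ is an $\RR$-torsor (Lemma \ref{lemma-braceward}), there is a unique continuous $\gamma\colon S\to\RR^{I}$ with $\Phi\circ\sigma_{j}=\mu(-\gamma_{\bullet}(j),\sigma'_{j})$. As $\Phi$ is fiberwise an isomorphism of broken lines it preserves translation distances, and since $d_{[\alpha]_{s}}(\sigma_{i}(s),\sigma_{j}(s))=\alpha_{s}(i,j)$ (and likewise for $\alpha'$), a short computation gives $\alpha_{s}(i,j)=\gamma_{s}(i)-\gamma_{s}(j)+\alpha'_{s}(i,j)$ for $i\leq_{I}j$, i.e.\ $\gamma$ is a morphism $\alpha\to\alpha'$ in $\Fun(I,\BR^{+})^{\simeq}$. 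Finally $h_{\gamma}$ and $\Phi$ are $\RR$-equivariant, lie over $\id_{S}$, and agree on $\{\sigma_{j}\}$, hence agree on $[\alpha]_{S}^{\circ}$ (each fiber point is an $\RR$-translate of some $\sigma_{j}(s)$) and so on $[\alpha]_{S}$ by density, exactly as in Lemma \ref{harmless}(2); thus $\Phi=\Psi_{S}(\gamma)$. For local essential surjectivity, given $(L_{S},\mu,\{\overline{\sigma}_{i}\})\in\broken_{I}(S)$, cover $S$ by opens $U$ over which the $\overline{\sigma}_{i}$ lift to sections $\sigma_{i}\colon U\to L_{U}^{\circ}$ (possible since $L_{S}^{\circ}\to L_{S}^{\circ}/\RR$ is an $\RR$-torsor, Lemma \ref{lemma-braceward}); by Example \ref{example.weak-sections-versus-sections} the $\{\sigma_{i}\}$ form an $I$-section, so Theorem \ref{theorem-representab} supplies $\alpha\colon U\to\Rep(I,\BR^{+})$ with $(L_{U},\{\sigma_{i}\})$ isomorphic in $\broken^{I}(U)$ to the universal example $([\alpha]_{U},\{\sigma_{j}\}_{j\in I})$ of Construction \ref{construction.inverse}, whence $(L_{U},\mu,\{\overline{\sigma}_{i}\}|_{U})\cong\Psi_{U}(\alpha)$.

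The bulk of the work is the fullness step — producing the continuous ``translation discrepancy'' $\gamma$ from $\Phi$ and verifying that it satisfies the naturality relation — together with the systematic $\pm\infty$ bookkeeping needed to check that $h_{\gamma}$ is well defined and that the relevant identities persist at the degenerate values allowed by condition $(\ast)$ of Construction \ref{makelines}.
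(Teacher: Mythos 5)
Your proof is correct and follows essentially the same route as the paper's one-line proof: the paper simply invokes Example \ref{example.weak-sections-versus-sections} (weak $I$-sections lift locally to $I$-sections) together with Theorem \ref{theorem-representab} (the representability $\broken^I \simeq \Rep(I,\BR^{+})$), and then stackifies. You have spelled out the details of the local equivalence of prestacks that this invocation hides, including the explicit formula $h_{\gamma}$ for the comparison at the level of morphisms.
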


\begin{proof}
Combine Example \ref{example.weak-sections-versus-sections} with Theorem \ref{theorem-representab}.
\end{proof}

\begin{remark}\label{remark.structure}
Proposition \ref{lemma.structure-of-broken-I} can be restated as follows: the topological stack $\broken_{I}$
can be realized as the stack-theoretic quotient of the topological space $\Rep(I, \BR^{+}) \simeq \broken^{I}$
by the action of $\RR^{I}$, given concretely by the map
$$ \RR \times \Rep(I, \BR^{+}) \rightarrow \Rep(I, \BR^{+}) \quad \quad (\gamma, \alpha) \mapsto \{ \alpha(i,j) - \gamma(i) + \gamma(j) \}_{i \leq_{I} j}.$$
\end{remark}

We close this section by establishing an analogue of Theorem \ref{leftor}.

\begin{notation}\label{notation.linord}
We define a category $\LinOrd$ as follows:
\begin{itemize}
\item The objects of $\LinOrd$ are pairs $(I, \leq_{I} )$, where $I$ is a finite nonempty set and
$\leq_{I}$ is a linear ordering on $I$.
\item A morphism from $(I, \leq_{I} )$ to $(J, \leq_{J} )$ in the category $\LinOrd$ is a surjection $f: I \rightarrow J$
which is nondecreasing: that is, $i \leq_{I} i'$ implies $f(i) \leq_{J} f(i')$.
\end{itemize}
Note that we can regard $\LinOrd$ as a full subcategory of the category $\LinPreOrd$ introduced in Construction \ref{construction.dotwell}.
We will generally abuse notation by identifying an object $(I, \leq_{I} ) \in \LinOrd$ with its underlying set $I$.
\end{notation}

\begin{theorem}\label{leftor2}
The canonical map
$$ \phi: \varinjlim_{ I \in \LinOrd^{\op} } \broken_{I} \rightarrow \broken $$
exhibits $\broken$ as a colimit of the diagram $\{ \broken_{I} \}_{I \in \LinOrd^{\op} }$ in the $\infty$-category $\Shv_{\SSet}( \Top )$
\end{theorem}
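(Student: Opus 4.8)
The plan is to adapt the proof of Theorem \ref{leftor} essentially verbatim, with the stacks $\broken_I$ in place of the spaces $\broken^I$. Write $\phi\colon \varinjlim_{I\in\LinOrd^{\op}}\broken_I\to\broken$ for the canonical map. First I would check that $\phi$ is \emph{locally surjective}: given any $S$-family of broken lines $L_S$, Proposition \ref{proposition.section-existence} yields an open cover $\{U_\alpha\}$ of $S$ on which $L_{U_\alpha}$ admits an $I_\alpha$-section for some $I_\alpha\in\LinOrd$, and Example \ref{example.weak-sections-versus-sections} converts this into a weak $I_\alpha$-section, so $L_{U_\alpha}$ lies in the essential image of $\broken_{I_\alpha}(U_\alpha)$. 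It therefore remains to prove that $\phi$ is a monomorphism in $\Shv_{\SSet}(\Top)$; since $\Shv_{\SSet}(\Top)$ is an $\infty$-topos, this amounts to showing that the relative diagonal $\delta\colon \varinjlim_{L}\broken_L\to\sheafF$ is an equivalence, where $\sheafF$ denotes the fiber product $(\varinjlim_{I}\broken_I)\times_{\broken}(\varinjlim_{J}\broken_J)\simeq\varinjlim_{I,J\in\LinOrd^{\op}}(\broken_I\times_{\broken}\broken_J)$.

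The one genuinely new ingredient is an analogue of Lemma \ref{lemma.answered}: for $I,J\in\LinOrd$ the canonical map $\varinjlim_{K\in\Amal(I,J)^{\op}}\broken_{K/=_{K}}\to\broken_I\times_{\broken}\broken_J$ is an equivalence. (Here each amalgam $K\in\Amal(I,J)$ is a genuine preordering, so we pass to its linear quotient $K/=_{K}\in\LinOrd$; the comparison map is induced by the surjections $I\to K/=_{K}\leftarrow J$ coming from the inclusions $I\hookrightarrow K\hookleftarrow J$, together with the fact that weak sections restrict along morphisms of $\LinOrd$ as in Remark \ref{remark.functor}.) Because $\broken_I$ and $\broken_J$ are stacks rather than spaces, I cannot directly invoke the explicit open cover of Theorem \ref{theorem.fiber-product-description-vague}; instead I would prove the statement after base change to an arbitrary $S$-family of broken lines. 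Fix a topological space $S$ with a map $S\to\broken$. By Lemmas \ref{lemma.compare-broken-I} and \ref{lemma-braceward}, the base changes $S_I:=S\times_{\broken}\broken_I$, $S_J$ and $S_{K/=_{K}}$ are representable by spaces étale over $S$, and a point of $S_I\times_S S_J$ lying over $s\in S$ records nondecreasing surjections $I,J\twoheadrightarrow L_s^{\circ}/\RR$. Since the condition ``$d_{L_s}(a,b)\neq-\infty$'' is invariant under translating $a$ or $b$, it depends only on the images of $a$ and $b$ in $L_s^{\circ}/\RR$; hence, exactly as in the proof of Theorem \ref{theorem.fiber-product-description-vague}, each such point determines an amalgam $K_s\in\Amal(I,J)$. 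Running that argument fibrewise shows that the image of $S_{K/=_{K}}\hookrightarrow S_I\times_S S_J$ is the open set $\{s'\colon K\leq K_{s'}\}$, that these open sets cover $S_I\times_S S_J$, and that the image of $S_{(K\vee K')/=}$ is their pairwise intersection; Lemma \ref{oloic} then gives $S_I\times_S S_J\simeq\varinjlim_{K}S_{K/=_{K}}$. Since $\broken_I\times_{\broken}\broken_J\to\broken$ is a representable local homeomorphism, checking this after every base change $S\to\broken$ suffices, so the displayed map of sheaves over $\broken$ is an equivalence.

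With this in hand the proof of Theorem \ref{leftor} transports directly. Substituting the fiber-product decomposition into $\sheafF$ writes it as an iterated colimit $\varinjlim_{I,J\in\LinOrd^{\op}}\varinjlim_{K\in\Amal(I,J)^{\op}}\broken_{K/=_{K}}\simeq\varinjlim_{(I,J,K)\in\calJ'^{\op}}\broken_{K/=_{K}}$, where $\calJ'$ is the category of triples $(I,J,K)$ with $I,J\in\LinOrd$ and $K\in\Amal(I,J)$, a morphism being a pair of $\LinOrd$-morphisms $I\to I'$, $J\to J'$ for which the induced map $I\amalg J\to I'\amalg J'$ is nondecreasing. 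The functor $u'\colon\calJ'\to\LinOrd$, $(I,J,K)\mapsto K/=_{K}$, admits a right adjoint sending $L$ to $(L,L,L\amalg_{L}L)$, where $L\amalg_{L}L$ denotes $L\amalg L$ equipped with the preordering pulled back along the fold map $L\amalg L\to L$ (one checks directly that a $\calJ'$-morphism $(I,J,K)\to(L,L,L\amalg_{L}L)$ is the same datum as a nondecreasing surjection $K/=_{K}\to L$, using that the inclusions $I,J\hookrightarrow K$ are essentially surjective). Hence $u'^{\op}$ is cofinal, so $\sheafF\simeq\varinjlim_{L\in\LinOrd^{\op}}\broken_L$. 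Finally, under this identification the diagonal $\broken_L\to\broken_L\times_{\broken}\broken_L$ is the colimit-cocone map at the amalgam $L\amalg_{L}L$ (whose linear quotient is $L$), so the equivalence $\sheafF\simeq\varinjlim_{L}\broken_L$ is a left inverse to $\delta$; therefore $\delta$ is an equivalence, $\phi$ is a monomorphism, and, combined with local surjectivity, $\phi$ exhibits $\broken$ as the asserted colimit.

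I expect the second step to be the main obstacle: although the combinatorics of $\Amal(I,J)$ is identical to that in Theorem \ref{theorem.fiber-product-description-vague}, one must relativize the ``open cover'' description of the fiber product from a topological space to an arbitrary base stack, and it is here that Lemmas \ref{lemma.compare-broken-I} and \ref{lemma-braceward} together with the translation-invariance of the finiteness of the translation distance do the real work. The remaining cofinality bookkeeping, once $\calJ'$ and the adjoint functor are correctly set up, is routine.
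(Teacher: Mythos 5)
Your proof is correct, but it takes a genuinely different route from the one in the paper. You rerun the entire architecture of the proof of Theorem \ref{leftor}---local surjectivity plus monomorphism, with the latter established via a fiber-product decomposition and a cofinality argument---substituting $\broken_I$ for $\broken^I$ throughout. This requires, as you correctly identify, a stacky analogue of Lemma \ref{lemma.answered}, which you supply by base-changing along $S \to \broken$ and observing that the translation-invariance of the condition $d_{L_s}(a,b)\neq -\infty$ lets the combinatorics of Theorem \ref{theorem.fiber-product-description-vague} transfer verbatim to weak sections (Lemmas \ref{lemma.compare-broken-I} and \ref{lemma-braceward} ensure that $S_I = S\times_\broken \broken_I$, $S_J$, and $S_{K/=_K}$ are spaces \'etale over $S$, so the resulting bijections onto open subsets are automatically open embeddings, and Lemma \ref{oloic} applies). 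Your identification of the right adjoint to $u'\colon\calJ'\to\LinOrd$ as $L\mapsto (L,L,L\amalg_L L)$ also checks out. The paper instead treats Theorem \ref{leftor2} as a \emph{corollary} of Theorem \ref{leftor}: it produces $\psi\colon \varinjlim_{\LinPreOrd^{\op}}\broken^{\widetilde I}\to\varinjlim_{\LinOrd^{\op}}\broken_I$ with $\phi\circ\psi$ the equivalence of Theorem \ref{leftor}, and then shows $\psi$ is an equivalence by proving that $\{\broken_I\}_{\LinOrd^{\op}}$ is the left Kan extension of $\{\broken^{\widetilde I}\}_{\LinPreOrd^{\op}}$ along $\widetilde I\mapsto \widetilde I/=_{\widetilde I}$. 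Using the $\RR^I$-torsor $\broken^I\to\broken_I$, this reduces to showing that $\varinjlim_{J}\RR^J$ is terminal over the category $\cC_0$ of nonempty finite sets, which holds because the relevant indexing category is nonempty with pairwise coproducts. The paper's route is shorter and avoids relativizing the open-cover description of fiber products to the stacky setting; your route is more self-contained and makes the geometry of the fiber products $\broken_I\times_\broken\broken_J$ explicit, at the cost of having to redo the \'etale-descent bookkeeping that the paper sidesteps by working at the level of Kan extensions.
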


\begin{proof}
Note that, for every finite nonempty linearly preordered set $\widetilde{I}$, we have a map of topological stacks
$\broken^{\widetilde{I}} \rightarrow \broken_{ \widetilde{I} / =_{ \widetilde{I} } }$, depending functorially on $\widetilde{I}$ (see Remark \ref{suffices}). Passing to the colimit over $\widetilde{I}$,
we obtain a map $\psi: \varinjlim_{\widetilde{I} \in \LinPreOrd^{\op} } \broken^{ \widetilde{I}} \rightarrow \varinjlim_{ I \in \LinOrd^{\op} } \broken_{I}$, whose composition
with $\phi$ is the equivalence $\varinjlim_{\widetilde{I} \in \LinPreOrd^{\op} } \broken^{ \widetilde{I}} \simeq \broken$ of Theorem \ref{leftor} (here all colimits are formed in the
$\infty$-category $\Shv_{\SSet}( \Top )$). It will therefore suffice to show that $\psi$ is an equivalence. In fact, we make a more precise claim: the diagram
$\{ \broken_{I} \}_{I \in \LinOrd^{\op} }$ is a left Kan extension of the diagram $\{ \broken^{\widetilde{I}} \}_{\widetilde{I} \in \LinPreOrd^{\op} }$ along the functor
$$ q: \LinPreOrd^{\op} \rightarrow \LinOrd^{\op} \quad \quad \widetilde{I} \mapsto \widetilde{I} / =_{ \widetilde{I} }.$$
Since $q$ is an op-fibration of categories, this assertion can be reformulated as follows:
\begin{itemize}
\item[$(\ast)$] Let $I$ be a nonempty finite linearly ordered set, and set $\cC = \LinPreOrd \times_{ \LinOrd } \{ I \}$. Then
the canonical map $\varinjlim_{ \widetilde{I} \in \cC^{\op}} \broken^{ \widetilde{I} } \rightarrow \broken_{I}$ is an equivalence in the $\infty$-category $\Shv_{\SSet}( \Top )$.
\end{itemize}
Let us regard $\broken^{I}$ as an $\RR^{I}$-torsor over $\broken_{I}$, classified by a map of topological stacks $\broken_{I} \rightarrow \BR^{I}$. For each
$\widetilde{I} \in \cC^{\op}$, we have a pullback diagram of topological stacks
$$ \xymatrix{ \broken^{ \widetilde{I} } \ar[r] \ar[d] & \broken_{I} \ar[d] \\
\RR^{ \widetilde{I} } / \RR^{I} \ar[r] & \BR^{I}. }$$
It will therefore suffice to show that the lower horizontal maps induce an equivalence 
$$ \rho: \varinjlim_{ \widetilde{I} \in \cC^{\op} } \RR^{ \widetilde{I} } / \RR^{I} \rightarrow \BR^{I}.$$

Let $\cC_0$ denote the category of nonempty finite sets. Then the construction $\widetilde{I} \mapsto \{ \widetilde{I} \times_{I} \{i\} \}_{i \in I}$ induces an
equivalence of categories $\cC \simeq \cC_0^{I}$. Unwinding the definitions, we see that $\rho$ can be identified with the $I$th power of a morphism
$$ \rho_0: \varinjlim_{ J \in \cC_0^{\op} } \RR^{J} / \RR \rightarrow \BR$$
in $\Shv_{\SSet}( \Top )$; here $\RR$ acts diagonally on each $\RR^{J}$. To show that $\rho_0$ is an equivalence, it will suffice to show that the final
object ${\bf 1} \in \Shv_{\SSet}( \Top )$ is a colimit of the diagram $\{ \RR^{J} \}_{J \in \cC^{\op} }$; here we identify each $\RR^{J}$ with
the associated representable functor 
$$\Top^{\op} \rightarrow \Set \subseteq \SSet \quad \quad S \mapsto \Hom_{\Top}( S, \RR^{J} ).$$
In fact, we claim that ${\bf 1}$ is already a colimit of the diagram $\{ \RR^{J} \}_{J \in \cC^{\op} }$ in the presheaf $\infty$-category $\Fun( \Top^{\op}, \SSet)$:
that is, for every topological space $S$, the (homotopy) colimit $\varinjlim_{J \in \cC_0^{\op} } \Hom_{\Top}( S, \RR^{J} )$ is contractible. This
homotopy colimit can be identified with the (Kan complex associated to the) nerve of the category whose objects are nonempty finite sets $J$ equipped with a map
$J \rightarrow \Hom_{\Top}(S, \RR )$, which is contractible because it is nonempty and admits pairwise coproducts.
\end{proof}

\clearpage
\section{Sheaves on the Moduli Stack of Broken Lines}\label{section.sheaves-on-broken}

In this section, we study sheaves on the moduli stack $\broken$ of broken lines. By definition,
a sheaf $\sheafF$ on $\broken$ with values in a category $\cC$ (or, more generally, an $\infty$-category $\cC$)
is a functor $\Pt( \broken)^{\op} \rightarrow \cC$ which satisfies a suitable descent condition (see Definition \ref{definition.sheaf-on-stack}).
Roughly speaking, we can think of a $\cC$-valued sheaf $\sheafF$ on $\broken$ as a rule which assigns to each
$S$-family of broken lines $\pi: L_{S} \to S$ a $\cC$-valued sheaf $\sheafF_{L_S}$ on the topological space $S$, depending functorially on $L_S$.
This functoriality guarantees that the stalk $\sheafF_{L_S,s} \in \cC$ depends only on the broken line $L_{s} \simeq \pi^{-1} \{s\}$, which is determined
(up to isomorphism) by the linearly ordered set $\pi_0 L_{s}^{\circ}$ of connected components of the non-fixed locus. The main result of this section
 asserts that $\sheafF$ can be recovered from its value on individual broken lines, together with some additional data of a purely combinatorial nature:

\begin{theorem}\label{theorem.broken-sheaves-vague}
Let $\cC$ be a compactly generated $\infty$-category, let $\Shv_{\cC}( \broken)$ denote the $\infty$-category of $\cC$-valued sheaves on
$\broken$ (Definition \ref{definition.sheaf-on-stack}), and let $\LinOrd$ denote the category of finite nonempty linearly ordered sets (Notation \ref{notation.linord}).
Then there is a canonical equivalence of categories
$$ \Shv_{\cC}( \broken) \xrightarrow{\sim} \Fun( \LinOrd, \cC ).$$
Moreover, if $L$ is a broken line and $I \simeq \pi_0(L^{\circ})$, then the diagram of $\infty$-categories
$$ \xymatrix{ \Shv_{\cC}( \broken) \ar[dr] \ar[rr]^{\sim} & & \Fun(\LinOrd, \cC ) \ar[dl] \\
& \cC & }$$
commutes up to canonical isomorphism, where the vertical maps are given by evaluation on $L$ and $I$, respectively. 
\end{theorem}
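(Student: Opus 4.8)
The plan is to combine the presentation of $\broken$ as a colimit of the auxiliary stacks $\broken_I$ with an explicit computation of $\Shv_{\cC}$ on each $\broken_I$. Since $\cC$ is compactly generated, the functor $\Shv_{\cC}(-)$ carries colimits of topological stacks to limits of $\infty$-categories (this is part of the formalism surrounding Definition~\ref{definition.sheaf-on-stack}); applying this to Theorem~\ref{leftor2} gives
\[
\Shv_{\cC}(\broken)\;\simeq\;\varprojlim_{I\in\LinOrd}\Shv_{\cC}(\broken_I),
\]
where the transition functor attached to a monotone surjection $f\colon I\to J$ is pullback along the map $\broken_J\to\broken_I$ of Remark~\ref{remark.functor} (precomposition with $f$ on weak sections). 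The whole theorem then follows from an equivalence, natural in $I\in\LinOrd$,
\[
\Shv_{\cC}(\broken_I)\;\simeq\;\Fun(\LinOrd_{I/},\cC),
\]
carrying evaluation at the (unique up to isomorphism) broken line $L$ with $\pi_0(L^\circ)\simeq I$ to evaluation at the initial object $\id_I\in\LinOrd_{I/}$. Indeed, a standard co-Yoneda argument — the undercategories $\LinOrd_{I/}$ exhibit $\LinOrd$ as a colimit, so $\Fun(\LinOrd,\cC)\simeq\varprojlim_{I}\Fun(\LinOrd_{I/},\cC)$ — then identifies $\Shv_{\cC}(\broken)$ with $\Fun(\LinOrd,\cC)$, compatibly with the evaluation functors because $\id_I$ is the initial object of $\LinOrd_{I/}$.

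To compute $\Shv_{\cC}(\broken_I)$, use Proposition~\ref{lemma.structure-of-broken-I} and Remark~\ref{remark.structure}: writing $I$ as a linear order with $n+1$ elements, $\broken_I$ is the stack-theoretic quotient of $\Rep(I,\BR^{+})$ by the translation action of $\RR^{I}$. In the coordinates of Remark~\ref{plusup0} (with $-\log$ identifying $\RR^{+}$ with $[0,\infty)$), and after splitting off the diagonal copy of $\RR$ in $\RR^{I}$ (which acts trivially on $\Rep(I,\BR^{+})$), one obtains an identification
\[
\broken_I\;\simeq\;\BR\times\bigl([0,\infty)/\!\!/\RR\bigr)^{n},
\]
where each factor $\RR$ acts on $[0,\infty)$ by the exponential scaling $x\mapsto e^{-u}x$. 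The computation thus reduces — inductively, peeling off one factor at a time — to two basic statements, valid for every compactly generated $\infty$-category $\cD$: first, that $\Shv_{\cD}(\BR)\simeq\cD$ (because $\RR$ and its powers are connected, descent along the \v{C}ech nerve $\RR^{\bullet}$ of the atlas $\ast\to\BR$ is trivial — every automorphism of a constant sheaf on a connected base is global and the higher coherences collapse — so $\Shv_{\cD}(\BR)$ is the totalization of a homotopy-constant cosimplicial diagram with value $\cD$); and second, that $\Shv_{\cD}([0,\infty)/\!\!/\RR)\simeq\Fun(\Delta^1,\cD)$, which follows from the recollement attached to the open--closed decomposition of $[0,\infty)/\!\!/\RR$ into $\RR_{>0}/\!\!/\RR\simeq\ast$ and $\{0\}/\!\!/\RR\simeq\BR$, using $\Shv_{\cD}(\ast)=\Shv_{\cD}(\BR)=\cD$ together with the computation that the gluing functor $i^{*}j_{*}\colon\cD\to\cD$ is the identity (the nearby cycles of the constant sheaf along the half-line is again constant). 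Both statements are used in their evident forms relative to an auxiliary base stack, which is what the induction requires. Iterating gives $\Shv_{\cC}(\broken_I)\simeq\Fun((\Delta^1)^{n},\cC)$, and a bookkeeping check identifies the $n$-cube $(\Delta^1)^{n}$ with the poset $\LinOrd_{I/}$: its objects, the monotone surjections $I\to J$, index the strata of $\broken_I$ (the stratum for $I\to J$ being $\simeq\BR^{|J|}$), with $\id_I$ — the vertex "all factors at the cone point" — corresponding to the fully broken line $L$. Checking that this identification is natural in $I$ amounts to verifying that the maps $\broken_J\to\broken_I$ are stratified and realize precomposition $\LinOrd_{J/}\to\LinOrd_{I/}$.

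I expect the main obstacle to be the two basic sheaf computations, and in particular making them precise within the theory of $\cC$-valued sheaves on topological stacks: both require controlling $\Shv_{\cD}$ of the relevant building blocks ($\RR^{n}$, the half-line, their products, and the quotient $[0,\infty)/\!\!/\RR$) and the descent comparing them, and it is precisely here that the connectedness of $\RR$ — which collapses the $\RR$-direction and leaves no room for ``non-constructible'' sheaves to survive equivariance — and the compact generation of $\cC$ (ensuring $\Shv_{\cC}$ commutes with the colimit in Theorem~\ref{leftor2} and that the intermediate categories $\Fun((\Delta^1)^{k},\cC)$ remain compactly generated) are genuinely used. A secondary point needing care is the naturality in $I$ of the equivalence $\Shv_{\cC}(\broken_I)\simeq\Fun(\LinOrd_{I/},\cC)$, without which the limit over $\LinOrd$ does not assemble; this, together with the co-Yoneda identity and the tracking of the distinguished point, completes the proof.
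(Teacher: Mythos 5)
Your proposal takes a genuinely different route from the paper. The paper presents $\broken$ as the colimit $\varinjlim_{I\in\LinPreOrd^{\op}}\broken^{I}$ of the \emph{topological spaces} $\broken^{I}\simeq\Rep(I,\BR^{+})$ (Theorem~\ref{leftor}), applies Proposition~\ref{proposition.formal-statement} to turn this into a limit of sheaf categories, observes that the resulting sheaves on each $\broken^{I}$ are automatically constructible (Example~\ref{example.automatic-constructibility}), computes $\Shv^{c}_{\cC}(\broken^{I})\simeq\Fun(\Conv(I),\cC)$ by the exit-path formalism of Higher Algebra~A.9 (Proposition~\ref{proposition.constructible-characterization}), and reassembles via the Grothendieck fibration $\overline{\LinPreOrd}\to\LinPreOrd$ with fibers $\Conv(I)$ and the section $I\mapsto(I,=_{I})$. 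You instead take the colimit over the quotient \emph{stacks} $\broken_{I}$ (Theorem~\ref{leftor2}), compute $\Shv_{\cC}(\broken_{I})$ directly by recollement along the open--closed stratification of $\bigl(\RR^{+}/\!\!/\RR\bigr)^{n}\times\BR$, and reassemble via the source fibration $\Fun(\Delta^{1},\LinOrd)\to\LinOrd$ (your ``co-Yoneda'' step is, after unwinding, the observation that this fibration has fibers $\LinOrd_{I/}\cong\Conv(I)$ and localizes to $\LinOrd$ along the target functor; this is the same combinatorics as the paper's $\overline{\LinPreOrd}$, indexed by $\LinOrd$ rather than $\LinPreOrd$). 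The trade-off: your route stays on the quotient stacks and never descends to an atlas, which is conceptually clean; the price is that it needs a recollement theory for topological stacks, a relative computation of $i^{*}j_{*}$ over an arbitrary base, and the identification $\Shv_{\cD}(\BR)\simeq\cD$ — none of which the paper develops, and which the exit-path computation on the underlying spaces $\broken^{I}$ is precisely designed to avoid. Two soft spots worth flagging: the \v{C}ech argument for $\Shv_{\cD}(\BR)\simeq\cD$ is not right as written, since the cosimplicial diagram $n\mapsto\Shv_{\cD}(\RR^{n})$ is far from homotopy-constant with value $\cD$ (each term is a full sheaf category on $\RR^{n}$); the totalization does collapse to $\cD$, but this needs the descent condition together with the fact that the compatible systems are forced to be constant, which is not the same thing. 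And the inductive ``peeling off one factor at a time'' genuinely requires a base-change statement for the recollement — you acknowledge this, and it is the real mathematical content of your step, not a routine formality.
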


\subsection{Sheaves on Topological Spaces}\label{section.sheaves-on-spaces}

We begin with a review of the theory of sheaves with values in an $\infty$-category $\cC$. For a more detailed discussion, we refer the reader to \cite{htt}.

\begin{definition}\label{definition.sheaf}
For every topological space $S$, let $\calU(S)$ denote the partially ordered set of open subsets of $S$.
Let $\cC$ be an $\infty$-category. A {\it $\cC$-valued presheaf on $S$} is a functor
$\sheafF: \calU(S)^{\op} \rightarrow \cC$. We let $\PShv_{\cC}(S)$ denote the $\infty$-category
$\Fun( \calU(S)^{\op}, \cC)$ whose objects are $\cC$-valued presheaves on $S$.

Assume now that the $\infty$-category $\cC$ admits small limits. We will say that a $\cC$-valued presheaf $\sheafF$ on a topological space $S$ is a {\it $\cC$-valued sheaf on $S$} if,
for every collection of open sets $\{ U_{\alpha} \}$ in $S$ having union $U$, the canonical map
$$ \sheafF(U) \rightarrow \varprojlim_{V} \sheafF(V)$$ is an equivalence, where the limit is indexed by the category of all open sets $V \subseteq X$ which are contained in some $U_{\alpha}$.
We let $\Shv_{\cC}(S)$ denote the full subcategory of $\PShv_{\cC}(S)$ spanned by the $\cC$-valued sheaves on $S$.
\end{definition}

\begin{notation}
In the situation of Definition \ref{definition.sheaf}, suppose that the $\infty$-category $\cC$ admits filtered colimits. Let $\sheafF$ be a $\cC$-valued presheaf on
a topological space $S$. For each point $s \in S$, we let $\sheafF_{s}$ denote the direct limit $\varinjlim_{ s \in U } \sheafF(U)$; here the colimit is taken over the
filtered system of all open neighborhoods of $s$ in $S$. We will refer to $\sheafF_{s}$ as the {\it stalk of $\sheafF$ at the point $s$}.
\end{notation}

\begin{definition}\label{definition.sheafification}
Let $S$ be a topological space and let $\cC$ be an $\infty$-category which admits small limits. Suppose
we are given a morphism $\alpha: \sheafF \rightarrow \sheafG$ of $\cC$-valued presheaves on $S$.
We will say that {\it $\alpha$ exhibits $\sheafG$ as a sheafification of $\sheafF$} if the following conditions are satisfied:
\begin{itemize}
\item[$(i)$] The presheaf $\sheafG$ is a $\cC$-valued sheaf on $S$.
\item[$(ii)$] For every $\cC$-valued sheaf $\sheafH$ on $S$, composition with $\alpha$ induces a homotopy equivalence
$\bHom_{ \PShv_{\cC}(S) }( \sheafG, \sheafH ) \rightarrow \bHom_{ \PShv_{\cC}(S) }( \sheafF, \sheafH )$.
\end{itemize}
\end{definition}

It follows immediately from the definitions that if $\sheafF$ is a $\cC$-valued presheaf on a topological space $S$, then
a sheafification of $\sheafF$ is unique up to equivalence (in fact, up to a contractible space of choices) provided that it exists. We can also guarantee existence under the hypothesis that $\cC$ is compactly generated (see Section~5.5.7 of~\cite{htt}):

\begin{proposition}\label{proposition.existence-of-sheafification}
Let $\cC$ be a compactly generated $\infty$-category and let $\sheafF$ be a $\cC$-valued presheaf on a topological space $S$. Then:
\begin{itemize}
\item[$(a)$] There exists a morphism $\alpha: \sheafF \rightarrow \sheafG$ which exhibits $\sheafG$ as a sheafification of $\sheafF$.
\item[$(b)$] For every point $s \in S$, the map $\alpha$ induces an equivalence of stalks $\sheafF_{s} \rightarrow \sheafG_{s}$.
\end{itemize}
\end{proposition}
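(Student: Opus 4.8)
The plan is to exhibit $\Shv_{\cC}(S)$ as an accessible localization of the presentable $\infty$-category $\PShv_{\cC}(S)$, so that part $(a)$ follows from the general theory of \cite{htt}: the sheafification functor will be the left adjoint $L$ to the inclusion $\Shv_{\cC}(S)\hookrightarrow \PShv_{\cC}(S)$, and the map $\alpha$ will be the unit $\sheafF\to L\sheafF$. First I would record the relevant input from compact generation: $\cC$ is presentable, and if $\cC_0\subseteq\cC$ denotes a small set of representatives for the compact objects of $\cC$, then $\cC$ is generated under colimits by $\cC_0$ and a morphism $f$ of $\cC$ is an equivalence if and only if $\Map_{\cC}(c,f)$ is an equivalence for every $c\in\cC_0$. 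Since $\calU(S)$ is a small poset, $\PShv_{\cC}(S)=\Fun(\calU(S)^{\op},\cC)$ is presentable; for an open set $U\subseteq S$ I write $h_U\in\PShv_{\SSet}(S)$ for the representable presheaf ($h_U(V)=\ast$ if $V\subseteq U$, and $h_U(V)=\emptyset$ otherwise), and for $c\in\cC$ I write $h_U\otimes c\in\PShv_{\cC}(S)$ for the pointwise copower ($V\mapsto c$ if $V\subseteq U$, and the initial object otherwise).

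Next I would identify the sheaves among presheaves as the objects local with respect to a small set of maps. Given an open cover $\mathfrak{U}=\{U_\alpha\}$ of an open set $U$, let $\chi_{\mathfrak{U}}\subseteq h_U$ be the associated sieve in $\PShv_{\SSet}(S)$, namely the subobject with $\chi_{\mathfrak{U}}(V)=\ast$ exactly when $V\subseteq U_\alpha$ for some $\alpha$. Writing $\chi_{\mathfrak{U}}=\varinjlim_{V} h_V$ as a colimit of representables indexed by the opens $V$ occurring in the cover, one gets for every $\cC$-valued presheaf $\sheafF$ and every $c\in\cC$ natural identifications $\Map_{\PShv_{\cC}(S)}(h_U\otimes c,\sheafF)\simeq \Map_{\cC}(c,\sheafF(U))$ and $\Map_{\PShv_{\cC}(S)}(\chi_{\mathfrak{U}}\otimes c,\sheafF)\simeq \Map_{\cC}(c,\varprojlim_{V}\sheafF(V))$. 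Combined with the detection criterion for equivalences in $\cC$, this shows that a presheaf $\sheafF$ is local with respect to the set $W=\{\,\chi_{\mathfrak{U}}\otimes c\to h_U\otimes c\,\}$ — where $\mathfrak{U}$ ranges over open covers of open sets of $S$ and $c$ ranges over $\cC_0$ — precisely when $\sheafF$ satisfies the sheaf condition of Definition~\ref{definition.sheaf}. Since $W$ is a set, $\Shv_{\cC}(S)$ is the accessible localization of $\PShv_{\cC}(S)$ at $W$; by the cited results of \cite{htt} the inclusion admits a left adjoint $L$, and the unit $\alpha\colon\sheafF\to\sheafG:=L\sheafF$ exhibits $\sheafG$ as a sheafification of $\sheafF$. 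This establishes $(a)$.

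For $(b)$, I would use that the stalk functor $\sheafF\mapsto\sheafF_{s}=\varinjlim_{s\in U}\sheafF(U)$ is a colimit-preserving functor $\PShv_{\cC}(S)\to\cC$ (colimits in $\PShv_{\cC}(S)$ are computed pointwise, and filtered colimits commute with all colimits) satisfying $(\mathcal{X}\otimes c)_{s}\simeq \mathcal{X}_{s}\otimes c$. A direct computation of the filtered colimit over neighborhoods of $s$ gives $(h_U)_{s}\simeq\ast$ if $s\in U$ and $(h_U)_{s}\simeq\emptyset$ otherwise, and likewise $(\chi_{\mathfrak{U}})_{s}\simeq\ast$ if $s\in U$ and $\emptyset$ otherwise — using that $s$ lies in some $U_\alpha$ exactly when $s\in U$. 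Hence each generating morphism in $W$ induces an equivalence on the stalk at $s$. Since $(-)_{s}$ preserves colimits, the class of morphisms it inverts is strongly saturated and contains $W$, hence contains the class of all $L$-equivalences; as $\alpha$ is an $L$-equivalence, $\alpha_{s}\colon\sheafF_{s}\to\sheafG_{s}$ is an equivalence for every $s\in S$, which is $(b)$.

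The one genuinely fiddly point is the second step: checking that the sheaf condition of Definition~\ref{definition.sheaf} is literally $W$-locality, and that $W$ is a set rather than a proper class — this is where the hypothesis that $\cC$ is compactly generated is essential (it lets us replace "for all $c\in\cC$" by "for all $c$ in the small set $\cC_0$"). Everything else is a formal consequence of the theory of presentable $\infty$-categories and accessible localizations developed in \cite{htt}.
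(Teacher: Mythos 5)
Your proof is correct. The paper gives no argument for this proposition beyond a pointer to \S 5.5.7 of \cite{htt}, and your proof---realizing $\Shv_{\cC}(S)$ as the accessible localization of $\PShv_{\cC}(S)$ at the small set of maps $\chi_{\mathfrak{U}}\otimes c\to h_{U}\otimes c$ with $c$ compact, and then deducing $(b)$ from the fact that the colimit-preserving stalk functor inverts these generators and hence every morphism in the strongly saturated class they generate---is exactly the standard localization argument that citation is gesturing at, with the role of compact generation (replacing ``all $c\in\cC$'' by a small detecting set) correctly isolated.
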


\begin{corollary}
Let $\cC$ be a compactly generated $\infty$-category and let $S$ be a topological space. Then the inclusion functor
$\Shv_{\cC}(S) \hookrightarrow \PShv_{\cC}(S)$ admits a left adjoint, which assigns to each $\cC$-valued presheaf $\sheafF$ its
sheafification.
\end{corollary}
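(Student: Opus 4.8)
The plan is to deduce the corollary formally from Proposition \ref{proposition.existence-of-sheafification} together with Definition \ref{definition.sheafification}, using the standard criterion for the existence of a left adjoint to a fully faithful functor (see \cite{htt}, \S 5.2.7). Recall the relevant fact: if $G : \cD \hookrightarrow \cE$ is the inclusion of a full subcategory of an $\infty$-category $\cE$, then $G$ admits a left adjoint provided that for every object $X \in \cE$ there is a morphism $\eta_X : X \to LX$ with $LX \in \cD$ such that composition with $\eta_X$ induces a homotopy equivalence $\bHom_{\cE}(LX, Y) \to \bHom_{\cE}(X, Y)$ for every $Y \in \cD$; in that case the assignment $X \mapsto LX$ promotes canonically to a left adjoint $L$ of $G$, with $\eta$ as unit.

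First I would apply this with $\cE = \PShv_{\cC}(S)$ and $\cD = \Shv_{\cC}(S)$, whose inclusion is fully faithful by definition. Given a $\cC$-valued presheaf $\sheafF$ on $S$, Proposition \ref{proposition.existence-of-sheafification}(a) supplies a morphism $\alpha : \sheafF \to \sheafG$ exhibiting $\sheafG$ as a sheafification of $\sheafF$. Unwinding Definition \ref{definition.sheafification}, this says precisely that $\sheafG \in \Shv_{\cC}(S)$ and that, for every $\sheafH \in \Shv_{\cC}(S)$, composition with $\alpha$ is a homotopy equivalence $\bHom_{\PShv_{\cC}(S)}(\sheafG, \sheafH) \to \bHom_{\PShv_{\cC}(S)}(\sheafF, \sheafH)$ --- exactly the hypothesis needed above. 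Hence the inclusion $\Shv_{\cC}(S) \hookrightarrow \PShv_{\cC}(S)$ admits a left adjoint $L$, and the component of its unit at $\sheafF$ is a morphism $\sheafF \to L\sheafF$ exhibiting $L\sheafF$ as a sheafification of $\sheafF$. Since a sheafification is unique up to a contractible space of choices (the remark following Definition \ref{definition.sheafification}), $L$ is canonically identified with the functor carrying each presheaf to its sheafification, which proves the corollary.

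There is no real obstacle here: the only content is Proposition \ref{proposition.existence-of-sheafification}, which guarantees that object-wise sheafifications exist, and the passage from object-wise localizations to a bona fide adjoint functor is a formal fact about $\infty$-categories. If one prefers to avoid invoking \cite{htt} at this last step, one could instead build $L$ directly by choosing a coherent system of sheafifications on all of $\PShv_{\cC}(S)$ (possible for every object by Proposition \ref{proposition.existence-of-sheafification}), but quoting the adjoint functor criterion is the cleanest route.
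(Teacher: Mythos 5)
Your argument is correct and matches what the paper intends: the paper states this as an unproved corollary of Proposition \ref{proposition.existence-of-sheafification} and Definition \ref{definition.sheafification}, and your proof is exactly the standard formal unwinding (objectwise localizations onto a full subcategory assemble into a left adjoint, as in \cite{htt}, \S 5.2.7).
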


\begin{remark}
For part $(a)$ of Proposition \ref{proposition.existence-of-sheafification}, one does not need the full strength of the assumption that $\cC$ is compactly generated: one can
sheafify $\cC$-valued pre-sheaves whenever $\cC$ is a presentable $\infty$-category. However, in general one cannot control the stalks of the resulting sheaves.
\end{remark}

\begin{warning}\label{warning.paracompact-hausdorff}
Let $\cC$ be a compactly generated $\infty$-category, let $S$ be a topological space, and let $\alpha: \sheafF \rightarrow \sheafG$ be a morphism
of $\cC$-valued presheaves on $S$. Consider the following assertions:
\begin{itemize}
\item[$(1)$] The morphism $\alpha$ exhibits $\sheafG$ as a sheafification of $\sheafF$.
\item[$(2)$] The presheaf $\sheafG$ is a $\cC$-valued sheaf on $S$, and the map $\alpha$ induces an equivalence of stalks $\sheafF_{s} \rightarrow \sheafG_{s}$ for each point $s \in S$.
\end{itemize}
It follows from Proposition \ref{proposition.existence-of-sheafification} that $(1) \Rightarrow (2)$. The reverse implication holds if $\cC$ is an ordinary category (or, more generally, an
$n$-category for $n < \infty$), or if $S$ is a paracompact Hausdorff space of finite covering dimension. However, condition $(2)$ does not imply $(1)$ in general: equivalences of
$\cC$-valued presheaves cannot be detected at the level of stalks. For our applications, this technicality will be irrelevant (since we are interested primarily in sheaves on
finite-dimensional manifolds). 
\end{warning}

\begin{remark}[Functoriality]
Let $f: S \rightarrow T$ be a map of topological spaces and let $\cC$ be an $\infty$-category. If $\sheafF$ is a $\cC$-valued presheaf on $S$, we let $f_{\ast} \sheafF$ denote the
$\cC$-valued presheaf on $T$ given by the formula $(f_{\ast} \sheafF)(U) = \sheafF( f^{-1}(U) )$. If $\cC$ admits small limits and $\sheafF$ is a $\cC$-valued sheaf on $S$,
then $f_{\ast} \sheafF$ is a $\cC$-valued sheaf on $T$. We can therefore regard the construction $\sheafF \mapsto f_{\ast} \sheafF$ as a functor
$f_{\ast}: \Shv_{\cC}( S) \rightarrow \Shv_{\cC}(T)$. We will refer to $f_{\ast}$ as the {\it direct image functor associated to $f$}.
\end{remark}

\begin{proposition}\label{proposition.existence-of-pullback}
Let $\cC$ be a compactly generated $\infty$-category and let $f: S \rightarrow T$ be a map of topological spaces. Then
the direct image functor $f_{\ast}: \Shv_{\cC}(S) \rightarrow \Shv_{\cC}(T)$ admits a left adjoint $f^{\ast}: \Shv_{\cC}(T) \rightarrow \Shv_{\cC}(S)$.
\end{proposition}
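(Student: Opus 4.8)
The plan is to reduce the statement to the analogous assertion at the level of presheaves, where the left adjoint is produced by left Kan extension, and then to correct by sheafification. First I would record the evident description of the direct image functor on presheaves: the assignment $U \mapsto f^{-1}(U)$ defines an order-preserving map $f^{-1}: \calU(T) \to \calU(S)$, and the formula $(f_{\ast}\sheafF)(U) = \sheafF(f^{-1}(U))$ exhibits $f_{\ast}: \PShv_{\cC}(S) \to \PShv_{\cC}(T)$ as precomposition with $(f^{-1})^{\op}: \calU(T)^{\op} \to \calU(S)^{\op}$. Since $\cC$ is compactly generated it is in particular presentable, hence cocomplete, so left Kan extension along $(f^{-1})^{\op}$ furnishes a left adjoint $f^{\ast}_{\mathrm{pre}}: \PShv_{\cC}(T) \to \PShv_{\cC}(S)$ to the presheaf-level direct image functor.

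Next I would bring in the sheafification functors. By the Corollary following Proposition \ref{proposition.existence-of-sheafification}, the inclusions $\iota_S: \Shv_{\cC}(S) \into \PShv_{\cC}(S)$ and $\iota_T: \Shv_{\cC}(T) \into \PShv_{\cC}(T)$ admit left adjoints $L_S$ and $L_T$. Moreover, the Functoriality remark preceding the Proposition guarantees that the presheaf $U \mapsto \sheafF(f^{-1}(U))$ is a sheaf whenever $\sheafF$ is, so that the sheaf-level direct image $f_{\ast}: \Shv_{\cC}(S) \to \Shv_{\cC}(T)$ is compatible with the inclusions: $\iota_T \circ f_{\ast} \simeq f_{\ast} \circ \iota_S$ as functors $\Shv_{\cC}(S) \to \PShv_{\cC}(T)$.

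Finally I would set $f^{\ast} := L_S \circ f^{\ast}_{\mathrm{pre}} \circ \iota_T : \Shv_{\cC}(T) \to \Shv_{\cC}(S)$ and verify the adjunction by composing the three adjunctions $L_S \dashv \iota_S$, $f^{\ast}_{\mathrm{pre}} \dashv f_{\ast}$, and $L_T \dashv \iota_T$: for $\sheafG \in \Shv_{\cC}(T)$ and $\sheafF \in \Shv_{\cC}(S)$ one has
\[
\bHom_{\Shv_{\cC}(S)}(f^{\ast}\sheafG, \sheafF) \simeq \bHom_{\PShv_{\cC}(S)}(f^{\ast}_{\mathrm{pre}}\iota_T\sheafG, \iota_S\sheafF) \simeq \bHom_{\PShv_{\cC}(T)}(\iota_T\sheafG, f_{\ast}\iota_S\sheafF) \simeq \bHom_{\Shv_{\cC}(T)}(\sheafG, f_{\ast}\sheafF),
\]
where the last equivalence uses $f_{\ast}\iota_S\sheafF \simeq \iota_T f_{\ast}\sheafF$ together with $L_T \dashv \iota_T$.

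There is no essential obstacle: the argument is formal once one knows sheafifications exist, which is precisely Proposition \ref{proposition.existence-of-sheafification}. (Alternatively, one could invoke the adjoint functor theorem, since $\Shv_{\cC}(S)$ and $\Shv_{\cC}(T)$ are presentable and $f_{\ast}$ preserves small limits; but the explicit formula $f^{\ast} = L_S \circ f^{\ast}_{\mathrm{pre}} \circ \iota_T$ is more useful for later computations, e.g. of the behavior of $f^{\ast}$ on stalks.) The only point that merits being spelled out carefully is the compatibility $\iota_T \circ f_{\ast} \simeq f_{\ast} \circ \iota_S$, i.e. that precomposition with $(f^{-1})^{\op}$ carries sheaves to sheaves — this is where the Functoriality remark enters — and, depending on the reader's taste, the (standard) existence of left Kan extensions of $\cC$-valued presheaves along a functor of posets when $\cC$ is cocomplete.
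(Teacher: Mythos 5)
The paper states Proposition \ref{proposition.existence-of-pullback} without proof, treating it as a standard consequence of the existence of sheafification; so there is no proof in the paper to compare against. Your argument is the standard one and is correct: precomposition with $(f^{-1})^{\op}: \calU(T)^{\op} \to \calU(S)^{\op}$ admits a left adjoint $f^{\ast}_{\mathrm{pre}}$ by left Kan extension (using only that $\cC$ is cocomplete, which follows from presentability, which follows from compact generation); $f_{\ast}$ carries sheaves to sheaves (the Functoriality remark), so $\iota_T \circ f_{\ast} \simeq f_{\ast} \circ \iota_S$; and then $f^{\ast} := L_S \circ f^{\ast}_{\mathrm{pre}} \circ \iota_T$ is left adjoint by the three-step chain of equivalences you wrote (in the last step, what you actually need is full faithfulness of $\iota_T$, i.e. $L_T \circ \iota_T \simeq \id$, which is a consequence of $\iota_T$ being the inclusion of a reflective subcategory). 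Your parenthetical remark that the adjoint functor theorem would also suffice is correct, and your observation that the explicit formula is more useful is borne out later in the paper, e.g. in Proposition \ref{proposition.existence-of-sheafification}$(b)$'s control on stalks and the discussion surrounding Example \ref{example.stalk-is-pullback}.
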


In the situation of Proposition \ref{proposition.existence-of-pullback}, we will refer to $f^{\ast}$ as the {\it pullback functor associated to $f$}.

\begin{notation}
In the situation of Proposition \ref{proposition.existence-of-pullback}, we will sometimes denote the image of
a sheaf $\sheafF \in \Shv_{\cC}(T)$ under the pullback functor $f^{\ast}$ by $\sheafF|_{S}$, and refer to it as 
{\it the restriction of $\sheafF$ to $S$}.
\end{notation}

\begin{example}\label{example.stalk-is-pullback}
Let $\cC$ be a compactly generated $\infty$-category, let $S$ be a topological space containing a point $s$, and let $i: \{ s \} \hookrightarrow S$
be the inclusion map. Then the pullback functor $i^{\ast}: \Shv_{\cC}(S) \rightarrow \Shv_{\cC}( \{s\} ) \simeq \cC$ assigns to each
sheaf $\sheafF \in \Shv_{\cC}(S)$ its stalk $\sheafF_{s}$ at the point $s$.
\end{example}

\begin{example}[Constant Sheaves]
Let $S$ be a topological space and let $\cC$ be a compactly generated $\infty$-category. Suppose we are given an object
$C \in \cC$, which we will identify with a $\cC$-valued sheaf on the one-point space $\ast$. We let
$\underline{C}$ denote the pullback $f^{\ast} C$, where $f: S \rightarrow \ast$ is the projection map.
We will refer to $\underline{C} \in \Shv_{\cC}(S)$ as the {\it constant sheaf with value $C$}.
\end{example}

\begin{example}
Let $f: S \rightarrow T$ be a continuous map of topological spaces and let $\cC$ be a compactly generated $\infty$-category.
For any $\cC$-valued sheaf $\sheafF$ on $T$, the stalks of the pullback sheaf $f^{\ast} \sheafF$ are given
by the formula $( f^{\ast} \sheafF)_{s} \simeq \sheafF_{ f(s) }$ (this follows from Example \ref{example.stalk-is-pullback} together with the
transitivity properties of the pullback construction).
\end{example}

\begin{example}
Let $S$ be a topological space, let $\cC$ be a compactly generated $\infty$-category, and let $U \subseteq S$
be an open subset. Then the restriction $\sheafF|_{U}$ is given by the formula $\sheafF|_{U}(V) = \sheafF(V)$
for $V \subseteq U$.
\end{example}

\begin{remark}[Extension by Zero]\label{remark.extension-by-zero-1}
Let $f: S' \rightarrow S$ be a local homeomorphism of topological spaces. Then, for any compactly generated $\infty$-category $\cC$,
the pullback functor $f^{\ast}: \Shv_{\cC}(S) \rightarrow \Shv_{\cC}(S')$ preserves small limit and colimits. It follows from the adjoint functor theorem
that $f^{\ast}$ admits a left adjoint $f_{!}: \Shv_{\cC}(S') \rightarrow \Shv_{\cC}(S)$. Moreover, this construction enjoys the following base change property:
for any pullback diagram of topological spaces
$$ \xymatrix{ T' \ar[r]^{g'} \ar[d]^{f'} & S' \ar[d]^{f} \\
T \ar[r]^{g} & S, }$$
the Beck-Chevalley transformation $f'_{!} \circ g'^{\ast} \rightarrow g^{\ast} f_{!}$ is an equivalence of functors
from $\Shv_{\cC}(S')$ to $\Shv_{\cC}(T)$. In particular, the functor $f_{!}$ is given at the level of stalks by the formula
$$ ( f_{!} \sheafF)_{s} = \coprod_{ f(s') = s } \sheafF_{ s' }.$$
\end{remark}

\begin{remark}[Functoriality in $\cC$]\label{remark.functoriality-in-C}
Suppose we are given a pair of adjoint functors $\Adjoint{F}{\cC}{\cD}{G}$, where $\cC$ and $\cD$ are compactly
generated $\infty$-categories. Then $G$ preserves small limits. It follows that for any topological space $S$, postcomposition with $G$
determines a functor $\Shv_{\cD}(S) \rightarrow \Shv_{\cC}(S)$. This functor admits a left adjoint
$\tilde{F}: \Shv_{\cC}(S) \rightarrow \Shv_{\cD}(S)$, which carries a sheaf $\sheafF \in \Shv_{\cC}(S)$ to the
sheafification of the presheaf
$$ (U \in \calU(S) ) \mapsto F( \sheafF(U) ).$$

Observe that if $f: S \rightarrow T$ is a continuous map of topological spaces, then we have a commutative diagram of $\infty$-categories
$$ \xymatrix{ \Shv_{\cD}(S) \ar[r]^{G} \ar[d]^{ f_{\ast} } & \Shv_{\cC}(S) \ar[d]^{ f_{\ast} } \\
\Shv_{\cD}(T) \ar[r]^{ G} & \Shv_{\cC}(T). }$$
It follows that the diagram of left adjoints
$$ \xymatrix{ \Shv_{\cD}(S) & \Shv_{\cC}(S) \ar[l]_{ \tilde{F} } \\
\Shv_{\cD}(T) \ar[u]_{ f^{\ast} } & \Shv_{\cC}(T) \ar[u]_{ f^{\ast} } \ar[l]_{ \tilde{F} } }$$
also commutes, up to canonical homotopy.
\end{remark}

\subsection{Sheaves on Topological Stacks}\label{section.sheaves-on-stacks}

We now adapt the ideas of \S \ref{section.sheaves-on-spaces} to the setting of topological stacks.

\begin{definition}\label{definition.presheaf-on-stack}
Let $X$ be a topological stack and let $\cC$ be an $\infty$-category. A {\it lax $\cC$-valued presheaf on $X$} is a functor
$\sheafF: \Pt(X)^{\op} \rightarrow \cC$. We let $\PShv_{\cC}^{\lax}( X ) = \Fun( \Pt(X)^{\op}, \cC)$ denote the $\infty$-category of lax $\cC$-valued presheaves on
$X$.
\end{definition}

\begin{warning}
Let $X$ be a topological space, viewed as a topological stack as in Example \ref{example.space-as-stack}. Then the $\infty$-category $\PShv_{\cC}(X)$ of
$\cC$-valued presheaves on $X$ (in the sense of Definition \ref{definition.sheaf}) is not equivalent to the $\infty$-category $\PShv_{\cC}^{\lax}(X)$ of
lax $\cC$-valued presheaves on $X$ (in the sense of Definition \ref{definition.presheaf-on-stack}), except in the trivial case where $X$ is empty.
The latter $\infty$-category is much larger (perhaps unreasonably so), since the $\infty$-category $\Pt(X) \simeq \Top_{/X}$ is not small.
\end{warning}

\begin{definition}\label{definition.lax-sheaf-on-stack}
Let $X$ be a topological stack, let $\cC$ be an $\infty$-category which admits small limits, and let $\sheafF$ be a lax $\cC$-valued presheaf on $X$.
For each object $\widetilde{S} \in \Pt(X)$ having image $S \in \Top$, we let $\sheafF_{ \widetilde{S} }$ denote the $\cC$-valued presheaf on $S$ given by the formula
$\sheafF_{\widetilde{S}}(U) = \sheafF( { \widetilde{S}}|_{U} )$. We will say that $\sheafF$ is a {\it lax $\cC$-valued sheaf on $X$} if, for each object $\widetilde{S} \in \Pt(X)$,
the presheaf $\sheafF_{ \widetilde{S}}$ is a $\cC$-valued sheaf on $S$. We let $\Shv_{\cC}^{\lax}( X )$ denote the full subcategory of $\PShv^{\lax}_{\cC}(X)$ spanned
by the lax $\cC$-valued sheaves on $S$.
\end{definition}

\begin{definition}\label{definition.sheafification.stacky}
Let $X$ be a topological stack and let $\cC$ be an $\infty$-category which admits small limits. Suppose
we are given a morphism $\alpha: \sheafF \rightarrow \sheafG$ of lax $\cC$-valued presheaves on $X$.
We will say that {\it $\alpha$ exhibits $\sheafG$ as a sheafification of $\sheafF$} if the following conditions are satisfied:
\begin{itemize}
\item[$(i)$] The lax presheaf $\sheafG$ is a lax $\cC$-valued sheaf on $X$.
\item[$(ii)$] For every lax $\cC$-valued sheaf $\sheafH$ on $X$, composition with $\alpha$ induces a homotopy equivalence
$\bHom_{ \PShv^{\lax}_{\cC}(X) }( \sheafG, \sheafH ) \rightarrow \bHom_{ \PShv^{\lax}_{\cC}(X) }( \sheafF, \sheafH )$.
\end{itemize}
\end{definition}

It follows immediately from the definitions that if $\sheafF$ is a lax $\cC$-valued presheaf on a topological stack $X$, then
a sheafification of $\sheafF$ is unique up to equivalence (in fact, up to a contractible space of choices) provided that it exists. For existence, we have the following analogue
of Proposition \ref{proposition.existence-of-sheafification}:

\begin{proposition}\label{proposition.existence-of-sheafification-stacky}
Let $\cC$ be a compactly generated $\infty$-category and let $\sheafF$ be a $\cC$-valued presheaf on a topological stack $X$. Then:
\begin{itemize}
\item[$(a)$] There exists a morphism $\alpha: \sheafF \rightarrow \sheafG$ which exhibits $\sheafG$ as a sheafification of $\sheafF$.
\item[$(b)$] Let $\alpha: \sheafF \rightarrow \sheafG$ be any morphism of lax $\cC$-valued presheaves on $X$. Then
$\alpha$ exhibits $\sheafG$ as a sheafification of $\sheafF$ (in the sense of Definition \ref{definition.sheafification.stacky}) if and only if, for each object $\widetilde{S} \in \Pt(X)$,
the induced map $\sheafF_{ \widetilde{S}} \rightarrow \sheafG_{ \widetilde{S}}$ exhibits $\sheafG_{ \widetilde{S}}$ as a sheafification of
$\sheafF_{ \widetilde{S} }$ (in the sense of Definition \ref{definition.sheafification}).
\end{itemize}
\end{proposition}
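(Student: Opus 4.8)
The plan is to deduce Proposition~\ref{proposition.existence-of-sheafification-stacky} from the corresponding statement for topological spaces (Proposition~\ref{proposition.existence-of-sheafification}) by a Bousfield-style localization argument. The key observation is that a morphism $\alpha: \sheafF \rightarrow \sheafG$ of lax presheaves on $X$ lives in the functor $\infty$-category $\PShv^{\lax}_{\cC}(X) = \Fun(\Pt(X)^{\op}, \cC)$, and the sheaf condition is ``checked one object at a time'': a lax presheaf is a lax sheaf precisely when, for each $\widetilde S \in \Pt(X)$ with image $S$, the presheaf $\sheafF_{\widetilde S}$ on $S$ is a sheaf. So the full subcategory $\Shv^{\lax}_{\cC}(X) \subseteq \PShv^{\lax}_{\cC}(X)$ is the intersection, over all $\widetilde S$, of the preimages under restriction functors $r_{\widetilde S}: \PShv^{\lax}_{\cC}(X) \rightarrow \PShv_{\cC}(S)$ of the reflective subcategories $\Shv_{\cC}(S) \subseteq \PShv_{\cC}(S)$.

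First I would make precise the restriction functors: for $\widetilde S \in \Pt(X)$, the construction $U \mapsto \widetilde S|_U$ (for $U \in \calU(S)$) defines a functor $\calU(S)^{\op} \rightarrow \Pt(X)^{\op}$, and precomposition yields $r_{\widetilde S}: \PShv^{\lax}_{\cC}(X) \rightarrow \PShv_{\cC}(S)$, $\sheafF \mapsto \sheafF_{\widetilde S}$. Each $r_{\widetilde S}$ preserves all small limits and colimits (they are computed pointwise in both functor categories). By Proposition~\ref{proposition.existence-of-sheafification}, the inclusion $\Shv_{\cC}(S) \hookrightarrow \PShv_{\cC}(S)$ is an accessible reflective localization. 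For part $(a)$, I would then invoke the general fact that if $\cC$ is presentable and $L_i: \cC \rightarrow \cC_i$ is a small (or accessible) family of colimit-preserving functors into $\infty$-categories each equipped with an accessible reflective localization $\cD_i \subseteq \cC_i$, then the full subcategory $\{X \in \cC : L_i(X) \in \cD_i \text{ for all } i\}$ is again an accessible reflective localization of $\cC$ --- this is a standard consequence of the theory of localizations of presentable $\infty$-categories (HTT 5.5.4). Here $\cC = \PShv^{\lax}_{\cC}(X)$ is presentable since $\cC$ is (it is a functor category with presentable target), and the indexing ``set'' of objects $\widetilde S$ is large, but one reduces to a small family using that sheafification on $S$ is determined by a set of covering-sieve inclusions and that any lax presheaf is a small colimit of representables, so only set-many test objects are genuinely needed --- alternatively one appeals directly to the small-object argument applied to the (small up to equivalence) set of morphisms $\{$covering sieve $\hookrightarrow$ representable$\}$ in $\PShv^{\lax}_\cC(X)$. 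This produces the reflector and hence the morphism $\alpha: \sheafF \rightarrow \sheafG$ of part $(a)$.

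For part $(b)$, one direction is immediate: if $\alpha$ exhibits $\sheafG$ as a sheafification in $\PShv^{\lax}_{\cC}(X)$, then since $r_{\widetilde S}$ is a left adjoint carrying the local objects $\Shv^{\lax}_{\cC}(X)$ into (not necessarily onto) $\Shv_{\cC}(S)$, and since the localization we built is generated by the sieve-inclusions on all the $S$'s, a diagram chase with the universal property shows $\sheafF_{\widetilde S} \rightarrow \sheafG_{\widetilde S}$ is a sheafification on $S$; more cleanly, the $\alpha$ produced in $(a)$ has this property by construction, and any sheafification is equivalent to it. For the converse, suppose $\alpha: \sheafF \rightarrow \sheafG$ is a morphism of lax presheaves such that each $\sheafF_{\widetilde S} \rightarrow \sheafG_{\widetilde S}$ is a sheafification on $S$. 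Let $\alpha': \sheafF \rightarrow \sheafG'$ be the sheafification produced in $(a)$; by the universal property of $\alpha'$ there is a map $u: \sheafG' \rightarrow \sheafG$ with $u \circ \alpha' \simeq \alpha$. Applying $r_{\widetilde S}$ and using that both $\sheafF_{\widetilde S} \rightarrow \sheafG'_{\widetilde S}$ and $\sheafF_{\widetilde S} \rightarrow \sheafG_{\widetilde S}$ are sheafifications of $\sheafF_{\widetilde S}$, the induced map $\sheafG'_{\widetilde S} \rightarrow \sheafG_{\widetilde S}$ is an equivalence for every $\widetilde S$. Since equivalences in $\PShv^{\lax}_{\cC}(X) = \Fun(\Pt(X)^{\op},\cC)$ are detected objectwise --- and every object of $\Pt(X)$ is the image-object of some $\widetilde S$ (tautologically) --- $u$ is an equivalence, so $\alpha$ also exhibits $\sheafG$ as a sheafification.

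The main obstacle I anticipate is the set-theoretic point in part $(a)$: $\Pt(X)$ is not small, so the naive description of $\Shv^{\lax}_{\cC}(X)$ as a ``large intersection of reflective subcategories'' must be replaced by an honest accessibility argument --- exhibiting the sheaf condition as local with respect to a \emph{small} set of morphisms in $\PShv^{\lax}_{\cC}(X)$ (covering-sieve inclusions, noting that a topological space has only a set of open covers up to refinement and that the relevant data only depends on $S$ through its lattice of opens, of which there is only a set per cardinality after suitable bounding). Everything else is a formal manipulation of reflective localizations and the objectwise detection of equivalences in functor $\infty$-categories.
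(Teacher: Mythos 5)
Your proposal runs the logic backwards and, in doing so, leaves the actual content of the proposition unproved. Two concrete problems. First, the presentability claim underlying part $(a)$ fails: $\PShv^{\lax}_{\cC}(X)=\Fun(\Pt(X)^{\op},\cC)$ is \emph{not} presentable, because $\Pt(X)$ is a large category (the paper warns about exactly this point), and your proposed cardinality bound does not exist: the lax sheaf condition quantifies over \emph{all} objects $\widetilde{S}\in\Pt(X)$, whose underlying spaces form a proper class with lattices of opens of unbounded size, and you give no argument that the condition for large $S$ follows from the condition for a bounded family. So the appeal to accessible localization / the small object argument is not available as stated. Second, and more seriously, part $(b)$ \emph{is} the assertion that the stacky sheafification is computed objectwise, i.e.\ that $\sheafG_{\widetilde{S}}\simeq(\sheafF_{\widetilde{S}})^{\mathrm{sh}}$ for the universal $\alpha$. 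You dispose of this with ``the $\alpha$ produced in $(a)$ has this property by construction,'' but a reflector built by a small-object argument (a transfinite composite of cobase changes of sieve-type inclusions in the big functor category) has no such property by construction: one would have to verify that each generating map restricts under $r_{\widetilde{S}}$ to a local equivalence on $S$, and the restriction of a representable on $\Pt(X)^{\op}$ to $\calU(S)^{\op}$ is not a transparent object. The alternative ``diagram chase'' would require, for each sheaf $\sheafH_0$ on $S$, a lax sheaf on $X$ restricting to $\sheafH_0$ and corepresenting maps into it, which you do not produce. Note also that your converse direction of $(b)$ compares $\sheafG$ with the $\sheafG'$ from $(a)$ and uses that $\sheafG'_{\widetilde{S}}$ is a sheafification of $\sheafF_{\widetilde{S}}$ --- i.e.\ it presupposes the very claim at issue.

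The intended argument (see the Remark following the proposition) runs in the opposite direction and avoids both difficulties: \emph{define} $\sheafG$ objectwise by $\sheafG(\widetilde{S})=(\sheafF_{\widetilde{S}})^{\mathrm{sh}}(S)$, using Proposition \ref{proposition.existence-of-sheafification}. Functoriality in $\widetilde{S}$ holds because, for $\widetilde{f}:\widetilde{S}'\to\widetilde{S}$ over $f:S'\to S$, the composite $\sheafF_{\widetilde{S}}\to f_{\ast}\sheafF_{\widetilde{S}'}\to f_{\ast}(\sheafF_{\widetilde{S}'})^{\mathrm{sh}}$ lands in a sheaf on $S$ and hence factors through $(\sheafF_{\widetilde{S}})^{\mathrm{sh}}$. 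The resulting endofunctor $L$ with unit $\eta:\id\to L$ takes values in lax sheaves, and both maps $L\sheafF\rightrightarrows LL\sheafF$ are equivalences since sheafifying a sheaf does nothing; the standard recognition criterion for localization functors then shows that $\eta_{\sheafF}$ satisfies condition $(ii)$ of Definition \ref{definition.sheafification.stacky}. This yields $(a)$ and both directions of $(b)$ at once, with no presentability of the large functor category needed. If you wish to retain a localization-theoretic framing, the reflector must still be produced by this objectwise formula; abstract existence machinery neither applies here nor would identify the restrictions $\sheafG_{\widetilde{S}}$ if it did.
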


\begin{remark}
More informally, Proposition \ref{proposition.existence-of-sheafification-stacky} asserts that if $\sheafF$ is a lax $\cC$-valued presheaf on a topological stack
$X$, then we can sheafify each of the associated presheaves $\sheafF_{ \widetilde{S} }$ (in the sense of Definition \ref{definition.sheafification}) and assemble the resulting
objects into a lax $\cC$-valued sheaf on $X$, which also satisfies the requirements of Definition \ref{definition.sheafification.stacky}.
\end{remark}

\begin{corollary}
Let $\cC$ be a compactly generated $\infty$-category and let $X$ be a topological stack. Then the inclusion functor
$\Shv^{\lax}_{\cC}(X) \hookrightarrow \PShv^{\lax}_{\cC}(X)$ admits a left adjoint, which assigns to each $\cC$-valued presheaf $\sheafF$ its
sheafification.
\end{corollary}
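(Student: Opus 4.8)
The plan is to deduce this corollary purely formally from Proposition~\ref{proposition.existence-of-sheafification-stacky}, recognizing it as an instance of the standard criterion for the existence of a left adjoint to a fully faithful inclusion of $\infty$-categories.

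First, I would recall the general fact (see \cite{htt}): if $\cD \subseteq \cE$ is a full subcategory of an $\infty$-category, then the inclusion $\cD \hookrightarrow \cE$ admits a left adjoint if and only if, for every object $E \in \cE$, there exists a morphism $\alpha : E \to D$ with $D \in \cD$ such that composition with $\alpha$ induces a homotopy equivalence $\bHom_{\cE}(D, D') \to \bHom_{\cE}(E, D')$ for every $D' \in \cD$; in this situation the left adjoint may be chosen to carry $E$ to such an object $D$ and $\alpha$ to the corresponding unit map. Second, I would apply this with $\cE = \PShv^{\lax}_{\cC}(X)$ and $\cD = \Shv^{\lax}_{\cC}(X)$, which is a full subcategory by Definition~\ref{definition.lax-sheaf-on-stack}. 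Given a lax $\cC$-valued presheaf $\sheafF$ on $X$, part~$(a)$ of Proposition~\ref{proposition.existence-of-sheafification-stacky} produces a morphism $\alpha : \sheafF \to \sheafG$ exhibiting $\sheafG$ as a sheafification of $\sheafF$; unwinding Definition~\ref{definition.sheafification.stacky}, condition~$(i)$ asserts $\sheafG \in \Shv^{\lax}_{\cC}(X)$, while condition~$(ii)$ is precisely the mapping-space equivalence above, tested against every object of $\Shv^{\lax}_{\cC}(X)$. Hence the criterion applies: the inclusion admits a left adjoint $L$, with $L(\sheafF) \simeq \sheafG$.

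I do not expect a genuinely hard step here; the content is entirely packaged into Proposition~\ref{proposition.existence-of-sheafification-stacky}. The two points deserving a moment of care are $(i)$ verifying that Definition~\ref{definition.sheafification.stacky} is \emph{literally} the localization condition appearing in the criterion, so that no further argument is required, and $(ii)$ observing that ``the'' sheafification functor $\sheafF \mapsto \sheafG$ is well-defined, since a sheafification is unique up to a contractible space of choices (as noted just before Proposition~\ref{proposition.existence-of-sheafification-stacky}); this is what licenses the phrase ``its sheafification'' in the statement. An alternative, more hands-on route would avoid the abstract criterion and instead assemble $L$ directly from part~$(a)$ and condition~$(ii)$, but this merely reproves the general fact in the present special case.
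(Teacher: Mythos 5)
Your argument is correct and is exactly the intended route: the paper states this corollary without proof as an immediate consequence of Proposition~\ref{proposition.existence-of-sheafification-stacky}, and your reasoning — Definition~\ref{definition.sheafification.stacky}(ii) is verbatim the mapping-space condition in the standard criterion for a full subcategory to be reflective, and part~$(a)$ of the proposition supplies the required unit map — is precisely the unwinding that justifies the word ``Corollary.'' Nothing to add.
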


\begin{notation}\label{notation.transition-maps}
Let $X$ be a topological stack, let $\sheafF$ be a $\cC$-valued presheaf on $X$, and let $\widetilde{f}: \widetilde{S}' \rightarrow \widetilde{S}$ be a morphism in
$\Pt(X)$, covering a continuous map of topological spaces $f: S' \rightarrow S$. For each open set $U \subseteq S$, we have a tautological
map $\widetilde{S}'|_{ f^{-1}(U) } \rightarrow \widetilde{S}|_{U}$ in the category $\Pt(X)$, which induces a map
$$ \sheafF_{ \widetilde{S}(U)} 
= \sheafF( \widetilde{S}|_{U} ) 
\rightarrow \sheafF( \widetilde{S}'|_{ f^{-1}(U) } )
= \sheafF_{ \widetilde{S}'}( f^{-1} U) = 
(f_{\ast} \sheafF_{ \widetilde{S}'})(U).$$
This construction depends functorially on $U$, and therefore determines a morphism
$\eta_{ \widetilde{f} }: \sheafF_{ \widetilde{S} } \rightarrow f_{\ast} \sheafF_{ \widetilde{S}'}$ in the $\infty$-category of presheaves $\PShv_{\cC}(S)$. 

If the $\infty$-category $\cC$ is compactly generated and $\sheafF$ is a lax $\cC$-valued sheaf on $X$, then we can identify
$\eta_{ \widetilde{f}}$ with a map $\eta_{ \widetilde{f} }^{\sharp}: f^{\ast} \sheafF_{ \widetilde{S}} \rightarrow \sheafF_{ \widetilde{S}'}$ in the
$\infty$-category of $\cC$-valued presheaves on $S'$. 
\end{notation}

\begin{definition}\label{definition.sheaf-on-stack}
Let $X$ be a topological stack, let $\cC$ be a compactly generated $\infty$-category, and let $\sheafF$ be a lax $\cC$-valued presheaf on $X$.
We will say that $\sheafF$ is a {\it $\cC$-valued sheaf on $X$} if it is a lax $\cC$-valued sheaf on $X$ (Definition \ref{definition.lax-sheaf-on-stack})
and, for every morphism $\widetilde{f}: \widetilde{S}' \rightarrow \widetilde{S}$ in $\Pt(X)$, the map
$\eta_{ \widetilde{f} }: \sheafF_{ \widetilde{S}} \rightarrow f_{\ast} \sheafF_{ \widetilde{S}'}$ is an equivalence in the $\infty$-category $\Shv_{\cC}(S)$. 
We let $\Shv_{\cC}(X)$ denote the full subcategory of $\PShv_{\cC}^{\lax}(X)$ spanned by the $\cC$-valued sheaves on $X$.
\end{definition}

\begin{warning}\label{warning.sheaf-two-notions}
Let $X$ be a topological space and let $\cC$ be a compactly generated $\infty$-category. We have now given two different definitions
for the $\infty$-category $\Shv_{\cC}(X)$ of $\cC$-valued sheaves on $X$:
\begin{itemize}
\item[$(a)$] Viewing $X$ as a topological space and applying Definition \ref{definition.sheaf}, obtain an $\infty$-category
$\Shv_{\cC}(X)$ which is a full subcategory of $\Fun( \calU(X)^{\op}, \cC)$.
\item[$(b)$] Viewing $X$ as a topological stack (via Example \ref{example.space-as-stack}) and applying Definition \ref{definition.sheaf-on-stack},
we obtain an $\infty$-category which is a full subcategory of $\Fun( \Pt(X)^{\op}, \cC )$; we denote this full subcategory by $\Shv_{\cC}'(X)$ in this Warning only.
\end{itemize}
The $\infty$-categories $\Shv_{\cC}(X)$ and $\Shv'_{\cC}(X)$ are not isomorphic. However, they are canonically equivalent: composition
with the inclusion functor $\calU(X) \hookrightarrow \Pt(X) = \Top_{/X}$ yields a forgetful functor $\theta: \Shv'_{\cC}(X) \rightarrow \Shv_{\cC}(X)$,
given by $\sheafF \mapsto \sheafF_{ \widetilde{X} }$ where $\widetilde{X} \in \Pt(X)$ is the object given by the identity map $\id: X \rightarrow X$.
The functor $\theta$ has a homotopy inverse, which assigns to each sheaf $\sheafG \in \Shv_{\cC}(X)$ the functor
$$ \overline{\sheafG}: \Pt(X)^{\op} \rightarrow \cC \quad \quad \overline{\sheafG}( f: S \rightarrow X) = (f^{\ast} \sheafG )(S).$$
\end{warning}

\begin{remark}[Stalks of Lax Sheaves]
Let $\cC$ be a compactly generated $\infty$-category, let $X$ be a topological stack, and let $\sheafF$ be a lax $\cC$-valued sheaf on $X$.
For every object $\widetilde{S} \in \Pt(X)$ and every point $s$ of the underlying topological space $S$, we can consider the stalk
$\sheafF_{ \widetilde{S}, s}$. If $\widetilde{f}: \widetilde{S} \rightarrow \widetilde{S}'$ is a morphism in $\Pt(X)$, then the construction of Notation
\ref{notation.transition-maps} determines a canonical map $\sheafF_{ \widetilde{S}', f(s) } \simeq
(f^{\ast} \sheafF_{ \widetilde{S}'})_{s} \rightarrow \sheafF_{ \widetilde{S}, s}$.
If $\sheafF$ is a $\cC$-valued sheaf on $X$, then this map is an equivalence: in other words, the stalk $\sheafF_{ \widetilde{S}, s}$ depends
only on the underlying map $\{s\} \rightarrow X$, and not on the ``ambient space'' $S$. Moreover, the converse is {\em almost} true:
if a lax $\cC$-valued sheaf $\sheafF$ has the property that each of the maps
$\sheafF_{ \widetilde{S}', f(s) } \rightarrow \sheafF_{ \widetilde{S}, s}$ is an equivalence, then the map
$\eta_{ \widetilde{f}}^{\sharp}: f^{\ast} \sheafF_{ \widetilde{S} } \rightarrow \sheafF_{ \widetilde{S}'}$ induces an equivalence on stalks, and is
therefore an equivalence if either $\cC$ is an ordinary category or $S$ is a paracompact Hausdorff space of finite covering dimension. (See Warning~\ref{warning.paracompact-hausdorff}.)
\end{remark}

\begin{remark}[Functoriality]\label{remark.pullback-of-lax-sheaves}
Let $f: X \rightarrow Y$ be a morphism of topological stacks (see Remark \ref{remark.topstack}). For any compactly generated $\infty$-category $\cC$,
composition with $f$ determines functors
$$ \PShv_{\cC}^{\lax}(Y) \rightarrow \PShv_{\cC}^{\lax}(X) \quad \Shv_{\cC}^{\lax}(Y) \rightarrow \Shv_{\cC}^{\lax}(X) \quad 
\Shv_{\cC}(Y) \rightarrow \Shv_{\cC}(X).$$
We will denote each of these functors by $f^{\ast}$. By means of this observation, we can regard the constructions
$X \mapsto \PShv_{\cC}^{\lax}(X), \Shv_{\cC}^{\lax}(X), \Shv_{\cC}(X)$ as functors from the $2$-category
$\TopStack$ of topological stacks (Remark \ref{remark.topstack}) to the $\infty$-category $\widehat{ \Cat}_{\infty}$ of 
(not necessarily small) $\infty$-categories.
\end{remark}

\begin{remark}[Extension by Zero]\label{remark.extension-by-zero-2}
Let $f: X \rightarrow Y$ be a morphism of topological stacks. Assume that $f$ is representable by local homeomorphisms: that is,
for every object $\widetilde{S} \in \Pt(Y)$ with underlying topological space $S$, the fiber product
$S_X = X \times_{Y} S$ is (representable by) a topological space and the projection map $\pi: S_{X} \rightarrow S$ is a local homeomorphism.
In this case, the pullback functor $f^{\ast}: \Shv_{\cC}^{\lax}(Y) \rightarrow \Shv_{\cC}^{\lax}(X)$ admits a left adjoint
$f_{!}: \Shv_{\cC}^{\lax}(X) \rightarrow \Shv_{\cC}^{\lax}(Y)$, which is given concretely by the formula
$$ (f_{!} \sheafF)_{ \widetilde{S}} = \pi_{!} (\sheafF_{ \widetilde{S}_X } ),$$ where $\widetilde{S}_{X}$ denotes the object of $\Pt(X)$ given by the projection map $S_{X} \rightarrow X$. 
Note that if $\sheafF$ belongs to $\Shv_{\cC}(X)$, then $f_{!} \sheafF$ belongs to $\Shv_{\cC}(Y)$ (this follows from compatibility of extension-by-zero with base change;
see Remark \ref{remark.extension-by-zero-1}).
\end{remark}

\begin{remark}
Let $f: X \rightarrow Y$ be a map of topological spaces and let $\cC$ be a compactly generated $\infty$-category.
Then the pullback functor $f^{\ast}: \Shv_{\cC}(Y) \rightarrow \Shv_{\cC}(X)$ of Proposition \ref{proposition.existence-of-pullback}
agrees (under the identification provided in Warning \ref{warning.sheaf-two-notions}) with the pullback functor defined in
Remark \ref{remark.pullback-of-lax-sheaves}, if we identify $X$ and $Y$ with the associated topological stacks.
\end{remark}

\begin{remark}\label{remark.stays-a-sheaf}
Suppose we are given a pair of adjoint functors $\Adjoint{F}{\cC}{\cD}{G}$, where $\cC$ and $\cD$ are compactly
generated. For any topological stack $X$, postcomposition with $G$ determines a forgetful functor 
$\Shv^{\lax}_{\cD}(X) \rightarrow \Shv^{\lax}_{\cC}(X)$. This functor admits a left adjoint
$\widetilde{F}: \Shv^{\lax}_{\cC}(X) \rightarrow \Shv^{\lax}_{\cC}(X)$. Concretely, the functor $\widetilde{F}$ carries
a lax sheaf $\sheafF \in \Shv^{\lax}_{\cC}(X)$ to the sheafification of the lax presheaf given by
$$ (\widetilde{S} \in \Pt(X) ) \mapsto F( \sheafF( \widetilde{S} ) ).$$
Note that if $\sheafF$ belongs to $\Shv_{\cC}(X)$, then $\widetilde{F}(\sheafF)$ belongs to
$\Shv_{\cD}(X)$ (see Remark \ref{remark.functoriality-in-C}).
\end{remark}

\subsection{Constructible Sheaves on \texorpdfstring{$\broken^{I}$}{Broken-I}}

Recall that the moduli stack $\broken$ of broken lines can be presented as colimit
$\varinjlim \broken^{I}$, where each $\broken^{I}$ is (representable by) a topological space
and the colimit is taken over all linearly preordered sets $I$ (Theorem \ref{leftor}). Consequently,
a $\cC$-valued sheaf $\sheafF$ on $\broken$ can be recovered from its restrictions $\sheafF|_{\broken^{I}}$
to the topological spaces $\broken^{I}$. We now articulate a special property enjoyed by sheaves
of the form $\sheafF|_{\broken^{I}}$: they are {\em constructible} with respect to a certain stratification
of the topological space $\broken^{I}$ (Example \ref{example.automatic-constructibility}).

\begin{construction}[The Stratification of $\broken^{I}$]\label{construction.stratify}
Let $I$ be a nonempty finite set equipped with a linear preordering $\leq_{I}$. 
We will say that an equivalence relation $E$ on $I$ is {\it convex} if the $E$-equivalence classes of $I$
are convex: that is, if for every triple of elements $i \leq_{I} j \leq_{I} k$ satisfying $i E k$, we also have $i E j$ and $j E k$.
We let $\Conv(I)$ denote the collection of all convex equivalence relations on $I$, partially ordered by refinement.

In what follows, we will identify $\broken^{I}$ with the topological space $\Rep(I, \BR^{+})$ of
functors $\alpha: I \rightarrow \BR^{+}$ (see Notation \ref{preorderrep}). For each convex equivalence relation $E$ on $I$, we define subsets
$K_{E} \subseteq U_{E} \subseteq \broken^{I}$ by the formulae
$$U_{E} = \{ \alpha \in \Rep(I, \BR^{+} ): (i E j) \Rightarrow (\alpha(i,j) < \infty) \}$$
$$K_{E} = \{ \alpha \in \Rep(I, \BR^{+} ): (i E j) \Leftrightarrow (\alpha(i,j)< \infty) \}.$$
The following properties are easily verified:
\begin{itemize}
\item[$(i)$] Each $U_{E}$ is an open subset of $\broken^{I}$. Moreover, the construction
$E \mapsto U_{E}$ is order-reversing: if $E \subseteq E'$, then $U_{E'} \subseteq U_{E}$.

\item[$(ii)$] Each $K_{E}$ is a closed subset of $U_{E}$. More precisely, $K_{E}$ is the closed
subset of $U_{E}$ complementary to the union $\bigcup_{E \subsetneq E'} U_{ E'}$.

\item[$(iii)$] Each $K_{E}$ is homeomorphic to Euclidean space of dimension $| I | - | I / E |$
(in particular, it is contractible).

\item[$(iv)$] Each point $\alpha \in \broken^{I}$ belongs to exactly one of the sets $\{ K_{E} \}_{E \in \Conv(I) }$.
\end{itemize}
In particular, the sets $\{ K_{E} \}_{E \in \Conv(I) }$ determine a stratification of the topological space
$\Rep(I, \BR^{+} )$, indexed by the partially ordered set $\Conv(I)$.
\end{construction}

\begin{definition}
Let $I$ be a nonempty finite linearly preordered set, let $\cC$ be a compactly generated $\infty$-category, and let
$\sheafF$ be a $\cC$-valued sheaf on the topological space $\broken^{I}$. We will say that $\sheafF$ is {\it constructible} if the
restriction $\sheafF|_{ K_{E} }$ is constant for each $E \in \Conv(I)$.
We let $\Shv^{c}_{\cC}( \broken^{I} )$ denote the full subcategory of $\Shv_{\cC}( \broken^{I} )$ spanned
by the constructible sheaves on $\broken^{I}$.
\end{definition}

\begin{example}\label{example.automatic-constructibility}
Let $\cC$ be a compactly generated $\infty$-category and let $\sheafF$ be a $\cC$-valued sheaf on the topological stack
$\broken$. Then, for every finite nonempty linearly preordered set $I$, the sheaf $\sheafF_{ \widetilde{\Rep}(I, \BR^{+} )} \in
\Shv_{\cC}( \broken^{I} )$ is constructible. In other words, for every convex equivalence
relation $E \in \Conv(I)$, the restriction $\sheafF_{ \widetilde{\Rep}(I, \BR^{+} )} |_{ K_{E} }$ is constant. This follows
from the observation that the family of broken lines
$$ K_{E} \times_{ \Rep(I, \BR^{+} ) } \widetilde{\Rep}(I, \BR^{+} ) \rightarrow K_{E}$$
is constant: that is, it is equivalent to a product $K_{E} \times L$ for some fixed broken line $L$ (namely, a concatenation
of copies of $[ - \infty, \infty ]$ indexed by the linearly ordered set $I / E$). Consequently, the map $K_E \to \broken$ classifying this family of broken lines factors through the projection $K_E \rightarrow \ast$.
\end{example}

\begin{proposition}\label{proposition.constructible-characterization}
Let $I$ be a finite nonempty linearly preordered set and let $\cC$ be a compactly generated $\infty$-category. Then the construction
$$ (\sheafF \in \Shv_{ \cC}( \broken^{I} ) ) \mapsto \{ \sheafF(U_E) \}_{E \in \Conv(I) }$$ restricts to
an equivalence of $\infty$-categories
$$ \Shv_{\cC}^{c}( \broken^{I} ) \rightarrow \Fun( \Conv(I), \cC ).$$
\end{proposition}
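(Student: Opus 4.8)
The plan is to exploit the general theory of constructible sheaves with respect to a stratification indexed by a poset. The key structural fact, established in Construction~\ref{construction.stratify}, is that the open sets $U_E$ (for $E \in \Conv(I)$) form an open cover of $\broken^I = \Rep(I,\BR^+)$ with the intersection property $U_E \cap U_{E'} = U_{E \vee E'}$, where $E \vee E'$ is the join in $\Conv(I)$ (the coarsest convex equivalence relation refining both; one checks the intersection of the two ``finite'' conditions is again of this form). Moreover each $U_E$ deformation retracts onto the contractible stratum $K_E$: concretely, scaling all the finite values $\alpha(i,j)$ toward $0$ while sending the infinite ones to $\infty$ exhibits such a retraction, and this retraction is compatible with the stratification in the sense that it carries $U_E \cap K_{E'}$ into itself for $E \subseteq E'$. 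I would record this as a preliminary lemma.

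First, I would show that the functor $\sheafF \mapsto \{\sheafF(U_E)\}_{E \in \Conv(I)}$ is well-defined on $\Shv^c_\cC(\broken^I)$, i.e.\ lands in $\Fun(\Conv(I),\cC)$: the maps $\sheafF(U_E) \to \sheafF(U_{E'})$ for $E \subseteq E'$ (induced by $U_{E'} \subseteq U_E$) give the functoriality, and the key point is that for a \emph{constructible} sheaf the value $\sheafF(U_E)$ computes the stalk at $K_E$, i.e.\ the map $\sheafF(U_E) \to \sheafF_\alpha$ is an equivalence for any $\alpha \in K_E$. This follows from the retraction lemma: $U_E$ is a cofinal union of ``smaller'' open neighborhoods of $K_E$, the restriction $\sheafF|_{U_E}$ is pulled back (up to the relevant contractibility) from $K_E$ since $\sheafF|_{K_E}$ is constant and $K_E$ is a retract, so global sections over $U_E$ agree with sections over $K_E$ agree with the stalk. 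I would phrase this carefully using that $\cC$ is compactly generated so that pullback and the relevant (co)limits behave well, invoking the stalk formulas from \S\ref{section.sheaves-on-spaces}.

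Next, to build the inverse, given $\Phi \colon \Conv(I) \to \cC$ I would define a presheaf on $\broken^I$ by right Kan extension: declare $\sheafG(U_E) = \Phi(E)$ and more generally, for an arbitrary open $V$, set $\sheafG(V) = \varprojlim_{E : V \subseteq U_E} \Phi(E)$ (equivalently, sheafify the presheaf $U_E \mapsto \Phi(E)$ along the basis $\{U_E\}$). Using Lemma~\ref{oloic}-style reasoning (the $U_E$ form a cover closed under pairwise intersection indexed by the join-semilattice $\Conv(I)$), one sees this is a sheaf, it is constructible because its restriction to each $K_E$ is constant with value $\Phi(E)$ (again via the retraction), and $\sheafG(U_E) \simeq \Phi(E)$. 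Checking that the two constructions are mutually inverse then amounts to: (i) for constructible $\sheafF$, the canonical map from the Kan-extended sheaf built out of $\{\sheafF(U_E)\}$ back to $\sheafF$ is an equivalence — both sides are sheaves, both are constructible, and they agree on the basis $\{U_E\}$, hence agree; (ii) for $\Phi \colon \Conv(I) \to \cC$, we just observed $\sheafG(U_E) \simeq \Phi(E)$ naturally in $E$.

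The main obstacle I expect is the precise homotopy-coherent bookkeeping in identifying $\sheafF(U_E)$ with the stalk along $K_E$ and, dually, in verifying that the Kan-extended presheaf is genuinely a sheaf with the claimed restriction to strata — i.e.\ making the ``deformation retract $U_E \to K_E$ is compatible with the sheaf'' argument rigorous in the $\infty$-categorical setting rather than just at the level of homotopy categories. The cleanest route is probably to avoid explicit retractions and instead argue entirely via the combinatorial cover $\{U_E\}_{E \in \Conv(I)}$: since this cover is closed under intersections and indexed by a finite poset, descent (Definition~\ref{definition.sheaf}) identifies $\Shv_\cC(\broken^I)$ with an appropriate limit over $\Conv(I)^{\op}$ of the categories $\Shv_\cC(U_E)$, and constructibility pins down each $\Shv_\cC^c(U_E)$ as $\cC$ (using that $U_E$ is contractible and stratified with $K_E$ as its unique open stratum, so a constructible sheaf on $U_E$ is constant). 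Assembling these identifications — and checking the transition functors match the poset structure of $\Conv(I)$ — yields the desired equivalence $\Shv^c_\cC(\broken^I) \simeq \Fun(\Conv(I),\cC)$.
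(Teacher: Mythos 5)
Your strategy---identify $\sheafF(U_E)$ with the stalk along $K_E$ and construct an inverse by Kan extension along the finite cover $\{U_E\}$---is a genuinely different route from the paper's, which invokes the exodromy theorem (Theorem A.9.3 of \cite{HA}) to reduce everything to the purely point-set claim that the exit-path $\infty$-category of the stratified space $\Rep(I,\BR^{+})$ is equivalent to the poset $\Conv(I)$, verified by an explicit convexity computation. However, your proposal has a genuine gap at its central step. You justify the equivalence $\sheafF(U_E)\simeq \sheafF_{\alpha}$ (for $\alpha\in K_E$) by asserting that ``$\sheafF|_{U_E}$ is pulled back from $K_E$ since $\sheafF|_{K_E}$ is constant and $K_E$ is a retract.'' This is false: a sheaf pulled back from $K_E$ along the retraction would be constant on $U_E$, whereas a constructible sheaf on $U_E$ is generally not constant---already for $U_E\simeq [0,\infty)$ with closed stratum $\{0\}$, a constructible sheaf is the data of two objects and a cospecialization map between them. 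The correct mechanism is not homotopy invariance along a retraction but the fact that $U_E$ is a \emph{conical} neighborhood of $K_E$: a point of the closed stratum is an initial object of the exit-path $\infty$-category of $U_E$, so the limit computing $\sheafF(U_E)$ collapses onto the stalk. Establishing this requires precisely the convexity argument the paper performs (or an equivalent direct analysis of constructible sheaves on $[0,\infty)^{b}$ stratified by coordinate vanishing); the existence of your deformation retraction does not by itself deliver it.

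The same confusion undermines your proposed ``cleanest route'': it is not true that $\Shv^{c}_{\cC}(U_E)\simeq \cC$. The induced stratification of $U_E$ has strata $K_{E'}$ for all $E'\supseteq E$, and a constructible sheaf on $U_E$ records one object per stratum together with cospecialization data (by the proposition itself, $\Shv^{c}_{\cC}(U_E)\simeq \Fun(\{E'\in\Conv(I): E\subseteq E'\},\cC)$), so the proposed descent argument does not terminate. It is in any case vacuous: $U_{=_I}$ is all of $\broken^{I}$, so the cover contains the total space. Two further points need repair in the first route as well: $\{U_E\}_{E\in\Conv(I)}$ is a finite collection of opens, not a basis for the topology, so ``sheafifying along the basis $\{U_E\}$'' is not an available move; and verifying that your right Kan extension $V\mapsto \varprojlim_{V\subseteq U_E}\Phi(E)$ satisfies descent for arbitrary open covers, with constant restriction to each $K_{E'}$, again requires the conical structure rather than formal nonsense. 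In short, the skeleton of your argument can be completed, but only after importing the exit-path/conicality input on which the paper's proof is actually built.
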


\begin{proof}
Let $\mathcal{J}$ denote the $\infty$-category of exit paths associated to the stratification of Construction \ref{construction.stratify}.
By virtue of Theorem A.9.3 of~\cite{HA}, it will suffice to show that the projection map $\mathcal{J} \rightarrow \Conv(I)$
is an equivalence of $\infty$-categories. Fix points $\alpha, \alpha' \in \broken^{I}$, having images $E, E' \in \Conv(I)$ with $E \leq E'$;
we wish to show that the mapping space $\bHom_{ \calJ}( \alpha, \alpha' )$ is contractible. Unwinding the definitions, we can identify
$\bHom_{\calJ}( \alpha, \alpha' )$ with the Kan complex whose $n$-simplices are given by continuous maps
$f: \Delta^{n} \times [0,1] \rightarrow \broken^{I}$ having the property that for each $x \in \Delta^{n}$,
we have 
$$ f(x,0) = \alpha \quad \quad f(x,1) = \alpha' \quad \quad f(x,t) \in K_{E'} \text{ for $t > 0$.}$$
It is easy to see that this Kan complex is contractible: writing $I = \{ i_0 \leq i_1 \leq \cdots \leq i_n \}$, 
the construction 
$$ (\beta \in \Rep(I, \BR^{+} )) \mapsto ( \log( \beta(i_0, i_1) ), \ldots, \log( \beta(i_{n-1}, i_n) )$$
restricts to homeomorphisms of both $K_{E'}$ and $\{ \alpha \} \cup K_{E'}$ with convex subsets of $\RR_{\leq 0}^{n}$.
\end{proof}

\begin{remark}\label{remark.constructible-evaluation}
Let $\cC$ be a compactly generated $\infty$-category and let $\sheafF$ be a $\cC$-valued sheaf on the topological stack
$\broken$. Then, for every finite nonempty linearly ordered set $I$, the restriction map
$$ \sheafF( \broken^{I} ) \rightarrow \sheafF( \{ \alpha \} )$$
is an equivalence, where $\{ \alpha \}$ denotes the point of $\broken^{I} \simeq \Rep(I, \BR^{+} )$
given by the formula
$$\alpha(i,j) = \begin{cases} 0 & \text{ if } i =_{I} j \\
\infty & \text{ otherwise. } \end{cases}$$
This follows by applying Proposition \ref{proposition.constructible-characterization} to the sheaf $\sheafF_{ \widetilde{\Rep}(I, \BR^{+} )}$, which
is constructible by virtue of Example \ref{example.automatic-constructibility}.
\end{remark}

\begin{remark}[Functoriality]
Let $f: I \rightarrow J$ be an essentially surjective morphism of finite nonempty linearly preordered sets. 
If $E$ is a convex equivalence relation on $J$, we let
$\overline{E}$ denote the convex equivalence relation on $I$ defined by $(i \overline{E} i') \Leftrightarrow ( f(i) E f(i') )$.
Note that composition in $f$ determines a map $\broken^{J} \rightarrow \broken^{I}$ which
carries each stratum $K_{E} \subseteq \broken^{J}$ into the corresponding stratum $K_{ \overline{E} } \subseteq \broken^{I}$.
Consequently, for any compactly generated $\infty$-category $\cC$, the pullback functor
$\Shv_{ \cC}( \broken^{I} ) \rightarrow \Shv_{\cC}( \broken^{J} )$ carries constructible sheaves on
$\broken^{I}$ to constructible sheaves on $\broken^{J}$. Moreover, the pullback functor on constructible sheaves fits into a commutative diagram of $\infty$-categories
$$ \xymatrix{ \Shv^{c}_{ \cC}( \broken^{I} ) \ar[r] \ar[d] & \Shv^{c}_{\cC}( \broken^{J} ) \ar[d] \\
\Fun( \Conv(I), \cC) \ar[r] & \Fun( \Conv(J), \cC ), }$$
where the vertical maps are the equivalences of Proposition \ref{proposition.constructible-characterization} and the bottom horizontal map
is given by precomposition with the map $$\Conv(J) \rightarrow \Conv(I) \quad \quad E \mapsto \overline{E}.$$
\end{remark}

\subsection{Classification of Sheaves}

We now give a more precise formulation of Theorems \ref{theorem.mainB} and~\ref{theorem.broken-sheaves-vague}. Let $\LinOrd$ denote the category of finite nonempty linearly ordered sets, with
morphisms given by surjections (Notation \ref{notation.linord}).
For every object $I \in \LinOrd$, let $\widetilde{\Rep}(I, \BR^{+} ) \rightarrow \Rep(I, \BR^{+} ) \simeq \broken^{I}$
be the family of broken lines given in Construction \ref{makelines}. By virtue of Remark \ref{remark.functor}, the construction $I \mapsto \widetilde{\Rep}(I, \BR^{+} )$ determines
a functor $\LinOrd \rightarrow \Pt( \broken)^{\op}$. 

\begin{theorem}\label{theorem.broken-sheaves}
Let $\cC$ be a compactly generated $\infty$-category. Then composition with the functor $I \mapsto \widetilde{\Rep}( I, \BR^{+} )$ induces an equivalence of
$\infty$-categories $\Shv_{\cC}( \broken) \rightarrow \Fun( \LinOrd, \cC )$.
\end{theorem}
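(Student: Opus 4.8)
The plan is to combine the two presentations of $\broken$ already established. By Theorem~\ref{leftor2}, $\broken$ is the colimit $\varinjlim_{I \in \LinOrd^{\op}} \broken_I$ in $\Shv_{\SSet}(\Top)$, and since the formation of sheaves on a topological stack sends colimits of stacks to limits of $\infty$-categories (Remark~\ref{remark.pullback-of-lax-sheaves}), pulling back along the canonical maps $\broken_I \to \broken$ yields an equivalence
\[
\Shv_{\cC}(\broken) \xrightarrow{\ \sim\ } \varprojlim_{I \in \LinOrd} \Shv_{\cC}(\broken_I),
\]
where the limit is over $\LinOrd$ (the opposite has disappeared because sheaves are contravariant). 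So the first step is to make precise this descent statement: a $\cC$-valued sheaf on $\broken$ is the same as a compatible family of $\cC$-valued sheaves $\sheafF_I \in \Shv_{\cC}(\broken_I)$ together with coherence data along the morphisms of $\LinOrd$. One must be a little careful here, since $\Shv_{\cC}(-)$ is only defined on topological stacks and $\broken$ is built as a colimit in the larger $\infty$-category $\Shv_{\SSet}(\Top)$; but the colimit diagram $\{\broken_I\}$ together with all the fiber products $\broken_I \times_{\broken} \broken_J \simeq \varinjlim_K \broken^K$ (Lemma~\ref{lemma.answered}) consists of topological stacks, so the descent is governed by a diagram of genuine $\infty$-categories of sheaves, and the equivalence follows from the sheaf condition of Definition~\ref{definition.sheaf-on-stack} exactly as in the proof of Theorem~\ref{leftor}.

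The second step is to identify each $\Shv_{\cC}(\broken_I)$ and the restriction functors between them. By Remark~\ref{remark.structure}, $\broken_I$ is the stacky quotient of $\Rep(I,\BR^+) \simeq \broken^I$ by the action of $\RR^I$, and this action is free in the relevant homotopical sense (the map $\broken^I \to \broken_I$ is an $\RR^I$-torsor). Thus $\Shv_{\cC}(\broken_I)$ is the $\infty$-category of $\RR^I$-equivariant $\cC$-valued sheaves on $\Rep(I,\BR^+)$. Since $\RR^I$ is contractible and acts on $\Rep(I,\BR^+)$ compatibly with its stratification by the $K_E$, equivariance forces such a sheaf to be constructible (the orbits of the action sweep out the strata up to the translation direction), and in fact the quotient map collapses the $\RR^I$-direction so that $\Shv_{\cC}(\broken_I) \simeq \Shv^c_{\cC}(\broken^I)$. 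Applying Proposition~\ref{proposition.constructible-characterization} gives
\[
\Shv_{\cC}(\broken_I) \simeq \Shv^c_{\cC}(\broken^I) \simeq \Fun(\Conv(I), \cC).
\]
Here $\Conv(I)$ is the poset of convex equivalence relations on the \emph{linearly ordered} set $I$. The key combinatorial observation is that $\Conv(I)$ has a terminal object, namely the relation with a single equivalence class, and more relevantly that the whole diagram $I \mapsto \Fun(\Conv(I),\cC)$ over $\LinOrd$ can be simplified: a convex equivalence relation on $I$ is the same as a monotone surjection out of $I$, so $\Conv(I)$ is canonically the poset of quotient objects of $I$ in $\LinOrd$, i.e.\ $\Conv(I) \simeq (\LinOrd_{I/})^{\op}$ restricted to surjections — more precisely the coslice $I \backslash \LinOrd$ viewed as a poset.

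The third and decisive step is to compute the limit $\varprojlim_{I \in \LinOrd} \Fun(\Conv(I),\cC)$ and show it is $\Fun(\LinOrd, \cC)$ with the stated evaluation property. Unwinding, an object of this limit assigns to each $I$ a functor $\Conv(I) \to \cC$, i.e.\ to each monotone surjection $I \twoheadrightarrow J$ an object of $\cC$, compatibly as $I$ varies; but by the coslice description this is exactly the data of a functor on the category whose objects are pairs $(I, I \twoheadrightarrow J)$, which has a cofinal full subcategory of the terminal quotients $(I \twoheadrightarrow I)$ — equivalently one shows the projection from the relevant Grothendieck construction down to $\LinOrd$ (sending $(I\twoheadrightarrow J)$ to $J$) is cofinal, because its fibers $\LinOrd_{/J}^{\mathrm{surj}}$ have terminal objects $\id_J$. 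Hence $\varprojlim_I \Fun(\Conv(I),\cC) \simeq \Fun(\LinOrd, \cC)$, and tracing through the identifications, the functor $\Shv_{\cC}(\broken) \to \Fun(\LinOrd,\cC)$ is precisely evaluation of $\sheafF$ on the family $\widetilde{\Rep}(I,\BR^+) \to \Rep(I,\BR^+)$ followed by restriction to the point $\alpha$ with $\alpha(i,j)=\infty$ for $i\neq j$, which by Remark~\ref{remark.constructible-evaluation} is the value of $\sheafF$ on the broken line with $|I|$ components; this also yields the asserted commuting triangle with evaluation at a broken line $L$ with $\pi_0(L^\circ)\simeq I$.

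I expect the main obstacle to be making the cofinality argument in the third step fully rigorous at the level of $\infty$-categories, and in particular keeping straight the direction of all the (op)posites: the interplay between surjections in $\LinOrd$, convex equivalence relations, and coslices is elementary but notoriously easy to get backwards, and one must verify that the cofinal functor is the one whose fibers genuinely have \emph{initial} (or terminal, depending on variance) objects so that Quillen's Theorem A / the $\infty$-categorical cofinality criterion of \cite{htt} applies. A secondary technical point is justifying $\Shv_{\cC}(\broken_I) \simeq \Shv^c_{\cC}(\broken^I)$ — i.e.\ that passing to the $\RR^I$-quotient is the same as imposing constructibility — which requires knowing that $\RR^I$-equivariant sheaves on $\Rep(I,\BR^+)$ agree with sheaves on the quotient (true since $\RR^I$ acts freely with contractible orbits and the quotient map is, locally on $\broken^I$, a trivial $\RR^I$-bundle), combined with the fact that translation-invariance along the $\RR^I$-direction is precisely constructibility for this stratification. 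Everything else — the descent spectral sequence, the identification of $\Conv(I)$ with quotient objects, and the final bookkeeping — is routine given the results already proved in the paper.
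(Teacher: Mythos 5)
Your proposal takes a genuinely different route from the paper, replacing the presentation of Theorem~\ref{leftor} (colimit of the \emph{topological spaces} $\broken^I$ over $\LinPreOrd^{\op}$) with that of Theorem~\ref{leftor2} (colimit of the \emph{stacks} $\broken_I$ over $\LinOrd^{\op}$). Steps~1 and~3 of your outline are sound: the descent statement $\Shv_\cC(\broken) \simeq \varprojlim_{I\in\LinOrd}\Shv_\cC(\broken_I)$ does follow from Proposition~\ref{proposition.formal-statement} combined with Theorem~\ref{leftor2}, and the combinatorial reduction in step~3 is exactly what the paper does---though the cleanest phrasing is not cofinality but rather that the quotient functor $v\colon (I,E)\mapsto I/E$ on the Grothendieck construction is a localization (it has a fully faithful right adjoint $u\colon I\mapsto (I,=_I)$, and $v$ inverts precisely the Cartesian edges), so that functors inverting Cartesian edges are identified with $\Fun(\LinOrd,\cC)$ via restriction along $u$.

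The genuine gap is your step~2, the asserted equivalence $\Shv_\cC(\broken_I)\simeq\Shv^c_\cC(\broken^I)$. This is a substantive claim that appears nowhere in the paper and is not a routine consequence of anything established. Your justification also contains a factual error: the $\RR^I$-action on $\Rep(I,\BR^+)$ is \emph{not} free. A point of the stratum $K_E$ is stabilized by the subgroup $\RR^{I/E}\subseteq\RR^I$ of cocycles that are constant on $E$-classes, which is never trivial---even on the open dense stratum (the indiscrete relation) the stabilizer is the diagonal $\RR$. What is true is that $\broken^I\to\broken_I$ is an $\RR^I$-torsor in the stacky sense (Remark~\ref{remark.structure}), but that is a statement about the fiber product, not freeness of the original action, and it means $\broken_I$ has nontrivial inertia: its strata are $K_E/\RR^I\simeq B\RR^{I/E}$. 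To make step~2 precise you would need a ``sheaves on a stack with contractible-group isotropy'' statement (something like $\Shv_\cC(B\RR^n)\simeq\cC$, and a stratified version thereof), which requires its own argument using homotopy invariance of $\cC$-valued sheaves and the totalization of the bar resolution $\Shv_\cC(\broken^I\times(\RR^I)^\bullet)$. This is plausibly true, and ingredients appear in the proof of Proposition~\ref{proposition.homotopy-invariant-is-enough}, but nothing in the paper establishes the equivalence of $\infty$-categories you need, and handwaving it as freeness + contractibility papers over exactly the part that is delicate. The paper sidesteps all of this by using the covering of $\broken$ by the genuine \emph{topological spaces} $\broken^I\simeq\Rep(I,\BR^+)$ indexed over $\LinPreOrd$, so that Corollary~\ref{corollary.now-constructible} reduces the problem to constructible sheaves on spaces, where the exit-path theorem (Theorem~A.9.3 of \cite{HA}) applies directly. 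If you want to pursue your route, you should prove the step-2 equivalence honestly; otherwise you will land on the paper's argument anyway.
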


\begin{proof}[Proof of Theorem \ref{theorem.broken-sheaves-vague} from Theorem \ref{theorem.broken-sheaves}]
Let $L$ be a broken line and set $I = \pi_0( L^{\circ} )$, which we regard as a linearly ordered set. The only nontrivial point
is to verify that the diagram of $\infty$-categories
$$ \xymatrix{ \Shv_{\cC}( \broken) \ar[dr] \ar[rr]^{\sim} & & \Fun(\LinOrd, \cC ) \ar[dl] \\
& \cC & }$$
commutes up to isomorphism, where the horizontal map is the equivalence of Theorem \ref{theorem.broken-sheaves} and
the vertical maps are given by evaluation on $L$ and $I$, respectively. In other words, we must show that if
$\sheafF$ is a $\cC$-valued sheaf on $\broken$, then the value of $\sheafF$ on the family of broken lines
$\widetilde{\Rep}(I, \BR^{+} ) \rightarrow \Rep(I, \BR^{+} )$ is equivalent to the value of $\sheafF$ on $L$ (which we can
regard as a family of broken lines parametrized by a point). This is a restatement of Remark \ref{remark.constructible-evaluation}.
\end{proof}

We would like to deduce Theorem \ref{theorem.broken-sheaves} from Proposition \ref{proposition.constructible-characterization}, which concerns sheaves on
topological spaces rather than topological stacks. For this, we appeal to the following formal observation:

\begin{proposition}\label{proposition.formal-statement}
Let $X$ be a topological stack and suppose we are given a diagram $\{ X_{\alpha} \}$ in the $2$-category $\TopStack_{/X}$ which satisfies the following condition:
\begin{itemize}
\item[$(\ast)$] The induced map $\varinjlim X_{\alpha} \rightarrow X$ is an equivalence in the $\infty$-category of $\SSet$-valued sheaves
on the category $\Top$.
\end{itemize}
Then, for any compactly generated $\infty$-category $\cC$, the canonical maps
$$ \Shv_{\cC}^{\lax}(X) \rightarrow \varprojlim \Shv_{\cC}^{\lax}( X_{\alpha} ) \quad \quad
\Shv_{\cC}(X) \rightarrow \varprojlim \Shv_{\cC}( X_{\alpha} )$$
are equivalences of $\infty$-categories.
\end{proposition}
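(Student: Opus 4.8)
\textbf{Proof proposal for Proposition \ref{proposition.formal-statement}.}
The plan is to reduce both assertions to the following purely formal principle about the presheaf $\infty$-category $\Fun(\Top^{\op}, \SSet)$: the full subcategory of $\SSet$-valued sheaves is a left localization, and the construction $Y \mapsto \Shv_{\cC}^{\lax}(Y)$, $Y \mapsto \Shv_{\cC}(Y)$ should be viewed as restrictions of a single functor defined on all of $\Fun(\Top^{\op}, \SSet)$ which sends colimits to limits. First I would recall (using Warning \ref{warning.sheaf-two-notions} and Remark \ref{remark.pullback-of-lax-sheaves}) that for a topological stack $Y$ presented by $\Pt(Y) \to \Top$, the category $\Pt(Y)$ is nothing but the category of elements of the corresponding $\SSet$-valued presheaf $h_Y = \rho^{-1}(\bullet): \Top^{\op} \to \SSet$, and hence $\PShv^{\lax}_{\cC}(Y) = \Fun(\Pt(Y)^{\op}, \cC)$ depends functorially on $h_Y \in \Fun(\Top^{\op}, \SSet)$. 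The key point is that the assignment $h \mapsto \Fun((\textstyle\int h)^{\op}, \cC)$ carries colimits in $\Fun(\Top^{\op}, \SSet)$ to limits in $\widehat{\Cat}_{\infty}$; this is a standard consequence of the fact that $\int(-)\colon \Fun(\Top^{\op}, \SSet) \to \Cat_{\infty}$ is itself colimit-preserving (it is the composite of the Yoneda-theoretic equivalence $\Fun(\Top^{\op},\SSet) \simeq \RShv(\Top)$ followed by the forgetful functor to $\Cat_{\infty}$, both of which preserve colimits; see \cite{htt}) together with $\Fun((-)^{\op},\cC)$ carrying colimits of $\infty$-categories to limits.

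Granting this, the statement for $\PShv^{\lax}_{\cC}$ is immediate from $(\ast)$. To pass to $\Shv^{\lax}_{\cC}$ and then $\Shv_{\cC}$, I would argue that both are ``cut out by local conditions'' which are preserved and reflected under the equivalence of the lax-presheaf categories. Concretely: an object of $\varprojlim_{\alpha} \PShv^{\lax}_{\cC}(X_\alpha)$ is a compatible system $(\sheafF_\alpha)$; using the identification with $\PShv^{\lax}_{\cC}(X)$ it corresponds to a single lax presheaf $\sheafF$ on $X$, and I claim $\sheafF$ is a lax sheaf (resp.\ a sheaf, in the sense of Definition \ref{definition.sheaf-on-stack}) if and only if each $\sheafF_\alpha$ is. One direction is functoriality of $f^{\ast}$ under the maps $X_\alpha \to X$ (lax sheaves and sheaves pull back to lax sheaves and sheaves, by Remark \ref{remark.pullback-of-lax-sheaves}). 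For the converse I would use that the sheaf conditions in Definitions \ref{definition.lax-sheaf-on-stack} and \ref{definition.sheaf-on-stack} are tested on objects $\widetilde{S} \in \Pt(X)$, i.e.\ on maps $S \to X$ of topological spaces; since $(\ast)$ says $\varinjlim h_{X_\alpha} \to h_X$ is an equivalence of $\SSet$-valued sheaves on $\Top$, every such map $S \to X$ factors, locally on $S$, through some $X_\alpha$ (this is exactly local surjectivity of the colimit map, plus the fact that the descent condition is local on $S$ and invariant under refinement of the cover). Hence the presheaves $\sheafF_{\widetilde{S}}$ and the transition maps $\eta_{\widetilde{f}}$ are, locally on each $S$, pulled back from the $\sheafF_\alpha$, so the sheaf axioms for $\sheafF$ follow from those for the $\sheafF_\alpha$. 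Combined with the full-faithfulness already established, this shows $\Shv^{\lax}_{\cC}(X) \to \varprojlim \Shv^{\lax}_{\cC}(X_\alpha)$ and $\Shv_{\cC}(X) \to \varprojlim \Shv_{\cC}(X_\alpha)$ are equivalences.

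The step I expect to be the main obstacle is making precise the claim that the local-factorization property of $(\ast)$ really lets one verify the sheaf and ``$\eta_{\widetilde{f}}$ is an equivalence'' conditions for $\sheafF$ on $X$ from the corresponding conditions on the $X_\alpha$. The subtlety is that a map $S \to X$ need not factor through any single $X_\alpha$ \emph{globally}; one only gets an open cover $\{U_i\}$ of $S$ and factorizations $U_i \to X_{\alpha_i} \to X$, plus coherence data on overlaps coming from the fact that $h_X$ is the sheafification of $\varinjlim h_{X_\alpha}$. So one must check that the descent condition of Definition \ref{definition.lax-sheaf-on-stack} — which is itself local on $S$ — is compatible with this, and that the equivalence $\eta_{\widetilde{f}}^{\sharp}$ of Definition \ref{definition.sheaf-on-stack}, being a morphism of sheaves on $S$, can be tested after restricting to the $U_i$. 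Both of these are true because ``being a sheaf'' and ``being an equivalence of sheaves'' are local conditions on the base (Remark \ref{remark.locality}-style reasoning, and the fact that $\Shv_{\cC}(S)$ has descent for open covers of $S$), but spelling out the bookkeeping — especially handling the sheafification implicit in $\varinjlim h_{X_\alpha} \to h_X$ and the passage between $\PShv^{\lax}_{\cC}$ and $\Shv^{\lax}_{\cC}$ via Proposition \ref{proposition.existence-of-sheafification-stacky} — is where the real work lies. I would organize this as a lemma: for a topological stack $X$ with an atlas-type cover as in $(\ast)$, a lax presheaf $\sheafF$ on $X$ is a (lax) sheaf iff its restriction to each $X_\alpha$ is, and similarly with ``sheaf'' in place of ``lax sheaf''; the proposition then follows by combining this lemma with the colimit-to-limit property of $\PShv^{\lax}_{\cC}(-)$.
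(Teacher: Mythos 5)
The paper states Proposition \ref{proposition.formal-statement} as a ``formal observation'' without proof, so I can only judge your argument on its own terms; and there is a genuine gap in it. Your first step claims that $\PShv^{\lax}_{\cC}(X) \rightarrow \varprojlim \PShv^{\lax}_{\cC}(X_{\alpha})$ is ``immediate from $(\ast)$'' because $h \mapsto \Fun\bigl((\textstyle\int h)^{\op}, \cC\bigr)$ carries colimits of presheaves to limits of $\infty$-categories. That last fact is fine, but hypothesis $(\ast)$ asserts that $\varinjlim h_{X_{\alpha}} \rightarrow h_{X}$ is an equivalence in the $\infty$-category of \emph{sheaves} on $\Top$, i.e.\ after sheafification; the colimit of the $h_{X_{\alpha}}$ computed in $\Fun(\Top^{\op},\SSet)$ is only \emph{locally} equivalent to $h_X$. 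Consequently the presheaf-level equivalence you assert is false: already for a space $X$ covered by two open sets $U, V$, a lax presheaf on $X$ assigns values to arbitrary maps $S \rightarrow X$, most of which do not factor through $U$ or $V$, so it carries strictly more data than a compatible pair of lax presheaves on $U$ and $V$. What you can conclude formally is only $\varprojlim \PShv^{\lax}_{\cC}(X_{\alpha}) \simeq \Fun\bigl((\textstyle\int \varinjlim^{\mathrm{pre}} h_{X_{\alpha}})^{\op}, \cC\bigr)$, and the entire content of the proposition is the comparison between (lax) sheaves on this presheaf colimit and (lax) sheaves on its sheafification $h_X$.

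That comparison is exactly the statement that $\Shv^{\lax}_{\cC}(-)$, unlike $\PShv^{\lax}_{\cC}(-)$, inverts local equivalences of $\SSet$-valued presheaves on $\Top$, and it is where the descent argument must actually be carried out: one shows the restriction functor is fully faithful and essentially surjective by factoring each $\widetilde{S} \in \Pt(X)$ locally on $S$ through the $X_{\alpha}$, and then gluing using the sheaf condition of Definition \ref{definition.lax-sheaf-on-stack} on each base $S$ (both to reconstruct $\sheafF(\widetilde{S})$ from the restricted data and to compute mapping spaces). Your closing paragraph correctly names local factorization and locality of the descent condition as the relevant ingredients, but you deploy them only to transfer the sheaf and ``$\eta_{\widetilde{f}}$ is an equivalence'' conditions across an equivalence of lax-presheaf categories that does not exist. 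The argument needs to be restructured so that the descent step establishes the equivalence of the categories of lax sheaves themselves; once that is done, cutting out $\Shv_{\cC} \subseteq \Shv^{\lax}_{\cC}$ by the condition on the maps $\eta_{\widetilde{f}}$ on both sides is the easy part, essentially as you describe.
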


\begin{corollary}\label{corollary.not-yet-constructible}
Let $\cC$ be a compactly generated $\infty$-category. Then the construction
$\sheafF \mapsto \{ \sheafF_{ \widetilde{\Rep}(I, \BR^{+} ) } \}_{I \in \LinPreOrd}$ induces an equivalence of $\infty$-categories
$$ \Shv_{\cC}( \broken) \rightarrow \varprojlim_{I \in \LinPreOrd} \Shv_{\cC}( \broken^{I} ).$$
\end{corollary}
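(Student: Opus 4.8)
The plan is to deduce Corollary \ref{corollary.not-yet-constructible} directly from the formal statement of Proposition \ref{proposition.formal-statement}, applied to the presentation of $\broken$ as a colimit of the topological spaces $\broken^{I}$ furnished by Theorem \ref{leftor}.

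First I would set $X = \broken$ and take the diagram $\{ \broken^{I} \}_{I \in \LinPreOrd^{\op}}$ in the $2$-category $\TopStack_{/\broken}$; this is the diagram introduced in Construction \ref{construction.dotwell}, whose structure maps $\broken^{I} \to \broken$ are those of Remark \ref{remark.functor} and whose transition maps are associated to morphisms in $\LinPreOrd$. Theorem \ref{leftor} asserts precisely that the induced map $\varinjlim_{I \in \LinPreOrd^{\op}} \broken^{I} \to \broken$ is an equivalence in $\Shv_{\SSet}(\Top)$, which is hypothesis $(\ast)$ of Proposition \ref{proposition.formal-statement}. Applying that proposition immediately yields that the canonical map $\Shv_{\cC}(\broken) \to \varprojlim_{I \in \LinPreOrd^{\op}} \Shv_{\cC}(\broken^{I})$ is an equivalence of $\infty$-categories (the proposition in fact gives this for both $\Shv_{\cC}$ and $\Shv_{\cC}^{\lax}$, but only the former is needed here).

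The only remaining point, which requires a little bookkeeping rather than real work, is to identify the functor produced in this way with the one named in the statement, $\sheafF \mapsto \{ \sheafF_{\widetilde{\Rep}(I, \BR^{+})} \}_{I}$. Here I would invoke Theorem \ref{theorem-representab} to identify each stack $\broken^{I}$ with the topological space $\Rep(I, \BR^{+})$, and then Warning \ref{warning.sheaf-two-notions} to identify $\Shv_{\cC}(\broken^{I})$ with the $\infty$-category of $\cC$-valued sheaves on that topological space. Under these identifications the structure map $\broken^{I} \to \broken$ is classified by the object $\widetilde{\Rep}(I, \BR^{+}) \in \Pt(\broken)$ of Construction \ref{makelines}, so by Definition \ref{definition.sheaf-on-stack} the associated pullback functor sends $\sheafF$ to the presheaf $U \mapsto \sheafF(\widetilde{\Rep}(I, \BR^{+})|_{U}) = (\sheafF_{\widetilde{\Rep}(I, \BR^{+})})(U)$ on $\Rep(I, \BR^{+})$, exactly as desired; functoriality of $I \mapsto \widetilde{\Rep}(I, \BR^{+})$ as a diagram in $\Pt(\broken)^{\op}$ indexed by $\LinPreOrd$ is the content of Remark \ref{remark.functor}.

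I do not anticipate a genuine obstacle: the substance of the corollary is entirely absorbed into Theorem \ref{leftor} and Proposition \ref{proposition.formal-statement}, and the only thing to be careful about is making the colimit presentation, the representability of $\broken^{I}$, and the ``two notions of sheaf'' comparison of Warning \ref{warning.sheaf-two-notions} line up so that the abstract equivalence coincides on the nose with the concrete restriction functor $\sheafF \mapsto \sheafF_{\widetilde{\Rep}(I,\BR^{+})}$.
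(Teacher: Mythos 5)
Your proposal is correct and takes exactly the same route as the paper, whose proof is the one-line instruction to combine Proposition \ref{proposition.formal-statement} with Theorem \ref{leftor}. The extra bookkeeping you supply about identifying the abstract equivalence with the concrete restriction functor $\sheafF \mapsto \sheafF_{\widetilde{\Rep}(I,\BR^{+})}$ via Theorem \ref{theorem-representab} and Warning \ref{warning.sheaf-two-notions} is accurate and simply makes explicit what the paper leaves implicit.
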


\begin{proof}
Combine Proposition \ref{proposition.formal-statement} with Theorem \ref{leftor}.
\end{proof}

Example~\ref{example.automatic-constructibility} shows that for any $\sheafF \in \Shv_{\cC}( \broken )$, the sheaves
$\sheafF_{ \widetilde{\Rep}(I, \BR^{+} )}$ are automatically constructible. We therefore obtain the following variant of Corollary~\ref{corollary.not-yet-constructible}:

\begin{corollary}\label{corollary.now-constructible}
Let $\cC$ be a compactly generated $\infty$-category. Then the construction
$\sheafF \mapsto \{ \sheafF_{ \widetilde{\Rep}(I, \BR^{+} ) } \}_{I \in \LinPreOrd}$ induces an equivalence of $\infty$-categories
$$ \Shv_{\cC}( \broken) \rightarrow \varprojlim_{I \in \LinPreOrd} \Shv^{c}_{\cC}( \broken^{I} ).$$
\end{corollary}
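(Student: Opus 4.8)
The plan is to obtain this Corollary as a formal consequence of Corollary \ref{corollary.not-yet-constructible} and Example \ref{example.automatic-constructibility}, with no further geometric input. First I would record that the diagram $I \mapsto \Shv^{c}_{\cC}(\broken^{I})$ is a subdiagram of $I \mapsto \Shv_{\cC}(\broken^{I})$ indexed by $\LinPreOrd$: by the Functoriality remark following Proposition \ref{proposition.constructible-characterization}, every transition functor $\Shv_{\cC}(\broken^{I}) \to \Shv_{\cC}(\broken^{J})$ (pullback along an essentially surjective $I \to J$) carries constructible sheaves to constructible sheaves, so the full-subcategory inclusions $\Shv^{c}_{\cC}(\broken^{I}) \hookrightarrow \Shv_{\cC}(\broken^{I})$ assemble into a natural transformation of $\LinPreOrd$-indexed diagrams. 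Passing to limits over $\LinPreOrd$ then produces a fully faithful functor
$$ \varprojlim_{I \in \LinPreOrd} \Shv^{c}_{\cC}(\broken^{I}) \hookrightarrow \varprojlim_{I \in \LinPreOrd} \Shv_{\cC}(\broken^{I}), $$
which exhibits the source as the full subcategory of the target spanned by those compatible families $\{ \sheafG_{I} \}$ all of whose components $\sheafG_{I}$ are constructible.

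Next I would invoke Corollary \ref{corollary.not-yet-constructible}: the construction $\sheafF \mapsto \{ \sheafF_{\widetilde{\Rep}(I,\BR^{+})} \}_{I \in \LinPreOrd}$ is an equivalence $\Shv_{\cC}(\broken) \xrightarrow{\sim} \varprojlim_{I} \Shv_{\cC}(\broken^{I})$. By Example \ref{example.automatic-constructibility}, each $\sheafF_{\widetilde{\Rep}(I,\BR^{+})}$ is constructible, so this equivalence factors through the full subcategory $\varprojlim_{I} \Shv^{c}_{\cC}(\broken^{I})$ identified in the previous paragraph. An equivalence is essentially surjective, and a full subcategory of its target through which it factors must therefore be the entire target; hence the inclusion above is itself an equivalence, and so is the factored functor $\Shv_{\cC}(\broken) \to \varprojlim_{I} \Shv^{c}_{\cC}(\broken^{I})$, which is exactly the assertion of the Corollary. (As a byproduct, this shows that every compatible family of sheaves on the spaces $\broken^{I}$ is automatically constructible.)

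I do not expect a serious obstacle; the argument is entirely formal once Corollary \ref{corollary.not-yet-constructible} is in hand. The only points requiring care are bookkeeping ones: matching the variance and the indexing category $\LinPreOrd$ across Corollary \ref{corollary.not-yet-constructible}, the Functoriality remark, and the present statement, and verifying that constructibility is preserved by \emph{all} transition functors in the diagram, not merely by the maps classifying the universal families $\widetilde{\Rep}(I,\BR^{+}) \to \broken^{I}$. The mild $\infty$-categorical input — that a limit of a diagram of full-subcategory inclusions is again a full-subcategory inclusion, with the expected description of objects — is standard, since mapping spaces in a limit of $\infty$-categories are limits of mapping spaces.
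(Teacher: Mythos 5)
Your proposal is correct and is essentially the paper's own proof: the paper deduces the corollary directly from Corollary~\ref{corollary.not-yet-constructible} together with Example~\ref{example.automatic-constructibility}, stating only that "we therefore obtain the following variant." You have simply made explicit the formal bookkeeping the paper suppresses — that the inclusions $\Shv^{c}_{\cC}(\broken^{I}) \hookrightarrow \Shv_{\cC}(\broken^{I})$ assemble (via the Functoriality remark) into a natural transformation whose limit is again fully faithful with the expected essential image, and that an equivalence factoring through a full subcategory forces that subcategory to be everything.
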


We now wish to combine Corollary \ref{corollary.now-constructible} with the characterization of constructible sheaves supplied by
Proposition \ref{proposition.constructible-characterization}. For this, we need an auxiliary construction.

\begin{notation}
We define a category $\overline{\LinPreOrd}$ as follows:
\begin{itemize}
\item The objects of $\overline{\LinPreOrd}$ are triples $(I, \leq_{I}, E)$, where $I$ is a nonempty finite set, $\leq_{I}$ is a linear preordering on $I$,
and $E$ is a convex equivalence relation on $I$ (see Construction \ref{construction.stratify}).

\item A morphism from $(I, \leq_{I}, E)$ to $(J, \leq_{J}, E' )$ in the category $\overline{\LinPreOrd}$ consists of a nondecreasing map
$f: I \rightarrow J$ with the property that $(i E i') \Rightarrow ( f(i) E' f(i') )$ (so that $f$ induces a map $I/E \rightarrow J/E'$).
\end{itemize}
The construction $(I, \leq_I, E) \mapsto (I, \leq_I)$ determines a forgetful functor 
	\begin{equation*}
	\theta: \overline{\LinPreOrd} \rightarrow \LinPreOrd.
	\end{equation*}
This functor is a Grothendieck fibration, whose fiber over an object $I \in \LinPreOrd$ can be identified with the partially ordered set
$\Conv(I)$ of Construction \ref{construction.stratify}.
\end{notation}

\begin{remark}\label{remark.open-sets-of-broken-lines}
For each object $(I, \leq_I, E)$ of $\overline{\LinPreOrd}$, we let $\widetilde{U}_{E}$ denote the fiber product
$U_{E} \times_{ \Rep(I, \BR^{+} ) } \widetilde{\Rep}(I, \BR^{+} )$, where $U_{E}$ is the open subset of
$\Rep(I, \BR^{+} )$ defined in Construction \ref{construction.stratify}. We note that the construction
$(I, \leq_{I}, E) \mapsto \widetilde{U}_{E}$ determines a functor $\overline{\LinPreOrd} \rightarrow \Pt(\broken)^{\op}$.
\end{remark}

\begin{remark}
The Cartesian fibration $\theta: \overline{\LinPreOrd} \rightarrow \LinPreOrd$ is classified by a functor
$\LinPreOrd \rightarrow \Cat_{\infty}^{\op}$, given concretely by $I \mapsto \Conv(I)$. It follows
that for any $\infty$-category $\cC$, the construction $I \mapsto \Fun( \Conv(I), \cC )$ determines
a functor $\LinPreOrd \rightarrow \Cat_{\infty}$. Moreover, Corollary 3.3.3.2 of \cite{htt}
supplies a fully faithful embedding 
\begin{eqnarray}\label{eqn.map}
\varprojlim_{I \in \LinPreOrd} \Fun( \Conv(I), \cC) \rightarrow \Fun( \overline{\LinPreOrd}, \cC), \end{eqnarray}
whose essential image is spanned by those functors which carry each $\theta$-Cartesian
morphism in $\overline{\LinPreOrd}$ to an equivalence in $\cC$. 
\end{remark}

If $\cC$ is a compactly generated $\infty$-category, then we can use Proposition \ref{proposition.constructible-characterization}
to identify the domain of the map (\ref{eqn.map}) with the limit $\varprojlim_{I \in \LinPreOrd} \Shv^{c}_{\cC}( \Rep(I, \BR^{+} ) )$.
Combining this observation with Corollary \ref{corollary.now-constructible}, we obtain the following:

\begin{corollary}\label{corollary.closer-closer}
Let $\cC$ be a compactly generated $\infty$-category. Then composition with the functor 
$$ \overline{\LinPreOrd} \rightarrow \Pt(\broken)^{\op} \quad \quad  (I, \leq_{I}, E) \mapsto \widetilde{U}_{E}$$
of Remark \ref{remark.open-sets-of-broken-lines} induces a fully faithful embedding
$$ T: \Shv_{\cC}( \broken) \rightarrow \Fun( \overline{\LinPreOrd}, \cC ),$$
whose essential image is spanned by those functors $F: \overline{\LinPreOrd} \rightarrow \cC$
which carry $\theta$-Cartesian morphisms of $\overline{\LinPreOrd}$ to equivalences in $\cC$.
\end{corollary}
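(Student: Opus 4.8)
The plan is to assemble three results already in hand---Corollary~\ref{corollary.now-constructible}, Proposition~\ref{proposition.constructible-characterization}, and the fully faithful embedding~(\ref{eqn.map})---into the desired description. First I would upgrade Proposition~\ref{proposition.constructible-characterization} to a statement about diagrams: the equivalences $\Shv^{c}_{\cC}(\broken^{I})\xrightarrow{\sim}\Fun(\Conv(I),\cC)$, $\sheafG\mapsto\{\sheafG(U_E)\}_{E\in\Conv(I)}$, are natural in $I\in\LinPreOrd$, and therefore assemble into an equivalence of functors $\LinPreOrd\to\Cat_\infty$ sending $I$ to $\Fun(\Conv(I),\cC)$. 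Granting this, passing to limits over $\LinPreOrd$ and composing with the equivalence of Corollary~\ref{corollary.now-constructible} yields an equivalence $\Shv_{\cC}(\broken)\xrightarrow{\sim}\varprojlim_{I\in\LinPreOrd}\Shv^{c}_{\cC}(\broken^{I})\xrightarrow{\sim}\varprojlim_{I\in\LinPreOrd}\Fun(\Conv(I),\cC)$. Composing this with the fully faithful embedding~(\ref{eqn.map}), whose essential image consists of those functors $\overline{\LinPreOrd}\to\cC$ that carry $\theta$-Cartesian morphisms to equivalences, produces a functor $T\colon\Shv_{\cC}(\broken)\to\Fun(\overline{\LinPreOrd},\cC)$ which is fully faithful; since precomposing a fully faithful functor with an equivalence leaves the essential image unchanged, the essential image of $T$ is exactly the one asserted.

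It then remains to identify $T$ with composition along the functor $(I,\leq_{I},E)\mapsto\widetilde{U}_{E}$ of Remark~\ref{remark.open-sets-of-broken-lines}. Tracing through the three identifications, $T(\sheafF)$ carries $(I,\leq_{I},E)$ to $\sheafF_{\widetilde{\Rep}(I,\BR^{+})}(U_E)$, which by the definition of $\sheafF_{\widetilde{S}}$ (Definition~\ref{definition.lax-sheaf-on-stack}) equals $\sheafF\bigl(\widetilde{\Rep}(I,\BR^{+})|_{U_E}\bigr)=\sheafF(\widetilde{U}_{E})$, with $\widetilde{U}_{E}=U_E\times_{\Rep(I,\BR^{+})}\widetilde{\Rep}(I,\BR^{+})$; checking that the transition maps match is a routine diagram chase.

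The step I expect to be the main obstacle is the naturality assertion, since a bare family of pointwise equivalences does not suffice to manipulate the limits. The cleanest route is through the exit-path presentation used in the proof of Proposition~\ref{proposition.constructible-characterization}: an essentially surjective nondecreasing map $f\colon I\to J$ induces a map of stratified spaces $\broken^{J}\to\broken^{I}$ carrying $K_{E}$ into $K_{\overline{E}}$ (exactly the behaviour recorded in the Functoriality remark above), hence a functor between the associated exit-path $\infty$-categories which, under the identifications with $\Conv(J)$ and $\Conv(I)$ established in that proof, is precisely the map $E\mapsto\overline{E}$; since the equivalence between constructible sheaves and functors out of the exit-path $\infty$-category (Theorem~A.9.3 of~\cite{HA}) is itself functorial, the required naturality follows. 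Everything else is formal.
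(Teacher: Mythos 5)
Your proposal is correct and follows the same route as the paper: combine Corollary~\ref{corollary.now-constructible} with the $I$-naturality of the equivalence of Proposition~\ref{proposition.constructible-characterization} (which the paper records in the Functoriality remark at the end of that subsection, via exactly the exit-path argument you sketch), and then apply the fully faithful embedding~(\ref{eqn.map}) coming from Corollary~3.3.3.2 of~\cite{htt}. Your treatment of the naturality point is, if anything, more explicit than the paper's.
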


\begin{proof}[Proof of Theorem \ref{theorem.broken-sheaves}]
Let $u: \LinOrd \rightarrow \overline{\LinPreOrd}$ be the functor given by $u(I, \leq_{I} ) = (I, \leq_{I}, =_{I} )$. We wish to show that for any compactly generated $\infty$-category
$\cC$, the composite functor
$$ \Shv_{\cC}( \broken) \xrightarrow{T} \Fun( \overline{\LinPreOrd}, \cC ) \xrightarrow{ \circ u} \Fun( \LinOrd, \cC )$$
is an equivalence of $\infty$-categories, where $T$ is the functor appearing in Corollary \ref{corollary.closer-closer}.

Note that the functor $u$ is a fully faithful embedding which admits a left adjoint $v: \overline{\LinPreOrd} \rightarrow \LinOrd$, given by
the formula $v(I, \leq_{I}, E) = ( I/E, \leq_{I/E} )$. Let $\Fun'( \overline{\LinPreOrd}, \cC )$ denote the full subcategory of $\Fun( \overline{\LinPreOrd}, \cC )$ spanned by those functors $F: \overline{\LinPreOrd} \rightarrow \cC$ which satisfy the following condition:
\begin{itemize}
\item[$(\ast)$] For every morphism $\alpha$ in $\overline{\LinPreOrd}$, if $v(\alpha)$ is an isomorphism in $\LinOrd$, then $F(\alpha)$ is an equivalence in $\LinPreOrd$.
\end{itemize}
Then condition $(\ast)$ is equivalent to the requirement that $F$ is a right Kan extension of its restriction $F|_{ \LinOrd}$, so the restriction functor
$\Fun'( \overline{\LinPreOrd}, \cC ) \rightarrow \Fun( \LinOrd, \cC)$ is an equivalence of $\infty$-categories (with homotopy inverse given by composition with $v$).
We now complete the proof by observing that $T$ restricts to an equivalence of $\infty$-categories
$\Shv_{\cC}(\broken) \simeq \Fun'( \overline{\LinPreOrd}, \cC)$, by virtue of Corollary \ref{corollary.closer-closer} (note that a morphism
$\alpha$ in $\overline{\LinPreOrd}$ has the property that $v(\alpha)$ is an isomorphism if and only if $\alpha$ is $\theta$-Cartesian, where
$\theta: \overline{\LinPreOrd} \rightarrow \LinPreOrd$ is the forgetful functor).
\end{proof}

\subsection{Lax Sheaves on \texorpdfstring{$\broken$}{Broken}}

Let $\cC$ be a compactly generated $\infty$-category and let $\sheafF, \sheafG \in \Shv_{\cC}(\broken)$ be $\cC$-valued sheaves on the moduli stack of broken lines.
It follows from Theorem \ref{theorem.broken-sheaves} that the space of maps $\bHom_{ \Shv_{\cC}}( \sheafF, \sheafG )$ can be recovered from the values of $\sheafF$ and
$\sheafG$ on the families of broken lines $\widetilde{\Rep}(I, \BR^{+} ) \rightarrow \Rep(I, \BR^{+} )$ given in Construction \ref{makelines}. In \S \ref{section.factorizable}, we will need a slight
extension of this result, which applies to certain classes of lax $\cC$-valued sheaves (see Definition \ref{definition.lax-sheaf-on-stack}). To formulate this extension, we need to introduce some terminology.

\begin{definition}
Let $X$ be a topological stack, let $\cC$ be a compactly generated $\infty$-category, and let $\sheafF \in \PShv_{\cC}^{\lax}( X )$
be a lax $\cC$-valued presheaf on $X$. We will say that $\sheafF$ is {\it homotopy invariant} if the following condition is satisfied:
\begin{itemize}
\item[$(\ast)$] Let $\widetilde{S}' \rightarrow \widetilde{S}$ be a morphism in the category $\Pt(X)$ whose underlying map of topological spaces
is the projection $\pi: S \times \RR \rightarrow S$, for some topological space $S$. Then the induced map
$\sheafF( \widetilde{S} ) \rightarrow \sheafF( \widetilde{S}' )$ is an equivalence in the $\infty$-category $\cC$.
\end{itemize}
\end{definition}

\begin{proposition}\label{proposition.automatic-homotopy-invariance}
Let $X$ be a topological stack, let $\cC$ be a compactly generated $\infty$-category, and let $\sheafF \in \Shv_{\cC}(X)$ be a $\cC$-valued sheaf on $X$.
Then $\sheafF$ is homotopy invariant.
\end{proposition}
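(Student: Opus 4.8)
The plan is to reduce the claim to a statement about sheaves on a single topological space $S$, for which homotopy invariance is a standard fact about sheaves on $S\times\RR$. Concretely, suppose $\sheafF\in\Shv_{\cC}(X)$, and let $\widetilde{f}\colon\widetilde{S}'\to\widetilde{S}$ be a morphism in $\Pt(X)$ whose underlying map of spaces is the projection $\pi\colon S\times\RR\to S$. Since $\sheafF$ is a $\cC$-valued sheaf on $X$ (Definition~\ref{definition.sheaf-on-stack}), the transition map $\eta_{\widetilde{f}}\colon\sheafF_{\widetilde{S}}\to\pi_{\ast}\sheafF_{\widetilde{S}'}$ is an equivalence in $\Shv_{\cC}(S)$, and equivalently $\eta_{\widetilde f}^{\sharp}\colon\pi^{\ast}\sheafF_{\widetilde S}\to\sheafF_{\widetilde S'}$ is an equivalence in $\Shv_{\cC}(S\times\RR)$. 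The map $\sheafF(\widetilde{S})\to\sheafF(\widetilde{S}')$ of the definition is, by Notation~\ref{notation.transition-maps}, the map on global sections induced by $\eta_{\widetilde f}$, i.e. the composite $\sheafF_{\widetilde S}(S)\to(\pi_{\ast}\sheafF_{\widetilde S'})(S)=\sheafF_{\widetilde S'}(S\times\RR)$. So it suffices to prove the following purely space-level assertion: for any compactly generated $\infty$-category $\cC$, any topological space $S$, and any $\sheafG\in\Shv_{\cC}(S)$, the unit map $\sheafG(S)\to(\pi^{\ast}\sheafG)(S\times\RR)$ is an equivalence, where $\pi\colon S\times\RR\to S$ is the projection. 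Applying this with $\sheafG=\sheafF_{\widetilde S}$ and using that $\eta_{\widetilde f}^{\sharp}$ identifies $\pi^{\ast}\sheafF_{\widetilde S}$ with $\sheafF_{\widetilde S'}$ finishes the proof.

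To prove the space-level assertion, first treat the case where $\sheafG=\underline{C}$ is a constant sheaf. The key input is that $\pi\colon S\times\RR\to S$ admits a section $s_0\colon S\hookrightarrow S\times\{0\}$, so that $s_0^{\ast}\pi^{\ast}\sheafG\simeq\sheafG$, together with the fact that $\RR$ is contractible, which makes $\pi^{\ast}$ fully faithful after passing to global sections: the composite $\sheafG(S)\to(\pi^{\ast}\sheafG)(S\times\RR)\to\sheafG(S)$ (the second map induced by $s_0$) is the identity, and a straightforward homotopy argument — contracting the $\RR$-factor to the origin — shows that the first map is also a retraction onto the image, hence an equivalence. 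For general $\sheafG\in\Shv_{\cC}(S)$ one reduces to the constant case: since $\cC$ is compactly generated, $\sheafG$ is a colimit of sheaves of the form $i_{U!}\underline{C}$ for $U\subseteq S$ open and $C\in\cC$ (extension by zero along the open inclusion $i_U$, cf. Remark~\ref{remark.extension-by-zero-1}); pullback $\pi^{\ast}$ commutes with colimits and with extension by zero along open inclusions by base change, and global sections over $S$ (resp.\ $S\times\RR$) likewise commute with the relevant colimits for compactly generated $\cC$, so it is enough to know the result for each $i_{U!}\underline{C}$. Base change along the square relating $U\times\RR\to U$ to $S\times\RR\to S$ reduces that case to the constant-sheaf case over $U$, which we have handled.

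I expect the main obstacle to be the bookkeeping in the last reduction: one must be careful that global sections $\sheafG\mapsto\sheafG(S)$ genuinely commutes with the colimit presentation of $\sheafG$ — this uses that for compactly generated (indeed presentable) $\cC$ the $\infty$-topos of $\cC$-valued sheaves behaves well, and one should invoke the appropriate statement from \cite{htt} rather than reproving it — and that the homotopy contracting the $\RR$-factor can be implemented at the level of sheaves (one genuinely uses $\RR$ rather than an arbitrary space here). An alternative, slicker route that avoids the colimit argument entirely is to observe that $\pi^{\ast}\colon\Shv_{\cC}(S)\to\Shv_{\cC}(S\times\RR)$ has both a left adjoint $\pi_!$ (since $\pi$ is not a local homeomorphism this is not literally available, so this route requires properness/paracompactness hypotheses) — so I would in fact present the colimit argument as the main line, and only remark that on nice $S$ one can argue via $\pi_!$ or via the projection formula. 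Either way, no step requires anything beyond formal properties of pullback, extension by zero, and the contractibility of $\RR$.
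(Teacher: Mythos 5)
Your first reduction (to the assertion that the unit $\sheafG(S)\to(\pi^{\ast}\sheafG)(S\times\RR)$ is an equivalence for $\sheafG=\sheafF_{\widetilde S}\in\Shv_{\cC}(S)$) matches the paper's. But from there the paper and your proposal diverge sharply, and your route has a genuine gap.

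The gap is the step ``global sections over $S$ (resp.\ $S\times\RR$) likewise commute with the relevant colimits.'' They do not. The functor $\Gamma_S\colon\Shv_{\cC}(S)\to\cC$, $\sheafG\mapsto\sheafG(S)$, is the right adjoint $p_{\ast}$ to the constant-sheaf functor $p^{\ast}$ (where $p\colon S\to\ast$); it preserves limits, not colimits, and there is no hypothesis in play (compact generation of $\cC$ is irrelevant here, and $\Shv_{\SSet}(S)$ for a general topological space $S$ is not compactly generated) under which it would preserve the colimit presentation of $\sheafG$ by objects $i_{U!}\underline C$. You flag this yourself as ``bookkeeping'' to be referenced away, but it is not a technicality: the natural transformation $\Gamma_S\Rightarrow\Gamma_{S\times\RR}\circ\pi^{\ast}$ whose componentwise-equivalence you want to establish is a map between two colimit-non-preserving functors, so the class of $\sheafG$ on which it is an equivalence is not automatically closed under colimits, and checking on generators does not suffice. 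Your fallback remark about a $\pi_!$ for non-\'etale $\pi$ does not rescue this either. A secondary issue is that the constant-sheaf base case is also not as soft as ``contract the $\RR$-factor'': implementing a homotopy at the level of sheaf sections on a general topological space already requires essentially the same input as the general case.

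The paper avoids all of this by reducing along a different axis. Instead of decomposing $\sheafG$ into constant pieces, it uses compact generation of $\cC$ to test the equivalence $\sheafG(S)\to(\pi^{\ast}\sheafG)(S\times\RR)$ against a compact object $C\in\cC$; then $U\mapsto\bHom_{\cC}(C,\sheafG(U))$ is an $\SSet$-valued sheaf on $S$ (compactness of $C$ is what makes the mapping-out functor commute with sheafification and pullback), and the statement becomes an assertion purely about $\SSet$-valued sheaves on $S$, which is Lemma~A.2.9 of \cite{HA}. That lemma is the genuine topological input (it is where the contractibility of $\RR$ is actually used), and there is no way around citing or reproving something of that strength. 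I would recommend restructuring your argument to mirror this: fix a compact $C$, pass to $\SSet$-valued sheaves, and invoke the cited lemma, rather than attempting a colimit decomposition through $\Gamma_S$.
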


\begin{proof}
Let $\widetilde{S}$ be an object of the category $\Pt(X)$ having underlying topological space $S$, and let $\sheafF_{ \widetilde{S} } \in \Shv_{\cC}(S)$ be as in Definition
\ref{definition.lax-sheaf-on-stack}. Then the projection map $\pi: S \times \RR \rightarrow S$ admits an essentially unique lift to a map
$\widetilde{S}' \rightarrow \widetilde{S}$ in the category $\Pt(X)$. Our assumption that $\sheafF$ belongs to $\Shv_{\cC}(X)$ guarantees that
we can identify $\sheafF_{ \widetilde{S}' }$ with the pullback $\pi^{\ast} \sheafF_{ \widetilde{S} }$. It will therefore suffice to show that
the canonical map $\sheafF_{ \widetilde{S} }(S) \rightarrow ( \pi^{\ast} \sheafF_{ \widetilde{S}} )( S \times \RR )$ is an equivalence. 
Equivalently, we wish to show that for every object $C \in \cC$, the induced map
$$ \theta: \bHom_{\cC}( C, \sheafF_{ \widetilde{S}}(S)) \rightarrow \bHom_{ \cC}( C, \pi^{\ast} \sheafF_{ \widetilde{S} }(S) )$$
is a homotopy equivalence. Since the $\infty$-category $\cC$ is compactly generated, we may assume without loss of
generality that the object $C \in \cC$ is compact. In this case, the construction
$$ (U \in \calU(S) ) \mapsto \bHom_{\cC}(C, \sheafF_{ \widetilde{S} }(U) )$$
determines a $\SSet$-valued sheaf $\sheafG$ on $S$, and $\theta$ can identified with the canonical map $\sheafG(S) \rightarrow ( \pi^{\ast} \sheafG)( S \times \RR )$,
which is an equivalence by virtue of Lemma A.2.9 of \cite{HA}.
\end{proof}

We can now formulate the main result of this section; it will be used in the proof of Theorem~\ref{theorem.even-more-precise}.

\begin{proposition}\label{proposition.homotopy-invariant-is-enough}
Let $\cC$ be a compactly generated $\infty$-category. Suppose we are given objects $\sheafF \in \Shv_{\cC}( \broken)$ and $\sheafG \in \Shv_{\cC}^{\lax}(\broken)$.
Let $$\widetilde{\Rep}: \LinOrd^{\op} \rightarrow \Pt(\broken) \quad \quad I \mapsto \widetilde{\Rep}( I, \BR^{+} )$$
denote the functor of Construction \ref{makelines}. If $\sheafG$ is homotopy invariant, then the restriction map
$$ \bHom_{ \Shv_{\cC}^{\lax}(\broken) }( \sheafF, \sheafG ) \rightarrow \bHom_{ \Fun( \LinOrd, \cC) }( \sheafF \circ \widetilde{\Rep}, \sheafG \circ \widetilde{\Rep} )$$
is a homotopy equivalence.
\end{proposition}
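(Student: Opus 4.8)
The plan is to reduce the statement to a question about the two diagrams $\sheafF \circ \widetilde{\Rep}$ and $\sheafG \circ \widetilde{\Rep}$ on the category $\overline{\LinPreOrd}$ (or rather on the larger source category $\Pt(\broken)^{\op}$ restricted along suitable functors), exploiting that $\sheafF$ — being an honest sheaf on $\broken$ — is completely determined by its restrictions to the universal families $\widetilde{\Rep}(I,\BR^+)$, while $\sheafG$ only needs to be ``probed'' along these families because it is homotopy invariant. First I would recall from Corollary~\ref{corollary.closer-closer} (and its proof) that $T\colon \Shv_{\cC}(\broken) \to \Fun(\overline{\LinPreOrd},\cC)$, given by $\sheafH \mapsto \sheafH \circ \widetilde{U}_{\bullet}$, is fully faithful, and that the proof of Theorem~\ref{theorem.broken-sheaves} identifies $\Shv_{\cC}(\broken)$ with the full subcategory $\Fun'(\overline{\LinPreOrd},\cC)$ of functors that are right Kan extensions of their restriction along $u\colon \LinOrd \hookrightarrow \overline{\LinPreOrd}$. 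Since $\sheafF \in \Shv_{\cC}(\broken)$, the functor $T(\sheafF)$ is such a right Kan extension; hence mapping out of $\sheafF$ only sees the values of the target on the image of $u$, i.e. on the objects $(I,\le_I,=_I)$ — which are exactly the families $\widetilde{\Rep}(I,\BR^+)$.

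The key technical point is therefore to show that, for a homotopy invariant lax sheaf $\sheafG$, the composite $\sheafG \circ \widetilde{U}_{\bullet}\colon \overline{\LinPreOrd}^{\op} \to \cC$ is again a right Kan extension of its restriction along $u$ — or at least that the relevant mapping space computation is unaffected. Concretely: for a fixed object $(I,\le_I,E) \in \overline{\LinPreOrd}$, the comma category $\LinOrd_{(I,\le_I,E)/}$ relevant to the Kan extension has an initial object, namely $v(I,\le_I,E) = (I/E, \le_{I/E})$, because $v$ is left adjoint to $u$. I would show that the natural map $\sheafG(\widetilde{U}_E) \to \sheafG(\widetilde{\Rep}(I/E,\BR^+))$ induced by the quotient is an equivalence in $\cC$. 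This is where homotopy invariance enters: the family $\widetilde{U}_E \to U_E$ is, fiberwise, constant equal to the concatenation of copies of $[-\infty,\infty]$ indexed by $I/E$ (as in Example~\ref{example.automatic-constructibility}), and the inclusion of the fiber over the ``most degenerate'' point realizes $U_E$ as homotopy equivalent — via an explicit straight-line contraction using the $-\log$ coordinates, as in the proof of Proposition~\ref{proposition.constructible-characterization} — to a point over which the family restricts to $\widetilde{\Rep}(I/E,\BR^+)$. More precisely one writes down an $\RR$-worth (in fact an $\RR_{\le 0}^{?}$-worth) of deformations and applies homotopy invariance of $\sheafG$ repeatedly; iterating over the finitely many coordinates gives the claimed equivalence.

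Granting that, the argument concludes formally. The map $T$ exhibits $\bHom_{\Shv_{\cC}^{\lax}(\broken)}(\sheafF,\sheafG)$ as $\bHom_{\Fun(\overline{\LinPreOrd},\cC)}(T(\sheafF), T(\sheafG))$ up to the subtlety that $T$ is only known to be fully faithful on $\Shv_{\cC}(\broken)$, not on all lax sheaves — so here I would instead argue directly: by Corollary~\ref{corollary.now-constructible} / Corollary~\ref{corollary.closer-closer}, $\sheafF$ is a limit (in $\Shv_{\cC}^{\lax}(\broken)$) of representable-type pieces pulled back from the $\broken^I$, and mapping into any lax $\sheafG$ commutes with that limit; combined with the previous paragraph this expresses $\bHom(\sheafF,\sheafG)$ as a limit over $\overline{\LinPreOrd}$ of mapping spaces $\bHom_{\cC}(\sheafF(\widetilde{U}_E), \sheafG(\widetilde{U}_E))$, and the right-Kan-extension property of $\sheafF \circ \widetilde{U}_\bullet$ together with the equivalences $\sheafG(\widetilde{U}_E) \simeq \sheafG(\widetilde{\Rep}(I/E,\BR^+))$ collapses this limit to one over $\LinOrd$, namely $\bHom_{\Fun(\LinOrd,\cC)}(\sheafF\circ\widetilde{\Rep}, \sheafG\circ\widetilde{\Rep})$. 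The main obstacle I anticipate is the homotopy invariance step of the middle paragraph: one must be careful that the contraction of $U_E$ can be presented as a finite sequence of maps each of which is literally a projection of the form $S \times \RR \to S$ (up to isomorphism in $\Pt(\broken)$) so that the hypothesis applies, rather than merely a homotopy equivalence of base spaces — and one must track the $I$-section / weak-$I$-section data along these maps so that everything takes place inside $\Pt(\broken)$ and not just $\Top$.
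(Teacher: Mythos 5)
There is a genuine gap in your final, ``formal'' step, and it is not the obstacle you flagged. You reduce everything to the claim that $\bHom_{\Shv^{\lax}_{\cC}(\broken)}(\sheafF,\sheafG)$ can be rewritten as a limit over $\overline{\LinPreOrd}$ of mapping spaces $\bHom_{\cC}(\sheafF(\widetilde{U}_E), \sheafG(\widetilde{U}_E))$, justified by saying that $\sheafF$ ``is a limit of representable-type pieces and mapping into any lax $\sheafG$ commutes with that limit.'' Mapping \emph{out of} a limit does not commute with that limit, and Corollaries \ref{corollary.closer-closer} and \ref{corollary.now-constructible} are equivalences of \emph{categories} of sheaves: they compute $\bHom(\sheafF,\sheafG)$ only when $\sheafG$ is itself an object of $\Shv_{\cC}(\broken)$, which is exactly what you are not allowed to assume. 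What is actually needed is a \emph{colimit} presentation of $\sheafF$ in $\Shv^{\lax}_{\cC}(\broken)$ by objects whose mapping spaces into an arbitrary homotopy-invariant lax sheaf are computable. This is how the paper proceeds: $\Fun(\LinOrd,\cC)$ is generated under small colimits by the left Kan extensions $F_{I,C}$ of constant functors, the inverse equivalence $T$ of Theorem \ref{theorem.broken-sheaves} preserves these colimits into $\Shv^{\lax}_{\cC}(\broken)$, and $T(F_{I,C})$ is identified with the extension by zero $u_{!}\underline{C}$ along the local homeomorphism $u\colon \broken_I \to \broken$, so that $\bHom(u_{!}\underline{C},\sheafG) \simeq \bHom_{\Shv^{\lax}_{\cC}(\broken_I)}(\underline{C}, u^{\ast}\sheafG)$ by adjunction. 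None of this structure appears in your outline.

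Relatedly, you locate the use of homotopy invariance in the wrong place. The comparison $\sheafG(\widetilde{U}_E) \simeq \sheafG(\widetilde{\Rep}(I/E,\BR^{+}))$ you propose is plausible (and provable by the iterated $\RR$-projection argument you sketch), but the essential use of the hypothesis is elsewhere: the mapping space $\bHom_{\Shv^{\lax}_{\cC}(\broken_I)}(\underline{C}, u^{\ast}\sheafG)$ is the totalization of the cosimplicial space $\bHom_{\cC}(C, \sheafG(\widetilde{X}_{\bullet}))$ attached to the \v{C}ech nerve of $\Rep(I,\BR^{+}) \to \broken_I$, whose $k$-th term lives over $\Rep(I,\BR^{+}) \times (\RR^{I})^{k}$, and homotopy invariance is what makes this cosimplicial object constant --- i.e., what guarantees that a map defined only on the universal families $\widetilde{\Rep}(I,\BR^{+})$ descends, coherently, along the $\RR^{I}$-actions. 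Since the functor $I \mapsto \widetilde{\Rep}(I,\BR^{+})$ is far from full (each universal family has automorphism group $\RR^{I}$ over $\Rep(I,\BR^{+})$ that is invisible to $\LinOrd$), some such descent argument is unavoidable, and your proposal does not supply one.
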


\begin{proof}
Let $\cC$ be a compactly generated $\infty$-category and let $$T: \Fun( \LinOrd, \cC) \rightarrow \Shv_{\cC}(\broken)$$ denote a homotopy inverse to the equivalence of
$\infty$-categories $\Shv_{\cC}(\broken) \rightarrow \Fun( \LinOrd, \cC)$ of Theorem \ref{theorem.broken-sheaves}. Suppose that $\sheafG \in \Shv_{\cC}^{\lax}( \broken)$ is homotopy-invariant.
Let us say that a functor $F: \LinOrd \rightarrow \cC$ is {\it good} if the canonical map
\begin{eqnarray*}
\theta_{F}: \bHom_{ \Shv_{\cC}^{\lax}(\broken) }( T(F), \sheafG ) & 
\rightarrow & \bHom_{ \Fun( \LinOrd, \cC) }( T(F) \circ \widetilde{\Rep}, \sheafG \circ \widetilde{\Rep} ) \\
& \simeq & \bHom_{ \Fun( \LinOrd, \cC) }( F, \sheafG \circ \widetilde{\Rep} ). \end{eqnarray*}
is a homotopy equivalence. We wish to prove that every object $F \in \Fun( \LinOrd, \cC)$ is good.

For each object $I \in \LinOrd$ and each object $C \in \cC$, let
$F_{I,C}: \LinOrd \rightarrow \cC$ denote the functor given by the formula
$$ F_{I, C}(J) = \coprod_{ I \twoheadrightarrow J} C,$$
where the coproduct is taken over all isomorphism classes of morphisms $I \twoheadrightarrow J$ in the category $\LinOrd$ (here $I$ is fixed). In other words, $F_{I,C}$ is the left Kan extension of the constant
functor $\{ I \} \rightarrow \{C\} \subseteq \cC$ along the inclusion $\{ I \} \hookrightarrow \LinOrd$. Note that the the $\infty$-category
$\Fun( \LinOrd, \cC)$ is generated under small colimits by objects of the form $F_{I,C}$. 
The functor $T$ preserves small colimits when regarded as a functor from $\Fun(\LinOrd, \cC)$ to $\Shv^{\lax}_{\cC}(\broken)$ (since
the full subcategory $\Shv_{\cC}(\broken) \subseteq \Shv_{\cC}^{\lax}(\broken)$ is closed under small colimits). It follows that the collection of
good functors $F \in \Fun( \LinOrd, \cC)$ is closed under small colimits. Consequently, to complete the proof, it will suffice to show that each of the functors
$F_{I,C} \in \Fun( \LinOrd, \cC)$ is good.

Let us henceforth regard the objects $I \in \LinOrd$ and $C \in \cC$ as fixed. Let $\broken_{I}$ denote the topological stack of Notation \ref{notation.broken-I}, let $u: \broken_{I} \rightarrow \broken$ be the tautological map, and let $\underline{C}$ denote the constant sheaf on $\broken_{I}$ taking the value $C$. We will prove the following:
\begin{itemize}
\item[$(\ast)$] Let $\sheafH \in \Shv^{\lax}_{\cC}( \broken)$ be homotopy invariant. Then the canonical map
$$\bHom_{ \Shv_{\cC}^{\lax}( \broken_I ) }( \underline{C}, u^{\ast} \sheafH ) \rightarrow \bHom_{\cC}( C, \sheafH( \widetilde{\Rep}(I, \BR^{+} ) ) )$$
is a homotopy equivalence.
\end{itemize}
Let us assume $(\ast)$ for the moment and use it to complete the proof of Proposition \ref{proposition.homotopy-invariant-is-enough}.
Lemma \ref{lemma.compare-broken-I} asserts that the map $u: \broken_{I} \rightarrow \broken$ is a local homeomorphism of topological stacks.
Let $u_{!}: \Shv_{\cC}^{\lax}( \broken_{I} ) \rightarrow \Shv_{\cC}^{\lax}( \broken )$ denote the ``extension-by-zero'' functor of Remark \ref{remark.extension-by-zero-2}.
Then $u_{!} \underline{C}$ belongs to $\Shv_{\cC}(\broken)$ (Remark \ref{remark.extension-by-zero-2}). Applying $(\ast)$ in the special case where $\sheafH \in \Shv_{\cC}(\broken)$
(noting that any such sheaf is homotopy invariant by Proposition \ref{proposition.automatic-homotopy-invariance}), we obtain a homotopy equivalence
\begin{align}
\bHom_{ \Shv_{\cC}( \broken) }( u_{!} \underline{C}, \sheafH) 
&\rightarrow \bHom_{\cC}( C, \sheafH( \widetilde{\Rep}(I, \BR^{+} ) ) ) \nonumber \\
&\simeq \bHom_{ \Fun( \LinOrd, \cC)}( F_{I,C}, \sheafH \circ \widetilde{\Rep} ). \nonumber
\end{align}
It follows that we can identify $u_{!} \underline{C}$ with the sheaf $T( F_{I,C} )$. Applying $(\ast)$ again in the case $\sheafH = \sheafG$, we conclude that
the functor $F_{I,C}$ is good, as desired.

It remains to prove $(\ast)$. Let $X_{\bullet}$ denote the simplicial topological space given by the nerve of the map $\Rep(I, \BR^{+} ) \rightarrow \broken_{I}$.
Then the composite map $X_{\bullet} \rightarrow \broken_{I} \xrightarrow{u} \broken$ allows us to lift $X_{\bullet}$ to a simplicial object
$\widetilde{X}_{\bullet}$ of the category $\Pt(\broken)$. It follows from Proposition \ref{lemma.structure-of-broken-I} that the mapping space
\begin{equation*}
\bHom_{ \Shv_{\cC}^{\lax}( \broken_I ) }( \underline{C}, u^{\ast} \sheafH )
\end{equation*}
can be realized as the totalization of the cosimplicial space
$E^{\bullet} =  \bHom_{\cC}( C, \sheafH( \widetilde{X}_{\bullet} ) )$. Consequently, $(\ast)$ is equivalent to the assertion that the map of spaces
$\Tot( E^{\bullet} ) \rightarrow E^{0}$ is a homotopy equivalence. To prove this, it will suffice to show that the cosimplicial space $E^{\bullet}$ is constant:
that is, that each of the iterated coface maps
$$ E^{0} = \bHom_{ \cC}( C, \sheafH( \widetilde{X}_0) ) \rightarrow \bHom_{\cC}( C, \sheafH( \widetilde{X}_k )) \simeq E^{k}$$
is a homotopy equivalence. This follows from our assumption that $\sheafH$ is homotopy invariant (note that $X_k$ can be identified
with the product of $X_{0} = \Rep(I, \BR^{+} )$ with the $k$th power of $\RR^{I}$, by virtue of Proposition \ref{lemma.structure-of-broken-I}).
\end{proof}

\clearpage
\section{Factorizable sheaves and nonunital associative algebras}\label{section.factorizable}

Let $\cC$ be a compactly generated $\infty$-category. Suppose that $\cC$ is equipped with a monoidal structure, whose underlying tensor product
$$ \otimes: \cC \times \cC \rightarrow \cC$$
preserves colimits separately in each variable. In this section, we study {\em factorizable} $\cC$-valued sheaves on $\broken$: that is,
sheaves $\sheafF \in \Shv_{\cC}( \broken )$ equipped with an isomorphism $m^{\ast} \sheafF \simeq \sheafF \boxtimes \sheafF$,
satisfying appropriate coherence conditions (here $m: \broken \times \broken \rightarrow \broken$ is the map given by concatenation of broken lines). 
Our goal in this section is to prove the main result of this paper, which asserts that the datum of a factorizable $\cC$-valued sheaf on $\broken$ is
equivalent to the datum of a nonunital associative algebra object of $\cC$ (Theorem \ref{theorem.mainC}; see \S \ref{maintheo} for a precise formulation). 

\begin{notation}\label{notation.otimes}
We assume throughout this section that the reader is familiar with the theory of monoidal $\infty$-categories, as developed in \cite{HA}.
We will be primarily interested in {\em nonunital} monoidal $\infty$-categories: that is, $\infty$-categories $\cC$ equipped with
a tensor product operation $\otimes: \cC \times \cC \rightarrow \cC$ which is coherently associative (but not necessarily unital).
Given nonunital monoidal $\infty$-categories $\cC$ and $\calD$, we let $\Fun^{\lax}( \cC, \calD )$ denote the
$\infty$-category of nonunital lax monoidal functors from $\cC$ to $\calD$, and we let $\Fun^{\otimes}( \cC, \calD )$ denote
the full subcategory of $\Fun^{\lax}( \cC, \calD )$ spanned by the nonunital monoidal functors from $\cC$ to $\calD$.
More informally: the objects of $\Fun^{\lax}( \cC, \calD)$ are functors $F: \cC \rightarrow \calD$ which are equipped
with natural transformations $u_{C,C'}: F(C) \otimes F(C') \rightarrow F(C \otimes C' )$ (compatible with the associativity constraints on $\cC$ and $\calD$, in an appropriate homotopy-coherent sense);
such an object belongs to $\Fun^{\otimes}(\cC, \calD)$ if each of the maps $u_{C,C'}$ is an equivalence in $\calD$.
\end{notation}

\begin{warning}
Our use of the superscript $\lax$ in Notation \ref{notation.otimes} (in connection with lax monoidal functors) is logically unrelated to our use of the same superscript in 
Definition \ref{definition.lax-sheaf-on-stack} (in connection with lax sheaves on topological stacks). These notions will make a brief simultaneous appearance in our proof of
Theorem \ref{theorem.even-more-precise}, but it should be clear in context which of the two meanings is relevant. 
\end{warning}

\subsection{Concatenation of Broken Lines}\label{section.monoid}

Recall that if $L$ and $L'$ are broken lines, the {\it concatenation} $L \star L'$ is obtained from the disjoint union $L \amalg L'$ by identifying
the final point of $L$ with the initial point of $L'$. We now extend this construction to families. Given an $S$-family of broken lines $L_S$
and a $T$-family of broken lines $L_{T}$, we will define an $(S \times T)$-family of broken lines $L_{S} \star L_{T}$ (see Construction \ref{construction.relative-concatenation} below)
whose fiber at a point $(s,t) \in S \times T$ is given by the concatenation $L_{s} \star L_{t}$. First, we need the following elementary observation:

\begin{proposition}\label{proposition.initial-section}
Let $\pi: L_S \rightarrow S$ be a family of broken lines over a topological space $S$. Let
$v_{\init}: S \rightarrow L_S$ be the function which assigns to each point $s \in S$
the initial point of the broken line $L_s$, and define $v_{\term}: S \rightarrow L_S$ similarly.
Then $v_{\init}$ and $v_{\term}$ are continuous.
\end{proposition}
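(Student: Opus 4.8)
The plan is to use the local coordinates supplied by Corollary \ref{corollary.local-coordinates} to reduce the continuity assertion to an explicit check on the universal family $\widetilde{\Rep}(I,\BR^{+}) \to \Rep(I,\BR^{+})$. Since continuity is a local condition on $S$, and the formation of $v_{\init}$ and $v_{\term}$ is clearly compatible with pullback along continuous maps $S' \to S$, it suffices to treat the case $S = \Rep(I,\BR^{+})$ and $L_S = \widetilde{\Rep}(I,\BR^{+})$ for a nonempty finite linearly preordered set $I$. By symmetry (reversing the linear preordering on $I$ interchanges initial and terminal points), it is enough to handle $v_{\init}$.

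First I would identify, for each $\alpha \in \Rep(I,\BR^{+})$, the initial point of the broken line $L_\alpha = \{\alpha\} \times_{\Rep(I,\BR^{+})} \widetilde{\Rep}(I,\BR^{+})$ explicitly as a tuple $\{\beta(i)\}_{i \in I} \in [-\infty,\infty]^{I}$. Working from the description in Lemma \ref{bezlem}: if $I = I_1 \amalg \cdots \amalg I_m$ is the partition into $\alpha$-equivalence classes ordered so that $\alpha(i_a,i_b) = \infty$ for $a < b$, then the initial point of $L_\alpha$ is the image under $\gamma_1$ of $-\infty \in [-\infty,\infty]$, namely the tuple with $\beta(i) = -\infty$ for $i \notin I_1$ and $\beta(i) = -\infty$ for $i \in I_1$ as well --- in other words the constant tuple $\beta(i) \equiv -\infty$. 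So $v_{\init}(\alpha) = (\alpha, \{-\infty\}_{i \in I})$, which visibly does not depend on $\alpha$ at all as a point of $[-\infty,\infty]^{I}$; the map $v_{\init}: \Rep(I,\BR^{+}) \to \widetilde{\Rep}(I,\BR^{+}) \subseteq \Rep(I,\BR^{+}) \times [-\infty,\infty]^{I}$ is just $\alpha \mapsto (\alpha, (-\infty,\dots,-\infty))$, which is continuous (it is the graph of a constant map). One still needs to check that this tuple actually lies in $\widetilde{\Rep}(I,\BR^{+})$, i.e. satisfies condition $(\ast)$ of Construction \ref{makelines}: if $i \leq_I j$ and either $\beta(j) > -\infty$ or $\alpha(i,j) < \infty$ then $\beta(i) = \alpha(i,j) + \beta(j)$ --- here $\beta(j) = -\infty$ always, and when $\alpha(i,j) < \infty$ we get $\alpha(i,j) + \beta(j) = -\infty = \beta(i)$, so $(\ast)$ holds; and that it is indeed the least element of $L_\alpha$ in the ordering $\leq_{L_\alpha}$, which is immediate since every coordinate is the minimum possible value $-\infty$.

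Then I would assemble this into the general statement: given an arbitrary $S$-family $\pi: L_S \to S$, cover $S$ by open sets $U$ on which Corollary \ref{corollary.local-coordinates} gives an isomorphism $L_U \simeq U \times_{\Rep(I,\BR^{+})} \widetilde{\Rep}(I,\BR^{+})$ over some classifying map $\alpha: U \to \Rep(I,\BR^{+})$; since $v_{\init}$ is natural under pullback, $v_{\init}|_U$ is the composite of $\alpha$ with the (constant-in-the-fiber) section computed above, hence continuous, and similarly for $v_{\term}$. Continuity being local, this finishes the proof.

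\textbf{Main obstacle.} I do not anticipate a serious difficulty here --- the content is entirely in recognizing that, in the chosen coordinates, the initial and terminal sections are the constant tuples $(-\infty,\dots,-\infty)$ and $(+\infty,\dots,+\infty)$. The one point requiring a little care is the bookkeeping in Lemma \ref{bezlem}'s parametrization (matching the abstract "initial point of the broken line" with the concrete tuple), and checking that these constant tuples genuinely satisfy condition $(\ast)$ and are order-extremal in every fiber; but this is a short verification rather than a real obstacle.
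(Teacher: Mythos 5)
Your proof is correct, but it takes a considerably heavier route than the paper's. The paper simply uses the local trivialization guaranteed by condition $(a)$ of Definition~\ref{defn.S-family}: working locally on $S$, choose a directed homeomorphism $h\colon L_S \simeq S\times[0,1]$; then $v_{\init}(s) = h^{-1}(s,0)$ and $v_{\term}(s) = h^{-1}(s,1)$, which are manifestly continuous. No appeal to the universal family is needed. Your proof instead invokes Corollary~\ref{corollary.local-coordinates} (hence Theorem~\ref{theorem-representab} and Proposition~\ref{proposition.section-existence}) to identify $L_U$ locally with a pullback of $\widetilde{\Rep}(I,\BR^{+})$, and then computes $v_{\init}$ explicitly in those coordinates as the constant section $\alpha\mapsto(\alpha,(-\infty,\dots,-\infty))$. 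The computation itself is right (the all-$(-\infty)$ tuple does satisfy condition $(\ast)$ and is coordinatewise least, hence the initial point by the description of $\leq_{L}$ used in the verification of $(A4)$ in Proposition~\ref{bez}), and there is no circularity since Proposition~\ref{proposition.initial-section} is stated well after \S\ref{cullus}. What the paper's approach buys is economy: it uses only the defining data of an $S$-family of broken lines, whereas yours routes through the representability machinery. One small stylistic quibble: the ``by symmetry, reverse the preordering'' step for $v_{\term}$ is slightly hand-wavy as stated (reversing $\leq_I$ also reverses the $\RR$-action); it is cleaner to just observe directly that $v_{\term}(\alpha) = (\alpha,(\infty,\dots,\infty))$ by the same token.
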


\begin{proof}
The assertion is local on $S$, so we can assume that there exists a directed homeomorphism
$h: L_S \simeq S \times [0,1]$. In this case, the maps $v_{\init}$ and $v_{\term}$ are given by the formulae
$$ v_{\init}( x ) = h^{-1}( \pi(x), 0) \quad \quad v_{\term}(x) = h^{-1} ( \pi(x), 1).$$ \end{proof}

\begin{construction}[Concatenation]\label{construction.relative-concatenation}
Let $S$ and $T$ be topological spaces, and suppose we are given families of broken lines $\pi_S: L_S \rightarrow S$ and
$\pi_{T}: L_T \rightarrow T$. Let $v_{\term}: S \rightarrow L_S$ and $v_{\init}: T \rightarrow L_T$
be as in Proposition \ref{proposition.initial-section}. We define a new map:
$$ L_S \star L_T \rightarrow S \times T$$
as follows:
\begin{itemize}
\item As a topological space, $L_S \star L_T$ fits into a pushout diagram
$$ 
	\xymatrix{ 
	S \times T \ar[rr]^-{ v_{\term} \times \id_T } \ar[d]^-{ \id_{S} \times v_{\init} } 
		&& L_S \times T \ar[d] \\
	S \times L_T \ar[rr] 
		&& L_S \star L_T 
	}
$$

\item The projection map $L_S \star L_T \rightarrow S \times T$ is obtained by amalgamating the maps
$$ (\pi_S \times \id_T): L_S \times T \rightarrow S \times T$$
$$ (\id_{S} \times \pi_{T}): S \times L_T \rightarrow S \times T.$$
\end{itemize}

Note that there is a canonical action of the group $\RR$ on $L_S \star L_T$, which is uniquely determined by the requirement that the inclusion maps
$$ L_S \times T \hookrightarrow L_S \star L_T \hookleftarrow S \times L_T$$
are $\RR$-equivariant.
\end{construction}

\begin{proposition}\label{ocso}
Let $\pi_{S}: L_S \rightarrow S$ and $\pi_{T}: L_T \rightarrow T$ be families of broken lines over topological spaces $S$ and $T$, respectively.
the induced map $L_S \star L_T \rightarrow S \times T$ is a family of broken lines over the product $S \times T$ (where we equip
$L_S \star L_T$ with the $\RR$-action described in Construction \ref{construction.relative-concatenation}).
\end{proposition}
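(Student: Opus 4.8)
The plan is to verify that $L_S \star L_T \to S \times T$ satisfies axioms $(A1)$ through $(A5)$ of Theorem~\ref{theorem.line-recognition}; by that theorem, this suffices to conclude that it is a family of broken lines. Throughout, I would use the explicit pushout description of $L_S \star L_T$ from Construction~\ref{construction.relative-concatenation}, together with the continuity of the initial- and terminal-point sections supplied by Proposition~\ref{proposition.initial-section}. Since the assertion is local on $S \times T$ (Remark~\ref{remark.locality}), I may assume at the outset that there are directed homeomorphisms $h_S \colon L_S \simeq S \times [0, 1/2]$ and $h_T \colon L_T \simeq T \times [1/2, 1]$; the pushout then identifies $L_S \star L_T$ with $(S \times T) \times [0,1]$ as a topological space, where the gluing sends the shared copy of $S \times T$ to the slice at $1/2$. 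This already settles $(A1)$: the resulting homeomorphism $L_S \star L_T \simeq (S\times T)\times [0,1]$ is directed on each fiber because $h_S$ and $h_T$ are directed on fibers and the $\RR$-action on $L_S \star L_T$ is defined so as to restrict to the given actions on $L_S \times T$ and $S \times L_T$ (which push each point monotonically toward, respectively, the terminal point of $L_s$ and the terminal point of $L_t$, the latter being the terminal point of the whole fiber).

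Next, $(A3)$ follows from compactness of $[0,1]$ together with the local product identification just described, exactly as in the proof of necessity in Theorem~\ref{theorem.line-recognition}; alternatively one can observe that $L_S \star L_T$ is proper over $S \times T$ by Remark~\ref{remark.properness}. For $(A2)$, I would note that the fixed locus $(L_S \star L_T)^{\RR}$ is the image of $L_S^{\RR} \times T$ together with $S \times L_T^{\RR}$ inside the pushout, with the two pieces meeting along $v_{\term}(S) \times T = S \times v_{\init}(T)$. Since $\pi_S|_{L_S^{\RR}}$ and $\pi_T|_{L_T^{\RR}}$ are unramified and the sections $v_{\term}, v_{\init}$ are continuous (Proposition~\ref{proposition.initial-section}), one can, working in a neighborhood of a point of $S \times T$, write $L_S^{\RR}$ as a disjoint union $K_1 \amalg \cdots \amalg K_m$ of closed embeddings over $S$ with $K_m = v_{\term}(S)$, and similarly $L_T^{\RR} = K'_1 \amalg \cdots \amalg K'_n$ with $K'_1 = v_{\init}(T)$; then the fixed locus of the concatenation, over a neighborhood of the chosen point, decomposes as $(K_1 \times T) \amalg \cdots \amalg (K_{m-1}\times T) \amalg \bigl((K_m \times T) \cup (S \times K'_1)\bigr) \amalg (S \times K'_2) \amalg \cdots \amalg (S \times K'_n)$, and each summand maps to $S \times T$ by a closed embedding (for the glued middle summand this uses that $v_{\term} \times \id$ and $\id \times v_{\init}$ agree on $S \times T$). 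Axiom $(A5)$ is handled by a similar case analysis on where a non-fixed point of a fiber $(L_S \star L_T)_{(s,t)}$ lies: if it lies in the image of $L_s^{\circ} \times \{t\}$, lift it locally using the local lifting property of $L_S^{\circ} \to S$ (axiom $(A5)$ for $L_S$) and then take the product with a constant section in the $T$-direction; symmetrically if it lies in the image of $\{s\} \times L_t^{\circ}$; these exhaust the non-fixed locus since the gluing slice $v_{\term}(S) \times T$ consists of fixed points.

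For $(A4)$, I would use the reformulation via the translation distance function (Proposition~\ref{proposition.translation-distance}): it suffices to show that $\{(x,y) : x \leq_{(L_S\star L_T)_{(s,t)}} y\}$ is closed in the fiber product over $S\times T$. Using the fiberwise identification with $[0,1]$ via the directed homeomorphism constructed for $(A1)$, this set is exactly $\{(x,y) : \gamma(y) - \gamma(x) \geq 0\}$ for the coordinate map $\gamma \colon L_S\star L_T \to [0,1]$, hence closed. (If one prefers to avoid invoking the local product form here, one can instead decompose $L_S\star L_T \times_{S\times T}L_S\star L_T$ into the three regions where both points lie in the $L_S$-part, both lie in the $L_T$-part, or one lies in each, verifying closedness of the order relation on each using axiom $(A4)$ for $L_S$, for $L_T$, and the continuity of $v_{\term}, v_{\init}$ respectively — but the coordinate-function argument is cleaner.) The main obstacle I anticipate is purely bookkeeping: checking that the $\RR$-action declared in Construction~\ref{construction.relative-concatenation} is well-defined and continuous on the pushout, and that the quotient topology on $L_S \star L_T$ genuinely agrees with the product topology $(S\times T)\times[0,1]$ under the local trivialization. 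This is a standard but slightly fiddly point about pushouts along closed embeddings (both $v_{\term}\times\id$ and $\id\times v_{\init}$ are closed embeddings since the fibers are compact Hausdorff and $v_{\term}, v_{\init}$ are sections of proper maps), and once it is dispatched the five axioms fall out by the routine arguments sketched above.
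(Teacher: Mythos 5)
Your argument is correct, and its core steps coincide with the paper's: localize on $S \times T$ to obtain directed product trivializations of $L_S$ and $L_T$, glue them to a directed homeomorphism $L_S \star L_T \simeq (S \times T) \times [0,1]$, and verify that the fixed-point locus of the concatenation is unramified over $S \times T$. The difference is in packaging, and it creates redundancy: you announce a plan of verifying $(A1)$ through $(A5)$ of Theorem \ref{theorem.line-recognition}, but the local product trivialization you build to check $(A1)$ already establishes condition $(a)$ of Definition \ref{defn.S-family} directly, and your decomposition of the fixed locus into clopen pieces over the base is exactly condition $(b)$. Given $(a)$ and $(b)$, the axioms $(A3)$, $(A4)$, $(A5)$ are automatic (this is the ``necessity'' half of Theorem \ref{theorem.line-recognition}), so there is no need to re-derive them. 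The paper's proof simply verifies $(a)$ and $(b)$ of Definition \ref{defn.S-family} after localizing, handling $(b)$ in one line by identifying $(L_S \star L_T)^{\RR}$ with the pushout $(L_S^{\RR} \times T) \amalg_{S \times T} (S \times L_T^{\RR})$ and noting each factor is unramified; your explicit clopen decomposition is a correct, more granular account of the same observation. The upshot is only that the $(A3)$--$(A5)$ portions of your write-up can be cut once you notice you have already secured $(a)$.
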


\begin{proof}
The assertion is local on $S$ and $T$. We may therefore assume without loss of generality that there exist directed homeomorphisms
$h: L_S \simeq S \times [0,1]$ and $h': L_T \simeq T \times [1,2]$. We combine $h$ and $h'$ to obtain
a directed homeomorphism $L_S \star L_T \simeq S \times T \times [0,2]$. Unwinding the definitions, we see that the fixed point locus
$(L_S \star L_T)^{\RR}$ is given by the pushout
$$ (L_S^{\RR} \times T) \amalg_{ S \times T} ( S \times L_{T}^{\RR} ),$$
and is therefore unramified over $S \times T$ (by virtue of the fact that both $L_S^{\RR} \times T$ and
$S \times L_T^{\RR}$ are unramified over $S \times T$).
\end{proof}

\begin{remark}\label{easystar}
Suppose we are given two families of broken lines $L_{S} \rightarrow S \leftarrow L'_{S}$, parametrized by the same topological space $S$.
In this case, we let $L_{S} \star_{S} L'_{S}$ denote the fiber product $(L_S \star L'_{S}) \times_{ S \times S} S$. This is an $S$-family of broken lines,
whose fiber over a point $s \in S$ is given by the concatenation $L_{s} \star L'_{s}$.
\end{remark}

Recall that $\Pt(\broken)$ is the category whose objects are pairs $(\pi: L_S \rightarrow S, \mu)$,
where $S$ is a topological space and $\mu: \RR \times L_S \rightarrow S$ exhibits $L_S$ as a family of broken lines over $S$
(see Construction \ref{construction.category-of-lines}). Construction \ref{construction.relative-concatenation} determines a functor
\begin{equation*}
 \star: \Pt(\broken) \times \Pt(\broken) \rightarrow \Pt(\broken)
\end{equation*}
$$ ( (L_S \rightarrow S), ( L_T \rightarrow T) ) \mapsto ( L_S \star L_T \rightarrow S \times T ).$$
Moreover, for every triple of families of broken lines $( L_S \rightarrow S ), ( L_T \rightarrow T), ( L_U \rightarrow U)$, we have
a canonical isomorphism 
\eqn\label{eqn.star-associativity}
	( L_S \star L_T) \star L_U \simeq L_S \star (L_T \star L_U)
\eqnd
in the category $\Pt(\broken)$:
both sides can be identified with the colimit of the diagram of topological spaces
$$ \xymatrix{ & S \times T \times U \ar[dl] \ar[dr] & & S \times T \times U \ar[dl] \ar[dr] & \\
L_S \times T \times U & & S \times L_T \times U & & S \times T \times L_U. }$$
It is easy to check that these isomorphisms satisfy the pentagon identity, leading to the following conclusion:

\begin{prop}
The natural isomorphism~\eqref{eqn.star-associativity} endows $( \Pt(\broken), \star)$ with the structure of
a {\em nonunital} monoidal category.
\end{prop}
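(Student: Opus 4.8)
The plan is to deduce the coherence of $\star$ from the universal property of colimits, making precise the remark in the text that every iterated concatenation is a colimit of a single, parenthesization-independent diagram. First I would record the functoriality of $\star\colon\Pt(\broken)\times\Pt(\broken)\to\Pt(\broken)$. A morphism in $\Pt(\broken)$ is a pullback square, so given morphisms $L_{S'}\to L_S$ and $L_{T'}\to L_T$ one must check that the induced square $L_{S'}\star L_{T'}\to L_S\star L_T$ over $S'\times T'\to S\times T$ is again a pullback. This follows because the pushout of Construction~\ref{construction.relative-concatenation} is formed along the closed embeddings $v_{\term}\times\id_T\colon S\times T\hookrightarrow L_S\times T$ and $\id_S\times v_{\init}\colon S\times T\hookrightarrow S\times L_T$ (these are sections of maps with closed relative diagonal, cf. Remark~\ref{remark.properness}), and a pushout of this shape is a van Kampen colimit: it is computed by a closed quotient map, hence is preserved both by the product functors $-\times X$, $X\times-$ and by base change along arbitrary maps of parameter spaces. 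Together with Proposition~\ref{ocso} this shows $\star$ is a well-defined functor.

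For coherence I would introduce, for every $n\ge 1$ and every $n$-tuple of families $\pi_j\colon L_j\to S_j$, the ``subdivided interval'' diagram $\mathcal{D}_n$: its indexing poset has vertices $v_1,\dots,v_{n-1}$ and edges $e_1,\dots,e_n$ with $v_i\le e_i$ and $v_i\le e_{i+1}$, and the functor sends $e_j$ to $\big(\prod_{k<j}S_k\big)\times L_j\times\big(\prod_{k>j}S_k\big)$ and each $v_i$ to $\prod_k S_k$, with structure maps induced by the sections $v_{\term}$, $v_{\init}$ of Proposition~\ref{proposition.initial-section}. The key lemma is that \emph{for every parenthesization $\mathfrak p$ of the word $L_1\cdots L_n$, the object $\mathfrak p(L_1,\dots,L_n)$ obtained by iterating $\star$ carries a canonical cocone exhibiting it as $\varinjlim\mathcal{D}_n$}. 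I would prove this by induction on $n$: the cases $n\le 2$ are immediate, and for $\mathfrak p=(\mathfrak p_1,\mathfrak p_2)$ splitting after position $k$ one combines the inductive identifications of $\mathfrak p_1$ and $\mathfrak p_2$ as colimits of $\mathcal{D}_k$ and $\mathcal{D}_{n-k}$ with the fact (from the first paragraph) that $-\times X$ and $X\times-$ preserve the relevant pushouts, so that $\mathfrak p_1\star\mathfrak p_2$ is obtained by gluing $\varinjlim\mathcal{D}_k$ and $\varinjlim\mathcal{D}_{n-k}$ (each enlarged by a product with the remaining parameter factors) along $\prod_k S_k$; a routine comparison of indexing posets identifies the result with $\varinjlim\mathcal{D}_n$.

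Granting the lemma, the associator~\eqref{eqn.star-associativity} is precisely the unique isomorphism compatible with the two canonical cocones on $(L_S\star L_T)\star L_U$ and $L_S\star(L_T\star L_U)$ as colimits of $\mathcal{D}_3$; its naturality in $(L_S,L_T,L_U)$ is immediate from the functoriality of $\varinjlim\mathcal{D}_3$ in the diagram. The pentagon axiom is then formal: applying the lemma with $n=4$, all five vertices of the pentagon are canonically identified with $\varinjlim\mathcal{D}_4$, and each of the five edges is the canonical comparison isomorphism between two such presentations; since a composite of canonical comparison isomorphisms out of a fixed colimit is again the canonical comparison (by uniqueness of maps out of a colimit), the two composites around the pentagon agree. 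As the monoidal structure is only claimed to be nonunital, no unit object, unit constraints, or triangle identity need be produced.

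The step I expect to be the main obstacle is the topological bookkeeping behind the lemma: checking that the pushouts defining concatenation are well enough behaved---preserved by products with the parameter factors and by base change---that iterated concatenation genuinely computes $\varinjlim\mathcal{D}_n$. The cleanest way around this is to pass to local coordinates: after replacing each $S_j$ by members of an open cover one has directed homeomorphisms $L_{S_j}\cong S_j\times[j-1,j]$, and then Construction~\ref{construction.relative-concatenation} identifies every concatenation of the $L_{S_j}$ with $\big(\prod_j S_j\big)\times[0,n]$ carrying its standard subdivision. With this identification all the comparison maps become literal equalities and the pentagon reduces to the strict associativity of ordinal sum of subdivided intervals; the general case then follows by the uniqueness clauses in the universal properties. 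Alternatively, one may run the whole argument inside the category of compactly generated spaces, where $-\times X$ preserves all colimits.
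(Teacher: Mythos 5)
Your proof takes the same route as the paper: the paper's entire argument is the one-sentence observation preceding the proposition, namely that both parenthesizations of a triple concatenation are canonically identified with the colimit of the zigzag diagram $\{ L_S \times T \times U \leftarrow S\times T\times U \to S\times L_T\times U \leftarrow \cdots \}$, followed by the assertion that the pentagon is then ``easy to check.'' Your $\mathcal{D}_n$ and the induction on parenthesizations is exactly the right way to flesh this out, and your pentagon argument (all five vertices canonically $\varinjlim\mathcal D_4$, each edge the canonical comparison, done by uniqueness of maps out of a colimit) is the correct formalization of the paper's claim. One small remark: the ``main obstacle'' you flag in the last paragraph is not actually an obstacle, and the local-coordinate fallback is unnecessary. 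Since $v_{\term}\times\id_T$ and $\id_S\times v_{\init}$ are closed embeddings (sections of maps with closed relative diagonal, as you note), the pushout presents $L_S\star L_T$ as a union of two closed subspaces with closed intersection; such a presentation is preserved by $-\times Z$ and by arbitrary base change simply because closedness of a subset can be tested after intersecting with each piece of a finite closed cover. So the commutation with products and pullbacks that your induction needs holds in $\Top$ itself, with no need to restrict to compactly generated spaces or to trivialize locally.
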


\begin{remark}
Let $\rho: \Pt(\broken) \rightarrow \Top$ be the forgetful functor of Proposition \ref{proposition.cartesian-fibration}, given by
$\rho(\pi: L_S \rightarrow S) = S$. Then $\rho$ has the structure of a (nonunital) monoidal functor, where we equip $\Top$ with the monoidal structure
given by the Cartesian product. 

Proposition \ref{proposition.cartesian-fibration} asserts that $\rho$ is a fibration in groupoids, which is classified by a functor
from the category of topological spaces to the $2$-category of groupoids (given by $S \mapsto \broken(S)$). The nonunital monoidal structures
on $\Pt(\broken)$ and on $\rho$ encode the fact that the construction $S \mapsto \broken(S)$ is a {\it nonunital lax monoidal} functor:
in particular, for every pair of topological spaces $S$ and $T$, we have a canonical map $\broken(S) \times \broken(T) \rightarrow \broken(S \times T)$, given by
$( L_S, L_T) \mapsto (L_S \star L_T)$.
\end{remark}

We now use the monoidal structure on the category $\Pt(\broken)$ to introduce the class of sheaves we are interested in.

\begin{definition}\label{definition.factorizable-sheaf}
Let $\cC$ be a monoidal $\infty$-category. Assume that $\cC$ is compactly generated and that the tensor product functor
$$ \otimes: \cC \times \cC \rightarrow \cC$$
preserves small colimits separately in each variable. A {\it weakly factorizable $\cC$-valued sheaf on $\broken$} is a nonunital lax monoidal functor
$\sheafF: \Pt(\broken)^{\op} \rightarrow \cC$ which satisfies the following condition:
\begin{itemize}
\item[$(a)$] The underlying functor $\sheafF: \Pt(\broken)^{\op} \rightarrow \cC$ is a $\cC$-valued sheaf on $\broken$, in the sense of Definition \ref{definition.sheaf-on-stack}.
\end{itemize}
We call $\sheafF$ a {\em factorizable $\cC$-valued sheaf} if it satisfies the following additional condition:
\begin{itemize}
\item[$(b)$]\label{item.factorizing-condition} For every pair of broken lines $L$ and $L'$, the map $\sheafF(L) \otimes \sheafF(L') \rightarrow \sheafF(L \star L')$ (supplied by the lax monoidal structure on
the functor $\sheafF$) is an equivalence in the $\infty$-category $\cC$.
\end{itemize}
We let $\Shv^{\wfact}_{\cC}(\broken)$ denote the $\infty$-category of weakly factorizable $\cC$-valued sheaves on $\broken$ (which we regard as a full subcategory of
the $\infty$-category of all nonunital lax monoidal functors from $\Pt(\broken)^{\op}$ to $\cC$), and we let $$\Shv^{\fct}_{\cC}( \broken ) \subseteq \Shv^{\wfact}_{\cC}(\broken)$$ denote
the full subcategory spanned by the factorizable $\cC$-valued sheaves on $\broken$.
\end{definition}

\begin{remark}
The definition of a factorizable $\cC$-valued sheaf on $\broken$ does not mention the unit for the monoidal structure on $\cC$; consequently, Definition
\ref{definition.factorizable-sheaf} also makes sense when $\cC$ is a (compactly generated) {\em nonunital} monoidal $\infty$-category. 
\end{remark}

\begin{warning}
The terminology of Definition \ref{definition.factorizable-sheaf} is potentially misleading: Factorizability is a {\em structure} on a $\cC$-valued sheaf, rather than a property (so the term {\it factorized sheaf} might be more appropriate).
\end{warning}

\begin{warning}
Condition $(b)$ of factorizability only refers to the value of $\cF$ on broken lines---not on families thereof. In general, if $\cF$ is a factorizable sheaf and $L_S, L_T$ are families of broken lines, the map $\cF(L_S) \tensor \cF(L_T) \to \cF(L_S \star L_T)$ is not an equivalence (and in particular, $\cF$ is still a {\em lax} monoidal functor). 
\end{warning}

\subsection{Digression: The Twisted Arrow Category}

We now introduce a categorical construction which will be useful in our analysis of factorizable sheaves on the moduli stack of broken lines.

\begin{construction}[Twisted Arrows]\label{construction.twisted}
Let $\cC$ be a category. We define a new category $\Tw(\cC)$ as follows:
\begin{itemize}
\item The objects of $\Tw(\cC)$ are morphisms $f: C \rightarrow \overline{C}$ in the category $\cC$.
\item A morphism from $f: C \rightarrow \overline{C}$ to $g: D \rightarrow \overline{D}$ in $\Tw(\cC)$
is given by a pair of morphisms $u: C \rightarrow D$ and $v: \overline{D} \rightarrow \overline{C}$ satisfying $f = v \circ g \circ u$; that is,
by a commutative diagram
$$ \xymatrix{ C \ar[d]^{f} \ar[r] & D \ar[d]^{g} \\
\overline{C} & \overline{D} \ar[l] }$$
in the category $\cC$.
\end{itemize}
We refer to $\Tw(\cC)$ as the {\it twisted arrow category} of the category $\cC$. Note that we have forgetful functors
$\cC \leftarrow \Tw(\cC) \rightarrow \cC^{\op}$, given on objects by $C \mapsfrom (f: C \to \overline{C} ) \mapsto \overline{C}$.
\end{construction}

We will only be interested in the following special case of Construction \ref{construction.twisted}:

\begin{example}\label{example.twist}
Let $\LinOrd$ denote the category whose objects are nonempty finite linearly ordered sets and whose morphisms are monotone surjections (Notation \ref{notation.linord}).
Given an object $I \in \LinOrd$, the datum of a monotone surjection $I \rightarrow \overline{I}$ is equivalent to the datum of an equivalence relation 
$\simeq_{I}$ on the set $I$ which is {\em convex}, in the sense that $i \leq_{I} j \leq_{I} k$ and $i \simeq_{I} k$ implies $i \simeq_{I} j \simeq_{I} k$. 
This equivalence gives a more concrete description of the twisted arrow category $\Tw(\LinOrd)$:
\begin{itemize}
\item The objects of $\Tw(\LinOrd)$ are pairs $(I, \simeq_{I} )$, where $I \in \LinOrd$ and $\simeq_{I}$ is a convex equivalence relation on $I$.

\item A morphism from $(I, \simeq_{I})$ to $(J, \simeq_{J} )$ in $\Tw(\LinOrd)$ is a monotone surjection $f: I \rightarrow J$
having the following additional property: if $i, i' \in I$ satisfy $f(i) \simeq_{J} f(i')$, then $i \simeq_{I} i'$. Note that this condition
ensures the existence of a commutative diagram $$ \xymatrix{ I \ar[r] \ar[d] & J \ar[d] \\
I / \simeq_I & J / \simeq_{J} \ar[l] }$$
in the category $\LinOrd$.

\end{itemize}
Under this description, the forgetful functors $\LinOrd \leftarrow \Tw(\LinOrd) \rightarrow \LinOrd^{\op}$ are given
by $I \mapsfrom (I, \simeq_{I} ) \mapsto I / \simeq_{I}$.
\end{example}

\begin{notation}\label{remark.left-adjoint-to-forget}
Given an object $I \in \LinOrd$, we let $I^{\sharp}$ denote the object $(I, \simeq_{I} ) \in \Tw( \LinOrd)$, where
$\simeq_{I}$ is the {\em indiscrete} equivalence relation on $I$: that is, we have $i \simeq_{I} i'$ for every pair of elements $i, i' \in I$.
The construction $I \mapsto I^{\sharp}$ determines a fully faithful embedding $\LinOrd \hookrightarrow \Tw(\LinOrd)$, which is left
adjoint to the forgetful functor $\Tw( \LinOrd) \rightarrow \LinOrd$.
\end{notation}

We now introduce (nonunital) monoidal structures on the categories $\LinOrd$ and $\Tw( \LinOrd )$, given by a combinatorial
counterpart of the geometric concatenation introduced in \S \ref{section.monoid}.

\begin{construction}[Concatenation of Linearly Ordered Sets]\label{construction.concatenate-linear-orderings}
Let $I$ and $J$ be sets equipped with linear orderings $\leq_{I}$ and $\leq_{J}$, respectively.
We let $I \star J$ denote the disjoint union $I \amalg J$, which we equip with the linear ordering
$\leq_{I \star J}$ given by 
$$ (x \leq_{ I \star J} y) \Leftrightarrow ( x,y \in I \text{ and } x \leq_I y) \text{ or }
(x,y \in J \text{ and } x \leq_{J} y ) \text{ or } (x \in I \text{ and } y \in J).$$
We will refer to $I \star J$ as the {\it concatenation of $I$ and $J$}. 

Note that if $I$ and $J$ are finite and nonempty, then $I \star J$ has the same property. Consequently,
concatenation determines a functor
$$ \star: \LinOrd \times \LinOrd \rightarrow \LinOrd.$$
Moreover, we have evident isomorphisms $(I \star J) \star K \simeq I \star (J \star K)$, which endow
$\LinOrd$ with the structure of a nonunital monoidal category.
\end{construction}

\begin{remark}\label{remark.universal-property-linord}
The nonunital monoidal category $\LinOrd$ enjoys the following universal property: it is the {\em free} nonunital
monoidal $\infty$-category on a single generator (that is, the monoidal envelope of the nonunital associative $\infty$-operad,
in the sense of \cite{HA}). In particular, for every nonunital monoidal $\infty$-category $\cC$,
we have an equivalence of $\infty$-categories
$$ \Fun^{\otimes}( \LinOrd, \cC ) \simeq \Alg^{\nounit}(\cC),$$
where $\Fun^{\otimes}( \LinOrd, \cC)$ denotes the $\infty$-category of nonunital monoidal functors from $\LinOrd$ to
$\cC$, and $\Alg^{\nounit}(\cC)$ denotes the $\infty$-category of nonunital associative algebra objects of $\cC$.
Alternatively, one can take this to be the definition of the $\infty$-category $\Alg^{\nounit}(\cC)$.
\end{remark}

\begin{variant}[Concatenation in $\Tw(\LinOrd)$]\label{construction.concatenate-decorated}
By functoriality, the concatenation functor on the category $\LinOrd$ induces a concatenation functor on the twisted arrow category $\Tw(\LinOrd)$, which
we will also denote by $\star$. In terms of the description of the category $\Tw(\LinOrd)$ given in Example \ref{example.twist}, this functor is given concretely
by the formula
$$ (I, \simeq_{I} ) \star (J, \simeq_{J} ) = (I \star J, \simeq_{I \star J} ),$$
where the equivalence relation $\simeq_{I \star J}$ is given by
$$ (x \simeq_{I \star J} y) \Leftrightarrow ( x,y \in I \text{ and } x \simeq_{I} y ) \text{ or } (x,y \in J \text{ and } x \simeq_{J} y ).$$
Note that the operation $I, J \mapsto I \star J$ (together with the evident isomorphisms $(I \star J) \star K \simeq I \star (J \star K)$)
endow $\Tw(\LinOrd)$ with the structure of a nonunital monoidal category, compatible with the forgetful functors
$\LinOrd \leftarrow \Tw(\LinOrd) \rightarrow \LinOrd^{\op}$.
\end{variant}

\begin{warning}\label{warning.UL}
Let $\sharp: \LinOrd \rightarrow \Tw(\LinOrd)$ denote the left adjoint to the forgetful functor
(Notation \ref{remark.left-adjoint-to-forget}). Since the forgetful functor is a nonunital monoidal functor, $\sharp$ inherits the structure of a {\em colax nonunital monoidal functor}: in particular,
for every pair of objects $I, J \in \LinOrd$, we have a canonical map $( I \star J)^{\sharp} \rightarrow I^{\sharp} \star J^{\sharp}$.
Beware that this map is bijective on the underlying sets (which can be identified with the disjoint union $I \amalg J$ on both sides), but
is not an isomorphism in the category $\Tw(\LinOrd)$: the equivalence relation on $I^{\sharp} \star J^{\sharp}$ has two equivalence classes,
rather than one.
\end{warning}

\begin{notation}\label{notation.nought}
Let $\cC$ be an $\infty$-category and let $\Fun( \Tw(\LinOrd), \cC)$ denote the $\infty$-category of functors from $\Tw(\LinOrd)$ to $\cC$. We let $\Fun_0( \Tw(\LinOrd), \cC)$ denote the full subcategory of $\Fun( \Tw(\LinOrd), \cC )$ spanned by those functors
$F$ with the following property: for every object $(I, \simeq_{I} ) \in \Tw(\LinOrd)$, the  map
$F(I^{\sharp}) \rightarrow F(I, \simeq_{I})$ 
is an equivalence in $\cC$. Note that this condition is equivalent to the requirement that
$F$ is a left Kan extension of its restriction to the essential image of the fully faithful embedding $\sharp: \LinOrd \hookrightarrow \Tw(\LinOrd)$.
It follows that the restriction map $\Fun_0( \Tw(\LinOrd), \cC ) \rightarrow \Fun( \LinOrd, \cC )$ is an equivalence of $\infty$-categories.
\end{notation}

\begin{variant}\label{notation.Fun-Lin-pullbacks}
Let $\cC$ be a monoidal $\infty$-category. We let $\Fun^{\tensor}_0( \Tw(\LinOrd),\cC)$ and $\Fun^{\lax}_0( \Tw(\LinOrd),\cC)$ be the full subcategories
of the $\infty$-categories $\Fun^{\tensor}( \Tw(\LinOrd), \cC )$ and $\Fun^{\lax}( \Tw(\LinOrd), \cC)$ spanned by those objects for
which the underlying functor $F: \Tw(\LinOrd) \rightarrow \cC$ belongs to the full subcategory
$\Fun_0( \Tw(\LinOrd), \cC) \subseteq \Fun( \Tw(\LinOrd), \cC)$ of Notation \ref{notation.nought}.
By definition, we have a diagram of pullback squares
	\begin{equation*}
	\xymatrix{
	\Fun^{\tensor}_0( \Tw(\LinOrd) ,\cC) \ar[r] \ar[d]
		& \fun^{\tensor} (\Tw(\LinOrd),\cC) \ar[d]\\
	\Fun^{\lax}_0(\Tw(\LinOrd),\cC) \ar[r] \ar[d]
		& \Fun^{\lax} (\Tw(\LinOrd),\cC) \ar[d] \\
	\Fun_0(\Tw(\LinOrd),\cC) \ar[r]
		& \Fun(\Tw(\LinOrd),\cC).
	}
	\end{equation*}
\end{variant}

Since the functor $\sharp: \LinOrd \hookrightarrow \Tw(\LinOrd)$ is fully faithful, the forgetful functor $$\Tw(\LinOrd) \rightarrow \LinOrd$$
exhibits $\LinOrd$ as the $\infty$-category obtained from $\Tw(\LinOrd)$ by formally inverting all morphisms of the form
$I^{\sharp} \rightarrow (I, \simeq_I )$. From this description, we immediately deduce the following result:

\begin{proposition}\label{proposition.character-functor}
Let $\cC$ be a nonunital monoidal $\infty$-category. Then composition with the forgetful functor $\Tw(\LinOrd) \rightarrow \LinOrd$
induces a fully faithful embedding
$$ \Alg^{\nounit}( \cC ) \simeq \Fun^{\otimes}( \LinOrd, \cC ) \rightarrow \Fun^{\otimes}( \Tw(\LinOrd), \cC )$$
whose essential image is the full subcategory $\Fun^{\otimes}_{0}( \Tw(\LinOrd), \cC )$ of Variant \ref{notation.Fun-Lin-pullbacks}.
\end{proposition}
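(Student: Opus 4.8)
The plan is to deduce this from the non-monoidal statement, recorded in the discussion preceding the proposition, that the forgetful functor $p\colon \Tw(\LinOrd)\to\LinOrd$ exhibits $\LinOrd$ as the localization of $\Tw(\LinOrd)$ at the class $W$ of morphisms of the form $I^{\sharp}\to(I,\simeq_{I})$ — equivalently, at the components of the counit of the adjunction $\sharp\dashv p$ of Notation \ref{remark.left-adjoint-to-forget}. By that fact together with Notation \ref{notation.nought}, composition with $p$ already gives an equivalence $p^{*}\colon\Fun(\LinOrd,\cC)\xrightarrow{\sim}\Fun_{0}(\Tw(\LinOrd),\cC)$, where the target is the full subcategory of $\Fun(\Tw(\LinOrd),\cC)$ spanned by the functors inverting $W$. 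The goal is to promote this to an equivalence $\Fun^{\otimes}(\LinOrd,\cC)\xrightarrow{\sim}\Fun^{\otimes}_{0}(\Tw(\LinOrd),\cC)$, after which Remark \ref{remark.universal-property-linord} identifies the source with $\Alg^{\nounit}(\cC)$ and full faithfulness is automatic.

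The main step is to show that the commutative square
\[
\begin{array}{ccc}
\Fun^{\otimes}(\LinOrd,\cC) & \longrightarrow & \Fun^{\otimes}(\Tw(\LinOrd),\cC)\\
\downarrow & & \downarrow\\
\Fun(\LinOrd,\cC) & \longrightarrow & \Fun(\Tw(\LinOrd),\cC)
\end{array}
\]
— whose horizontal maps are $p^{*}$ and whose vertical maps forget the nonunital monoidal structure — is a pullback of $\infty$-categories, and likewise with $\Fun^{\otimes}$ replaced by $\Fun^{\lax}$ throughout. This expresses the fact that $p$ is a \emph{monoidal} localization: $p$ is a (strict) nonunital monoidal functor by Variant \ref{construction.concatenate-decorated}, and the strongly saturated class $\overline{W}$ generated by $W$ — which, by the localization property of $p$, coincides with the class of morphisms inverted by $p$ — is closed under the concatenation $\star$, since $p(g\star g')\simeq p(g)\star p(g')$ is an equivalence whenever $g$ and $g'$ are. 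Granting this, the universal property of localizations of (nonunital) monoidal $\infty$-categories yields the asserted pullback squares: a (lax) monoidal functor $\Tw(\LinOrd)\to\cC$ whose underlying functor inverts $W$ factors, essentially uniquely and compatibly with the (lax) monoidal structure, through $p$. I expect this last point to be the only real obstacle — namely, checking that the relevant statement of \cite{HA} on localizations of monoidal $\infty$-categories transports to the nonunital associative setting — but it is entirely formal.

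Granting the pullback square, the conclusion is immediate. The bottom map factors as the equivalence $\Fun(\LinOrd,\cC)\xrightarrow{\sim}\Fun_{0}(\Tw(\LinOrd),\cC)$ followed by the full inclusion $\Fun_{0}(\Tw(\LinOrd),\cC)\hookrightarrow\Fun(\Tw(\LinOrd),\cC)$; pulling the right vertical map back along this inclusion and invoking the pullback square of Variant \ref{notation.Fun-Lin-pullbacks}, one identifies $\Fun^{\otimes}(\LinOrd,\cC)$ with $\Fun^{\otimes}(\Tw(\LinOrd),\cC)\times_{\Fun(\Tw(\LinOrd),\cC)}\Fun_{0}(\Tw(\LinOrd),\cC)=\Fun^{\otimes}_{0}(\Tw(\LinOrd),\cC)$, the identification being implemented by $p^{*}$. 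Composing with the equivalence $\Fun^{\otimes}(\LinOrd,\cC)\simeq\Alg^{\nounit}(\cC)$ of Remark \ref{remark.universal-property-linord} gives the fully faithful embedding of the statement with essential image exactly $\Fun^{\otimes}_{0}(\Tw(\LinOrd),\cC)$.
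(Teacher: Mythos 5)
Your proposal is correct and follows the route the paper intends: the paper simply records, in the sentence preceding the proposition, that $p\colon\Tw(\LinOrd)\to\LinOrd$ is a (coreflective) localization at the class $W$ of counit maps, and declares the result immediate; you make that immediacy explicit by checking that $W$-equivalences are closed under $\star$, so that $p$ is a \emph{monoidal} localization, and then reading off the desired pullback square against the one from Variant \ref{notation.Fun-Lin-pullbacks}. The one point you rightly flag --- transporting the monoidal-localization universal property of \cite{HA} to the nonunital (and small, non-presentable) setting --- is indeed formal, and an alternative to invoking it wholesale is the hands-on observation that for $F\in\Fun^{\otimes}(\Tw(\LinOrd),\cC)$ inverting $W$, the colax structure maps of $F\circ\sharp$ are $F$ applied to the morphisms $(I\star J)^{\sharp}\to I^{\sharp}\star J^{\sharp}$ (Warning \ref{warning.UL}), which lie in $W$ and are hence inverted, so $F\circ\sharp$ is monoidal and the counit gives a monoidal equivalence $F\circ\sharp\circ p\simeq F$.
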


\begin{remark}
Proposition \ref{proposition.character-functor} admits a refinement: the category $\Tw(\LinOrd)$ is actually
{\em universal} among nonunital monoidal $\infty$-categories which admit a colax nonunital monoidal functor
from $\LinOrd$. In other words, for any nonunital monoidal $\infty$-category $\cC$, composition with the functor
$\sharp: \LinOrd \hookrightarrow \Tw(\LinOrd)$ induces an equivalence from $\Fun^{\otimes}( \Tw(\LinOrd), \cC)$ to the $\infty$-category of nonunital colax monoidal functors from
$\LinOrd$ into $\cC$. Since we will not need this fact, we leave details to the reader.
\end{remark}

\subsection{The Main Theorem}\label{maintheo}

In this section, we give a more precise formulation of Theorem \ref{theorem.mainC}. First, we need to understand the relationship between
the concatenation operations in the setting of broken lines (Construction \ref{construction.relative-concatenation}) and linearly ordered sets (Construction \ref{construction.concatenate-linear-orderings}).

\begin{construction}[Concatenation of Sections]\label{construction.compare}
Let $\pi: L_{S} \to S$ and $\pi': L_T \to T$ be families of broken lines, and let $I$ and $J$ be finite nonempty linearly ordered sets.
Suppose that $L_{S}$ is equipped with an $I$-section $\{ \sigma_i: S \to L_S \}_{i \in I}$, and that $L_{T}$ is equipped with
a $J$-section $\{ \tau_j: T \to L_{T} \}_{j \in J}$. In this case, we can equip the concatenation 
$L_{S} \star L_{T}$ with an $(I \star J)$-section $\{ \rho_{k}: S \times T \to L_{S} \star L_{T} \}_{k \in I \star J}$, given by the formula
$$ \rho_k(s,t) = \begin{cases} (\sigma_k(s), t) & \text{ if } k \in I \\
(s, \tau_k(t) ) & \text{ if } k \in J.\end{cases}$$
Here we abuse notation by identifying the products $L_{S} \times T$ and $S \times L_{T}$ with their images in $L_S \star L_T$.

Applying this observation in the universal example (where $\pi$ is the projection map $\widetilde{\Rep}( I, \BR^{+} ) \rightarrow \Rep(I, \BR^{+} )$
and $\pi'$ is the projection map $\widetilde{\Rep}( J, \BR^{+} ) \rightarrow \Rep(J, \BR^{+} )$), we obtain a morphism
$$ \widetilde{\Rep}(I, \BR^{+} ) \star \widetilde{\Rep}( J, \BR^{+} ) \rightarrow \widetilde{\Rep}(I \star J, \BR^{+} )$$
in the category $\Pt(\broken)$. These maps equip the construction $I \mapsto \widetilde{\Rep}(I, \BR^{+} )$ with
the structure of a nonunital lax monoidal functor $\LinOrd^{\op} \rightarrow \Pt(\broken)$; here
$\LinOrd$ is equipped with the nonunital monoidal structure given by concatenation of linearly ordered sets, and
$\Pt(\broken)$ is equipped with the nonunital monoidal structure given by concatenation of broken lines.
\end{construction}

Beware that the (nonunital) lax monoidal functor of Construction \ref{construction.compare} is not monoidal: given a pair of objects $I,J \in \LinOrd$,
we have a pullback square
$$ \xymatrix{ \widetilde{\Rep}(I, \BR^{+} ) \star \widetilde{\Rep}( J, \BR^{+} ) \ar[r] \ar[d] & \widetilde{\Rep}( I \star J, \BR^{+} ) \ar[d] \\
\Rep(I, \BR^{+} ) \times \Rep(J, \BR^{+} ) \ar[r] & \Rep(I \star J, \BR^{+} ) }$$
where the bottom vertical map is given by
$$ ( \alpha: I \to \BR^{+}, \beta: J \to \BR^{+} ) \mapsto (\gamma: I \star J, \BR^{+})$$
$$\gamma(x,y) = \begin{cases} \alpha(x,y) & \text{ if } x,y \in I \\
\beta(x,y) & \text{ if } x,y \in J \\
\infty & \text{ if } x \in I, y \in J. \end{cases}$$
This map is a closed embedding (but not a homeomorphism), whose image consists of those functors $\gamma: I \star J \rightarrow \BR^{+}$ satisfying
$\gamma(i,j) = \infty$ for $i \in I$ and $j \in J$. To remedy the situation, we introduce a variant of Construction \ref{construction.compare}:

\begin{construction}[The Functor $\Phi$]\label{construction.phi}
Let $(I, \simeq_I )$ be an object of the twisted arrow category $\Tw( \LinOrd )$ (see Example \ref{example.twist}),
and let $\Rep(I, \BR^{+} )$ be as in Notation \ref{preorderrep}. We let $\Phi( I, \simeq_I )$ denote the closed subset of $\Rep(I, \BR^{+} )$
consisting of those maps $\alpha: I \to \BR^{+}$ having the property that $\alpha(i,j) < \infty$ implies $i \simeq_{I} j$.
We let $\widetilde{\Phi}(I, \simeq_{I} )$ denote the object of $\Pt(\broken)$ given by the family of broken lines
$$ \Phi(I, \simeq_I) \times_{ \Rep(I, \BR^{+} ) } \widetilde{\Rep}( I, \BR^{+} ) \rightarrow \Phi(I, \simeq_I ).$$

Note that if $f: (I, \simeq_{I} ) \rightarrow (J, \simeq_J )$ is a morphism in the category $\Tw(\LinOrd)$, then the map
$\Rep(J, \BR^{+} ) \rightarrow \Rep(I, \BR^{+})$ carries the closed subset $\Phi(J, \simeq_J) \subseteq \Rep(J, \BR^{+} )$
to the closed subset $\Phi(I, \simeq_I) \subseteq \Rep(I, \BR^{+} )$. It follows that the construction $(I, \simeq_I) \mapsto \widetilde{\Phi}(I, \simeq_I )$
determines a functor
$$ \widetilde{\Phi}: \Tw( \LinOrd)^{\op} \rightarrow \Pt( \broken ).$$
\end{construction}

\begin{example}
Let $I$ be a nonempty finite linearly ordered set and let $I^{\sharp} \in \Tw(\LinOrd)$ be as in Notation \ref{remark.left-adjoint-to-forget}: that is,
the object obtained by equipping $I$ with the indiscrete equivalence relation. Then $\Phi( I^{\sharp} ) = \Rep(I, \BR^{+} ) \simeq \broken^{I}$.
\end{example}

\begin{example}
Let $I = [n] = \{ 0 < 1 < \cdots < n \}$ and let $\simeq_{I}$ be the discrete equivalence relation on $I$
(so that $i \simeq_{I} j$ if and only if $i = j$). Then $\Phi(I, \simeq_I)$ consists of a single point, and $\widetilde{\Phi}$ is
the ``$n$-times broken line'': that is, an $(n+1)$-fold concatenation of the standard (un)broken line $[ - \infty, \infty ]$ of Example \ref{example.standard-broken-line}.
\end{example}

Suppose we are given a pair of objects $(I, \simeq_I), (J, \simeq_J) \in \Tw( \LinOrd )$. Then the closed embedding
$$ \Rep(I, \BR^{+} ) \times \Rep(J, \BR^{+} ) \hookrightarrow \Rep(I \star J, \BR^{+} )$$
restricts to a homeomorphism
$$ \Phi(I, \simeq_I ) \times \Phi(J, \simeq_J) \xrightarrow{\sim} \Phi(I \star J, \simeq_{ I \star J} ).$$
It follows that we can identify $\widetilde{\Phi}( I \star J, \simeq_{ I \star J} )$ with the concatenation of
$\widetilde{\Phi}(I, \simeq_I)$ with $\widetilde{\Phi}(J, \simeq_J)$ in the category $\Pt(\broken)$, which proves the following:

\begin{proposition}\label{ols}
The functor $\widetilde{\Phi}: \Tw(\LinOrd)^{\op} \rightarrow \Pt(\broken)$ admits a nonunital monoidal structure, where
we equip $\Tw(\LinOrd)$ with the nonunital monoidal structure given by concatenation (Variant \ref{construction.concatenate-decorated}) and
$\Pt(\broken)$ with the nonunital monoidal structure described in \S \ref{section.monoid}.
\end{proposition}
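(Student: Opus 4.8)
The plan is to verify the claimed homeomorphism
$$\Phi(I,\simeq_I)\times\Phi(J,\simeq_J)\xrightarrow{\ \sim\ }\Phi(I\star J,\simeq_{I\star J})$$
explicitly, and then check that the family of broken lines $\widetilde{\Phi}(I\star J,\simeq_{I\star J})$ is the concatenation of $\widetilde{\Phi}(I,\simeq_I)$ and $\widetilde{\Phi}(J,\simeq_J)$; the compatibility of these identifications with associativity constraints (the pentagon) is then a routine bookkeeping check which I would relegate to the reader.

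First I would record the relevant closed embedding. An element of $\Rep(I\star J,\BR^+)$ is a functor $\gamma\colon I\star J\to\BR^+$, equivalently a function on pairs $(x,y)$ with $x\le_{I\star J}y$ satisfying the cocycle relations of Notation \ref{preorderrep}. Given $i\in I$, $j\in J$ we always have $i\le_{I\star J}j$, so $\gamma(i,j)$ is defined. The subset $\Phi(I\star J,\simeq_{I\star J})$ consists of those $\gamma$ with $\gamma(x,y)<\infty\Rightarrow x\simeq_{I\star J}y$; since no element of $I$ is $\simeq_{I\star J}$-equivalent to any element of $J$, this forces $\gamma(i,j)=\infty$ for all $i\in I$, $j\in J$. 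The cocycle relation then shows $\gamma$ is completely determined by its restrictions $\gamma|_{I\times I}$ and $\gamma|_{J\times J}$, and conversely any pair $(\alpha,\beta)\in\Phi(I,\simeq_I)\times\Phi(J,\simeq_J)$ assembles (setting all cross-terms equal to $\infty$) to an element of $\Phi(I\star J,\simeq_{I\star J})$. This bijection is continuous with continuous inverse by Notation \ref{preorderrep} (the topology is the coarsest making all evaluation maps $\gamma\mapsto\gamma(x,y)$ continuous), giving the homeomorphism.

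Next I would identify the total spaces. By Construction \ref{construction.phi}, $\widetilde{\Phi}(I\star J,\simeq_{I\star J})$ is the pullback of $\widetilde{\Rep}(I\star J,\BR^+)\to\Rep(I\star J,\BR^+)$ along $\Phi(I\star J,\simeq_{I\star J})\hookrightarrow\Rep(I\star J,\BR^+)$. Over the subset where all $I$--$J$ cross-terms are $\infty$, the universal family $\widetilde{\Rep}(I\star J,\BR^+)$ restricts, by the defining condition $(\ast)$ of Construction \ref{makelines} and the analysis in Lemma \ref{bezlem}, to the concatenation of the restricted families over $\Phi(I,\simeq_I)$ and $\Phi(J,\simeq_J)$: indeed, this is precisely the pullback square in \S\ref{maintheo} preceding Construction \ref{construction.phi}, which exhibits $\widetilde{\Rep}(I,\BR^+)\star\widetilde{\Rep}(J,\BR^+)$ as the fiber product of $\widetilde{\Rep}(I\star J,\BR^+)$ with $\Rep(I,\BR^+)\times\Rep(J,\BR^+)$ over $\Rep(I\star J,\BR^+)$, intersected now with $\Phi(I\star J,\simeq_{I\star J})$. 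Combining the two steps yields a canonical isomorphism
$$\widetilde{\Phi}(I\star J,\simeq_{I\star J})\simeq\widetilde{\Phi}(I,\simeq_I)\star\widetilde{\Phi}(J,\simeq_J)$$
in $\Pt(\broken)$, natural in $(I,\simeq_I)$ and $(J,\simeq_J)$ by Remark \ref{remark.functor} applied to morphisms in $\Tw(\LinOrd)$.

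The main obstacle, such as it is, is purely organizational: one must check that these structure isomorphisms are compatible with the associativity constraint \eqref{eqn.star-associativity} on $\Pt(\broken)$ and the corresponding constraint on $\Tw(\LinOrd)$ from Variant \ref{construction.concatenate-decorated}, i.e.\ that the coherence (pentagon) diagram commutes, so that $\widetilde{\Phi}$ indeed acquires a nonunital monoidal structure rather than merely a collection of binary-compatibility isomorphisms. This reduces to the observation that both sides of the relevant pentagon, when passed through the forgetful functor $\Pt(\broken)\to\Top$, are described by the same colimit of products of the spaces $\Phi(\,\cdot\,)$ and total spaces; since the forgetful functor is faithful enough for this purpose (morphisms in $\Pt(\broken)$ are determined by their underlying maps, as noted after Construction \ref{construction.category-of-lines}) and the underlying associativity isomorphisms in $\Top$ satisfy the pentagon on the nose, the coherence follows. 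I would state this and leave the diagram-chase to the reader, exactly as the paper does for the analogous claim about \eqref{eqn.star-associativity} itself.
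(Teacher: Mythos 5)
Your proposal is correct and follows essentially the same route as the paper: the paper's entire argument is the short paragraph preceding the statement, which records the homeomorphism $\Phi(I,\simeq_I)\times\Phi(J,\simeq_J)\simeq\Phi(I\star J,\simeq_{I\star J})$ and deduces the identification of $\widetilde{\Phi}(I\star J,\simeq_{I\star J})$ with the concatenation. You supply more of the elementary verifications (the forced $\infty$'s on cross-terms, the pullback identification of total spaces, the coherence remark) than the paper bothers to write out, but the decomposition of the argument is the same.
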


We can now formulate the main result of this paper:

\begin{theorem}\label{theorem.precise-version}
Let $\cC$ be a compactly generated monoidal $\infty$-category, and assume that the tensor product functor $\otimes: \cC \times \cC \rightarrow \cC$ preserves small colimits
separately in each variable. Then:
\begin{itemize}
\item[$(1)$] For every factorizable sheaf $\sheafF \in \Shv_{ \cC}^{\fct}(\broken)$, the composition
$$ \Tw(\LinOrd) \xrightarrow{ \widetilde{\Phi} } \Pt(\broken)^{\op} \xrightarrow{\sheafF} \cC$$
is a nonunital monoidal functor.

\item[$(2)$] The functor
$$ \Shv_{\cC}^{\fct}( \broken ) \xrightarrow{ \circ \widetilde{\Phi} } \Fun^{\otimes}( \Tw(\LinOrd), \cC)$$
is fully faithful, and its essential image is the full subcategory 
$$\Fun^{\otimes}_{0}( \Tw(\LinOrd), \cC), \subseteq \Fun^{\otimes}( \Tw(\LinOrd), \cC )$$
described in Variant \ref{notation.Fun-Lin-pullbacks}.
\end{itemize}
\end{theorem}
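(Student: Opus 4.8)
\textbf{Proof strategy for Theorem \ref{theorem.precise-version}.}
The plan is to factor the functor $\sheafF \mapsto \sheafF \circ \widetilde{\Phi}$ through the equivalence $\Shv_{\cC}(\broken) \simeq \Fun(\LinOrd,\cC)$ of Theorem \ref{theorem.broken-sheaves} and the fully faithful embedding $\Fun_0(\Tw(\LinOrd),\cC) \hookrightarrow \Fun(\Tw(\LinOrd),\cC)$ of Notation \ref{notation.nought}, and then promote all of this to monoidal data. First I would forget the monoidal structures and analyze the underlying functors. Recall that $\widetilde{\Phi}(I^{\sharp}) = \widetilde{\Rep}(I,\BR^{+})$ for $I \in \LinOrd$, so that the composite $\LinOrd \xrightarrow{\sharp} \Tw(\LinOrd)^{\op} \xrightarrow{\widetilde{\Phi}} \Pt(\broken)^{\op} \xrightarrow{\sheafF} \cC$ is precisely $\sheafF \circ \widetilde{\Rep}$, which is the image of $\sheafF$ under the equivalence of Theorem \ref{theorem.broken-sheaves}. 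The key geometric input is that, for any $\cC$-valued sheaf $\sheafF$ on $\broken$ (not yet assumed factorizable), the canonical map $\widetilde{\Phi}(I^{\sharp}) \to \widetilde{\Phi}(I,\simeq_I)$ in $\Pt(\broken)$ classifies a morphism of families that is constant along the fibers in the relevant sense: indeed $\Phi(I,\simeq_I) \subseteq \Rep(I,\BR^{+}) \simeq \broken^{I}$ is a (closed, contractible) union of strata $K_E$ on which the pulled-back family is constant (Example \ref{example.automatic-constructibility}), so $\sheafF$ pulled back to $\Phi(I,\simeq_I)$ is a constant sheaf, and Remark \ref{remark.constructible-evaluation} (together with constructibility, Proposition \ref{proposition.constructible-characterization}) shows that evaluation of $\sheafF$ on $\widetilde{\Phi}(I,\simeq_I)$ agrees with evaluation on $\widetilde{\Phi}(I^{\sharp})$. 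Hence $\sheafF \circ \widetilde{\Phi}$ sends each map $I^{\sharp} \to (I,\simeq_I)$ to an equivalence, i.e.\ $\sheafF \circ \widetilde{\Phi} \in \Fun_0(\Tw(\LinOrd),\cC)$, and the underlying functor of $\sheafF \circ \widetilde{\Phi}$ is recovered from $\sheafF \circ \widetilde{\Rep}$ by left Kan extension along $\sharp$. This already proves the underlying-functor part of $(2)$, and shows that on underlying functors the composite $\Shv_{\cC}(\broken) \xrightarrow{\circ \widetilde{\Phi}} \Fun_0(\Tw(\LinOrd),\cC) \simeq \Fun(\LinOrd,\cC)$ is the equivalence of Theorem \ref{theorem.broken-sheaves}, hence is itself an equivalence of underlying $\infty$-categories.

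Next I would install the monoidal structures and check compatibility. The functor $\widetilde{\Phi}: \Tw(\LinOrd)^{\op} \to \Pt(\broken)$ is nonunital monoidal by Proposition \ref{ols}; a weakly factorizable sheaf $\sheafF$ is by definition a nonunital lax monoidal functor $\Pt(\broken)^{\op} \to \cC$; so the composite $\sheafF \circ \widetilde{\Phi}$ is at least nonunital lax monoidal. Claim $(1)$ asserts it is in fact monoidal when $\sheafF$ is factorizable: the lax structure map on $\sheafF \circ \widetilde{\Phi}$ evaluated at $(I,\simeq_I),(J,\simeq_J)$ is the map $\sheafF(\widetilde{\Phi}(I,\simeq_I)) \otimes \sheafF(\widetilde{\Phi}(J,\simeq_J)) \to \sheafF(\widetilde{\Phi}(I,\simeq_I) \star \widetilde{\Phi}(J,\simeq_J)) = \sheafF(\widetilde{\Phi}((I,\simeq_I)\star(J,\simeq_J)))$, where the last identification is the homeomorphism $\Phi(I,\simeq_I) \times \Phi(J,\simeq_J) \xrightarrow{\sim} \Phi(I\star J,\simeq_{I\star J})$ used in the proof of Proposition \ref{ols}. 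Using the compatibility established in the first paragraph (evaluation of $\sheafF$ on $\widetilde{\Phi}(-,\simeq)$ agrees with evaluation on $\widetilde{\Phi}((-)^{\sharp})$), this reduces to showing that $\sheafF(\widetilde{\Phi}(I^{\sharp})) \otimes \sheafF(\widetilde{\Phi}(J^{\sharp})) \to \sheafF(\widetilde{\Phi}(I^{\sharp}) \star \widetilde{\Phi}(J^{\sharp}))$ is an equivalence; but $\widetilde{\Phi}(I^{\sharp}) \star \widetilde{\Phi}(J^{\sharp}) = \widetilde{\Rep}(I,\BR^{+}) \star \widetilde{\Rep}(J,\BR^{+})$, and the colax comparison of Warning \ref{warning.UL} together with Construction \ref{construction.compare} lets us compare with $\widetilde{\Rep}(I \star J,\BR^{+})$; since both sides are again (pulled back from) constant families over strata, condition $(b)$ of Definition \ref{definition.factorizable-sheaf} --- which is exactly the statement that $\sheafF(L) \otimes \sheafF(L') \to \sheafF(L \star L')$ is an equivalence for broken \emph{lines} $L, L'$ --- yields the desired equivalence after passing to stalks and invoking constructibility. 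This proves $(1)$.

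For the essential-image and full-faithfulness part of $(2)$ I would argue as follows. By the first paragraph, $\sheafF \circ \widetilde{\Phi}$ lands in $\Fun_0$, hence $\sheafF \circ \widetilde{\Phi} \in \Fun^{\otimes}_0(\Tw(\LinOrd),\cC)$ by Variant \ref{notation.Fun-Lin-pullbacks}; and by $(1)$ it is genuinely monoidal. Conversely, by Proposition \ref{proposition.character-functor}, $\Fun^{\otimes}_0(\Tw(\LinOrd),\cC)$ is identified, via restriction along $\sharp$, with $\Fun^{\otimes}(\LinOrd,\cC) \simeq \Alg^{\nounit}(\cC)$; so it suffices to show that the composite $\Shv^{\fct}_{\cC}(\broken) \to \Fun^{\otimes}(\LinOrd,\cC)$ is an equivalence. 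Fully faithfulness I would deduce from Proposition \ref{proposition.homotopy-invariant-is-enough}: given factorizable $\sheafF,\sheafG$, a morphism of the underlying monoidal functors $\sheafF \circ \widetilde{\Rep} \to \sheafG \circ \widetilde{\Rep}$ determines, because $\sheafG$ as a lax sheaf is homotopy invariant (Proposition \ref{proposition.automatic-homotopy-invariance}), a unique morphism $\sheafF \to \sheafG$ of lax $\cC$-valued presheaves, and one checks this is automatically lax-monoidal-compatible; the space of such lifts is contractible, which upgrades to fully faithfulness on mapping spaces at the level of $\Shv^{\wfact}$, and then restricts to $\Shv^{\fct}$. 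Essential surjectivity: given a nonunital algebra $A$, equivalently a monoidal functor $F \in \Fun^{\otimes}(\LinOrd,\cC)$, let $\sheafF \in \Shv_{\cC}(\broken)$ be the sheaf corresponding to the underlying functor of $F$ under Theorem \ref{theorem.broken-sheaves}; I would promote $\sheafF$ to a nonunital lax monoidal functor on $\Pt(\broken)^{\op}$ (using that $\Pt(\broken)$ is generated under the relevant colimits by the image of $\widetilde{\Rep}$, together with Remark \ref{easystar} and the associativity isomorphisms \eqref{eqn.star-associativity}), verify condition $(b)$ of Definition \ref{definition.factorizable-sheaf} directly from monoidality of $F$ on broken lines, and conclude $\sheafF \in \Shv^{\fct}_{\cC}(\broken)$ with $\sheafF \circ \widetilde{\Phi} \simeq F$ after restriction along $\sharp$. \emph{The main obstacle} I anticipate is this last promotion step --- producing the full nonunital lax monoidal coherence on $\sheafF$ over all of $\Pt(\broken)$ (not merely over the diagram $\widetilde{\Rep}$) --- which requires carefully matching the monoidal structure encoded by \eqref{eqn.star-associativity} and Construction \ref{construction.compare} against the lax structure, and checking that the homotopy-coherent associativity data assemble correctly; the tool for making this bookkeeping clean is again Proposition \ref{proposition.homotopy-invariant-is-enough}, applied in a multi-variable form to $\broken \times \cdots \times \broken$, which reduces every coherence datum to a statement about the combinatorial functor $\widetilde{\Phi}$ on $\Tw(\LinOrd)$.
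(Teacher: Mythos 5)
Your argument for part $(1)$ identifies the correct mechanism—reduce each $F(I,\simeq_I)$ to a value of $\sheafF$ on a single broken line via the constructibility statement (Proposition \ref{proposition.eval} / Remark \ref{remark.constructible-evaluation}), then invoke condition $(b)$ of Definition \ref{definition.factorizable-sheaf}. The paper's version of this step is slightly cleaner: rather than comparing $(I,\simeq_I)$ to $I^{\sharp}$, it compares $(I,\simeq_I)$ to the discrete object $I^{\flat}$, for which $\widetilde{\Phi}(I^{\flat})$ is literally a single broken line, and then uses the commutative square
\[
\xymatrix{
F(I , \simeq_I) \otimes F(J, \simeq_J) \ar[r] \ar[d] & F(I \star J, \simeq_{I \star J}) \ar[d] \\
F(I^{\flat} ) \otimes F(J^{\flat} ) \ar[r] & F(I^{\flat} \star J^{\flat} )
}
\]
with equivalences as vertical maps. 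Your route through $I^{\sharp}$ and $I^{\sharp}\star J^{\sharp}$ can be made to work (note $I^{\sharp}\star J^{\sharp} \neq (I\star J)^{\sharp}$, so you need a second application of the constructibility argument to collapse to the $(I\star J)^{\flat}$ broken line), but the phrase ``after passing to stalks and invoking constructibility'' hides exactly this extra step, and ``passing to stalks'' is not the right operation here: the values in question are not stalks but values on families, collapsed to constants by Proposition \ref{proposition.eval}.

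The genuine gap is in your essential-surjectivity argument for $(2)$. You propose to start from $F \in \Fun^{\otimes}(\LinOrd,\cC)$, take the corresponding $\sheafF \in \Shv_{\cC}(\broken)$ under Theorem \ref{theorem.broken-sheaves}, and then ``promote $\sheafF$ to a nonunital lax monoidal functor on $\Pt(\broken)^{\op}$.'' This is not a verification—it is a construction of a homotopy-coherent lax monoidal structure, and there is no apparent mechanism for producing it from the underlying functor plus ``generation of $\Pt(\broken)$ under colimits by the image of $\widetilde{\Rep}$.'' The paper sidesteps this entirely by never constructing an inverse pointwise: the theorem is deduced from Theorem \ref{theorem.more-precise-version}, which is in turn a formal consequence of Theorem \ref{theorem.even-more-precise}, an equivalence of \emph{planar $\infty$-operads}. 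Concretely, one equips both $\Fun(\Pt(\broken)^{\op},\cC)$ and $\Fun(\Tw(\LinOrd),\cC)$ with Day convolution monoidal structures (Theorems \ref{theorem.daycon} and \ref{theorem.daycon-topological}), identifies weakly factorizable sheaves with nonunital algebra objects in the first (Remark \ref{remark.weak-factor}) and $\Fun^{\lax}_0$ with algebra objects in the second (Example \ref{example.daycon-combinatorial}), and then shows that $\circ\,\widetilde{\Phi}$ is an equivalence of operads by an induction on the number of tensor factors, one variable at a time, using Proposition \ref{proposition.homotopy-invariant-is-enough}. Your final sentence (``applied in a multi-variable form to $\broken\times\cdots\times\broken$'') gestures toward this induction, but without the Day convolution formalism there is no clean way to formulate the operad-level statement and hence no way to conclude that algebra objects correspond to algebra objects. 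You should reorganize the argument to prove the operad equivalence first and deduce both fully faithfulness and essential surjectivity of $\circ\,\widetilde{\Phi}$ on algebras as formal consequences, rather than attempting a direct construction of the inverse functor.
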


From Theorem \ref{theorem.precise-version}, we easily deduce the result promised in the introduction to this paper:

\begin{proof}[Proof of Theorem \ref{theorem.mainC}]
Theorem \ref{theorem.precise-version} and Proposition \ref{proposition.character-functor} supply equivalences
$$\Alg^{\nounit}(\cC) \xrightarrow{\sim} \Fun^{\otimes}_{0}( \Tw(\LinOrd), \cC) \xleftarrow{\sim} \Shv_{\cC}^{\fct}( \broken ).$$
\end{proof}

We will deduce Theorem \ref{theorem.precise-version} from the following more general statement:

\begin{theorem}\label{theorem.more-precise-version}
Let $\cC$ be a compactly generated monoidal $\infty$-category, and assume that the tensor product functor $\otimes: \cC \times \cC \rightarrow \cC$ preserves small colimits
separately in each variable. 
Then composition with the nonunital lax monoidal functor $\widetilde{\Phi}: \Tw(\LinOrd) \rightarrow \Pt(\broken)^{\op}$
determines a fully faithful embedding $\Shv^{\wfact}_{\cC}( \broken ) \hookrightarrow \Fun^{\lax}( \Tw(\LinOrd), \cC )$, whose essential
image is the full subcategory $$\Fun^{\lax}_{0}( \Tw(\LinOrd), \cC) \subseteq \Fun^{\lax}( \Tw(\LinOrd), \cC)$$ described in
Variant \ref{notation.Fun-Lin-pullbacks}.
\end{theorem}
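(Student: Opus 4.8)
The plan is to route the comparison through the presentation $\Shv_{\cC}(\broken)\simeq\Fun(\LinOrd,\cC)$ of Theorem~\ref{theorem.broken-sheaves}, treating the monoidal structure afterward. Write $G$ for composition with $\widetilde{\Phi}$. By Proposition~\ref{ols}, $\widetilde{\Phi}$ is a nonunital monoidal functor, so $G$ carries $\Shv^{\wfact}_{\cC}(\broken)$ into $\Fun^{\lax}(\Tw(\LinOrd),\cC)$; on underlying functors it induces $G_0 : \Shv_{\cC}(\broken) \to \Fun(\Tw(\LinOrd),\cC)$. Since $\widetilde{\Phi}\circ\sharp^{\op} = \widetilde{\Rep}$ for the embedding $\sharp : \LinOrd \hookrightarrow \Tw(\LinOrd)$ of Notation~\ref{remark.left-adjoint-to-forget}, restricting $G_0(\sheafF)$ along $\sharp$ returns $\sheafF\circ\widetilde{\Rep}$. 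The first goal is to show $G_0$ lands in the subcategory $\Fun_0(\Tw(\LinOrd),\cC)$ of Notation~\ref{notation.nought}. Granting this, the functor $\Fun_0(\Tw(\LinOrd),\cC)\to\Fun(\LinOrd,\cC)$ given by restriction along $\sharp$ is an equivalence (Notation~\ref{notation.nought}), and its composite with $G_0$ is the equivalence of Theorem~\ref{theorem.broken-sheaves}; hence $G_0 : \Shv_{\cC}(\broken)\xra{\sim}\Fun_0(\Tw(\LinOrd),\cC)$ is an equivalence. Moreover, unwinding Definition~\ref{definition.factorizable-sheaf} gives $\Shv^{\wfact}_{\cC}(\broken) = \Fun^{\lax}(\Pt(\broken)^{\op},\cC)\times_{\Fun(\Pt(\broken)^{\op},\cC)}\Shv_{\cC}(\broken)$, while $\Fun^{\lax}_0(\Tw(\LinOrd),\cC) = \Fun^{\lax}(\Tw(\LinOrd),\cC)\times_{\Fun(\Tw(\LinOrd),\cC)}\Fun_0(\Tw(\LinOrd),\cC)$ by Variant~\ref{notation.Fun-Lin-pullbacks}; in particular $G$ already lands in $\Fun^{\lax}_0$.

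\emph{Proof that $G_0(\sheafF)\in\Fun_0$.} Fix $(I,\simeq_I)\in\Tw(\LinOrd)$; by convexity its classes form consecutive blocks $B_1<\cdots<B_m$, so $(I,\simeq_I) = B_1^{\sharp}\star\cdots\star B_m^{\sharp}$ and the counit $I^{\sharp}\to(I,\simeq_I)$ is the iterated colax structure map of Warning~\ref{warning.UL}. Using Construction~\ref{construction.phi} and the homeomorphism $\Phi(I,\simeq_I)\cong\prod_k\Rep(B_k,\BR^{+})$ recorded before Proposition~\ref{ols}, $G_0(\sheafF)$ carries this counit to the map
\[
\sheafF(\broken^{I}) \longrightarrow \sheafF\bigl(\widetilde{\Rep}(B_1,\BR^{+})\star\cdots\star\widetilde{\Rep}(B_m,\BR^{+})\bigr)
\]
given by restriction along the closed embedding $\prod_k\broken^{B_k}\hookrightarrow\broken^{I}$. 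Now $\prod_k\broken^{B_k}\cong(\RR^{+})^{\,|I|-m}$, and, by Example~\ref{ex2} and Construction~\ref{construction.stratify}, the product of the stratifications on the factors coincides with the stratification of $\broken^{[\,|I|-m\,]}$ (both strata posets are the Boolean lattice on the $|I|-m$ coordinate ``gaps''). The concatenated family over $\prod_k\broken^{B_k}$ is constant on each stratum by the argument of Example~\ref{example.automatic-constructibility}, so $\sheafF$ of it is a constructible sheaf; hence Remark~\ref{remark.constructible-evaluation} (applied to $\broken^I$ and to $\broken^{[\,|I|-m\,]}$, via Proposition~\ref{proposition.constructible-characterization}) identifies both $\sheafF(\broken^I)$ and $\sheafF(\prod_k\broken^{B_k})$ with the stalk at the respective deepest stratum. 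Each such stalk is $\sheafF$ evaluated on the concatenation of $|I|$ copies of $[-\infty,\infty]$; since the stalk of a sheaf on the stack $\broken$ depends only on the underlying broken line, the two are canonically identified compatibly with the displayed restriction map, which is therefore an equivalence.

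\emph{The monoidal upgrade.} By the two pullback-square descriptions above and the fact that $G_0$ is an equivalence, to prove that $G : \Shv^{\wfact}_{\cC}(\broken)\to\Fun^{\lax}_0(\Tw(\LinOrd),\cC)$ is an equivalence it suffices to show that $G$ is fully faithful and that every object of $\Fun^{\lax}_0$ lifts. Using the operadic description of nonunital lax monoidal functors --- as inert-preserving maps of $\infty$-operads over the nonunital associative $\infty$-operad --- both the mapping spaces in $\Fun^{\lax}(\Pt(\broken)^{\op},\cC)$ and the spaces of nonunital lax monoidal structures on a fixed underlying functor are computed as limits of mapping spaces between the multi-ary components, which are functors on the finite powers $(\Pt(\broken)^{\op})^{\times n}$; by Proposition~\ref{ols} (in the form $\widetilde{\Phi}(X)\star\widetilde{\Phi}(Y)\simeq\widetilde{\Phi}(X\star Y)$) the maps induced by $G$ are these limits of restrictions along the powers $\widetilde{\Phi}^{\times n}$. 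At level $n$ one is comparing functors on $\broken^{\times n}$ of the forms $\sheafF_1\boxtimes\cdots\boxtimes\sheafF_n$ and $m_n^{\ast}\sheafF$ (with $m_n : \broken^{\times n}\to\broken$ the iterated concatenation), all of which are $\cC$-valued sheaves on $\broken^{\times n}$ because $\otimes$ preserves colimits in each variable and pullbacks of sheaves are sheaves. So everything reduces to: restriction along $\widetilde{\Phi}^{\times n}$ is an equivalence on mapping spaces between such sheaves on $\broken^{\times n}$. But the entire development leading to Theorem~\ref{theorem.broken-sheaves}, Corollary~\ref{corollary.now-constructible}, and Proposition~\ref{proposition.homotopy-invariant-is-enough} goes through verbatim with $\broken$ replaced by $\broken^{\times n}$ (it is presented by products of the spaces $\Rep(I,\BR^{+})$, is the colimit of the $\broken^{I_1}\times\cdots\times\broken^{I_n}$, and its constructible sheaves are classified by $\Fun(\Conv(I_1)\times\cdots\times\Conv(I_n),\cC)$), yielding $\Shv_{\cC}(\broken^{\times n})\simeq\Fun(\LinOrd^{\times n},\cC)$ and the corresponding homotopy-invariance statement; rerunning the first two paragraphs in this setting produces exactly the required equivalence on mapping spaces. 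Full faithfulness of $G$ follows; applying the same reduction to spaces of lax monoidal structures shows that an arbitrary $F\in\Fun^{\lax}_0$, whose underlying functor is $G_0(\sheafF_0)$ for a unique $\sheafF_0$, is the image under $G$ of a weakly factorizable structure on $\sheafF_0$, which is essential surjectivity.

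\emph{The main obstacle.} The substantive step is the last paragraph: organizing the nonunital lax monoidal structures operadically so that the comparison decomposes level by level into assertions about sheaves on the products $\broken^{\times n}$, and then verifying that the machinery built for $\broken$ --- the presentation by the $\Rep(I,\BR^{+})$, the automatic constructibility of Example~\ref{example.automatic-constructibility}, Theorem~\ref{theorem.broken-sheaves}, and Proposition~\ref{proposition.homotopy-invariant-is-enough} --- transports to these products without essential change. By contrast, the identification of underlying functors is the short stratification computation of the second paragraph.
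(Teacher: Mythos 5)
Your first paragraph is in good shape: showing that $G_0(\sheafF)$ lands in $\Fun_0(\Tw(\LinOrd),\cC)$ via the block decomposition $\Phi(I,\simeq_I)\cong\prod_k\Rep(B_k,\BR^+)$ and an identification of stalks at the deepest stratum is essentially the paper's Proposition~\ref{proposition.eval} and Corollary~\ref{corollary.plain-version}, just organized around the constructible-sheaf presentation rather than the exit-path argument. That part is sound.

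The monoidal upgrade is where there is a genuine gap, and it is concentrated exactly where you flag ``the main obstacle.'' The Day convolution adjunction identifies the $n$-ary mapping space $\bHom(\sheafF_1\circledast\cdots\circledast\sheafF_n,\sheafG)$ with a mapping space in $\Fun\bigl((\Pt(\broken)^{\op})^{n},\cC\bigr)$ between $\overline{\sheafF}(L_{S_1},\ldots,L_{S_n})=\sheafF_1(L_{S_1})\otimes\cdots\otimes\sheafF_n(L_{S_n})$ and $\overline{\sheafG}(L_{S_1},\ldots,L_{S_n})=\sheafG(L_{S_1}\star\cdots\star L_{S_n})$, with the $L_{S_i}$ living over \emph{independent} bases. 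Your proposal silently replaces this with a mapping space between sheaves on the stack $\broken^{\times n}$, whose points are $n$-tuples of families over a \emph{common} base. These are different categories of functors, and the relevant objects do not correspond: restricting $\overline{\sheafG}$ to $\Pt(\broken^{\times n})^{\op}$ evaluates $\sheafG$ on a family over $S^n$, whereas $m_n^*\sheafG$ evaluates on a family over $S$; and $\overline{\sheafF}$ is not a sheaf in any sense because $\otimes$ does not commute with limits, so it is not the restriction of $\sheafF_1\boxtimes\cdots\boxtimes\sheafF_n$ either. The claim that ``everything reduces to an equivalence on mapping spaces between sheaves on $\broken^{\times n}$'' therefore needs an argument that is missing, and as written it does not hold.

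The paper sidesteps this precisely by \emph{not} trying to handle all $n$ slots at once. It substitutes $\widetilde{\Phi}$ one variable at a time through the chain $\cJ^{(m)}=\Tw(\LinOrd)^m\times(\Pt(\broken)^{\op})^{n-m}$, so that at each step the comparison reduces to a one-variable problem on $\Pt(\broken)^{\op}$ between two functors, $F(K)(L_S)=C\otimes\sheafF_m(L_S)\otimes C'$ and $G(K')(L_S)=\sheafG(\widetilde B\star L_S\star\widetilde B')$, neither of which is a sheaf on $\broken$: $F(K)$ is replaced by its sheafification (after checking that this does not change $F(K)\circ\widetilde{\Rep}$, via Remark~\ref{remark.constructible-evaluation}), and $G(K')$ is shown to be a \emph{homotopy-invariant lax} sheaf, which is exactly the hypothesis of Proposition~\ref{proposition.homotopy-invariant-is-enough}. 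That proposition, not a wholesale redevelopment of the sheaf theory for $\broken^{\times n}$, is the engine that makes the $n$-ary comparison go. Your sketch mentions it in passing but does not use it where it is actually needed; without it (or an honest substitute bridging $(\Pt(\broken)^{\op})^n$ and $\Shv_{\cC}(\broken^{\times n})$), the argument does not close.

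Summary: the identification of underlying functors is fine and matches the paper in spirit; the monoidal upgrade is a different route, and as written it contains a gap in the passage from Day-convolution mapping spaces (functors on $(\Pt(\broken)^{\op})^n$) to sheaf mapping spaces on $\broken^{\times n}$. Either that identification must be established carefully (and I do not believe it is an equivalence on the nose for the objects you need), or you should adopt the paper's one-variable-at-a-time substitution and its reliance on homotopy-invariant lax sheaves.
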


Theorem \ref{theorem.more-precise-version} is a consequence of a more general statement that we will formulate in \S \ref{section.daycon}
(Theorem \ref{theorem.even-more-precise}) and prove in \S \ref{section.bigproof}). 

\begin{proof}[Proof of Theorem \ref{theorem.precise-version} from Theorem \ref{theorem.more-precise-version}]
Let $\sheafF \in \Shv_{\cC}^{\wfact}( \broken)$ be a weakly factorizable $\cC$-valued sheaf on $\broken$, and let $F = \sheafF \circ \widetilde{\Phi}$ denote the associated lax monoidal functor
$\Tw(\LinOrd) \rightarrow \cC$. By virtue of Theorem \ref{theorem.more-precise-version}, it will suffice to show that the following
conditions are equivalent:
\begin{itemize}
\item[$(i)$] The functor $\sheafF$ is a factorizable $\cC$-valued sheaf on $\broken$. In other words, for every
pair of broken lines $L$ and $L'$, the canonical map $\sheafF(L) \otimes \sheafF(L') \rightarrow \sheafF(L \star L')$ is an equivalence in $\cC$.

\item[$(ii)$] The functor $F$ is nonunital monoidal: that is, for every pair of objects $(I, \simeq_{I} )$ and $(J, \simeq_J)$
in $\Tw(\LinOrd)$, the canonical map $F(I, \simeq_I) \otimes F(J, \simeq_J) \rightarrow F( I \star J , \simeq_{I \star J})$ is an equivalence in $\cC$.
\end{itemize}
Note that assertion $(i)$ is equivalent to the special case of assertion $(ii)$ where we assume that the equivalence relations $\simeq_I$ and $\simeq_J$ are discrete.
Conversely, suppose we are given an arbitrary pair of objects $(I, \simeq_{I} )$ and $(J, \simeq_{J} )$ of the category
$\Tw(\LinOrd)$. Set $I^{\flat} = (I, =_I)$ and $J^{\flat} = (J, =_{J} )$. (These are the discrete equivalence relations on $I$ and $J$; see Notation~\ref{linpreorder}.) Then we have a commutative diagram
$$\xymatrix{ F(I , \simeq_I) \otimes F(J, \simeq_J) \ar[r] \ar[d] & F(I \star J, \simeq_{I \star J}) \ar[d] \\
F(I^{\flat} ) \otimes F(J^{\flat} ) \ar[r] & F(I^{\flat} \star J^{\flat} )}$$
where the vertical maps are equivalences. If condition $(i)$ is satisfied, then the lower horizontal map is an equivalence, so that the upper horizontal map is an equivalence as well.
\end{proof}

\subsection{Digression: Day Convolution}\label{section.daycon}

In this section, we recall the theory of {\it Day convolution} in the setting of nonunital monoidal $\infty$-categories. For more details, we refer the reader to Section 2.2.6 of \cite{HA}.

\begin{construction}[The Day Convolution Product]\label{construction.daycon}
Let $\cC$ and $\calD$ be nonunital monoidal $\infty$-categories with tensor product functors $\otimes_{\cC}: \cC \times \cC \rightarrow \cC$ and $\otimes_{\calD}: \calD \times \calD \rightarrow \calD$.
Assume that $\calD$ is small and that $\cC$ admits small colimits. Given a pair of functors $F_0, F_1: \calD \rightarrow \cC$, we define a new functor $(F_0 \circledast F_1): \calD \rightarrow \cC$
by the formula
$$ (F_0 \circledast F_1)(D) = \varinjlim_{ D_0 \otimes_{\calD} D_1 \rightarrow D} F_0(C_0) \otimes_{\cC} F_1(C_1).$$
Here the colimit is taken over the $\infty$-category $(\calD \times \calD) \times_{\calD} \calD_{/D}$ parametrizing pairs of objects $D_0, D_1 \in \calD$ equipped
with a morphism $D_0 \otimes_{\calD} D_1 \rightarrow D$. We refer to $F_0 \circledast F_1$ as the {\it Day convolution product} of the functors $F_0$ and $F_1$.
\end{construction}

Under mild hypotheses, one can show that the Day convolution product underlies a nonunital monoidal structure on the $\infty$-category $\Fun( \calD, \cC )$. Moreover,
this monoidal structure can be characterized by a universal property:

\begin{theorem}\label{theorem.daycon}
Let $\cC$ and $\calD$ be nonunital monoidal $\infty$-categories. Assume that $\calD$ is small, that $\cC$ admits small colimits, and that the tensor product on $\cC$ preserves small colimits separately in each variable. Then there is a nonunital monoidal structure on the $\infty$-category $\Fun(\calD, \cC)$ with the following properties:
\begin{itemize}
\item[$(a)$] The underlying tensor product $\circledast: \Fun(\calD, \cC) \times \Fun(\calD, \cC) \rightarrow \Fun( \calD, \cC )$ is given by the Day convolution product of
Construction \ref{construction.daycon}.
\item[$(b)$] For every nonunital monoidal $\infty$-category $\mathcal{E}$, there is a canonical equivalence of $\infty$-categories
$$ \Fun^{\lax}( \mathcal{E}, \Fun(\calD, \cC) ) \simeq \Fun^{\lax}( \mathcal{E} \times \calD, \cC).$$
In particular, there is an equivalence of $\infty$-categories $\Alg^{\nounit}( \Fun(\calD, \cC) ) \simeq \Fun^{\lax}( \calD, \cC)$.
\end{itemize}
\end{theorem}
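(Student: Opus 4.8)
The plan is to deduce the theorem from the general theory of Day convolution developed in \S 2.2.6 of \cite{HA}, specialized to the nonunital associative $\infty$-operad $\Ass^{\nounit}$. Recall that a nonunital monoidal $\infty$-category is precisely a coCartesian fibration of $\infty$-operads over $\Ass^{\nounit}$; thus $\cC$ and $\calD$ correspond to coCartesian fibrations $\cC^{\otimes}, \calD^{\otimes} \to \Ass^{\nounit}$, and the goal is to produce a coCartesian fibration $\Fun(\calD,\cC)^{\otimes} \to \Ass^{\nounit}$ whose underlying $\infty$-category is $\Fun(\calD,\cC)$, together with the two asserted properties. First I would construct $\Fun(\calD,\cC)^{\otimes}$ as a \emph{generalized} $\infty$-operad over $\Ass^{\nounit}$, following \cite{HA} Construction 2.2.6.7: it is characterized, among generalized $\infty$-operads over $\Ass^{\nounit}$, by the universal property that maps $\mathcal{E}^{\otimes} \to \Fun(\calD,\cC)^{\otimes}$ over $\Ass^{\nounit}$ are naturally equivalent to maps $\mathcal{E}^{\otimes} \times_{\Ass^{\nounit}} \calD^{\otimes} \to \cC^{\otimes}$ over $\Ass^{\nounit}$. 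Concretely, it is a suitable $\infty$-category of sections of the pullback of $\cC^{\otimes}$ along the evaluation maps $\Fun(\Delta^{1}, \Ass^{\nounit}) \to \Ass^{\nounit}$, twisted by $\calD^{\otimes}$; this is well-defined because $\calD$ is small. Translating the universal property into the language of (lax) monoidal functors gives exactly the equivalence $\Fun^{\lax}(\mathcal{E}, \Fun(\calD,\cC)) \simeq \Fun^{\lax}(\mathcal{E}\times\calD, \cC)$ of part $(b)$; taking $\mathcal{E}$ to be the free nonunital monoidal $\infty$-category $\LinOrd$ (Remark \ref{remark.universal-property-linord}), so that lax monoidal functors out of $\mathcal{E}$ are nonunital algebras and the right-hand side collapses to $\Fun^{\lax}(\calD,\cC)$, yields the stated equivalence $\Alg^{\nounit}(\Fun(\calD,\cC)) \simeq \Fun^{\lax}(\calD,\cC)$.

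The substantive point — and the step I expect to be the main obstacle — is to show that this generalized $\infty$-operad $\Fun(\calD,\cC)^{\otimes}$ is genuinely a coCartesian fibration over $\Ass^{\nounit}$, i.e.\ that it defines an honest nonunital monoidal structure on $\Fun(\calD,\cC)$ rather than merely a lax one. This is where the hypotheses on $\cC$ enter. By the theory of operadic left Kan extensions (\cite{HA} \S 3.1.3), the required coCartesian lifts exist as soon as, for each $n$, the relevant indexing diagrams admit colimits in $\cC$ and those colimits are \emph{operadic}. Unwinding, this says that the $n$-ary tensor product must be computed by $(F_{1} \circledast \cdots \circledast F_{n})(D) = \varinjlim F_{1}(D_{1}) \otimes \cdots \otimes F_{n}(D_{n})$, the colimit taken over the $\infty$-category of tuples $(D_{1},\dots,D_{n})$ equipped with a map $D_{1}\otimes\cdots\otimes D_{n} \to D$ in $\calD$. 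Since $\calD$ is small these indexing $\infty$-categories are small, so the colimits exist because $\cC$ admits small colimits; and since $\otimes\colon\cC\times\cC\to\cC$ preserves small colimits separately in each variable, these colimits are preserved under tensoring, which is exactly what is needed for the iterated left Kan extensions to cohere — so that $\circledast$ is coherently associative and the $n$-fold convolution agrees with the iterated binary convolution. This also verifies $(a)$, since the binary instance of the displayed formula is precisely Construction \ref{construction.daycon}.

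Finally, one checks that the construction is functorial in $\cC$ and $\calD$ (covariantly in monoidal colimit-preserving functors of $\cC$, contravariantly in monoidal functors of $\calD$), so that the universal property identifications are natural; this is routine from the corresponding statements in \cite{HA} and I would relegate it to a remark. In summary, the only nonformal input is the verification of the colimit/operadic-colimit hypotheses needed to run the operadic left Kan extension machinery of \cite{HA}, and the two hypotheses in the statement — cocompleteness of $\cC$ (plus smallness of $\calD$) and variable-wise cocontinuity of $\otimes_{\cC}$ — are tailored precisely to this.
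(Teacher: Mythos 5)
The paper does not supply its own proof of this theorem; it is stated as background and cited directly from \cite{HA}, \S 2.2.6. Your proposal is, in effect, a reconstruction of Lurie's argument, and at that level the overall shape is right: construct $\Fun(\calD,\cC)^{\otimes}$ as a (generalized) $\infty$-operad over $\Ass^{\nounit}$ characterized by the bilinear universal property, then invoke operadic left Kan extensions to verify that it is in fact a coCartesian fibration, with the hypotheses on $\cC$ (cocompleteness, plus variable-wise cocontinuity of $\otimes$) entering exactly there, and with smallness of $\calD$ guaranteeing that the relevant indexing diagrams are small.

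There is, however, a genuine error in your derivation of the ``in particular'' clause of part $(b)$. You take $\mathcal{E} = \LinOrd$ and assert that ``lax monoidal functors out of $\mathcal{E}$ are nonunital algebras.'' That is not what Remark \ref{remark.universal-property-linord} says: the monoidal envelope $\LinOrd$ satisfies $\Fun^{\otimes}(\LinOrd,\cA) \simeq \Alg^{\nounit}(\cA)$ for \emph{strict} monoidal functors, while $\Fun^{\lax}(\LinOrd,\cA)$ is strictly larger (a lax monoidal functor out of $\LinOrd$ is a colax-to-lax interleaving, not just an algebra). The companion claim that $\Fun^{\lax}(\LinOrd\times\calD,\cC)$ ``collapses to'' $\Fun^{\lax}(\calD,\cC)$ is also not true. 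The correct route to the ``in particular'' is to observe that the universal property of $(b)$ holds, in the form proved in \cite{HA}, when $\mathcal{E}^{\otimes}$ ranges over arbitrary $\infty$-operads (or even generalized $\infty$-operads) over $\Ass^{\nounit}$, not only over nonunital monoidal $\infty$-categories. Taking $\mathcal{E}^{\otimes} = \Ass^{\nounit}$ itself, the left-hand side $\Fun^{\lax}(\mathcal{E},\Fun(\calD,\cC))$ is by definition $\Alg^{\nounit}(\Fun(\calD,\cC))$, and the fiber product $\Ass^{\nounit}\times_{\Ass^{\nounit}}\calD^{\otimes}\simeq\calD^{\otimes}$ collapses the right-hand side to $\Fun^{\lax}(\calD,\cC)$. (The statement in the paper is slightly imprecise in quantifying only over nonunital monoidal $\mathcal{E}$; the intended content is the operadic version.)
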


\begin{example}\label{example.daycon-combinatorial}
Let $\cC$ be a compactly generated monoidal $\infty$-category for which the tensor product $\otimes: \cC \times \cC \rightarrow \cC$ preserves small colimits
separately in each variable. Applying Theorem \ref{theorem.daycon} in the case $\calD = \Tw(\LinOrd)$, we obtain a nonunital monoidal structure on
the $\infty$-category $\Fun( \Tw(\LinOrd), \cC )$, whose tensor product $\circledast$ is given concretely by the formula
$$ (F_0 \circledast F_1)(I, \simeq_I) = \coprod F_0( I_0, \simeq_{I_0}) \otimes F_1( I_1, \simeq_{I_1} ).$$
Here the coproduct is taken over all decompositions $I = I_0 \amalg I_1$ into nonempty subsets, where $I_0$ is closed downward
under the linear order $\leq_{I}$ and invariant under the equivalence relation $\simeq_I$ (so that $I_1$ is closed upwards under $\leq_{I}$
and invariant under $\simeq_I$), and $\simeq_{I_0}$, $\simeq_{I_1}$ denote the equivalence relations on $I_0$ and $I_1$ obtained by
restricting the equivalence relation $\simeq_{I}$. Moreover, there is a canonical equivalence of $\infty$-categories
$$ \Alg^{\nounit}( \Fun( \Tw(\LinOrd), \cC) ) \simeq \Fun^{\lax}( \Tw(\LinOrd), \cC ).$$
Under this equivalence, the subcategory $\Fun^{\lax}_0( \Tw(\LinOrd), \cC ) \subseteq \Fun^{\lax}( \Tw(\LinOrd), \cC )$
of Variant \ref{notation.Fun-Lin-pullbacks} corresponds to the full subcategory of $\Alg^{\nounit}( \Fun( \Tw(\LinOrd), \cC) )$
spanned by those algebras for which the underlying functor $F: \Tw(\LinOrd) \rightarrow \cC$ belongs
to the full subcategory $\Fun_0( \Tw(\LinOrd), \cC) \rightarrow \Fun( \Tw(\LinOrd), \cC)$ of Notation \ref{notation.nought}.
\end{example}

We would like to consider an analogue of Example \ref{example.daycon-combinatorial}, where we replace the twisted arrow category
$\Tw(\LinOrd)$ with the category $\Pt(\broken)^{\op}$ whose objects are families of broken lines. Here we cannot apply
Theorem \ref{theorem.daycon} as stated, because the category $\Pt(\broken)^{\op}$ is not small. However, this is a minor technical nuisance:

\begin{lemma}\label{lemma.smallness}
Let $\cC$ be an $\infty$-category which admits small coproducts, and suppose we are given a pair of functors
$F_0, F_1: \Pt(\broken)^{\op} \rightarrow \cC$. Then the Day convolution product $F_0 \circledast F_1$ is well-defined.
That is, for any object of $\Pt(\broken)^{\op}$ given by a family of broken lines $L_{S} \to S$, the colimit
$$  \varinjlim_{ L_S \rightarrow L_{S_0} \star L_{S_1} } F_0( L_{S_0} ) \otimes_{\cC} F_1( L_{S_1} ).$$
exists in the $\infty$-category $\cC$.
\end{lemma}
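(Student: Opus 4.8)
The plan is to show that the colimit defining $(F_0 \circledast F_1)(L_S)$, though indexed a priori by a large category, is computed by a \emph{small} cofinal subcategory, after which the existence of small coproducts in $\cC$ (together with the existence of the relevant finite tensor products, which are just iterated applications of $\otimes_\cC$) will guarantee the colimit exists. Concretely, let $\mathcal{E}_{L_S}$ denote the indexing $\infty$-category $(\Pt(\broken)^{\op} \times \Pt(\broken)^{\op}) \times_{\Pt(\broken)^{\op}} (\Pt(\broken)^{\op})_{L_S /}$, whose objects are pairs of families of broken lines $(L_{S_0}, L_{S_1})$ together with a map $L_S \to L_{S_0} \star L_{S_1}$ in $\Pt(\broken)$. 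First I would unwind what such a map amounts to: since morphisms in $\Pt(\broken)$ are pullback squares (Construction \ref{construction.category-of-lines}), a map $L_S \to L_{S_0} \star L_{S_1}$ over $S \to S_0 \times S_1$ is determined (up to canonical isomorphism) by the map on base spaces $S \to S_0 \times S_1$ together with the resulting identification $L_S \simeq S \times_{S_0 \times S_1} (L_{S_0} \star L_{S_1})$; equivalently, it is a continuous map $S \to S_0 \times S_1$ whose associated family is isomorphic to $L_S$. In particular each fiber $L_s$ must be isomorphic, as a broken line, to a concatenation $L_{s_0}' \star L_{s_1}'$ of the two fibers.

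The key observation is that such data is \emph{rigid}: by Corollary \ref{corollary.classification} each $L_s$ is a concatenation of finitely many copies of $[-\infty,\infty]$, and a decomposition $L_s \simeq L_{s_0}' \star L_{s_1}'$ corresponds to a choice of splitting point, i.e. a choice of $\RR$-fixed point of $L_s$ (other than the initial and terminal ones) or, in the degenerate cases, the choice of putting all of $L_s$ on one side. Thus, up to isomorphism, the family $L_S$ together with a fiberwise-compatible splitting is controlled by a section of a local homeomorphism over $S$ — in the spirit of the weak $I$-sections of Definition \ref{definition.weak-I-section} and Lemma \ref{lemma-braceward} — and there are only a set's worth of such. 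More precisely, I would argue that the forgetful functor $\mathcal{E}_{L_S} \to \Top_{/(S \times \{\text{splitting data}\})}$ exhibits $\mathcal{E}_{L_S}$ as fibered over a small category, with each fiber a groupoid whose isomorphism classes form a set; since the relevant overcategory $\Top_{/X}$ has only a small category of objects up to the equivalence relevant here (any $L_{S_0}$, $L_{S_1}$ arising are pullbacks of a single universal family along maps from the fixed space $S$, by the argument in the proof of Proposition \ref{toprep}), the whole of $\mathcal{E}_{L_S}$ is essentially small. Alternatively, and perhaps more cleanly, I would cover $S$ by opens on which $L_S$ admits an $I$-section (Proposition \ref{proposition.section-existence}) and reduce to the case $L_S = \widetilde{\Rep}(I,\BR^+)$ over $\Rep(I,\BR^+)$, where Construction \ref{construction.compare} and the pullback squares following Theorem \ref{theorem.precise-version} give an explicit small cofinal family of decompositions indexed by the finitely many downward-closed decompositions $I = I_0 \amalg I_1$ together with the space of maps into the (small) spaces $\Rep(I_0,\BR^+) \times \Rep(I_1,\BR^+)$.

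The main obstacle I anticipate is the bookkeeping needed to pin down exactly \emph{which} small subcategory of $\mathcal{E}_{L_S}$ is cofinal, and to verify cofinality (via Theorem 4.1.3.1 of \cite{htt}, i.e. checking that the relevant comma categories are weakly contractible). The geometric content — that splittings of a family of broken lines form a sheaf of sets, indeed sections of an étale map over $S$ — is essentially Lemma \ref{lemma-braceward}, but translating "the colimit only depends on this sheaf" into a literal cofinal small subcategory requires care because $\Pt(\broken)^{\op}$ carries genuinely large hom-spaces (many different $S_0$, $S_1$ can present the same decomposition). I would handle this by fixing, for each decomposition type, a canonical representative: take $S_0 = S_1 = S$ and use the relative concatenation $\star_S$ of Remark \ref{easystar}, so that every object of $\mathcal{E}_{L_S}$ receives a map from one in which $S_0 = S_1 = S$; the resulting subcategory is then bounded in size by the set of $\RR$-equivariant decompositions of $L_S$ over $S$, which is small. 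Once cofinality is established, the colimit is a small colimit of objects of the form $F_0(L_{S_0}) \otimes_\cC F_1(L_{S_1})$, which exists because $\cC$ admits small coproducts and these comma-style colimits can be built from coproducts and (if one is not merely taking a coproduct over a discrete cofinal set) the finite limits/colimits already present — but in fact the canonical small model will be a \emph{coproduct} indexed by a set, so small coproducts alone suffice.
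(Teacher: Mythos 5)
Your plan converges on the same argument as the paper's proof: restrict to the full subcategory $\mathcal{E}_0 \subseteq \mathcal{E}_{L_S}$ spanned by decompositions $L_S \simeq L'_S \star_S L''_S$ over $S$ itself (those with $S_0 = S_1 = S$), observe that it is equivalent to a small set, and conclude that the Day convolution reduces to a small coproduct. The one simplification the paper makes that you do not is to note that the base-change assignment sending a general object $(L_{S_0}, L_{S_1}, L_S \to L_{S_0} \star L_{S_1})$ to the induced decomposition over $S$ is a right adjoint to the inclusion $\mathcal{E}_0 \hookrightarrow \mathcal{E}_{L_S}$, so cofinality is automatic and the comma-category verification you anticipate is unnecessary.
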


\begin{proof}
The relevant colimit is indexed by the opposite of the category
$$ \mathcal{E} = (\Pt(\broken) \times \Pt(\broken) ) \times_{ \Pt(\broken) } \Pt(\broken)_{L_S / }$$
parametrizing pairs of families of broken lines $L_{S_0} \to S_0$, $L_{S_1} \to S_1$
together with a morphism
$$ \xymatrix{ L_S \ar[r] \ar[d] & L_{S_0} \star L_{S_1} \ar[d] \\ 
S \ar[r] & S_0 \times S_1. }$$
in the category $\Pt(\broken)$. Let $\mathcal{E}_0$ be the full subcategory of $\mathcal{E}$ spanned by those objects
for which the maps $S_0 \leftarrow S \rightarrow S_1$ are homeomorphisms. The inclusion $\mathcal{E}_0 \hookrightarrow \mathcal{E}$ has
a right adjoint, and is therefore right cofinal. It will therefore suffice to show that the colimit
$$  \varinjlim_{ (L_{S_0}, L_{S_1}) \in \mathcal{E}_0 } F_0( L_{S_0} ) \otimes_{\cC} F_1( L_{S_1} ).$$
exists in the category $\cC$. This follows from our assumption that $\cC$ admits small coproducts, since the category
$\mathcal{E}_0$ is equivalent to a (small) set, regarded as a category with only identity morphisms.
\end{proof}

Using Lemma \ref{lemma.smallness}, we obtain the following variant of Theorem \ref{theorem.daycon}:

\begin{theorem}\label{theorem.daycon-topological}
Let $\cC$ be a compactly generated monoidal $\infty$-category for which the tensor product $\otimes: \cC \times \cC \rightarrow \cC$ preserves small colimits
separately in each variable. Then there is a nonunital monoidal structure on the $\infty$-category $\Fun(\Pt(\broken)^{\op}, \cC)$ with the following properties:
\begin{itemize}
\item[$(a)$] The underlying tensor product $$\circledast: \Fun(\Pt(\broken)^{\op}, \cC) \times \Fun(\Pt(\broken)^{\op}, \cC) \rightarrow \Fun( \Pt(\broken^{\op}, \cC )$$ is given by the Day convolution product 
described in Lemma \ref{lemma.smallness}.
\item[$(b)$] There is a canonical equivalence of $\infty$-categories $$\Alg^{\nounit}( \Fun(\Pt(\broken)^{\op}, \cC) ) \simeq \Fun^{\lax}( \Pt(\broken)^{\op}, \cC).$$
\end{itemize}
\end{theorem}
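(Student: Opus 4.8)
The plan is to deduce the theorem from the general theory of Day convolution recorded in Theorem~\ref{theorem.daycon} (see Section~2.2.6 of~\cite{HA}); the only issue is that $\Pt(\broken)^{\op}$ fails to be small, and this is exactly what Lemma~\ref{lemma.smallness} is designed to circumvent. Recall that the \cite{HA} construction of the nonunital Day convolution monoidal structure on $\Fun(\calD,\cC)$ produces a coCartesian fibration, over the relevant nonunital operad, whose fibers are the $\infty$-categories $\Fun(\calD^{\times n},\cC)$ and whose structure maps are computed by left Kan extension along the iterated tensor functors $\calD^{\times n}\to\calD$. Inspecting that construction, one sees that smallness of $\calD$ enters only to ensure that $\cC$ admits the colimits indexed by the comma categories $(\calD^{\times n})\times_{\calD}\calD_{/D}$ that compute these Kan extensions; once those colimits are known to exist, the construction --- and the proof of the universal property in Theorem~\ref{theorem.daycon}$(b)$ --- applies verbatim to a large $\calD$.

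First I would verify this existence hypothesis for $\calD = \Pt(\broken)^{\op}$. For $n=2$ it is exactly Lemma~\ref{lemma.smallness}: its proof exhibits, inside the indexing category $\mathcal{E} = (\Pt(\broken)\times\Pt(\broken))\times_{\Pt(\broken)}\Pt(\broken)_{L_S/}$, the full subcategory $\mathcal{E}_0$ on which the structural maps $S_0 \leftarrow S \to S_1$ of topological spaces are homeomorphisms; the inclusion $\mathcal{E}_0\hookrightarrow\mathcal{E}$ admits a right adjoint, hence is right cofinal, and $\mathcal{E}_0$ is equivalent to a small discrete set, so the colimit exists in $\cC$ using only small coproducts. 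The same argument, applied to the $n$-ary comma category $(\Pt(\broken)^{\times n})\times_{\Pt(\broken)}\Pt(\broken)_{L_S/}$, shows that the full subcategory on which all the projections $S\to S_i$ are homeomorphisms is right cofinal and essentially small for every $n\geq 1$. Hence $\cC$ admits all the requisite colimits; the \cite{HA} construction yields a nonunital monoidal $\infty$-category $\Fun(\Pt(\broken)^{\op},\cC)^{\circledast}$; and unwinding the cofinality reduction identifies its underlying tensor product with the Day convolution product described in Lemma~\ref{lemma.smallness}. This establishes part~$(a)$.

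For part~$(b)$, the universal property of Theorem~\ref{theorem.daycon}$(b)$ --- in particular the identification $\Alg^{\nounit}(\Fun(\calD,\cC))\simeq\Fun^{\lax}(\calD,\cC)$ --- is established in~\cite{HA} by the same comma-category manipulations, and therefore transports to $\calD=\Pt(\broken)^{\op}$ as soon as the colimits above are available; this gives the claimed equivalence $\Alg^{\nounit}(\Fun(\Pt(\broken)^{\op},\cC))\simeq\Fun^{\lax}(\Pt(\broken)^{\op},\cC)$. I expect the genuine work here to be bookkeeping rather than mathematics: one must check that non-smallness of $\Pt(\broken)^{\op}$ is harmless at \emph{every} stage of the operadic construction of~\cite{HA}, not merely for the binary tensor product handled by Lemma~\ref{lemma.smallness} --- equivalently, that each comma category occurring there becomes essentially small after passage to the cofinal subcategory of diagrams in which the parametrizing maps of topological spaces are homeomorphisms. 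If one prefers to avoid the internals of~\cite{HA}, an alternative is to fix a sufficiently large regular cardinal $\kappa$, apply Theorem~\ref{theorem.daycon} to the small full monoidal subcategory of families parametrized by spaces of size ${<}\,\kappa$, and note --- again using Lemma~\ref{lemma.smallness} --- that the comma categories computing the convolution products all lie within this subcategory, so that the resulting structure on $\Fun(\Pt(\broken)^{\op},\cC)$ is visibly independent of $\kappa$.
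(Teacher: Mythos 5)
Your argument is correct, but it proceeds by a genuinely different route from the paper's. The paper avoids the internals of the \cite{HA} construction altogether by appealing to a universe enlargement: it identifies $\cC$ with $\Ind(\cC_0) = \Fun^{\lex}(\cC_0^{\op},\SSet)$ (where $\cC_0$ is the full subcategory of compact objects), passes to $\widehat{\cC} = \Fun^{\lex}(\cC_0^{\op},\widehat{\SSet})$ --- a compactly generated $\infty$-category in a larger universe in which $\Pt(\broken)^{\op}$ is small --- applies Theorem~\ref{theorem.daycon} in that setting, and then invokes Lemma~\ref{lemma.smallness} once, to verify that the full subcategory $\Fun(\Pt(\broken)^{\op},\cC)\subseteq\Fun(\Pt(\broken)^{\op},\widehat{\cC})$ is closed under Day convolution and therefore inherits the desired nonunital monoidal structure. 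You instead open up the operadic construction of \cite{HA} and verify directly that the colimits it needs exist in $\cC$, via the $n$-ary generalizations of the cofinality argument in Lemma~\ref{lemma.smallness}. This is exactly the ``more refined analysis'' the authors allude to in the remark immediately following the theorem, where they note that one can avoid universes this way --- and that it would even permit the weaker hypothesis that $\cC$ merely admit small coproducts --- but ``leave the details to the reader.'' The trade-off is clear: the paper's proof is shorter and more modular, at the cost of invoking universes and the (mild) hypothesis of compact generation; yours is logically more elementary but requires checking the smallness/cofinality claim at each arity $n$ inside the coCartesian-fibration construction, which the authors elected not to write out. Your cardinal-cutoff fallback is also sound, though it carries its own bookkeeping burden in exhibiting independence of $\kappa$.
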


\begin{proof}
Let $\cC_0$ be the full subcategory of $\cC$ spanned by the compact objects. Our assumption that $\cC$ is compactly generated guarantees that we can identify
$\cC$ with the $\infty$-category $\Ind( \cC_0) = \Fun^{\lex}( \cC_0^{\op}, \SSet )$ of left-exact $\SSet$-valued functors on $\cC_0^{\op}$ (see Notation~\ref{notation.kan-complexes}). Let
$\widehat{\SSet}$ denote the $\infty$-category of Kan complexes that are not necessarily small, and set $\widehat{\cC} = \Fun^{\lex}( \cC_0^{\op}, \widehat{\SSet} )$.
Then we can regard $\widehat{\cC}$ as a compactly generated $\infty$-category in a larger universe where the category $\Pt(\broken)^{\op}$ is small.
Applying Theorem \ref{theorem.daycon} in that context, we obtain a nonunital monoidal structure on the $\infty$-category $\Fun( \Pt(\broken)^{\op}, \widehat{\cC} )$ satisfying
the analogues of conditions $(a)$ and $(b)$. It follows from Lemma \ref{lemma.smallness} that the full subcategory $$\Fun( \Pt(\broken)^{\op}, \cC ) \subseteq \Fun( \Pt(\broken)^{\op}, \widehat{\cC} )$$
is closed under Day convolution, and therefore inherits a nonunital monoidal structure with the desired properties.
\end{proof}

\begin{remark}
Using a more refined analysis, one can prove Theorem \ref{theorem.daycon-topological} directly from the analysis of Lemma \ref{lemma.smallness}, without appealing to the notion of universes. We can also
weaken the assumption that $\cC$ is compactly generated: it is only necessary to assume that $\cC$ admits small coproducts. We leave the details to the reader.
\end{remark}

\begin{remark}
Let $\cC$ be a compactly generated $\infty$-category. The proof of Lemma \ref{lemma.smallness} shows that the Day convolution on
$\Fun( \Pt(\broken)^{\op}, \cC)$ can be described concretely by the formula
$$ (\sheafF' \circledast \sheafF'')(L_S) = \coprod_{ L_S \simeq L'_{S} \star_{S} L''_{S}} \sheafF'( L'_S ) \otimes \sheafF''( L''_S),$$
where the coproduct is taken over all (isomorphism classes of) decompositions $L_{S} \simeq L'_{S} \star_{S} L''_{S}$ in the
category $\broken(S)$ of broken lines over $S$; here $\star_{S}$ is defined as in Remark \ref{easystar}.
\end{remark}

\begin{remark}\label{remark.weak-factor}
Under the equivalence $$\Alg^{\nounit}( \Fun(\Pt(\broken)^{\op}, \cC) ) \simeq \Fun^{\lax}( \Pt(\broken)^{\op}, \cC)$$ of
Theorem \ref{theorem.daycon-topological}, the full subcategory $\Shv^{\wfact}_{\cC}( \broken ) \subseteq \Fun^{\lax}( \Pt(\broken)^{\op}, \cC )$
corresponds to the $\infty$-category of nonunital algebra objects of $\Fun(\Pt(\broken)^{\op}, \cC)$ which are
$\cC$-valued sheaves on $\broken$, in the sense of Definition \ref{definition.sheaf-on-stack}.
\end{remark}

In the situation of Theorem \ref{theorem.daycon-topological}, precomposition with the nonunital monoidal functor $\widetilde{\Phi}: \Tw(\LinOrd) \rightarrow \Pt(\broken)^{\op}$ of Proposition \ref{ols}
induces a nonunital lax monoidal functor
$$ \Fun( \Pt(\broken)^{\op}, \cC) \rightarrow \Fun( \Tw(\LinOrd), \cC ),$$
which carries the $\infty$-category $\Shv_{\cC}( \broken )$ to $\Fun_0( \Tw(\LinOrd), \cC )$. Using Remark \ref{remark.weak-factor} and Example \ref{example.daycon-combinatorial},
we can identify $\Shv^{\wfact}_{\cC}( \broken)$ and $\Fun^{\lax}_{0}( \Tw(\LinOrd), \cC )$ with the inverse images of $\Shv_{\cC}( \broken)$ and
$\Fun_0( \Tw(\LinOrd), \cC )$ in the $\infty$-categories of nonunital algebra objects of $\Fun( \Pt(\broken)^{\op}, \cC)$ and $\Fun( \Tw(\LinOrd), \cC )$, respectively.
Consequently, Theorem \ref{theorem.more-precise-version} is a consequence of the following assertion, which we will prove in our next and final section, \S \ref{section.bigproof}:

\begin{theorem}\label{theorem.even-more-precise}
Let $\cC$ be a compactly generated monoidal $\infty$-category, and assume that the tensor product $\otimes: \cC \times \cC \rightarrow \cC$ preserves small colimits
separately in each variable. Then composition with the functor $\widetilde{\Phi}: \Tw(\LinOrd) \rightarrow \Pt(\broken)^{\op}$ of Proposition \ref{ols}
induces an equivalence of $\infty$-categories $$\Shv_{\cC}( \broken ) \rightarrow \Fun_0( \Tw(\LinOrd), \cC ).$$ Moreover,
this functor $\theta$ is an equivalence of (nonunital) planar $\infty$-operads. In other words, for any collection of objects
$\sheafF_1, \sheafF_2, \ldots, \sheafF_n, \sheafG \in \Shv_{\cC}( \broken )$, the canonical map
$$ \xymatrix{ \bHom_{ \Fun( \Pt(\broken)^{\op}, \cC) }( \sheafF_1 \circledast \cdots \circledast \sheafF_n, \sheafG ) \ar[d]^{\psi} \\
\bHom_{ \Fun( \Tw(\LinOrd)^{\op}, \cC) }( (\sheafF_1 \circ \widetilde{\Phi}) \circledast \cdots \circledast (\sheafF_n \circ \widetilde{\Phi}), \sheafG \circ \widetilde{\Phi} ) }$$
is a homotopy equivalence.
\end{theorem}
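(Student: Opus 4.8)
The plan is to deduce everything from the already-established equivalence $\Shv_{\cC}(\broken) \simeq \Fun(\LinOrd,\cC)$ (Theorem \ref{theorem.broken-sheaves}) together with the computation of morphism spaces between Day-convolution powers afforded by Proposition \ref{proposition.homotopy-invariant-is-enough}. First I would record the combinatorial side: by Notation \ref{notation.nought} the restriction functor $\Fun_0(\Tw(\LinOrd),\cC) \to \Fun(\LinOrd,\cC)$ (precomposition with $\sharp: \LinOrd \hookrightarrow \Tw(\LinOrd)$) is an equivalence, and under the identification $\widetilde{\Phi}(I^{\sharp}) \simeq \widetilde{\Rep}(I,\BR^{+})$ (the Example following Construction \ref{construction.phi}) the composite $\Shv_{\cC}(\broken) \xrightarrow{\circ\widetilde{\Phi}} \Fun(\Tw(\LinOrd),\cC) \to \Fun(\LinOrd,\cC)$ is exactly the equivalence of Theorem \ref{theorem.broken-sheaves}. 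So I must check two things: (i) $\circ\widetilde{\Phi}$ lands in $\Fun_0(\Tw(\LinOrd),\cC)$, i.e.\ for $\sheafF \in \Shv_{\cC}(\broken)$ the map $\sheafF(\widetilde{\Phi}(I^{\sharp})) \to \sheafF(\widetilde{\Phi}(I,\simeq_I))$ is an equivalence; and (ii) the resulting functor $\theta: \Shv_{\cC}(\broken) \to \Fun_0(\Tw(\LinOrd),\cC)$ is an equivalence of planar $\infty$-operads, i.e.\ it induces equivalences on all multi-mapping spaces (the single case $n=1$ then giving that $\theta$ itself is an equivalence of $\infty$-categories, since it is essentially surjective by (i) and Theorem \ref{theorem.broken-sheaves}).

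For (i): the morphism $I^{\sharp} \to (I,\simeq_I)$ in $\Tw(\LinOrd)$ is carried by $\widetilde{\Phi}$ to the map of families of broken lines over the closed inclusion $\Phi(I,\simeq_I) \hookrightarrow \Rep(I,\BR^{+})$; since $\sheafF$ is a sheaf on $\broken$, the value $\sheafF(\widetilde{\Phi}(I,\simeq_I))$ is the global sections of $\sheafF_{\widetilde{\Rep}(I,\BR^{+})}|_{\Phi(I,\simeq_I)}$, which by Example \ref{example.automatic-constructibility} is a constructible sheaf whose restriction to $\Phi(I,\simeq_I)$ is pulled back from a point (indeed $\Phi(I,\simeq_I)$ lies in the single stratum $K_{E}$ where $E$ is the convex equivalence relation $\simeq_I$ restricted appropriately, cf.\ Construction \ref{construction.stratify}), hence is contractible-over-the-base; the claim follows from Remark \ref{remark.constructible-evaluation} applied to both $I^\sharp$ and $(I,\simeq_I)$, or equivalently from homotopy invariance (Proposition \ref{proposition.automatic-homotopy-invariance}) together with the fact that $\Phi(I,\simeq_I) \hookrightarrow \Rep(I,\BR^{+})$ is a stratum-inclusion onto which the family is constant.

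The heart of the argument is (ii), the comparison of multi-mapping spaces. Both $\circledast$-products are computed by the concrete coproduct formulas: on the geometric side, $(\sheafF_1 \circledast \cdots \circledast \sheafF_n)(L_S) = \coprod \sheafF_1(L^{(1)}_S) \otimes \cdots \otimes \sheafF_n(L^{(n)}_S)$ over fiberwise decompositions $L_S \simeq L^{(1)}_S \star_S \cdots \star_S L^{(n)}_S$ (Remark following Theorem \ref{theorem.daycon-topological}); on the combinatorial side, $(\sheafF_1 \circ \widetilde{\Phi}) \circledast \cdots \circledast (\sheafF_n \circ\widetilde{\Phi})$ is described by Example \ref{example.daycon-combinatorial}. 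The key point is that $\sheafF_1 \circledast \cdots \circledast \sheafF_n$, while generally \emph{not} a sheaf on $\broken$, is still a \emph{homotopy-invariant lax $\cC$-valued presheaf} on $\broken$: the decomposition data $L_S \simeq L^{(1)}_S \star_S \cdots \star_S L^{(n)}_S$ does not change under pulling $L_S$ back along $S\times\RR \to S$ (concatenation commutes with base change, and the $\RR$-factor contributes nothing since the fibers of $\pi$ are unchanged), and each $\sheafF_i$ is homotopy invariant by Proposition \ref{proposition.automatic-homotopy-invariance}; hence the coproduct is too. Therefore Proposition \ref{proposition.homotopy-invariant-is-enough} applies with $\sheafG = $ the sheaf and the lax presheaf $\sheafF_1 \circledast \cdots \circledast \sheafF_n$ interchanged — more precisely, I would run Proposition \ref{proposition.homotopy-invariant-is-enough} with $\sheafF := \sheafG$ (a genuine sheaf) and the homotopy-invariant lax sheaf playing the role of $\sheafG$ in the statement — wait, the roles are the other way: $\bHom(\sheafF_1 \circledast\cdots\circledast \sheafF_n, \sheafG)$ has the sheaf in the \emph{target}. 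Here I instead use that $\Shv_{\cC}(\broken) \subseteq \Shv^{\lax}_{\cC}(\broken)$ is reflective and that $\sheafG$ is a sheaf, so mapping \emph{into} $\sheafG$ only sees the sheafification of $\sheafF_1 \circledast\cdots\circledast\sheafF_n$; and by adjunction $\bHom_{\Shv^{\lax}}(\sheafH, \sheafG) \simeq \bHom_{\Shv}(\mathrm{sheafify}(\sheafH),\sheafG)$. So I compute both sides by restricting along $\widetilde{\Rep}$ via Theorem \ref{theorem.broken-sheaves} applied to the sheafifications, and match the restrictions $(\sheafF_1 \circledast\cdots\circledast\sheafF_n)\circ\widetilde{\Rep}$ with $(\sheafF_1\circ\widetilde{\Rep})\circledast\cdots\circledast(\sheafF_n\circ\widetilde{\Rep})$ — this last identification is the combinatorial content, following from the universal property of $\LinOrd$ (Remark \ref{remark.universal-property-linord}) and compatibility of $\widetilde{\Phi}$ with $\star$ (Proposition \ref{ols}), i.e.\ that decompositions of $\widetilde{\Rep}(I,\BR^+)$ into concatenations correspond to order-convex partitions of $I$.

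The main obstacle I anticipate is the bookkeeping in this last matching: one must show that the colimit diagram indexing $(\sheafF_1 \circledast\cdots\circledast\sheafF_n)(\widetilde{\Rep}(I,\BR^{+}))$ — fiberwise decompositions of the universal $I$-sectioned family over $\Rep(I,\BR^{+})$ — is equivalent (cofinally) to the discrete set of order-preserving surjections $I \twoheadrightarrow [n-1]$ that appears on the combinatorial side, and that this equivalence is natural in $I \in \LinOrd$ and compatible with the lax monoidal structures so that it upgrades to an equivalence of planar operads rather than just of underlying functors. This requires unwinding Construction \ref{construction.compare} and checking that every fiberwise $\star$-decomposition of $\widetilde{\Rep}(I,\BR^{+})$ is, locally on the base and up to the contractible $\RR$-ambiguity quotiented out by homotopy invariance, induced by a convex partition of $I$ — which is essentially the classification of broken lines (Corollary \ref{corollary.classification}) in parametrized form (Corollary \ref{corollary.local-coordinates}). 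Once this cofinality/naturality is in hand, the planar-operad statement follows formally, since an operadic functor inducing equivalences on all multi-mapping spaces and essentially surjective on objects (which we get from Theorem \ref{theorem.broken-sheaves}) is an equivalence of planar $\infty$-operads.
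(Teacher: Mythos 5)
Your proposal identifies the correct ingredients (Theorem \ref{theorem.broken-sheaves}, Proposition \ref{proposition.homotopy-invariant-is-enough}, Proposition \ref{proposition.eval}, constructibility via Remark \ref{remark.constructible-evaluation}), and the first half of your argument — that $\circ\widetilde{\Phi}$ lands in $\Fun_0(\Tw(\LinOrd),\cC)$ and recovers the equivalence of Theorem \ref{theorem.broken-sheaves} after restricting along $\sharp$ — is essentially identical to Proposition \ref{proposition.eval} and Corollary \ref{corollary.plain-version} in the paper. (One small slip there: $\Phi(I,\simeq_I)$ is not a single stratum but a \emph{union} of strata, namely those $K_E$ with $E\subseteq\,\simeq_I$; the argument still works because the common initial object $\alpha_0$ of the exit-path category lies in this closed union, but the justification needs to be phrased as in Proposition \ref{proposition.eval}, not as ``constant over a stratum.'')

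The real difficulty, which you correctly flag but do not resolve, is the operadic claim, and here your route diverges from the paper's in a way that leaves a genuine gap. You propose to treat all $n$ factors at once: replace $\sheafF_1\circledast\cdots\circledast\sheafF_n$ by its sheafification and then invoke Theorem \ref{theorem.broken-sheaves}. But that theorem is an equivalence $\Shv_{\cC}(\broken)\simeq\Fun(\LinOrd,\cC)$, so you would first need to know that the sheafification of the Day convolution lies in the subcategory $\Shv_{\cC}(\broken)\subseteq\Shv^{\lax}_{\cC}(\broken)$ of genuine sheaves — this is not automatic, since the Day convolution is a large colimit of functors that are themselves not sheaves, and the paper does not prove (or need) such a statement. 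Even granting this, you would then have to match $\mathrm{sheafify}(\sheafF_1\circledast\cdots\circledast\sheafF_n)\circ\widetilde{\Rep}$ with the combinatorial convolution $(\sheafF_1\circ\widetilde{\Phi})\circledast\cdots\circledast(\sheafF_n\circ\widetilde{\Phi})$, and this comparison is genuinely delicate because $\sharp:\LinOrd\to\Tw(\LinOrd)$ is only \emph{colax} monoidal (Warning \ref{warning.UL}); the combinatorial Day convolution does not restrict cleanly to $\LinOrd$ (the convolution of two functors in $\Fun_0$ generally leaves $\Fun_0$), so the two sides do not line up termwise in any obvious way.

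The paper sidesteps both obstacles by comparing \emph{one variable at a time}: introduce $\cJ^{(m)}=\Tw(\LinOrd)^m\times(\Pt(\broken)^{\op})^{n-m}$ and show each intermediate restriction $\theta_m$ is an equivalence. Fixing $m$, the source functor of the one remaining variable has the simple shape $L_S\mapsto C\otimes\sheafF_m(L_S)\otimes C'$, whose sheafification $\sheafH$ \emph{does} land in $\Shv_{\cC}(\broken)$ by Remark \ref{remark.stays-a-sheaf} (it is the image of the genuine sheaf $\sheafF_m$ under the adjoint pair built from $C\otimes(-)\otimes C'$). Meanwhile the target functor $L_S\mapsto\sheafG(\widetilde{B}\star L_S\star\widetilde{B}')$ is manifestly a homotopy-invariant lax sheaf, and the $\Tw(\LinOrd)$ side of that variable is already a left Kan extension along $\sharp$, so restricting from $\Tw(\LinOrd)$ to $\LinOrd$ is lossless. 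At that point Proposition \ref{proposition.homotopy-invariant-is-enough} applies with source $\sheafH\in\Shv_{\cC}(\broken)$ and homotopy-invariant lax target $G(K')$, and the combinatorial matching that you (rightly) worry about never has to be performed. That one-variable induction is the missing idea; without it, your plan does not close.
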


\subsection{Proof of Theorem \ref{theorem.even-more-precise}}\label{section.bigproof}

We will deduce the first assertion of Theorem \ref{theorem.even-more-precise} by combining Theorem \ref{theorem.broken-sheaves} with the following:

\begin{proposition}\label{proposition.eval}
Let $\cC$ be a compactly generated $\infty$-category and let $\sheafF \in \Shv_{\cC}( \broken )$ be a $\cC$-valued sheaf on $\broken$.
Then, for every object $(I, \simeq_I ) \in \Tw( \LinOrd )$, the inclusion $\widetilde{ \Phi }( I, \simeq_I) \hookrightarrow \widetilde{\Rep}(I, \BR^{+} ) = \widetilde{\Phi}( I^{\sharp} )$
induces an equivalence $\sheafF( \widetilde{\Phi}(I^{\sharp} )) \rightarrow \sheafF( \widetilde{\Phi}(I, \simeq_I) )$.
\end{proposition}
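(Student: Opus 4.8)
The plan is to reduce the statement to the constructibility analysis of \S\ref{section.sheaves-on-broken}, specifically Proposition \ref{proposition.constructible-characterization} and Remark \ref{remark.constructible-evaluation}. Fix a $\cC$-valued sheaf $\sheafF$ on $\broken$ and an object $(I,\simeq_I) \in \Tw(\LinOrd)$, and let $E$ denote the convex equivalence relation $\simeq_I$ on $I$ (convexity was built into Example \ref{example.twist}, matching the notion in Construction \ref{construction.stratify}). Recall from Construction \ref{construction.phi} that $\Phi(I,\simeq_I) \subseteq \Rep(I,\BR^{+})$ consists of those $\alpha$ with $\alpha(i,j) < \infty \Rightarrow i \simeq_I j$. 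The first thing I would do is identify this closed subset in terms of the stratification $\{K_{E'}, U_{E'}\}_{E' \in \Conv(I)}$ of Construction \ref{construction.stratify}: unwinding the formulae, $\Phi(I,\simeq_I)$ is precisely the union $\bigcup_{E' \subseteq E} K_{E'}$ of the strata indexed by equivalence relations refining $E$ (i.e.\ finer than or equal to $E$), and the "deepest" stratum $K_E$ is a closed subset of $\Phi(I,\simeq_I)$ which is a deformation retract of it. Concretely, writing $I = \{i_0 \leq i_1 \leq \cdots \leq i_n\}$ and using the chart $\beta \mapsto (\log\beta(i_0,i_1), \ldots, \log\beta(i_{n-1},i_n))$ from the proof of Proposition \ref{proposition.constructible-characterization}, $\Phi(I,\simeq_I)$ maps homeomorphically onto a convex subset of $\RR_{\leq 0}^n$ (the coordinates indexed by "jumps" between distinct $E$-classes are pinned to $-\infty$, i.e.\ the corresponding $\RR^{+}$-coordinate is $\infty$, while those within an $E$-class range over $\RR_{\leq 0}$), and similarly for $K_E$; the inclusion $K_E \hookrightarrow \Phi(I,\simeq_I)$ is the inclusion of a convex subset, hence a homotopy equivalence.

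Next I would invoke the fact, recorded in Example \ref{example.automatic-constructibility}, that $\sheafF_{\widetilde{\Rep}(I,\BR^{+})}$ is a constructible sheaf on $\broken^I \simeq \Rep(I,\BR^{+})$ with respect to this stratification. The key point is that, by Proposition \ref{proposition.constructible-characterization} (and its proof via exit-path $\infty$-categories / Theorem A.9.3 of \cite{HA}), the restriction of a constructible sheaf to any stratum $K_{E'}$ is constant, and more to the point the restriction map $\sheafF(\broken^I) \to \sheafF(\{\alpha\})$ to the deepest point $\alpha$ (with $\alpha(i,j) = 0$ if $i =_I j$ and $\infty$ otherwise) is an equivalence — this is exactly Remark \ref{remark.constructible-evaluation}. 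I would run the identical argument one level up: since $\Phi(I,\simeq_I) \subseteq \Rep(I,\BR^{+})$ is a union of strata and $\sheafF_{\widetilde{\Rep}(I,\BR^{+})}|_{\Phi(I,\simeq_I)}$ is constructible with respect to the induced stratification, and since $K_E$ is the unique closed (deepest) stratum of $\Phi(I,\simeq_I)$ which is a deformation retract, the restriction $\sheafF(\widetilde{\Phi}(I^{\sharp}))|_{\Phi(I,\simeq_I)} \to \sheafF(\widetilde{\Phi}(I,\simeq_I))$ — wait, more carefully: $\sheafF(\widetilde{\Phi}(I^{\sharp})) = \sheafF_{\widetilde{\Rep}(I,\BR^{+})}(\Rep(I,\BR^{+}))$ and $\sheafF(\widetilde{\Phi}(I,\simeq_I)) = \sheafF_{\widetilde{\Rep}(I,\BR^{+})}|_{\Phi(I,\simeq_I)}(\Phi(I,\simeq_I))$, and both compute, via constructibility plus contractibility of the relevant open sets $U_{E'}$, the common value $\sheafF(\{\alpha_E\})$ where $\alpha_E$ is the point with $\alpha_E(i,j) = 0$ if $i \simeq_I j$ and $\infty$ otherwise. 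So I would show both global sections agree with the stalk at $\alpha_E$, each by an application of Proposition \ref{proposition.constructible-characterization} (the first is Remark \ref{remark.constructible-evaluation} applied to the linearly preordered set $I$; the second is the same statement applied to $\Phi(I,\simeq_I)$ in place of all of $\Rep(I,\BR^{+})$), and check that the comparison map induced by the inclusion $\widetilde{\Phi}(I,\simeq_I) \hookrightarrow \widetilde{\Phi}(I^{\sharp})$ is compatible with these identifications.

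A cleaner packaging, which I would probably prefer in the write-up: note that $\alpha_E \in \Phi(I,\simeq_I)$ is an initial/deepest point in the sense that $\{\alpha_E\} \cup K_E \hookrightarrow \Phi(I,\simeq_I)$ is the inclusion of a convex subset of $\RR_{\leq 0}^n$ into a larger convex subset, hence the exit-path category of $\Phi(I,\simeq_I)$ (with its induced stratification) has an initial object corresponding to the stratum through $\alpha_E$; then by Proposition \ref{proposition.constructible-characterization} (which identifies constructible sheaves with functors out of $\Conv$, equivalently out of the exit-path category) the global-sections functor on a constructible sheaf is computed by evaluation at that initial stratum, giving both $\sheafF(\widetilde{\Phi}(I^{\sharp})) \simeq \sheafF(\{\alpha_E\})$ and $\sheafF(\widetilde{\Phi}(I,\simeq_I)) \simeq \sheafF(\{\alpha_E\})$ compatibly. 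The main obstacle I anticipate is purely bookkeeping: making the homeomorphism between $\Phi(I,\simeq_I)$ and a convex subset of $\RR_{\leq 0}^n$ precise when $I$ is allowed to be a preordered set rather than strictly linearly ordered (so that some coordinate directions are forced to $-\infty$), and checking that the stratification of $\Phi(I,\simeq_I)$ induced from that of $\Rep(I,\BR^{+})$ is exactly the one indexed by $\{E' \in \Conv(I) : E' \subseteq E\}$ with the deepest stratum $K_E$ contractible — none of this is deep, but it is where the details live. Once that is in hand, constructibility plus contractibility of the relevant opens finishes the argument exactly as in Remark \ref{remark.constructible-evaluation}.
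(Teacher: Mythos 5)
Your overall strategy matches the paper's: reduce to the constructibility of $\sheafF_{\widetilde{\Rep}(I,\BR^{+})}$, observe that $\Phi(I,\simeq_I)$ is a union of strata, compare the two exit-path $\infty$-categories, and conclude by identifying a common initial object. But you have misidentified that initial object, and the error is load-bearing.

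Concretely: you repeatedly call $K_E$ (for $E = \simeq_I$) the ``deepest'' stratum of $\Phi(I,\simeq_I)$, say that it is closed, and claim it is a deformation retract; and you assert that both $\sheafF(\widetilde{\Phi}(I^\sharp))$ and $\sheafF(\widetilde{\Phi}(I,\simeq_I))$ agree with the stalk at a point $\alpha_E \in K_E$. All of these claims are backwards. Within $\Phi(I,\simeq_I)$ one has $K_E = U_E \cap \Phi(I,\simeq_I)$, so $K_E$ is an \emph{open} dense stratum, not a closed one; and the inclusion of the open interior into a manifold with boundary/corners such as $\prod_a (\RR^{+})^{|I_a|-1}$ is a homotopy equivalence but not a deformation retract. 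More to the point, the deepest stratum of $\Phi(I,\simeq_I)$ is $K_{E_{\min}}$, the singleton $\{\alpha_0\}$ where $\alpha_0(i,j)=0$ if $i=j$ and $\alpha_0(i,j)=\infty$ for $i <_I j$. Since the partial order on $\Conv(I)$ is by refinement, the discrete relation $E_{\min}$ is the initial object of $\Conv(I)$, and (by the proof of Proposition \ref{proposition.constructible-characterization}) the corresponding point $\alpha_0$ is initial in the exit-path $\infty$-category $\calJ$ of $\Rep(I,\BR^{+})$ and also in the sub-exit-path category $\calJ_0$ of $\Phi(I,\simeq_I)$. Both global-sections objects are therefore computed by evaluation at $\alpha_0$, not at $\alpha_E$.

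If you run your argument with $\alpha_E$ it actually fails: in the example $I = \{0 < 1 < 2\}$, $\simeq_I$ the relation $\{0,1\}\cup\{2\}$, the stalk at $\alpha_E = (0,\infty) \in K_E$ is the value of the constructible sheaf on the stratum $K_E$, which in general differs from the value on $K_{E_{\min}}$ -- but it is the latter that equals $\sheafF(\Rep(I,\BR^{+}))$. The repair is simply to replace $\alpha_E$ by $\alpha_0$ throughout; once that substitution is made your argument coincides with the paper's proof.
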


\begin{proof}
Let $\sheafF_{ \widetilde{\Rep}(I, \BR^{+} )}$ denote the $\cC$-valued sheaf on the space $\Rep(I, \BR^{+} ) \simeq \broken^{I}$
determined by the family of broken lines $\widetilde{\Rep}(I, \BR^{+} ) \rightarrow \Rep(I, \BR^{+} )$. 
Let us regard the topological space $\Rep(I, \BR^{+} ) \simeq \broken^{I}$ as equipped with the stratification of Construction \ref{construction.stratify}, and let
$\calJ$ be the $\infty$-category of exit paths associated to that stratification (see the proof of Proposition \ref{proposition.constructible-characterization}). Note that the closed subset $\Phi(I, \simeq_I) \subseteq \Rep(I, \BR^{+} )$
is a union of strata (namely, those strata which correspond to convex equivalence relations $E$ which are contained in $\simeq_{I}$). Consequently,
$\Phi(I, \simeq_I)$ inherits a stratification whose exit path $\infty$-category can be identified with a full subcategory $\calJ_0 \subseteq \calJ$.
The sheaf $\sheafF$ is constructible, and can therefore be identified with a functor $F: \calJ \rightarrow \cC$ (see Theorem A.9.3 of~\cite{HA}). To prove Proposition \ref{proposition.eval},
it will suffice to show that the restriction map
$$ \varprojlim_{\alpha \in \calJ} F(\alpha) \rightarrow \varprojlim_{\alpha \in \calJ_0} F(\alpha)$$
is an equivalence in the $\infty$-category $\cC$. This is clear, since both sides are given by evaluating $F$ at the point $\alpha_0 \in \Rep(I, \BR^{+} )$ given
by the formula $$\alpha_0(i,j) = \begin{cases} 0 & \text{ if } i = j \\
\infty & \text{ if } i <_I j; \end{cases}$$ note that $\alpha_0$ is an initial object of the $\infty$-category $\calJ$ (this follows from the proof of 
Proposition \ref{proposition.constructible-characterization}), and is therefore also initial as an object of the subcategory $\calJ_0$.
\end{proof}

\begin{corollary}\label{corollary.plain-version}
Let $\cC$ be a compactly generated $\infty$-category. Then composition with the functor $\widetilde{\Phi}: \Tw(\LinOrd)^{\op} \rightarrow \Pt(\broken)$ of Construction \ref{construction.phi}
induces an equivalence of $\infty$-categories $\Shv_{\cC}(\broken) \rightarrow \Fun_0( \Tw(\LinOrd), \cC)$.
\end{corollary}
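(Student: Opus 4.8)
The plan is to obtain Corollary \ref{corollary.plain-version} as a formal consequence of three things already in hand: the classification of sheaves on $\broken$ (Theorem \ref{theorem.broken-sheaves}), the ``vanishing of the extra strata'' computation (Proposition \ref{proposition.eval}), and the description of $\Fun_0( \Tw(\LinOrd), \cC)$ as a left Kan extension along the embedding $\sharp \colon \LinOrd \hookrightarrow \Tw(\LinOrd)$ of Notation \ref{remark.left-adjoint-to-forget} (Notation \ref{notation.nought}). Write $\Theta \colon \Shv_{\cC}( \broken ) \rightarrow \Fun( \Tw(\LinOrd), \cC )$ for the functor $\sheafF \mapsto \sheafF \circ \widetilde{\Phi}$ given by composition with $\widetilde{\Phi}$ of Construction \ref{construction.phi}; the content of the corollary is that $\Theta$ is an equivalence onto the full subcategory $\Fun_0( \Tw(\LinOrd), \cC)$.

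The first step is to identify $\widetilde{\Phi} \circ \sharp$ with the functor $\widetilde{\Rep} \colon \LinOrd \rightarrow \Pt(\broken)^{\op}$, $I \mapsto \widetilde{\Rep}(I, \BR^{+})$, of Construction \ref{makelines} and Remark \ref{remark.functor}. On objects this is the equality $\widetilde{\Phi}(I^{\sharp}) = \Phi(I^{\sharp}) \times_{ \Rep(I, \BR^{+})} \widetilde{\Rep}(I, \BR^{+}) = \widetilde{\Rep}(I, \BR^{+})$ recorded in Construction \ref{construction.phi}, using $\Phi(I^{\sharp}) = \Rep(I, \BR^{+})$; on morphisms, the functoriality of $\widetilde{\Phi}$ (induced by precomposition of maps into $\BR^{+}$) restricts along the fully faithful embedding $\sharp$ to the functoriality described in Remark \ref{remark.functor}. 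Granting this, the composite $\Shv_{\cC}(\broken) \xrightarrow{\Theta} \Fun(\Tw(\LinOrd), \cC) \to \Fun(\LinOrd, \cC)$, whose second arrow is restriction along $\sharp$, agrees with the equivalence of Theorem \ref{theorem.broken-sheaves}.

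Next, I would apply Proposition \ref{proposition.eval}: for every $\sheafF \in \Shv_{\cC}(\broken)$ and every $(I, \simeq_{I}) \in \Tw(\LinOrd)$, the canonical morphism $I^{\sharp} \rightarrow (I, \simeq_{I})$ in $\Tw(\LinOrd)$ is carried by $\sheafF \circ \widetilde{\Phi}$ to the map $\sheafF(\widetilde{\Phi}(I^{\sharp})) \rightarrow \sheafF(\widetilde{\Phi}(I, \simeq_{I}))$, which is an equivalence. By the defining condition of $\Fun_0( \Tw(\LinOrd), \cC)$ in Notation \ref{notation.nought}, this is exactly the statement that $\Theta$ factors through the full subcategory $\Fun_0( \Tw(\LinOrd), \cC) \subseteq \Fun( \Tw(\LinOrd), \cC)$. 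We thus have a commuting triangle: $\Theta \colon \Shv_{\cC}(\broken) \rightarrow \Fun_0( \Tw(\LinOrd), \cC)$, the restriction functor $\Fun_0( \Tw(\LinOrd), \cC) \rightarrow \Fun(\LinOrd, \cC)$ (an equivalence, by Notation \ref{notation.nought}), and the equivalence of Theorem \ref{theorem.broken-sheaves} as the third side. Two-out-of-three for equivalences of $\infty$-categories then forces $\Theta \colon \Shv_{\cC}(\broken) \rightarrow \Fun_0( \Tw(\LinOrd), \cC)$ to be an equivalence, proving the corollary.

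Because the two genuinely substantive inputs, Theorem \ref{theorem.broken-sheaves} and Proposition \ref{proposition.eval}, are already available, what remains is essentially bookkeeping; the one point I expect to require real care is the first step, namely verifying that $\widetilde{\Phi} \circ \sharp$ and $\widetilde{\Rep}$ coincide as functors (not merely objectwise), so that the triangle above actually commutes. Once that identification is pinned down, the argument is a formal diagram chase.
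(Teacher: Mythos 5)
Your proposal is correct and follows essentially the same route as the paper's own proof: use Proposition \ref{proposition.eval} to see that $\sheafF \circ \widetilde{\Phi}$ lands in $\Fun_0(\Tw(\LinOrd),\cC)$, observe that the triangle over $\Fun(\LinOrd,\cC)$ commutes with the equivalences of Theorem \ref{theorem.broken-sheaves} and Notation \ref{notation.nought} on the other two sides, and conclude by two-out-of-three. The only difference is that you pause to flag and justify the functorial identification $\widetilde{\Phi} \circ \sharp \simeq \widetilde{\Rep}$, which the paper treats as immediate; this is a reasonable point to make explicit, and your sketch of how to check it (on objects via $\Phi(I^{\sharp}) = \Rep(I,\BR^{+})$, on morphisms via precomposition) is the right one.
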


\begin{proof}
It follows from Proposition \ref{proposition.eval} that composition with the functor $\widetilde{\Phi}$ carries $\Shv_{\cC}( \broken)$ into the subcategory
$\Fun_0( \Tw(\LinOrd), \cC ) \subseteq \Fun( \Tw(\LinOrd), \cC )$. We conclude by observing that this composition functor fits into a commutative diagram
$$ \xymatrix{ & \Fun_0( \Tw(\LinOrd), \cC) \ar[dr] & \\
\Shv_{\cC}( \broken) \ar[ur] \ar[rr] & & \Fun( \LinOrd, \cC), }$$
where the right diagonal map is the equivalence of Notation \ref{notation.nought} and the horizontal map is
the equivalence of Theorem \ref{theorem.broken-sheaves}.
\end{proof}

\begin{proof}[Proof of Theorem \ref{theorem.even-more-precise}]
Let $\cC$ be a compactly generated monoidal $\infty$-category, and suppose that the tensor product $\otimes: \cC \times \cC \rightarrow \cC$ preserves small colimits separately in each variable.
By virtue of Corollary \ref{corollary.plain-version}, it will suffice to show that for every collection of $\cC$-valued sheaves $\sheafF_1, \ldots, \sheafF_n, \sheafG \in \Shv_{\cC}(\broken)$, 
the canonical map
$$ \xymatrix{ \bHom_{ \Fun( \Pt(\broken)^{\op}, \cC) }( \sheafF_1 \circledast \cdots \circledast \sheafF_n, \sheafG ) \ar[d]^{\psi} \\
\bHom_{ \Fun( \Tw(\LinOrd)^{\op}, \cC) }( (\sheafF_1 \circ \widetilde{\Phi}) \circledast \cdots \circledast (\sheafF_n \circ \widetilde{\Phi}), \sheafG \circ \widetilde{\Phi} ) }$$
is a homotopy equivalence.

Define functors $\overline{\sheafF}, \overline{\sheafG}: (\Pt(\broken)^{\op})^{n} \rightarrow \cC$ by the formulae
$$ \overline{\sheafF}(L_{S_1}, \ldots, L_{S_n} ) = \sheafF_1( L_{S_1}) \otimes \cdots \otimes \sheafF_n( L_{S_n} ) \quad
\overline{\sheafG}( L_{S_1}, \ldots, L_{S_n} ) = \sheafG(L_{S_1} \star \cdots \star L_{S_n} ).$$
Unwinding the definitions, we can identify $\psi$ with the restriction map
$$ \theta: \bHom_{ \Fun( (\Pt(\broken)^{\op})^{n}, \cC)}( \overline{\sheafF}, \overline{\sheafG} ) \rightarrow \bHom_{ \Fun( \Tw(\LinOrd)^{n}, \cC)}( \overline{\sheafF} \circ \widetilde{\Phi}^{n}, \overline{\sheafG} \circ \widetilde{\Phi}^{n} ).$$

For $0 \leq m \leq n$, let $\cJ^{(m)}$ denote the product $\Tw(\LinOrd)^{m} \times ( \Pt(\broken)^{\op} )^{n-m}$, and
let $\overline{\sheafF}^{(m)}, \overline{\sheafG}^{(m)}: \cJ^{(m)} \rightarrow \cC$ denote the functors given by the formulae
$$ \overline{\sheafF}^{(m)}( I_1, \ldots, I_m, L_{S_{m+1}}, \ldots, L_{S_n} ) = 
\overline{\sheafF}( \widetilde{\Phi}(I_1), \ldots, \widetilde{\Phi}(I_m), L_{S_{m+1}}, \ldots, L_{S_n})$$
$$ \overline{\sheafG}^{(m)}( I_1, \ldots, I_m, L_{S_{m+1}}, \ldots, L_{S_n} ) = 
\overline{\sheafG}( \widetilde{\Phi}(I_1), \ldots, \widetilde{\Phi}(I_m), L_{S_{m+1}}, \ldots, L_{S_n} ).$$
We will complete the proof by showing that each of the restriction maps
$$ \theta_m: \bHom_{ \Fun(\cJ^{(m-1)}, \cC)}( \overline{\sheafF}^{(m-1)}, \overline{\sheafG}^{(m-1)} ) \rightarrow \bHom_{ \Fun(\cJ^{(m)}, \cC)}( \overline{\sheafF}^{(m)}, \overline{\sheafG}^{(m)} ) $$
is a homotopy equivalence.

Let us henceforth regard the integer $m > 0$ as fixed, and let $\cK$ denote the product $\Tw(\LinOrd)^{m-1} \times (\Pt( \broken)^{\op})^{n-m}$. Let us identify
$\sheafF^{(m-1)}$ and $\sheafG^{(m-1)}$ with functors $F, G: \cK \rightarrow \Fun( \Pt(\broken)^{\op}, \cC)$, so that $\theta_m$ is given by the restriction map
$$ \bHom_{ \Fun( \cK, \Fun( \Pt(\broken)^{\op}, \cC )}( F, G) \rightarrow \bHom_{ \Fun( \cK, \Fun( \Tw(\LinOrd), \cC )}( F \circ \widetilde{\Phi}, G \circ \widetilde{\Phi} ).$$
To prove that this map is a homotopy equivalence, it will suffice to show that for every pair of objects $K, K' \in \cK$, the induced map
$$ \xymatrix{ \bHom_{ \Fun( \Pt(\broken)^{\op}, \cC)}( F(K), G(K') ) \ar[d]^{ \theta_{K,K'} } \\
 \bHom_{ \Fun( \cK, \Fun( \Tw(\LinOrd), \cC )}( F(K) \circ \widetilde{\Phi}, G(K') \circ \widetilde{\Phi}) }$$
is a homotopy equivalence.

To simplify the notation, we now assume that $1 < m < n$ (the extremal cases $m=1$ and $m=n$ are similar, but somewhat simpler).
Unwinding the definitions, we see that the functors $F(K), G(K'): \Pt(\broken)^{\op} \rightarrow \cC$ are given by the formulae
$$F(K)( L_S ) = C \otimes \sheafF_{m}( L_S ) \otimes C' \quad \quad G(K)( L_S ) = \sheafG( \widetilde{B} \star L_S \star \widetilde{B}' )$$
for some objects $C, C' \in \cC$, $\widetilde{B}, \widetilde{B}' \in \Pt(\broken)$. Note that $F(K) \circ \widetilde{\Phi}$ is a 
left Kan extension of its restriction along the functor $\sharp: \LinOrd \hookrightarrow \Tw(\LinOrd)$ of Notation
\ref{remark.left-adjoint-to-forget}, so that the restriction map
$$ \bHom_{ \Fun( \Tw(\LinOrd), \cC)}( F(K) \circ \widetilde{\Phi}, G(K') \circ \widetilde{\Phi} ) \rightarrow \bHom_{ \Fun( \LinOrd, \cC)}( F(K) \circ \widetilde{ \Fun}, G(K') \circ \widetilde{\Rep} )$$
is a homotopy equivalence; here $\widetilde{\Rep}$ denotes the functor $I \mapsto \widetilde{\Phi}( I^{\sharp} )$. We are therefore reduced to proving
that the composite map 
$$ \bHom_{ \Fun( \Pt(\broken)^{\op}, \cC)}( F(K), G(K') ) \rightarrow \bHom_{ \Fun( \LinOrd, \cC)}( F(K) \circ \widetilde{\Rep}, G(K') \circ \widetilde{\Rep} )$$
is a homotopy equivalence.

Let $\sheafH \in \Shv_{\cC}^{\lax}( \broken )$ denote the sheafification of the functor 
\begin{equation*}
F(K): \Pt(\broken)^{\op} \rightarrow \cC.
\end{equation*}
Since $\sheafF_m$ belongs to $\Shv_{\cC}(\broken)$, it follows from Remark \ref{remark.stays-a-sheaf} that $\sheafH$ also
belong to $\Shv_{\cC}( \broken )$. For every object $I \in \LinOrd$, let $L_{I}$ denote the broken line obtained by taking the fiber of the map $\widetilde{\Rep}( I, \BR^{+} ) \rightarrow \Rep(I, \BR^{+} )$
over the point $\alpha \in \Rep(I, \BR^{+} )$ given by $\alpha(i,j) = \begin{cases} 0 & \text{ if } i = j \\
\infty & \text{ if } i \neq j. \end{cases}$. 
Then we have a commutative diagram
$$ \xymatrix{ F(K)( \widetilde{\Rep}(I, \BR^{+} ) ) \ar[r] \ar[d] & F(K)( L_{I} ) \ar[d] \\
\sheafH( \widetilde{ \Rep}(I, \BR^{+} ) )\ar[r] & \sheafH(L_I). }$$
Here the right vertical map is an equivalence by construction (since the process of sheafification does not change the value of a presheaf on a single point),
and the horizontal maps are equivalences by virtue of Remark \ref{remark.constructible-evaluation}. It follows that the left vertical map is also an equivalence.
Allowing $I$ to vary, we conclude that the canonical map $F(K) \circ \widetilde{\Rep} \rightarrow \sheafH \circ \widetilde{\Rep}$ is an equivalence.
We can therefore identify $\theta_{K,K'}$ with the restriction map
$$ \bHom_{ \Fun( \Pt(\broken)^{\op}, \cC)}( \sheafH, G(K') ) \rightarrow \bHom_{ \Fun( \LinOrd, \cC)}( \sheafH \circ \widetilde{\Rep}, G(K') \circ \widetilde{\Rep} ).$$

Note that the formula $G(K')( L_S ) = \sheafG( \widetilde{B} \star L_S \star \widetilde{B}' )$ immediately shows that $G(K')$ is
a lax $\cC$-valued sheaf on $\broken$. To complete the proof that $\theta_{K,K'}$ is a homotopy equivalence, it will suffice to show that
$G(K')$ is homotopy invariant (see Proposition \ref{proposition.homotopy-invariant-is-enough}). Let $L_S \rightarrow S$ be a family of broken lines
parametrized by a topological space $S$ and set $L_{S \times \RR} = L_S \times \RR$, which we regard as a family of broken lines over $S \times \RR$.
We wish to show that the canonical map
$$ G(K')( L_S ) = \sheafG( \widetilde{B} \star L_S \star \widetilde{B}' ) \rightarrow
\sheafG( \widetilde{B} \star L_{S \times \RR} \star \widetilde{B}') = G(K')( L_{S \times \RR})$$
is an equivalence in $\cC$. This follows from the homotopy invariance of the presheaf $\sheafG \in \Shv_{\cC}(\broken)$ (see Proposition \ref{proposition.automatic-homotopy-invariance}).
\end{proof}

\end{document}